\tikzset{symbol/.style={,draw=none,every to/.append style={edge node={node [sloped, allow upside down, auto=false]{$#1$}}}}}
\def\R{\mathbb{R}}
\def\C{\mathbb{C}}
\def\Z{\mathbb{Z}}
\def\uu{\mathbf{u}}
\def\vv{\mathbf{v}}
\def\ww{\mathbf{w}}
\def\xx{\mathbf{x}}
\def\yy{\mathbf{y}}
\def\zz{\mathbf{z}}
\def\aa{\mathbf{a}}
\def\bb{\mathbf{b}}
\def\cc{\mathbf{c}}
\def\dd{\mathbf{d}}
\def\ee{\mathbf{e}}
\def\ff{\mathbf{f}}
\def\ii{\mathbf{i}}
\def\jj{\mathbf{j}}
\def\kk{\mathbf{k}}
\def\pp{\mathbf{p}}
\def\JJ{\mathbf{J}}
\def\FF{\mathbf{F}}
\def\span{\operatorname{span}}
\def\pr{\operatorname{pr}}
\def\diag{\operatorname{diag}}
\def\Im{\operatorname{Im}}
\def\Re{\operatorname{Re}}
\def\Ad{\operatorname{Ad}}
\def\ad{\operatorname{ad}}
\def\Sp{\operatorname{Sp}}
\def\SL{\operatorname{SL}}
\def\GL{\operatorname{GL}}
\def\SO{\operatorname{SO}}
\def\SU{\operatorname{SU}}
\def\U{\operatorname{U}}
\def\O{\operatorname{O}}
\def\PSL{\operatorname{PSL}}
\def\Mat{\operatorname{Mat}}
\def\Lag{\operatorname{Lag}}
\def\End{\operatorname{End}}
\def\Hom{\operatorname{Hom}}
\def\Fr{\operatorname{Fr}}
\def\Gr{\operatorname{Gr}}
\def\Tr{\operatorname{Tr}}
\def\g{\mathfrak{g}}
\def\sl{\mathfrak{sl}}
\def\su{\mathfrak{su}}
\def\sp{\mathfrak{sp}}
\def\k{\mathfrak{k}}
\def\t{\mathfrak{t}}
\def\h{\mathfrak{h}}
\def\a{\mathfrak{a}}
\def\p{\mathfrak{p}}
\def\q{\mathfrak{q}}
\def\r{\mathfrak{r}}
\def\u{\mathfrak{u}}
\newtheorem{theorem}{Theorem}[section]
\newtheorem*{theorem*}{Theorem}
\newtheorem{proposition}[theorem]{Proposition}
\newtheorem{remark}[theorem]{Remark}
\newtheorem{definition}[theorem]{Definition}
\newtheorem{lemma}[theorem]{Lemma}
\newtheorem*{example}{Example}
\newtheorem{corollary}[theorem]{Corollary}
\newtheorem*{corollary*}{Corollary}
\newif\ifdebug                                                      %
\title{Simultaneous linear symplectic reduction and orbit fibrations}
\author{Hyunmoon Kim}
\date{\today}
\begin{document}

\maketitle

\begin{abstract}
We develop a correspondence between the orbits of the group of linear symplectomorphisms of a real finite dimensional symplectic vector space in the complex Lagrangian Grassmannian and the Grassmannians of linear subspaces of the real symplectic vector space. Under this correspondence, orbit fibration maps whose fibers are holomorphic arc components correspond to fibrations from simultaneous linear symplectic reduction. We use this to compute the homotopy types of Grassmannians of linear subspaces of the symplectic vector space in the general case, recovering the observations of Arnold in the Lagrangian case, Oh-Park in the coisotropic case, and Lee-Leung in the symplectic case. Binary octahedral symmetries, symplectic twistor Grassmannians, and symmetries of Jacobi forms appear within this structure.
\end{abstract}

\section{Introduction}
\label{sec:intro}

This paper achieves three main results.

First, we compute the homotopy type of the Grassmannian of linear subspaces of a real, finite dimensional, symplectic vector space in the general case. In the Lagrangian, (co)isotropic, and symplectic cases, we recover the results of \cite{Arnold}, \cite{OhPark}, \cite{LeeLeung}, \cite{AjayiBanyaga}. Simultaneous linear symplectic reduction naturally realizes these Grassmannians as total spaces of fiber bundles, whose base space is an isotropic Grassmannian and whose fiber space is a symplectic Grassmannian.

Second, we prepare the orbit fibrations of \cite{Takeuchi}, \cite{Wolf}, \cite{Wolf1} for applications to symplectic geometry. We spend some time to reformulate their results in the specific case of the complex Lagrangian Grassmannian, in order to expose the property that the orbit fibration maps \emph{do not} depend on the choice of basepoint of the orbits, but \emph{do} depend on the choice of compatible linear complex structure on the real symplectic vector space.

Third, we remove the mystery in the correspondence observed in \cite{mythesis}. In \cite{mythesis}, it was observed that there is a bijection between the Grassmannians of linear subspaces of a real, finite dimensional symplectic vector space, and the orbits of the group of linear symplectomorphisms of the symplectic vector space, such that the corresponding objects are homotopy equivalent (See Figure \ref{fig:assemblyR2} for an example). We refine this correspondence to a correspondence of fiber bundles.

We now describe the structure of Grassmannians of linear subspaces of a real symplectic vector space.

The linear subspaces of a symplectic vector space interact with the symplectic form in different ways. For instance, if $W$ is a linear subspace of a $2n$-dimensional real symplectic vector space $(V, \omega)$, its symplectic complement
\[ W^\omega := \{ \vv \in V: \omega(\vv, \ww) = 0, \text{for all } \ww \in W\}\]
may intersect itself in any dimension from $0$ to $\dim_\R W$.

It is convenient to label linear subspaces $W \subseteq V$ by \emph{types}, a partition of $n$ into a triple of nonnegative integers $\vec{n} = (n_0, n_+, n_-)$, where $n_0 = \dim_{\R} (W \cap W^\omega)$, $\dim_\R (W/(W\cap W^\omega)) = 2n_+$, and $\dim_{\R} W^\omega/(W \cap W^\omega) = 2n_-$ (in the spirit of Definition 3.1 of \cite{Hess}). The Grassmannians of linear subspaces of $V$ of type $\vec{n}$ are denoted by $\Gr(\vec{n}) = \Gr(\vec{n}; V)$. The Grassmannians $\Gr(\vec{n})$ are the orbits of the real linear symplectic group \[ G:= \Sp(V)\] in the ordinary real Grassmannian manifolds of $V$ (Figure \ref{fig:shelf}). The common assumptions $W = W^\omega$ (Lagrangian), $W^\omega \subseteq W$ (coisotropic), $W \subseteq W^\omega$ (isotropic), and $W\cap W^\omega = \{0\}$ (symplectic), are equivalent to the type label satisfying, respectively $n_0 = n$, $n_- = 0$, $n_+ = 0$, and $n_0 = 0$. In addition, the type label provides a way to also systematically collect linear subspaces of $V$ that are neither symplectic nor isotropic nor coisotropic.

Because linear subspaces interact with the symplectic form in different ways, the Grassmannians $\Gr(\vec{n})$ exhibit different topological properties. For instance, some Grassmannians $\Gr(\vec{n})$ are compact and others are not (e.g. Corollary \ref{cor:compact}), and some are simply connected and others are not (e.g. (\ref{eq:fundamentalgroups})). This differs from the behavior of ordinary real or complex Grassmannians.

The Grassmannians of linear subspaces of real symplectic vector spaces have been studied type by type. In Lemma 1.2 of \cite{Arnold}, it was shown that the Lagrangian Grassmannian $\Gr(n, 0, 0)$ is diffeomorphic to $\U(n)/O(n)$. In Proposition 2.1 of \cite{OhPark}, it was shown that the coisotropic Grassmannian $\Gr(n_0, n_+, 0)$ is diffeomorphic to $\U(n)/(O(n_0) \times \U(n_+))$. It was shown in Theorem 13 of \cite{LeeLeung}, and Theorem 1.1 of \cite{AjayiBanyaga} in more generality, that the symplectic Grasmannian $\Gr(0, n_+, n_-)$ (identified as a connected component of the oriented symplectic Grassmannian) strongly deformation retracts to the ordinary complex Grassmannian (or $\U(n_+ + n_-)/(\U(n_+) \times \U(n_-))$). In the isotropic and coisotropic cases, one can choose a fixed $\omega$-compatible linear complex structure $\JJ$ and work with basis extensions that are both $\omega(\cdot, \JJ \cdot)$-orthonormal and Darboux. In the symplectic and general case, imposing this convenient condition on the bases no longer sees all the linear subspaces, but does capture the entire homotopy type.

As a corollary to our main theorem, we compute the unsurprising homotopy type of the Grassmannians $\Gr(\vec{n})$ in the general case.
\begin{corollary*}[Corollary \ref{cor:homotopytype}]
Let $(V, \omega)$ be a $2n$-dimensional symplectic vector space and $\vec{n} = (n_0, n_+, n_-)$ be a triple of nonnegative integers that sum to $n$. Then the Grassmannian $\Gr(\vec{n}; V)$ is homotopy equivalent to $\U(n)/(O(n_0) \times \U(n_+) \times \U(n_-))$.
\end{corollary*}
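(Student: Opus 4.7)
The plan is to use the orbit fibration furnished by the main theorem, combined with the identifications of the isotropic and symplectic cases from the cited prior work, and then match the resulting homotopy fiber sequence against the canonical homogeneous bundle
\[
\U(n_++n_-)/(\U(n_+) \times \U(n_-)) \longrightarrow \U(n)/(O(n_0) \times \U(n_+) \times \U(n_-)) \longrightarrow \U(n)/(O(n_0) \times \U(n_++n_-)).
\]

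First I would invoke the main theorem to realize $\Gr(\vec{n}; V)$ as the total space of a fiber bundle coming from simultaneous linear symplectic reduction, with projection $W \mapsto W \cap W^\omega$ landing in the isotropic Grassmannian $\Gr(n_0, 0, n_++n_-; V)$ and fiber over $I$ equal to the symplectic Grassmannian $\Gr(0, n_+, n_-; I^\omega/I)$ of the $2(n_++n_-)$-dimensional reduced symplectic space. Symplectic complementation $I \mapsto I^\omega$ provides a $\Sp(V)$-equivariant diffeomorphism of the base onto the coisotropic Grassmannian $\Gr(n_0, n_++n_-, 0; V)$, which Oh--Park identify with $\U(n)/(O(n_0) \times \U(n_++n_-))$, while Lee--Leung / Ajayi--Banyaga deformation retract each fiber onto the complex Grassmannian $\U(n_++n_-)/(\U(n_+) \times \U(n_-))$.

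Next I would fix an $\omega$-compatible linear complex structure $\JJ$ on $V$, yielding the maximal compact $K = \U(n) \subset \Sp(V)$. The orbit fibration is $K$-equivariant; $K$ acts transitively on the base with stabilizer $O(n_0) \times \U(n_++n_-)$ at the Darboux-adapted basepoint $I_0 = \span_\R(e_1,\dots,e_{n_0})$; and the complex structure induced by $\JJ$ on $I_0^\omega/I_0$ renders the Lee--Leung retraction $\U(n_++n_-)$-equivariant. Transporting over $K$ assembles these fiberwise retractions into a $K$-equivariant deformation retraction of $\Gr(\vec{n}; V)$ onto a homogeneous bundle over $\U(n)/(O(n_0) \times \U(n_++n_-))$, with total space $\U(n)/\operatorname{Stab}_{\U(n)}(W_0)$ for the Darboux/unitary basepoint
\[
W_0 = \span_\R(e_1,\dots,e_{n_0}) \oplus \span_\R(e_{n_0+1}, f_{n_0+1}, \dots, e_{n_0+n_+}, f_{n_0+n_+}).
\]
A direct block computation of complex-linear unitary matrices preserving $W_0$ then yields $\operatorname{Stab}_{\U(n)}(W_0) = O(n_0) \times \U(n_+) \times \U(n_-)$, completing the identification.

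The main obstacle is globalizing the fiberwise Lee--Leung retraction: a priori it depends on an auxiliary $\omega$-compatible complex structure on each reduced space $I^\omega/I$, and I must verify that these choices can be made continuously and $K$-equivariantly as $I$ varies over the base. I would handle this by using the contractibility of the space of $\omega$-compatible complex structures together with the pushforward of $\JJ$ along the $K$-orbit of $I_0$, so that the family of retractions assembles canonically and the ambient homogeneous bundle structure is manifest.
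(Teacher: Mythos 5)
Your outline is correct, but it takes a genuinely different route from the paper, which proves the corollary in one stroke: Theorem \ref{thm:vectorbundle} exhibits $\gamma_\JJ\colon \Gr(\vec{n})\to\Gr_\JJ(\vec{n})$ as the associated bundle $K\times_{K^\shortparallel(W_\ast)}(\tilde{\p}/\tilde{\p}^\shortparallel)(W_\ast)$, whose fibers are the vector spaces $\tilde{\p}^\times\cong \U(n_+,n_-)/(\U(n_+)\times\U(n_-))$, hence contractible, and then Dold's theorem converts this fibration into a strong deformation retract onto the compact base, which Proposition \ref{prop:compactfibrations} identifies with $\U(n)/(O(n_0)\times\U(n_+)\times\U(n_-))$. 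You instead fiber $\Gr(\vec{n})$ over the isotropic Grassmannian via $\cdot\cap\cdot^\omega$ (Theorem \ref{thm:reduction}), retract each symplectic-Grassmannian fiber onto its complex Grassmannian using the Lee--Leung/Ajayi--Banyaga result, assemble these retractions $K$-equivariantly, and finish with the stabilizer computation $\operatorname{Stab}_{\U(n)}(W_0)=O(n_0)\times\U(n_+)\times\U(n_-)$, which matches the paper's computation of $K\cap\Sp_W(V)$. What your route buys is that it avoids the exponential factorization machinery and Dold; what it costs is that the whole content is concentrated in the step you flag as an ``obstacle.''

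Concretely, the assembly works as soon as you note that $K$ acts transitively on the compact base, so $\Gr(\vec{n})\cong K\times_{K_{I_0}}(\cdot\cap\cdot^\omega)^{-1}(I_0)$ with $K_{I_0}=K\cap\Sp_{I_0}(V)$, and any deformation retraction of the single fiber $\Gr(0,n_+,n_-;I_0^\omega/I_0)$ onto the $\tilde{\JJ}(I_0)$-complex Grassmannian that is equivariant under the image $\U(n_++n_-)$ of $K_{I_0}$ then globalizes automatically and continuously. So the issue is not choosing compatible complex structures continuously over the base (the induced $\tilde{\JJ}(W_0)$ already does this canonically, as the paper notes; contractibility of the space of compatible structures is beside the point and would not give equivariance anyway), but rather equivariance of the fiberwise retraction at the basepoint under the basepoint stabilizer. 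That equivariance is not part of the cited Lee--Leung statement and would have to be verified for their construction; the cleanest repair is to replace it by the symmetric-space retraction the paper itself uses (Lemma \ref{lem:fibers}: the fiber is $\tilde{K}\times_{\tilde{K}^\shortparallel}\tilde{\p}^\times$, and scaling $\tilde{\p}^\times$ to zero is manifestly $\tilde{K}$-equivariant), at which point your argument essentially merges with the paper's.
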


We now state our main theorem in a way that emphasizes the effect of making a choice of $\omega$-compatible linear complex structure $\JJ$:

\begin{theorem*}[Theorem \ref{thm:vectorbundle}, Proposition \ref{prop:compactfibrations}] Let $(V, \omega)$ be a real, $2n$-dimensional symplectic vector space, and $G = \Sp(V)$ be the group of linear symplectomorphisms of $V$. Suppose $\JJ$ is an $\omega$-compatible linear complex structure on $V$, and let $K = G^{\Ad(\JJ)}$ be the maximal compact subgroup of $G$, consisting of linear symplectomorphisms of $V$ that commute with $\JJ$. Then there exists a compact submanifold (see Definition \ref{def:orbitsJ} or below) $\Gr_\JJ(\vec{n}; V) \subseteq \Gr(\vec{n}; V)$ diffeomorphic to the left coset space \[ \U(n)/(O(n_0) \times \U(n_+) \times \U(n_-)), \]
and a $K$-equivariant fiber bundle $\gamma_\JJ: \Gr(\vec{n}; V) \to \Gr_{\JJ}(\vec{n}; V)$ such that each fiber has a complex structure induced by $\JJ$, and is biholomorphic to
\[ \U(n_+, n_-)/(\U(n_+) \times \U(n_-)).\]
\end{theorem*}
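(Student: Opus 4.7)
The plan is to transfer the problem into the complex Lagrangian Grassmannian $\Lag(V_\C)$ (where $V_\C := V \otimes_\R \C$) via the correspondence developed earlier in this paper, apply the Takeuchi--Wolf orbit fibration theory there, and translate the result back to $\Gr(\vec{n}; V)$. Under the correspondence, $\Gr(\vec{n}; V)$ matches a single $G$-orbit $\mathcal{O}_{\vec{n}} \subseteq \Lag(V_\C)$, and the compatible complex structure $\JJ$ (through the splitting $V_\C = V^{1,0} \oplus V^{0,1}$) selects a distinguished basepoint in each orbit. The Takeuchi--Wolf framework then produces, for each such $\mathcal{O}_{\vec{n}}$, a $K$-equivariant fibration onto the unique closed $K$-orbit in $\overline{\mathcal{O}_{\vec{n}}}$, with fibers the holomorphic arc components; by the reformulation promised as the second main result this map is basepoint-independent but $\JJ$-dependent. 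Pulling back through the correspondence produces $\gamma_\JJ$.

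To identify the base, I would examine the closed $K$-orbit, which via the correspondence is $\Gr_\JJ(\vec{n}; V)$: those $W$ obtained from simultaneous reduction compatible with $\JJ$, i.e.\ those whose isotropic core $W \cap W^\omega$ is totally real for $\JJ$ inside an $n_0$-dimensional complex subspace, while the symplectic quotient $W/(W \cap W^\omega)$ and its $\omega$-complement split into $\omega(\cdot, \JJ \cdot)$-definite pieces of signatures $(n_+, n_-)$. A direct stabilizer computation for $K \cong \U(n)$ acting on such a standard basepoint yields isotropy $O(n_0) \times \U(n_+) \times \U(n_-)$, with $O(n_0)$ preserving the totally real core and the two $\U$-factors preserving the definite symplectic summands. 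Compactness then follows since $K$ is compact, and the coset description is immediate.

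For the fiber, I would compute the holomorphic arc component through a chosen basepoint of $\mathcal{O}_{\vec{n}}$. Simultaneous symplectic reduction by $W \cap W^\omega$ reduces to the symplectic case $(0, n_+, n_-)$, where the fiber is governed by the indefinite unitary group $\U(n_+, n_-)$ acting on the $(n_+, n_-)$-signature Hermitian form $\omega(\cdot, \JJ \cdot)$ on the symplectic quotient. The isotropy at the basepoint is its maximal compact $\U(n_+) \times \U(n_-)$, giving the fiber as biholomorphic to
\[
\U(n_+, n_-) / (\U(n_+) \times \U(n_-)).
\]
The main obstacle is verifying that the Takeuchi--Wolf arc component is not merely diffeomorphic but \emph{biholomorphic}, with the $\JJ$-induced complex structure, to the stated type I bounded symmetric domain: this amounts to matching the arc structure with the Harish-Chandra embedding of $\U(n_+, n_-)/(\U(n_+) \times \U(n_-))$ into its compact dual, and to checking that reduction by $W \cap W^\omega$ respects this complex structure. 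Once these identifications are in place, $K$-equivariance of $\gamma_\JJ$ and local triviality follow from homogeneity of the base under $K$.
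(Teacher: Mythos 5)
There is a genuine gap at the heart of your plan: the step ``pulling back through the correspondence produces $\gamma_\JJ$'' is not an available operation. The correspondence between $\Gr(\vec{n};V)$ and the $G$-orbit $\Lag^\C(\vec{n};V)$ is only a homotopy equivalence mediated by the roof $\Lag^\C_\oplus(\vec{n})$ (with its two projections $\Re\cdot_{\ge 0}$ and $\varphi$); there is no $G$- or $K$-equivariant map between $\Gr(\vec{n})$ and $\Lag^\C(\vec{n})$ along which a fibration could be transported, and indeed the two spaces have different dimensions (Remark \ref{rmk:dimensions}), differing by $(n_+-n_-)^2+n_++n_-$. Worse, even if such a transfer existed it would produce the wrong bundle: the Takeuchi--Wolf orbit fibration $\beta_\JJ$ on $\Lag^\C(\vec{n})$ has fibers diffeomorphic to $\tilde{\p}^\shortparallel \cong (\Sp(2n_+;\R)/\U(n_+))\times(\Sp(2n_-;\R)/\U(n_-))$, whereas the fibers of $\gamma_\JJ$ are the \emph{complementary} piece $\tilde{\p}^\times \cong \U(n_+,n_-)/(\U(n_+)\times\U(n_-))$; in the paper these two bundles are related only as the two factors of the fiber product $\eta_\JJ$ (Corollary \ref{cor:fiberproduct}), so neither determines the other. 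A further slip: the holomorphic arc components are the fibers of $\Re\cdot_0$, i.e.\ the whole reduced orbits $\Lag^\C(0,n_+,n_-;W_0^\omega/W_0)$, not the fibers of the Takeuchi--Wolf fibration $\beta_\JJ$, and the base of that fibration is a compact $K$-orbit inside the orbit itself, not a closed $K$-orbit in a smaller stratum of the closure.

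What actually closes the gap --- and is what the paper does --- is to run the Takeuchi--Wolf \emph{method}, rather than transfer their result: fiber $\Gr(\vec{n})$ over the isotropic Grassmannian by simultaneous reduction $(\cdot\cap\cdot^\omega)$, identify each fiber with $\Gr(0,n_+,n_-;W_0^\omega/W_0)\cong\tilde{G}/\tilde{G}(\tilde{I})$ for a compatible triple $(\tilde{I},\tilde{\JJ}(W_0),W')$, and use the diffeomorphism $\tilde{K}\times\tilde{\p}^\shortparallel\times\tilde{\p}^\times\xrightarrow{\cong}\tilde{G}$ (Lemma \ref{lem:fibers}, following Takeuchi's Lemma 6 and Wolf's Lemma 9.10) to exhibit this fiber as $\tilde{K}\times_{\tilde{K}^\shortparallel}\tilde{\p}^\times$; basepoint-independence (Lemma \ref{lem:fibers}.3) and $K$-transitivity on $\Gr_\JJ(\vec{n})$ then assemble these into the global $K$-equivariant bundle of Theorem \ref{thm:vectorbundle}, with the fiber complex structure coming from $\frac{1}{2}\ad(\tilde{\JJ})$ on $\tilde{\p}^\times$ and the Harish-Chandra embedding. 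Your stabilizer computation for the base ($O(n_0)\times\U(n_+)\times\U(n_-)$, hence compactness) and your identification of the fiber via $\U(n_+,n_-)$ acting on the reduced space are consistent with this, but note also that your description of $\Gr_\JJ(\vec{n})$ is off: the defining condition is that $W\oplus\JJ W_0$ be $\JJ$-invariant (Definition \ref{def:orbitsJ}); the isotropic core is automatically totally real for any compatible $\JJ$, and $\omega(\cdot,\JJ\cdot)$ is positive definite on all of $V$, so ``definite pieces of signatures $(n_+,n_-)$'' does not characterize anything here.
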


The fibers of $\gamma_\JJ$ are biholomorphic to a Siegel domain of genus $1$ in the sense of \cite{PyatetskiiShapiro}, and to the underlying complex manifold of a hermitian symmetric space of noncompact type AIII in the notation of \cite{Helgason}. As topological spaces, the fibers are contractible, so by a result of \cite{Dold}, we obtain a strong deformation retract of the total space to the base space.

We now show how the Grassmannians $\Gr(\vec{n})$ are also naturally total spaces of fiber bundles given by simultaneous linear symplectic reduction.

For any linear subspace $W \subseteq V$ its linear symplectic reduction can be expressed as a short exact sequence of vector spaces
\[ 0 \to W\cap W^\omega \to W \to W/(W\cap W^\omega) \to 0.\]
If $W$ has type $\vec{n}$, $W \cap W^\omega$ is a type $(n_0, 0, n-n_0)$ subspace of $V$ and $W/(W\cap W^\omega)$ is a type $(0, n_+, n_-)$ subspace in $(W+W^\omega)/(W\cap W^\omega))$. Since $W+W^\omega = (W\cap W^\omega)^\omega$, we can consider all short exact sequences for all $W\subseteq V$ of a given type simultaneously, and obtain a $G = \Sp(V)$-equivariant fiber bundle (Theorem \ref{thm:reduction})
\[ (\cdot \cap \cdot^\omega): \Gr(\vec{n}; V) \to \Gr(n_0, 0, n-n_0; V)\]
such that the fibers $(\cdot \cap \cdot^\omega)^{-1}(W_0)$ are identified with $\Gr(0, n_+, n_-; W_0^\omega/W_0)$.

Now let $\Gr_\JJ(\vec{n}; V)$ be the submanifold of $\Gr(\vec{n}; V)$ consisting of linear subspaces $W$ of type $\vec{n}$ such that $W \oplus \JJ(W \cap W^\omega)$ is $\JJ$-invariant. In the isotropic and coisotropic cases $\Gr_{\JJ}(\vec{n}; V)$ coincides with $\Gr(\vec{n}; V)$, and in the symplectic case, the submanifolds $\Gr_{\JJ}(\vec{n};V)$ coincide with ordinary complex Grassmannians (when $\Gr(\vec{n}; V)$ is identified with a connected component of the Grassmannian of oriented symplectic subspaces) of the complex vector space $(V, \JJ)$. If $W \in \Gr_\JJ(\vec{n}; V)$, then
$(W\cap W^\omega)^\omega/(W \cap W^\omega)$ has a reduced symplectic form $\tilde{\omega}$ and $\JJ$ prescribes a way to identify it with the $\omega(\cdot, \JJ \cdot)$-orthogonal complement of $(W\cap W^\omega) \oplus \JJ (W \cap W^\omega)$ in $V$, which is a $\JJ$-invariant symplectic subspace of $V$. From this identification $(W\cap W^\omega)^\omega/(W \cap W^\omega)$ obtains an $\tilde{\omega}$-compatible linear complex structure $\tilde{\JJ}$.

The $K$-equivariant ($\tilde{K}$-equivariant) fiber bundles $\gamma_\JJ$ ($\gamma_{\tilde{\JJ}}$) occur in the following commuting diagram
\begin{equation}
    \begin{tikzcd}
        \Gr(0, n_+, n_-) \arrow[r, hook] \arrow[d, two heads,  "\gamma_{\tilde{\JJ}}"] & \Gr(\vec{n}) \arrow[r, two heads, "\cdot \cap \cdot^\omega"] \arrow[d, two heads, "\gamma_\JJ"] & \Gr(n_0, 0, n-n_0) \arrow[d, equal]\\
        \Gr_{\tilde{\JJ}}(0, n_+, n_-) \arrow[r, hook] & \Gr_\JJ(\vec{n}) \arrow[r, two heads, "\cdot \cap \cdot^\omega"] & \Gr_\JJ(n_0, 0, n-n_0)
    \end{tikzcd}
\end{equation}
where the spaces in the bottom row are compact. After choosing a choice of Darboux basis of $V$ compatible with a choice of basepoint from each space, the fiber bundle of the bottom row can be expressed as
\[\U(n)/(\O(n_0) \times \U(n_+) \times \U(n_-)) \to \U(n-n_+)/(\O(n_0) \times \U(n_-)) \]
with fibers diffeomorphic to $\U(n_+ + n_-) / (\U(n_+) \times \U(n_-))$.

We now discuss our reformulation of a specific case of the results of Takeuchi \cite{Takeuchi} and Wolf \cite{Wolf}, \cite{Wolf1}.

A complex Lagrangian subspace $\FF \subseteq V\otimes_\R\C$ is a complex linear subspace that is maximally isotropic with respect to the $\C$-bilinear extension of $\omega$. Complex Lagrangian subspaces can also be labelled by type (Definition 3.1 of \cite{Hess}) where $(n_0, n_+, n_-)$ is the inertia of the hermitian form $-i \omega^\C( \cdot, \overline{\cdot})|_{\FF}$. Complex Lagrangian subspaces with the same type $\vec{n}$ can be collected into $\Lag^\C(\vec{n}) = \Lag^\C(\vec{n}; V)$ and coincide with the $G = \Sp(V)$-orbits of the complex Lagrangian Grassmannian $\Lag^\C(V)$, the collection of all complex Lagrangian subspaces of $V\otimes_\R\C$.

We state the reformulation of the results of \cite{Takeuchi}, \cite{Wolf}, \cite{Wolf1} in a way that emphasizes both the dependence on the choice of $\omega$-compatible linear complex structure $\JJ$, and the structural similarity with our main theorem.

\begin{theorem*}[\cite{Takeuchi}, \cite{Wolf}, \cite{Wolf1}]
Let $(V, \omega)$ be a real, $2n$-dimensional symplectic vector space, and $G = \Sp(V)$ be the group of linear symplectomorphisms of $V$. Suppose $\JJ$ is an $\omega$-compatible linear complex structure on $V$, and let $K$ be the maximal compact subgroup of $G$ consisting of linear symplectomorphisms of $V$ that commute with $\JJ$. Then there exists a compact submanifold (see Definition \ref{def:orbitsJ}) $\Lag^\C_\JJ(\vec{n}; V) \subseteq \Lag^\C(\vec{n}; V)$ diffeomorphic to the left coset space
\[ \U(n)/(O(n_0) \times \U(n_+) \times \U(n_-))\]
and a $K$-equivariant fiber bundle $\beta_\JJ: \Lag^\C(\vec{n}; V) \to \Lag_{\JJ}^\C(\vec{n}; V)$ such that each fiber has a complex structure induced by $\JJ$, and is biholomorphic to
\[ (\Sp(2n_+; \R)/\U(n_+)) \times (\Sp(2n_-; \R)/\U(n_-)).\]
\end{theorem*}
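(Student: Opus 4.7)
The complex Lagrangian Grassmannian $\Lag^\C(V)$ is a compact complex flag variety for $\Sp(2n; \C)$, and the real form $G = \Sp(V)$ acts on it with orbits indexed by type $\vec{n}$. The plan is to specialize the orbit fibration theorem of Takeuchi and Wolf to this setting. That theorem states that for each $G$-orbit $\mathcal{O}$ in a complex flag variety there is a unique closed $K$-orbit $\mathcal{O}_K \subseteq \mathcal{O}$ together with a $K$-equivariant continuous retraction $\mathcal{O} \to \mathcal{O}_K$ whose fibers are the holomorphic arc components of $\mathcal{O}$. I take $\Lag^\C_\JJ(\vec{n}; V)$ to be this base cycle inside $\Lag^\C(\vec{n}; V)$, and $\beta_\JJ$ to be the retraction, so that the main work is to identify both explicitly.

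To describe $\Lag^\C_\JJ(\vec{n}; V)$, I would fix a basepoint $\FF_0 \in \Lag^\C(\vec{n}; V)$ compatible with $\JJ$. Using a Darboux basis adapted to $\JJ$, one can arrange $\FF_0$ to have a convenient spanning set consisting of $n_0$ real vectors (absorbing the $-i\omega^\C(\cdot, \overline{\cdot})$-degenerate directions), $n_+$ holomorphic vectors, and $n_-$ antiholomorphic vectors with respect to $\JJ$. A direct computation shows that the $K = \U(n)$-stabilizer of $\FF_0$ is $\O(n_0) \times \U(n_+) \times \U(n_-)$: the orthogonal factor absorbs the ambiguity in the choice of real basis of the degenerate part, while the two unitary factors preserve the positive and negative hermitian parts independently. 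This gives the required diffeomorphism $\Lag^\C_\JJ(\vec{n}; V) \cong \U(n)/(\O(n_0) \times \U(n_+) \times \U(n_-))$, and compactness is automatic from the compactness of $K$.

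For the fibers of $\beta_\JJ$, I would compute the full $G$-stabilizer $G_{\FF_0}$ and show that in the block decomposition adapted to the partition $n = n_0 + n_+ + n_-$ it contains $\Sp(2n_+; \R) \times \Sp(2n_-; \R)$ as the factor acting on the positive and negative hermitian parts. The fiber $\beta_\JJ^{-1}(\FF_0)$ is the holomorphic arc component of $\FF_0$, which equals the $G_{\FF_0}$-orbit of $\FF_0$ inside $\Lag^\C(\vec{n}; V)$; this reduces to $(\Sp(2n_+; \R) \times \Sp(2n_-; \R))/(\U(n_+) \times \U(n_-))$, the product of two Siegel upper half spaces.

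The main obstacle is upgrading the diffeomorphism of fibers to a biholomorphism. The diffeomorphism drops out of the orbit--stabilizer calculation above, but matching the complex structure induced by $\JJ$ on each holomorphic arc component with the standard complex structure on the Siegel product requires more care. The concrete way I would carry this out is to parametrize complex Lagrangians transverse to a fixed real Lagrangian by a symmetric complex matrix $Z = X + iY$ with $Y$ of inertia $(n_0, n_+, n_-)$: the base cycle $\Lag^\C_\JJ(\vec{n}; V)$ corresponds to $X = 0$, and moving within a holomorphic arc component corresponds to varying $X$ along a specific affine complex subspace. Block-diagonalizing $Y$ into its $n_0$, $n_+$, $n_-$ parts decomposes this affine subspace into a point and two symmetric matrix parametrizations of the Siegel upper half spaces of $\Sp(2n_+; \R)/\U(n_+)$ and $\Sp(2n_-; \R)/\U(n_-)$, producing the biholomorphism.
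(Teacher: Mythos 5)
The first half of your plan (the base cycle) is consistent with what the paper does: $K$ acts transitively on $\Lag^\C_\JJ(\vec{n};V)$, and a stabilizer computation in a $\JJ$-adapted Darboux basis gives $K\cap\Sp_\FF(V)\cong \O(n_0)\times\U(n_+)\times\U(n_-)$, hence the compact quotient $\U(n)/(\O(n_0)\times\U(n_+)\times\U(n_-))$ (this is Proposition \ref{prop:compactfibrations}). The genuine gap is in your identification of the fibers of $\beta_\JJ$. The holomorphic arc components of $\Lag^\C(\vec{n};V)$ are \emph{not} the fibers of $\beta_\JJ$: they are the fibers of the $G$-equivariant reduction map $\Re\cdot_0:\Lag^\C(\vec{n})\to\Gr(n_0,0,n-n_0)$, each diffeomorphic to the full reduced orbit $\Lag^\C(0,n_+,n_-;W_0^\omega/W_0)$, of real dimension $(n_++n_-)^2+(n_++n_-)$, which exceeds $\dim\Lag^\C(\vec n)-\dim\Lag^\C_\JJ(\vec n)=n_+(n_++1)+n_-(n_-+1)$ by $2n_+n_-$. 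So whenever $n_+n_-\neq 0$ there is no retraction onto the closed $K$-orbit whose fibers are arc components, and the Takeuchi--Wolf statement you invoke as a black box does not exist in that form; note also that arc components are defined $G$-invariantly and do not depend on $\JJ$, whereas the fibers of $\beta_\JJ$ genuinely do. Your concrete fiber computation inherits this error: the claim that $G_{\FF_0}$ contains $\Sp(2n_+;\R)\times\Sp(2n_-;\R)$ is false, since any $g$ stabilizing $\FF_0$ preserves the hermitian form $\kappa|_{\FF_0}$, so the reductive part of the stabilizer is $\GL(n_0;\R)\times\U(n_+,n_-)$ (Theorem \ref{thm:splitF}, Remark \ref{rmk:dimensions}); and ``the $G_{\FF_0}$-orbit of $\FF_0$'' is literally a point. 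The group whose orbit through $\FF_0$ is the fiber is instead the centralizer $\tilde G(\tilde I)\cong\Sp(2n_+;\R)\times\Sp(2n_-;\R)$ of an involution $\tilde I$ of the reduced space determined by $\JJ$ and the basepoint; it meets the stabilizer only in $\U(n_+)\times\U(n_-)$ and is not contained in it.

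What is actually needed, and what the paper does, is a two-step construction: first fiber the orbit over the isotropic Grassmannian via $\Re\cdot_0$ (Theorem \ref{thm:reduction}), then inside each fiber use the decomposition $\tilde G=\tilde K\exp(\tilde{\p}^\shortparallel)\exp(\tilde{\p}^\times)$ (Takeuchi's Lemma 6, Wolf's Lemma 9.10) to project $\tilde k\exp(\tilde p^\shortparallel).\FF_\ast\mapsto\tilde k.\FF_\ast$ onto the closed $\tilde K$-orbit, checking that this is independent of the auxiliary basepoint and assembles $K$-equivariantly over the base (Lemma \ref{lem:fibers}, Theorem \ref{thm:vectorbundle}); the fiber $\exp(\tilde{\p}^\shortparallel).\FF_\ast\cong\tilde G(\tilde I)/(\U(n_+)\times\U(n_-))$ then carries the complex structure $\tfrac12\ad(\tilde\JJ)$ on $\tilde{\p}^\shortparallel$, giving the biholomorphism with $(\Sp(2n_+;\R)/\U(n_+))\times(\Sp(2n_-;\R)/\U(n_-))$. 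Your closing Siegel-coordinate picture also needs repair: the base cycle is compact, so it is not the slice $X=0$ in an affine chart (already for $n=1$, $\vec n=(0,1,0)$ it is the single point $Z=i$, not the imaginary axis), and motion in the fiber is not a purely affine variation of $X$; but this is secondary to the misidentification of the fibers.
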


In \cite{Takeuchi}, \cite{Wolf}, \cite{Wolf1}, orbit fibrations were investigated in the generality of orbits of real semisimple Lie groups in irreducible hermitian symmetric spaces of compact type and complex flag manifolds. The complex Lagrangian Grassmannian occurs as a special case (Section 11 of \cite{Wolf}). However, the homotopy type of $\Lag_\JJ^\C(\vec{n}; V)$ was only fully identified in the symplectic case, and the Grassmannians $\Gr(\vec{n})$ occur as $G = \Sp(V)$-orbits of ordinary real Grassmannians, so they were also not discussed in \cite{Takeuchi}, \cite{Wolf}, \cite{Wolf1}. In addition to exposing the role of choosing an $\omega$-compatible linear complex structure $\JJ$, we contribute the identification of the homotopy type in the general case, and an application of their methods to smooth manifolds that are not necessarily complex flag manifolds.

The fibers of $\beta_\JJ$ are biholomorphic to a product of two Siegel domains of genus $1$ in the sense of \cite{PyatetskiiShapiro}, and underlying complex manifolds of a product of two hermitian symmetric spaces of noncompact type CI in the notation of \cite{Helgason}. As topological spaces, they are contractible, so again by \cite{Dold}, we obtain a strong deformation retract of the total space to the base space, also obtaining a computation of the homotopy type of each orbit.

The construction of $\beta_\JJ$ in the context of the diagram 

\begin{equation}
    \begin{tikzcd}
        \Lag^\C(0, n_+, n_-) \arrow[r, hook] \arrow[d, two heads,  "\beta_{\tilde{\JJ}}"] & \Lag^\C(\vec{n}) \arrow[r, two heads, "\Re \cdot_0"] \arrow[d, two heads, "\beta_\JJ"] & \Gr(n_0, 0, n-n_0) \arrow[d, equal]\\
        \Lag_{\tilde{\JJ}}^\C(0, n_+, n_-) \arrow[r, hook] & \Lag_\JJ^\C(\vec{n}) \arrow[r, two heads, "\Re \cdot_0"] & \Gr_\JJ(n_0, 0, n-n_0)
    \end{tikzcd}
\end{equation}
was stated in Theorem B of \cite{Takeuchi} for the symplectic case and in Theorem 11.8 of \cite{Wolf1} for the general case. The map $\Re\cdot_0$ appears in Theorem A.3 of \cite{Takeuchi}, and Theorem 8.15 of \cite{Wolf1}. In Theorem 8.15.1b and Theorem 10.9.3 of \cite{Wolf1} the fibers of $\Re \cdot_0$ are identified as \emph{holomorphic arc components} of $\Lag^\C(V)$ (One implication of this result is that for two points in different fibers of $\Re\cdot_0$, there does not exist a holomorphic open disk passing through both of them).

Our symplectic approach differs from \cite{Takeuchi}, \cite{Wolf}, \cite{Wolf1} in the following ways. First, our approach is more conveniently applicable to symplectic geometry because all constructions are made from $(V, \omega)$, we do not fix a root system of the Lie algebra of $G = \Sp(V)$, and we allow the complex structure on the complex Lagrangian Grassmannian to explicitly vary according to the choice of $\omega$-compatible linear complex structure $\JJ$ on $(V, \omega)$. We provide a dictionary in Section \ref{sec:liealg}. Second, simultaneous linear symplectic reduction is manifestly $G$-equivariant, and we intentionally state it as a $G$-equivariant map. This differs from \cite{Wolf} and \cite{Wolf1}, where in Theorem 8.15 of \cite{Wolf1}, $\Re \cdot_0$ is explicitly $G$-equivariant, but in Section 11 of \cite{Wolf1} and the corollary to the holomorphic arc component theorem of \cite{Wolf}, it is intentionally restated as a $K$-equivariant map. Third, we emphasize that $\beta_\JJ$s and $\gamma_{\JJ}$s do \emph{not} depend on the choice of basepoint within the $K$-orbit, but \emph{do} depend on the choice of $\JJ$. This allows us to avoid referring to relevant spaces as orbits of a preferred basepoint, and provide a geometric, basepoint free description of the orbit fibration maps.

In Theorem \ref{thm:incidence}, we also restate and provide a different proof of the incidence of the orbit closures (Theorem A.2 of \cite{Takeuchi}, Theorem 10.6.3 \cite{Wolf1}, Figure \ref{fig:incidence}) using a Hilsum-Skandalis bibundle of action Lie groupoids (Remark \ref{rem:bibundle}).

We now discuss the correspondence between the Grassmannians $\Gr(\vec{n})$ and the $G$-orbits $\Lag^\C(\vec{n})$.

In \cite{mythesis}, it was observed that $\Gr(\vec{n})$ and $\Lag^\C(\vec{n})$, are homotopy equivalent. For example, if $V = \R^2$ with the standard symplectic form, the complex Lagrangian Grassmannian manifold is the complex projective line, $G =\SL(2; \R)$ and acts by the M\"{o}bius action, and its orbits are the upper hemisphere, equator, and the lower hemisphere. They correspond, respectively, to the point $\Gr(0, 1, 0; \R^2$), the real projective line $\Gr(1, 0, 0; \R^2)$, and another point $\Gr(0, 0, 1; \R^2)$ (Figure \ref{fig:assemblyR2}). %In this case, the observation was made independently by M. Hamilton and Y. Karshon.

In this paper, we refine the correspondence into a correspondence of fiber bundles with contractible fibers and diffeomorphic base spaces. Moreover, we explain the reason why this correspondence exists at all. The reason is that the fibers of $\gamma_\JJ$ and $\beta_\JJ$, which occur as bounded symmetric domains, are such that they have vector space manifestations that are complementary vector spaces. More precisely, we can consider the reduced case, which is fairly representative (Sections \ref{subsec:involutions}, \ref{subsec:symmetricspace}). In this case $\Ad(\JJ)$ provides a Cartan decomposition of the Lie algebra of $G$ into $\g = \k \oplus \p$, and depending on the choice of $(0, n_+, n_-)$, we can additionally choose an involution $I$ that commutes with $\JJ$, and further split $\p$ and $\g$ into $(\Ad(I), \Ad(\JJ))$-eigenspaces $\p =\p^\shortparallel \oplus \p^\times$ and $\g = \k^\shortparallel \oplus \k^\times \oplus \p^\shortparallel \oplus \p^\times$. The exponential map provides the diffeomorphisms to the fibers of $\gamma_\JJ$, $\beta_\JJ$:
\begin{eqnarray*}
    \p^\shortparallel &\cong& (\Sp(2n_+; \R)/\U(n_+)) \times (\Sp(2n_-; \R)/\U(n_-))\\
    \p^\times &\cong& \U(n_+, n_-)/(\U(n_+) \times \U(n_-)).
\end{eqnarray*}

An intermediary fiber bundle
\[ \eta_\JJ: \Lag_\oplus^\C(\vec{n}; V) \to \Lag_{\JJ, \oplus}^\C(\vec{n}; V)\]
arises, as a fiber product of $\gamma_\JJ$ and $\beta_\JJ$. The fibers of $\eta_\JJ$ are diffeomorphic to $\p$, and the base is diffeomorphic to $\Gr_{\JJ}(\vec{n}; V)$ and $\Lag_{\JJ}^\C(\vec{n}; V)$. One of its projection maps carries a corrective diffeomorphism (Corollary \ref{cor:fiberproduct}) of Baker-Campbell-Hausdorff nature which is the identity in the isotropic and coisotropic cases. In the symplectic case, the total space of $\eta_\JJ$ was introduced in \cite{LeeLeung} as the \emph{symplectic twistor Grassmannian}.

Finally, we list some observations that might interest some readers. We observe that the partial Cayley transforms introduced in \cite{KoranyiWolf} appear as elements of the binary octahedral group $BO_{48}$, or ``square roots'' of quaternionic symmetries. We note its formal similarities with the Bruhat decomposition of the complex Lagrangian Grassmannian (Section \ref{subsec:octahedron}). We also note the appearance of the Heisenberg groups of higher degree (appearing in the study of Jacobi forms \cite{EichlerZagier}, \cite{Ziegler}) in the stabilizer subgroups of each orbit (Section \ref{subsec:orbitsheisenberg}). This allows us to describe them in a coordinate free way, and without referring to central extensions (Definition \ref{def:heisenberg}). We also note the appearance of the unitary groups of every indefinite signature in the stabilizer subgroups of $\Lag^\C(V)$. We also marginally strengthen (Theorem \ref{thm:basisR}) the statement of the linear ``relative Darboux theorem'' in \cite{ArnoldGivental}.

We discuss orbits in Section \ref{sec:orbits}, orbit fibrations, complex structures, and simultaneous linear symplectic reduction in Section \ref{sec:fibrations}, and factorizations of orbit fibrations (the actual construction of $\gamma_\JJ$ and $\beta_\JJ$) in Section \ref{sec:factorization}. 

\subsection{Acknowledgments}
\label{subsec:acknowledgements}
This research was partly funded by the U.S.-Israel Binational Science Foundation (BSF) and by the Natural Sciences and Engineering Research Council of Canada. The author would like to express gratitude to Zuoqin Wang for pointing to the reference \cite{ArnoldGivental}, to Karl Neeb for pointing to the reference \cite{Loos}, and to Leonid Kovalev for clarifying some point set topology, and Yael Karshon for discussions.

\subsection{Notation}
\label{subsec:notation}
$-1_X$ denotes either the identity element of an object $X$ or the identity automorphism of an object $X$.
$Z(H)$ denotes the center of a group $H$. $\operatorname{Lie}(H)$ denotes the Lie algebra of a Lie group $H$. If $h \in H$, then $\Ad(h)(\cdot) = h (\cdot )h^{-1}$, and $\ad(\mathbf{h}) (\cdot) = [\mathbf{h}, \cdot]$ denotes left multiplication by the multiplication in $\operatorname{Lie}(H)$ by an element $\mathbf{h} \in \operatorname{Lie}(H)$.

For $R$, either the field of real or complex numbers, $\Sp(2n; R)$ denotes $\Sp(R^{2n})$ for $R^{2n}$ equipped with the standard symplectic form. $\U(a, b)$ denotes the indefinite unitary group.

$\Mat_{j \times \ell}(R)$ denotes the set of $j$ by $\ell$ matrices with coefficients in $R$, $1_j$ the $j \times j$ identity matrix, $0_{j \times \ell} \in \Mat_{j \times \ell}(R)$ the zero matrix. $R^n$ is identified with $\Mat_{n \times 1}(R)$.  $(\cdot)^t$ denotes the matrix transpose and $(\cdot)^\dagger$ denotes the conjugate matrix tranpose, $\Mat_{n \times n}(\C)^{\pm\dagger}$ denotes the set of Hermitian/skew hermitian $n \times n$ matrices and $\Mat_{n \times n}(R)^{\pm\t}$ denotes the set of symmetric/antisymmetric $n \times n$ matrices.

For a Lie group $G$ and a closed subgroup $H \le G$ and a smooth action of $H$ on a space $F$, $G\times_H F:= (G \times F)/H$ denote the orbit space of the action of $H$ on $G \times F$ given by $(g, f) \mapsto (gh, h^{-1}.f)$.

We often denote the bases given by Theorems \ref{thm:basisR}, \ref{thm:basisC}, \ref{thm:basisJ} as $\{ \ee^0, \ee^+, \ee^-, \ff^0, \ff^+, \ff^-\}$, $\{ \ee, \ff\}$, $\{\ee, \JJ\ee\}$.

\section{Orbits}\label{sec:orbits}
In this section we describe the relevant groups (including the binary octahedral group, and the Heisenberg group), spaces, group actions, orbits, and stabilizers. We also revisit some basis extension theorems (Theorems \ref{thm:basisR}, \ref{thm:basisJ}, \ref{thm:basisC}), and describe the incidence relations between the orbit closures (Proposition \ref{prop:incidenceR}, Theorem \ref{thm:incidence}).

\subsection{Orbits in ordinary real Grassmannians}
\label{subsec:orbitsreal}
Let $V$ be a $2n$-dimensional real vector space and $\omega$ be a symplectic form on $V$. Let $\Sp(V)$ be the group of real linear invertible transformations of $V$ that preserve $\omega$--i.e. $\omega(g\vv, g\ww) = \omega(\vv, \ww)$ for all $\vv, \ww \in V$ if and only if $g \in \Sp(V)$. Suppose $W \subseteq V$ is a real linear subspace of dimension $\ell$. Let $W^\omega \subseteq V$ be defined by the condition $\vv \in W^\omega$ if and only if $\omega(\vv, \ww) = 0$ for all $\ww \in W$ and call it the \emph{symplectic complement of $W$}. $W$ and $W^\omega$ have complementary dimensions, but are not complementary subspaces in general. Taking the symplectic complement twice brings any linear subspace back to itself.

Denote $W_0 := W \cap W^\omega$ and $W_0^\omega:= (W_0)^\omega = W + W^\omega$. $\omega$ descends to a nondegenerate form $\tilde{\omega}$ on $W/W_0$, so $W/W_0$ is a symplectic vector space with this form, and is in particular, even dimensional. $\omega$ also descends to a symplectic form on $W^\omega/W_0$, which is again, necessarily even dimensional, and we will again denote the descended symplectic form by $\tilde{\omega}$. We will say $W$ is of \emph{type $\vec{n}$} if $\dim_\R W_0 = n_0$ and $\frac{1}{2} \dim_\R W / W_0 = n_+$. Every linear subspace of $V$ has a type. Since $W$ and $W^\omega$ have complementary dimensions, the type of $W$ determines the dimensions of $W$ and $W^\omega$ as $\dim_\R W = n_0 + 2n_+ = \ell$ and $\dim_\R W^\omega = n_0 + 2n_- = 2n - \ell$, where $n_-:= n - n_0 - n_+$. A subspace $W$ is \emph{isotropic} if and only if $n_+ = 0$, \emph{coisotropic} if and only if $n_- = 0$, \emph{Lagrangian} if and only if $n_0 = n$, and \emph{symplectic} if and only if $n_0 = 0$.

Let $\Gr(\vec{n}; V) = \Gr(\vec{n})$ be the smooth manifold of linear subspaces of $V$ of type $\vec{n}$. Let $\Gr(\ell; V) = \Gr(\ell; 2n)$ be the ordinary Grassmannian of $\ell$-dimensional real linear subspaces of $V$. We will view $\Gr(\vec{n}; V)$ as a submanifold of $\Gr(n_0 + 2n_+; V)$. Then we have a partition
\[ \Gr(\ell; V) = \bigsqcup_{\vec{n}: n_0 + 2n_+ = \ell} \Gr(\vec{n}; V)\]
by how subspaces behave with respect to the symplectic form. See Figure \ref{fig:shelf} for an example.

The linear action of $\Sp(V)$ on $V$ induces an action on each $\Gr(\vec{n}; V)$ by $g.W := \{ g\ww : \ww \in W\}$ for $g \in \Sp(V)$ and $W \in \Gr(\vec{n};V)$, for which only $\pm 1_{\Sp(V)}$ act trivially. According to the linear ``relative Darboux theorem'' in  Section 1.2 of \cite{ArnoldGivental}), this action is transitive, so $\Gr(\vec{n}; V)$'s are precisely the $\Sp(V)$-orbits of $\Gr(\ell; V)$. We will be using the following (slightly stronger) version of the linear ``relative Darboux theorem.''

\begin{definition}[Associated splittings]
Let $W \subseteq V$ be a real linear subspace. We will call a triple $(W_+, W_-, W^0)$ of linear subspaces of $V$ a \emph{splitting of $V$ associated to $W$} if $W_+$ is complementary to $W_0$ in $W$, $W_-$ is complementary to $W_0$ in $W^\omega$, and $W^0$ is an isotropic subspace of $V$ complementary to $W_0$ in $(W_+ \oplus W_-)^\omega$. Given $W_+, W_-$, a choice of $W^0$ is equivalent to a choice of quadratic form on $W_0$.
\end{definition}

\begin{theorem}\label{thm:basisR}
Suppose $W \subseteq V$ is a linear subspace of type $\vec{n}$ and take any splitting $(W_+, W_-, W^0)$ associated to $W$. Then $(W_\pm, \omega|_{W_{\pm}})$ are symplectic subspaces of $V$, and there exists a Darboux basis \[ \{\ee^0_1, \cdots, \ee^0_{n_0}, \ee^+_1, \cdots, \ee^+_{n_+}, \ee^-_1, \cdots, \ee^-_{n_-}, \ff^0_1, \cdots, \ff^0_{n_0}, \ff^+_{1}, \cdots, \ff^+_{n_+}, \ff^-_1, \cdots, \ff^-_{n_-}\}\]
of $V$ such that $W_0 = \span_{\R}\{\ee^0_j\}_{1 \le j \le n_0}$, $W^0 = \span_{\R}\{ \ff^0_j\}_{1 \le j \le n_0}$, $W_+ = \span_{\R}\{\ee^+_\ell, \ff^+_\ell\}_{ 1 \le \ell \le n_+}$, and $W_- = \span_{\R}\{\ee^-_\ell, \ff^-_\ell\}_{1 \le \ell \le n_-}$.
\end{theorem}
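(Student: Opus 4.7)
The plan is to reduce to the standard fact that a symplectic vector space always admits a Darboux basis, applied to a natural symplectic direct sum decomposition of $V$ induced by the splitting $(W_+, W_-, W^0)$. So I would organize the proof around three blocks: (i) verifying that $W_+$, $W_-$, and $W_+\oplus W_-$ are symplectic; (ii) showing $V$ decomposes as a symplectic direct sum $(W_+\oplus W_-)\oplus((W_+\oplus W_-)^\omega)$, with the second summand $2n_0$-dimensional and containing both $W_0$ and $W^0$ as a transverse pair of Lagrangians; (iii) assembling Darboux bases on each summand, matching the required labels.

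First I would prove $W_+$ is symplectic by a short argument: if $\vv\in W_+$ satisfies $\omega(\vv,W_+)=0$, then since $\omega(W_+,W_0)=0$ (because $W_0\subseteq W^\omega$), we get $\omega(\vv,W)=0$, so $\vv\in W\cap W^\omega=W_0$; combined with $\vv\in W_+$ and $W_+\cap W_0=\{0\}$, this gives $\vv=0$. The same argument, with $W^\omega$ in place of $W$, shows $W_-$ is symplectic. Since $\omega(W_+,W_-)\subseteq\omega(W,W^\omega)=0$, the two are $\omega$-orthogonal, and a dimension count plus nondegeneracy on each piece shows $W_+\oplus W_-$ is symplectic. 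Consequently $V=(W_+\oplus W_-)\oplus(W_+\oplus W_-)^\omega$ as symplectic vector spaces, and $(W_+\oplus W_-)^\omega$ has dimension $2n_0$.

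Next I would observe that $W_0\subseteq(W_+\oplus W_-)^\omega$ (since $W_0$ annihilates $W\supseteq W_+$ and annihilates $W^\omega\supseteq W_-$), and by hypothesis $W^0$ is a complement to $W_0$ there, isotropic in $V$, while $W_0$ itself is isotropic. Both are Lagrangian in the $2n_0$-dimensional symplectic space $(W_+\oplus W_-)^\omega$. To see $\omega$ pairs them nondegenerately, I would chase an element $\ff\in W^0$ with $\omega(\ff,W_0)=0$: then $\ff$ annihilates $W_+\oplus W_-\oplus W_0\supseteq W$, so $\ff\in W^\omega=W_0\oplus W_-$; writing $\ff=\ff_0+\ff_-$ and using $\ff\in(W_+\oplus W_-)^\omega$ plus the nondegeneracy of $\omega$ on $W_-$ forces $\ff_-=0$, and then $\ff\in W_0\cap W^0=\{0\}$. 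Hence any basis $\{\ee^0_j\}$ of $W_0$ admits a dual basis $\{\ff^0_j\}\subseteq W^0$ with $\omega(\ee^0_j,\ff^0_k)=\delta_{jk}$.

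Finally, I would choose any Darboux bases $\{\ee^\pm_\ell,\ff^\pm_\ell\}$ of the symplectic subspaces $W_\pm$ and concatenate with the pair $(\{\ee^0_j\},\{\ff^0_j\})$ just constructed. The cross pairings vanish for structural reasons: $\omega(W_+,W_-)=0$ as noted; $\omega(W_\pm,W_0)=0$ because $W_0\subseteq W\cap W^\omega$; and $\omega(W_\pm,W^0)=0$ because $W^0\subseteq(W_+\oplus W_-)^\omega$. This verifies the full Darboux relations and matches the labelled decomposition stated in the theorem. No step presents a real obstacle; the only mildly delicate point is the nondegeneracy of $\omega$ between $W_0$ and $W^0$, which is where the hypothesis that $W^0$ lives inside $(W_+\oplus W_-)^\omega$ (rather than being an arbitrary isotropic complement of $W_0$ in $V$) is actually used.
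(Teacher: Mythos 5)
Your proof is correct, and it shares the paper's overall skeleton: build Darboux bases on $W_+$, on $W_-$, and on the $2n_0$-dimensional symplectic space $(W_+\oplus W_-)^\omega$ adapted to the transverse Lagrangian pair $(W_0,W^0)$, then concatenate, with the cross pairings vanishing for exactly the structural reasons you list. The difference is in how the individual blocks are produced. For $W_\pm$ the paper lifts a Darboux basis of the reduction $W/W_0$ (resp.\ $W^\omega/W_0$), writes the representatives as $W_+$-part plus $W_0$-part, and notes that the $W_+$-parts still satisfy the Darboux relations because the discrepancies lie in $W_0$, which pairs trivially with $W$; this simultaneously shows $\omega|_{W_\pm}$ is nondegenerate. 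You instead prove nondegeneracy of $\omega|_{W_\pm}$ directly by the radical-chasing argument and then invoke the standard existence of a Darboux basis on any symplectic space; this is more elementary and self-contained, at the cost of not exhibiting the basis as a lift from the reduced space. For the remaining block the paper cites Proposition 3.3 of \cite{Kim} for a Darboux basis adapted to the Lagrangian splitting $W_0\oplus W^0$, whereas you verify the nondegenerate pairing by hand; your chase can even be shortened, since any $\ff\in W^0$ with $\omega(\ff,W_0)=0$ annihilates all of $W_0\oplus W^0=(W_+\oplus W_-)^\omega$ (as $W^0$ is isotropic), hence lies in the radical of the restricted form and must vanish. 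You also correctly identify the one place where the hypothesis $W^0\subseteq(W_+\oplus W_-)^\omega$ is genuinely used. Net effect: same decomposition, but your sub-arguments avoid both the reduction step and the external citation.
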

\begin{proof} Consider the empty set as a Darboux basis of the zero vector space. Take a Darboux basis of $W/W_0$, and choose representatives $\{ \tilde{\ee}^+_1, \cdots, \tilde{\ee}^+_{n_+}, \tilde{\ff}^+_1, \cdots, \tilde{\ff}^+_{n_+}\}$ in $W$. Since $W_+$ is a complementary to $W_0$ in $W$, we can uniquely decompose the vectors as
\begin{eqnarray*}
    \tilde{\ee}^+_{j} &=& \ee^+_{j} + \vv_j \quad \ee^+_j \in W_+, \vv_j \in W_0\\
    \tilde{\ff}^+_j &=& \ff^+_j + \ww_j \quad \ff^+_j  \in W_+, \ww_j \in W_0
\end{eqnarray*}
Then $\{ \ee^+_1, \cdots, \ee^+_{n_+}, \ff^+_1, \cdots, \ff^+_{n_+}\}$ is a basis of $W_+$ for which $\omega|_{W_+}$ is nondegenerate, and is moreover a Darboux basis of $W_+$ with respect to $\omega|_{W_+}$. 
Similarly, $W_-$ has a Darboux basis $\{ \ee^-_1, \cdots, \ee^-_{n_-}, \ff^-_1, \cdots, \ff^-_{n_-}\}$ with respect to $\omega|_{W_-}$. $W_0 \oplus W^0$ is then a Lagrangian splitting of the symplectic vector space $(W_+ \oplus W_-)^\omega$. Applying Proposition 3.3 of \cite{Kim}, we get a Darboux basis $\{ \ee^0_1, \cdots, \ee^0_{n_0}, \ff^0_1, \cdots, \ff^0_{n_0}\}$ of $(W_+ \oplus W_-)^\omega$ such that $\{\ee^0_1, \cdots, \ee^0_{n_0}\}$ is a basis of $W_0$ and $\{\ff^0_1, \cdots, \ff^0_{n_0}\}$ is a basis of $W^0$. Putting these Darboux bases together, we obtain the desired Darboux basis of $V$.
\end{proof}
Recall that a \emph{linear complex structure} $\JJ$ on $V$ is a real linear map on $V$ such that $\JJ^2 = -1_{V}$, and it is \emph{$\omega$-compatible} if $\JJ \in \Sp(V)$ and $\omega(\vv, \JJ \vv)$ is positive for every nonzero vector $\vv$ in $V$. We state another basis theorem when given an $\omega$-compatible linear complex structure $\JJ$.
\begin{theorem}\label{thm:basisJ}
Suppose $\JJ$ is an $\omega$-compatible linear complex structure on $V$.
\begin{enumerate}
    \item For any coisotropic subspace $W \subseteq V$ of type $(n_0, n_+, 0)$, there exists a Darboux basis $\{ \ee_1, \cdots, \ee_n, \JJ \ee_1, \cdots, \JJ \ee_n\}$ of $V$ such that
\[ W^\omega = \span_{\R}\{\ee_j\}_{1 \le j \le n_0} \quad \text{and} \quad W = \span_{\R}\{\ee_j, \JJ\ee_\ell\}_{1 \le j \le n, n_0 < \ell \le n}.\]
\item If $W \subseteq V$ is a subspace such that $W \oplus \JJ W_0$ is $\JJ$-invariant, the $\omega(\cdot, \JJ\cdot)$-orthogonal complement of $W_0$ in $W$ is $\JJ$-invariant and is the unique subspace of $W$ that is both $\JJ$-invariant and complementary to $W_0$ in $W$.
\item If $W \subseteq V$ is a subspace such that $W \oplus \JJ W_0$ is $\JJ$-invariant then $W^\omega \oplus \JJ W_0$ is also $\JJ$-invariant. Moreover, there exists an $\omega(
\cdot, \JJ \cdot)$-orthogonal splitting
\[ V = (W_0 \oplus \JJ W_0) \oplus W_+^\JJ \oplus W_-^\JJ\]
into $\JJ$-invariant symplectic subspaces of $V$ such that
$W = W_0\oplus W_+^\JJ$ and $W^\omega = W_0 \oplus W_-^\JJ$.
Equivalently, there exists a Darboux basis of $V$ of the form $\{\ee_1, \cdots, \ee_n, \JJ \ee_1, \cdots, \JJ \ee_n\}$ such that
\begin{eqnarray*}
    W_0 &=& \span_{\R}\{ \ee_j\}_{1 \le j \le n_0},\\
    W &=& \span_{\R}\{\ee_j,  \ee_\ell, \JJ\ee_\ell\}_{1 \le j \le n_0, n_0 < \ell \le n_0 + n_+}\\
    W^\omega &=& \span_{\R}\{\ee_j, \ee_\ell, \JJ \ee_{\ell}\}_{1 \le j \le n_0, n_0 + n_+ < \ell \le n}.
\end{eqnarray*}
\end{enumerate}
\end{theorem}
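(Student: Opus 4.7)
The proof uses the Euclidean inner product $g := \omega(\cdot, \JJ \cdot)$, which is positive definite by $\omega$-compatibility of $\JJ$ and for which $\JJ$ is a $g$-isometry. I first collect three preliminaries used in all three parts. Since $W_0$ is $\omega$-isotropic, the identity $g(\ee, \JJ\ee') = -\omega(\ee, \ee')$ gives $W_0 \perp_g \JJ W_0$, so $W_0 \oplus \JJ W_0$ is a $\JJ$-invariant symplectic subspace; its $g$-orthogonal complement $U$ is therefore also $\JJ$-invariant and symplectic. Finally, for any $w \in W$ and $v \in W_0 \subseteq W^\omega$ one has $g(w, \JJ v) = -\omega(w, v) = 0$, so $W \perp_g \JJ W_0$ and thus $W \subseteq W_0 \oplus U$.

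For part 1, $W$ coisotropic means $W_0 = W^\omega$, and the preceding containment together with $W_0 \subseteq W$ and the dimension equality $\dim W = n_0 + 2n_+ = \dim W_0 + \dim U$ forces $W = W_0 \oplus U$. I then take a $g$-orthonormal basis $\{\ee_j\}_{1 \le j \le n_0}$ of $W_0$ and a $g$-orthonormal Darboux basis $\{\ee_\ell, \JJ\ee_\ell\}_{n_0 < \ell \le n}$ of $U$. Combined, they form a Darboux basis of $V$: $g$-orthonormality and $\JJ$-invariance give $\omega(\ee_i, \JJ\ee_j) = g(\ee_i, \ee_j) = \delta_{ij}$, while the vanishing of $\omega(\ee_i, \ee_j)$ and $\omega(\JJ\ee_i, \JJ\ee_j)$ follows from the isotropy of $W_0$, the Darboux condition on $U$, and $W_0 \perp_g U$.

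For part 2, let $W_+^\JJ$ denote the $g$-orthogonal complement of $W_0$ in $W$; since $W \perp_g \JJ W_0$, I have $W_+^\JJ \subseteq U$. The hypothesis that $W \oplus \JJ W_0$ is $\JJ$-invariant reduces, via the $g$-orthogonal decomposition $W \oplus \JJ W_0 = (W_0 \oplus \JJ W_0) \oplus W_+^\JJ$ and $\JJ$-invariance of the first summand, to $\JJ W_+^\JJ \subseteq W \oplus \JJ W_0$; since $\JJ W_+^\JJ \subseteq U$ and $U \perp_g (W_0 \oplus \JJ W_0)$, this is equivalent to $\JJ W_+^\JJ \subseteq W_+^\JJ$, and equality follows by dimension. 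For uniqueness, any other $\JJ$-invariant complement $W'$ to $W_0$ in $W$ satisfies $W' \subseteq W \subseteq W_0 \oplus U$; decomposing $w' \in W'$ as $v + u$ with $v \in W_0$, $u \in U$ and applying $\JJ$ produces a component $\JJ v \in \JJ W_0$ in $\JJ w' \in W' \subseteq W_0 \oplus U$, which forces $v = 0$, so $W' \subseteq U \cap W = W_+^\JJ$ and dimensions match.

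For part 3, part 2 supplies $W_+^\JJ$, which is symplectic because any element of its $\omega$-radical is $g$-orthogonal to all of $\JJ W_+^\JJ = W_+^\JJ$ and therefore zero. Define $W_-^\JJ$ as the $g$-orthogonal complement of $W_+^\JJ$ in $U$; this is again $\JJ$-invariant and symplectic. The identity $\omega(\cdot, \cdot) = -g(\cdot, \JJ \cdot)$ together with the $\JJ$-invariance of each summand shows $W_-^\JJ$ is $\omega$-orthogonal to $W = W_0 \oplus W_+^\JJ$, so $W_0 \oplus W_-^\JJ \subseteq W^\omega$, and dimensions yield equality. Then $W^\omega \oplus \JJ W_0 = (W_0 \oplus \JJ W_0) \oplus W_-^\JJ$ is $\JJ$-invariant as a sum of $\JJ$-invariant summands, and combining $g$-orthonormal Darboux bases of the $\JJ$-invariant summands $W_0 \oplus \JJ W_0$, $W_+^\JJ$, $W_-^\JJ$ produces the stated Darboux basis of $V$. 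The main obstacle is the step in part 2 showing that the $\JJ$-invariance hypothesis on $W \oplus \JJ W_0$ propagates to $\JJ$-invariance of $W_+^\JJ$; once this is secured, the remaining parts unfold through bookkeeping in the orthogonal decomposition $V = W_0 \oplus \JJ W_0 \oplus W_+^\JJ \oplus W_-^\JJ$.
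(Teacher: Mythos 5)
Your proof is correct, and it reaches the same objects ($W_+^\JJ$, $W_-^\JJ$, the orthogonal splitting, the basis $\{\ee,\JJ\ee\}$) by a somewhat different route than the paper for parts (1) and (2). The paper proves (1) by starting from the relative Darboux basis of Theorem \ref{thm:basisR} and performing a Gram--Schmidt-type correction inside $W$ (using coisotropy to see $\JJ\tilde{\ee}\in(W^\omega)^\omega=W$), and then proves (2) by restricting $\omega$ to $W\oplus\JJ W_0$ and applying (1) to $W$ viewed as a coisotropic subspace of that smaller symplectic space. You instead work globally with the metric $g=\omega(\cdot,\JJ\cdot)$ and the single $g$-orthogonal, $\JJ$-invariant decomposition $V=(W_0\oplus\JJ W_0)\oplus U$: the one-line observation $W\perp_g\JJ W_0$ (from $W_0\subseteq W^\omega$) places $W$ inside $W_0\oplus U$, so (1) collapses to a dimension count, and (2) becomes a direct component-wise argument for both invariance and uniqueness, with no appeal to Theorem \ref{thm:basisR} or to part (1). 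Part (3) is essentially the paper's argument (your $W_-^\JJ$, the $g$-orthogonal complement of $W_+^\JJ$ in $U$, coincides with the paper's complement of $W_0\oplus\JJ W_0\oplus W_+^\JJ$ in $V$, and the dimension count identifying $W^\omega=W_0\oplus W_-^\JJ$ is the same). What your route buys is a more self-contained and coordinate-free argument; what it costs is that you invoke, without proof, the standard existence of a $g$-orthonormal Darboux basis of the form $\{\ee_\ell,\JJ\ee_\ell\}$ for a $\JJ$-invariant symplectic subspace--but the paper's own proof relies on exactly the same standard fact at the end of (1), so this is not a gap, merely a point you could flag as the unitary Gram--Schmidt step for the hermitian form $g+i\omega$.
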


\begin{proof} $(1)$: Take any splitting associated to $W$, and a Darboux basis given by Theorem \ref{thm:basisR}. By taking corresponding linear transformations of $\ff$s, we may assume $\{ \ee_j^0\}_{1 \le j \le n_0}$ are $\omega(\cdot, \JJ \cdot)$-orthonormal. Let $\ee_j := \ee_j^0$ for $1 \le j \le n_0$.

We can also assume $\{\ee_\ell^+\}_{1 \le \ell \le n_+}$ are $\omega(\cdot, \JJ \cdot)$-orthogonal. Then for $1 \le \ell \le n_+$, let
\[ \tilde{\ee}_{n_0 + \ell} := \ee_\ell^+ - \sum_{j = 1}^{n_0} \omega(\ee_{j}^0, \JJ \ee_\ell^+) \ee_{j}^0 \in W.\]
Then $\JJ \tilde{\ee}_{n_0 + \ell} \in (W^\omega)^\omega = W$.
Let $\ee_{n_0 + \ell}$ and $\JJ\ee_{n_0 + \ell}$ be the normalizations of $\tilde{\ee}_{n_0 + \ell}$ and $\JJ \tilde{\ee}_{n_0 + \ell}$.

For the space $\span_\R\{ \ee_j, \JJ \ee_j\}_{1 \le j \le n_0 + n_+}$, its $\omega(\cdot, \JJ \cdot)$-orthogonal complement and its symplectic complement are equal. So the symplectic complement is a symplectic subspace with an $\omega(\cdot, \JJ \cdot)$-orthonormal Darboux basis $\{ \ee_{n_0 + n_+ + 1}, \cdots, \ee_{n}, \JJ \ee_{n_0 + n_+ + 1}, \cdots, \JJ \ee_n\}$. Then $\{ \ee_1, \cdots, \ee_n, \JJ \ee_1, \cdots, \JJ \ee_n\}$ is the desired basis. $(2)$: $\omega$ restricts to a symplectic form on $W\oplus \JJ W_0$. Then $W$ is a coisotropic subspace of $W \oplus \JJ W_0$, and there exists a Darboux basis of $W\oplus \JJ W_0$ given by (1). Then $W_+^\JJ:= \span_{\R} \{ \ee_j, \JJ \ee_j\}_{n_0 < j \le n-n_-}$ is a subspace of $W$ that is $\JJ$-invariant and complementary to $W_0$ in $W$. Since $\JJ$ is compatible with $\omega$, any subspace of $W$ that is both $\JJ$-invariant and complementary to $W_0$ in $W$ coincides with both the symplectic complement and $\omega(\cdot, \JJ \cdot)$-orthogonal complement of $W_0 \oplus \JJ W_0$ in $W\oplus \JJ W_0$. Moreover, such a subspace coincides with the $\omega(\cdot, \JJ \cdot)$-orthogonal complement of $W_0$ in $W$, and is thus unique. (3): Let $W_+^\JJ$ be the subspace of $W$ given by (2). Let $W_-^\JJ$ be the $\omega(\cdot, \JJ \cdot)$-orthogonal complement of the $\JJ$-invariant subspace $W_0 \oplus \JJ W_0 \oplus W_+^\JJ$ in $V$. Since $\JJ$ is $\omega$ compatible, $W_-^\JJ$ is $\JJ$-invariant and contained in $W^\omega$. It is also transversal to $W_0 \subseteq W^\omega$, so by counting dimensions $W^\omega = W_0 \oplus W_-^\JJ$. Then $W^\omega \oplus \JJ W_0= (W_0 \oplus \JJ W_0) \oplus W_-^\JJ$ and is $\JJ$-invariant. We obtain a splitting $(W_+^\JJ, W_-^\JJ, \JJ W_0)$ of $V$ associated to $W$ such that $W_+^\JJ$, $W_-^\JJ$ are $\JJ$-invariant. Take a Darboux basis given by Theorem \ref{thm:basisR}, and we can replace the $\ff^+$s and $\ff^-$s by $\JJ\ee^+$s and $\JJ \ee^-$s to obtain the desired basis.
\end{proof}

The incidence relations of the closures of the Grassmannians $\Gr(\vec{n})$ were stated in Section 2.2 of \cite{LeeLeung}:
\begin{proposition} \label{prop:incidenceR}
Suppose $n_0 + n_+ + n_- = n = n_0' + n_+' + n_-'$ and $n_0 + 2n_+ = n_0' + 2n_+' = \ell$. Then the following are equivalent
\begin{enumerate}
    \item $\Gr(\vec{n}'; V) \cap \overline{\Gr(\vec{n}; V)}$ is nonempty.
    \item $\Gr(\vec{n}';V) \subseteq \overline{\Gr(\vec{n};V)}$.
    \item $n_+' \le n_+$.
\end{enumerate}
\end{proposition}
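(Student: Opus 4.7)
The implication (2)$\Rightarrow$(1) is immediate because $\Gr(\vec{n}';V)$ is nonempty, and the implication (1)$\Rightarrow$(2) is a soft consequence of the fact that $\Gr(\vec{n};V)$ is a $G$-orbit under $G = \Sp(V)$: the closure $\overline{\Gr(\vec{n};V)}$ is $G$-invariant, and $G$ acts transitively on $\Gr(\vec{n}';V)$, so if one point of $\Gr(\vec{n}';V)$ lies in the closure then the entire $G$-orbit does. So the content is in the equivalence with (3).

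For (1)$\Rightarrow$(3), I would argue by semicontinuity. The dimension of $W \cap W^\omega$ equals the dimension of the kernel of the linear map $W \to V^*$, $\ww \mapsto \omega(\ww, \cdot)$, which is the corank of the restriction of $\omega$ to $W$. The rank of a continuous family of linear maps between fixed-dimensional spaces is lower semicontinuous, so $W \mapsto n_0(W) = \dim(W \cap W^\omega)$ is upper semicontinuous on $\Gr(\ell;V)$. Since $n_0 + 2n_+ = \ell$ is constant on $\Gr(\ell;V)$, the quantity $n_+$ is correspondingly lower semicontinuous. If $W' \in \Gr(\vec{n}';V)$ is the limit of a sequence (or net) of points in $\Gr(\vec{n};V)$, then $n_+' \le n_+$.

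For (3)$\Rightarrow$(1), I would produce an explicit smooth one-parameter family of subspaces $W_t$ whose type is $\vec{n}$ for $t \ne 0$ and which converges to a prescribed subspace $W' \in \Gr(\vec{n}';V)$ as $t\to 0$. Given $W'$ of type $\vec{n}'$ with $n_+' \le n_+$, we have $n_0' - n_0 = 2(n_+ - n_+') =: 2\Delta n \ge 0$. Using Theorem \ref{thm:basisR}, pick a splitting and a Darboux basis $\{\ee_j^0, \ee_\ell^{\pm}, \ff_j^0, \ff_\ell^{\pm}\}$ of $V$ adapted to $W'$, so that $W'_0 = \span_\R\{\ee_j^0\}_{1 \le j \le n_0'}$ and $W' = W'_0 \oplus \span_\R\{\ee_\ell^+, \ff_\ell^+\}_{1 \le \ell \le n_+'}$. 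Define $W_t$ by replacing $\ee_{n_0+2k}^0$ with $\ee_{n_0+2k}^0 + t\,\ff_{n_0+2k-1}^0$ for $k = 1,\ldots,\Delta n$ while keeping the other basis vectors of $W'$. A direct computation of the pairings shows that the $\Delta n$ perturbed pairs $(\ee_{n_0+2k-1}^0,\, \ee_{n_0+2k}^0 + t\,\ff_{n_0+2k-1}^0)$ become symplectic for $t \ne 0$ and are $\omega$-orthogonal both to each other and to the remaining basis vectors of $W_t$. Hence $W_t$ has isotropic part of dimension $n_0 = n_0' - 2\Delta n$ and symplectic part of dimension $2(n_+' + \Delta n) = 2n_+$, so $W_t \in \Gr(\vec{n};V)$ for $t \ne 0$ and $W_t \to W'$ as $t \to 0$ in $\Gr(\ell;V)$.

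The only mildly delicate step is the bookkeeping in (3)$\Rightarrow$(1): verifying that the perturbation genuinely creates symplectic pairs without disturbing the isotropy of $\span_\R\{\ee_j^0\}_{1 \le j \le n_0}$ or the transversality with $W'_\pm$. The other three implications are formal.
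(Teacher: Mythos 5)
Your proposal is correct, but for the substantive equivalence $(1)\Leftrightarrow(3)$ it takes a genuinely different route from the paper. The paper works on the frame space $\Fr(\Gr(\ell;V))$: it sends a frame to the skew-symmetric Gram matrix $(\omega(\vv_j,\vv_k))_{j,k}$, passes to the stratified space of eigenvalue configurations, and transfers the closure relations between strata back and forth using continuity together with the sequence lifting property of these maps (Proposition 2.4 of \cite{Siwiec}); both directions of $(1)\Leftrightarrow(3)$ are handled by this one piece of machinery. You instead split the equivalence: for $(1)\Rightarrow(3)$ you use upper semicontinuity of $W\mapsto\dim(W\cap W^\omega)$ (lower semicontinuity of the rank of $\omega|_W$, checked via a local frame and its Gram matrix), and for $(3)\Rightarrow(1)$ you build an explicit one-parameter family $W_t$ from a Darboux basis adapted to $W'$ via Theorem \ref{thm:basisR}, pairing off $2(n_+-n_+')$ of the isotropic basis vectors by the perturbation $\ee^0_{n_0+2k}\mapsto \ee^0_{n_0+2k}+t\,\ff^0_{n_0+2k-1}$; your Gram-matrix bookkeeping is right, the radical of $\omega|_{W_t}$ has dimension exactly $n_0$ for $t\neq 0$, and $W_t\to W'$. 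Your handling of $(1)\Leftrightarrow(2)$ via $G$-invariance of the closure and transitivity on $\Gr(\vec{n}')$ coincides with the paper's. What each approach buys: yours is more elementary and self-contained (no appeal to sequence lifting or to the auxiliary stratified configuration space), and it exhibits the degeneration concretely; the paper's frame-space argument is uniform, in that the identical construction, with the hermitian matrix $(\kappa(\vv_j,\vv_\ell))$ in place of the skew Gram matrix, proves the complex Lagrangian orbit-closure statement (Theorem \ref{thm:incidence}) with no new work, whereas your deformation would have to be redesigned for that case.
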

\begin{proof} $(1) \iff (3)$: Consider the frame space $\Fr(\Gr(\ell; V)) \subseteq V^{\ell} \times \Gr(\ell; V)$ consisting of $((\vv_1, \cdots, \vv_\ell), W)$ such that $W = \span_\R\{\vv_1, \cdots, \vv_\ell\}$. The map that sends $((\vv_1, \cdots, \vv_\ell), W)$ to the $\ell \times \ell$ skew symmetric matrix $(\omega(\vv_j, \vv_k))_{j, k}$ is smooth. The map that sends a skew symmetric matrix to its eigenvalues takes values in the metric space of configurations of $\ell$ unordered points in $i\R$ that are symmetric about the origin. This space is stratified by how many points are zero and nonzero. By the sequence lifting property (Proposition 2.4 of \cite{Siwiec}) of these maps, any convergent sequence on the space of $\ell$ unordered points lifts to a convergent sequence on $\Fr(\Gr(\ell; V))$. By the continuity of the the projection map $\Fr(\Gr(\ell; V)) \to \Gr(\ell; V)$, any convergent sequence on $\Fr(\Gr(\ell; V))$ descends to a convergence sequence on $\Gr(\ell; V)$. For the reverse direction, we note that all the maps involved are both continuous and have the sequence lifting property. $(1) \iff (2)$: This follows from the transitivity of the continuous $\Sp(V)$ action.
\end{proof}

\begin{corollary} \label{cor:compact}
    $\Gr(\vec{n}; V)$ is compact if and only if $n_+n_- = 0$.
\end{corollary}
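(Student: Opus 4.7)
The plan is to deduce this from Proposition \ref{prop:incidenceR}, together with the fact that the ambient ordinary Grassmannian $\Gr(\ell;V)$ with $\ell = n_0 + 2n_+$ is compact, so $\Gr(\vec{n};V)$ is compact if and only if it is closed in $\Gr(\ell;V)$.

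First, I would rewrite the closure condition combinatorially. By Proposition \ref{prop:incidenceR}, the closure of $\Gr(\vec{n};V)$ in $\Gr(\ell;V)$ is the disjoint union of all $\Gr(\vec{n}';V)$ such that $\vec{n}' = (n_0', n_+', n_-')$ satisfies $n_0' + n_+' + n_-' = n$, $n_0' + 2n_+' = \ell$, and $n_+' \le n_+$. Solving, $n_0' = \ell - 2n_+'$ and $n_-' = n - \ell + n_+'$, so the admissible range is $\max(0, \ell - n) \le n_+' \le n_+$. Thus $\Gr(\vec{n};V)$ is closed if and only if this interval is the single point $\{n_+\}$, i.e., $n_+ = \max(0, \ell - n)$.

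Next, I would check each case. If $\ell \le n$, the condition becomes $n_+ = 0$ (the isotropic case). If $\ell > n$, it becomes $n_+ = \ell - n$, which together with $n_0 = \ell - 2n_+ = 2n - \ell$ forces $n_- = n - n_0 - n_+ = 0$ (the coisotropic case). Conversely, if $n_+ = 0$ or $n_- = 0$, the interval collapses. So closedness is equivalent to $n_+ n_- = 0$.

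Finally, to make the ``only if'' direction concrete and to avoid relying on the case analysis being exhaustive, I would exhibit the degeneration explicitly: if $n_+ \ge 1$ and $n_- \ge 1$, then $\vec{n}' := (n_0 + 2, n_+ - 1, n_- - 1)$ is an admissible type with $n_+' < n_+$, so by Proposition \ref{prop:incidenceR} the stratum $\Gr(\vec{n}';V) \subseteq \overline{\Gr(\vec{n};V)} \setminus \Gr(\vec{n};V)$ is nonempty, which prevents compactness. There is no real obstacle here; the main point is simply to match the inequality $n_+' \le n_+$ from the proposition against the positivity constraints on $\vec{n}'$.
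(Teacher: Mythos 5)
Your proof is correct, but your treatment of the ``if'' direction takes a genuinely different route from the paper's. The paper argues the two directions asymmetrically: noncompactness when $n_+n_->0$ comes from Proposition \ref{prop:incidenceR} (essentially your explicit degeneration), while compactness when $n_+n_-=0$ is obtained by choosing an $\omega$-compatible linear complex structure $\JJ$ and invoking Theorem \ref{thm:basisJ}, which shows that the compact subgroup $K=\Sp(V)^{\Ad(\JJ)}$ already acts transitively on $\Gr(\vec{n})$, so $\Gr(\vec{n})$ is the continuous image of a compact group. You instead stay entirely inside the stratification: since $\Gr(\ell;V)$ is compact Hausdorff and $\Gr(\vec{n})$ carries the subspace topology, compactness is equivalent to closedness, and the implication $(1)\Rightarrow(3)$ of Proposition \ref{prop:incidenceR} shows that when $n_+=\max(0,\ell-n)$, equivalently $n_+n_-=0$, no admissible stratum other than $\Gr(\vec{n})$ itself can meet the closure. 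Your route is more self-contained in that it requires no choice of $\JJ$ and no Theorem \ref{thm:basisJ}, but it uses both implications of Proposition \ref{prop:incidenceR}; the paper uses the incidence result only for the noncompactness half and, in exchange, its argument records the stronger fact that the isotropic and coisotropic Grassmannians are homogeneous under the maximal compact subgroup, a fact that is reused later (compare Proposition \ref{prop:compactfibrations}, where $\Gr_\JJ(\vec{n})=\Gr(\vec{n})$ exactly in these cases). Your exhibited boundary type $(n_0+2,\,n_+-1,\,n_--1)$ is admissible, and its nonemptiness is guaranteed by Theorem \ref{thm:basisR}, so the ``only if'' half is complete as you wrote it.
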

\begin{proof}
By Proposition \ref{prop:incidenceR}, if $n_+$ and $n_-$ are both nonzero, $\Gr(\vec{n})$ is not closed. The subgroup of all $g \in \Sp(V)$ that commute with $\JJ$ is compact, and by Theorem \ref{thm:basisJ} acts transitively on $\Gr(\vec{n})$ if $n_+n_- = 0$.
\end{proof}

\subsection{Orbits of the Heisenberg group}
\label{subsec:orbitsheisenberg}

For a linear subspace $W \subseteq V$, let $\Sp_W(V)$ denote the subgroup of linear transformations $g \in \Sp(V)$ such that $g.W = W$. If $g \in \Sp_W(V)$, $g.W^\omega = W^\omega$, so $\Sp_W(V) = \Sp_{W^\omega}(V)$. Moreover, $g.W_0 = W_0$ so $g$ descends to symplectic linear transformations of $W/W_0$ and $W^\omega/W_0$, which we denote by $\left[g\right]$ and $\left[g\right]^\omega$, respectively.

\begin{definition}[Heisenberg group]
\label{def:heisenberg}
Suppose $W\subseteq V$ is a real subspace. Let $H(W) \le \Sp_W(V)$ be the subgroup of symplectic linear transformations $g$ of $V$ such that $g$ restricts to the identity transformation on $W_0$ and $g$ descends to the identity transformation on both $W/W_0$ and $W^\omega/W_0$. Denote by $Z(W) \le H(W)$ the subgroup of symplectic linear transformations that restrict to the identity transformation on both $W$ and $W^\omega$.
\end{definition}

Suppose $\{\ee, \ff\}$ is a basis given by Theorem \ref{thm:basisR} for some $W$ of type $\vec{n}$ and associated splitting $(W_+, W_-, W^0)$. In the reordered basis $\{\ee^0, \ee^+, \ff^+, \ee^-, \ff^-, \ff^0\}$, elements of $H(W)$ take the block form
\begin{equation} \label{eq:heisenbergtriangularform} \begin{pmatrix} 1_{n_0} & E_+ & F_+ & E_- & F_- & Y \\
0 & 1_{n_+} & 0 & 0 & 0 & F_+^t\\
0 & 0 & 1_{n_+} & 0 & 0 & -E_{+}^t\\
0 & 0 & 0& 1_{n_-} & 0 & F_-^t\\
0 & 0 & 0 & 0& 1_{n_-} & -E_-^t\\
0 & 0 & 0 & 0 & 0& 1_{n_0}  \end{pmatrix}, \end{equation}
where $1_k$ is the $k \times k$ identity matrix and
\begin{equation} \label{eq:heisenbergcondition}
Y - E_+ F_+^t - E_- F_-^t \in \Mat_{n_0 \times n_0}(\R)^t.
\end{equation} The matrix form shows $H(W)$ is nilpotent, contractible, and \begin{equation} \label{eq:heisenbergdimension}
\dim_\R H(W) = 2n_0 n_+ + 2n_0 n_- + \frac{1}{2}n_0(n_0 + 1).
\end{equation}
$H(W)$ is trivial if $W$ is symplectic, abelian if $W$ is Lagrangian and nonabelian otherwise. From the definition, we can also immediately see $H(W) = H(W^\omega)$, and $Z(W) = Z(W^\omega) = Z(W + W^\omega) = Z(W_0)$.

The matrix form in Equation (\ref{eq:heisenbergtriangularform}) shows that $Z(W)$ is the center of $H(W)$. In a Darboux basis obtained from any splitting associated to $W$, $Z(W)$ consists of elements such that $E_{\pm} = F_{\pm} = 0_{n_0 \times n_{\pm}}$. Since $W^0$ can be identified with the dual space of $W_0$ from the symplectic form, $Z(W)$ can be identified with the abelian group of quadratic forms on $W_0$ under addition. In particular, $\dim_\R Z(W) = n_0(n_0+ 1)/2$. $H(W)/Z(W)$ is isomorphic to the abelian group of real linear maps from $W_0$ to $W_0^\omega/W_0$ under addition $(\Hom_\R(W_0, W_0^\omega/W_0), +)$. So $H(W)$ can be viewed as a central extension of groups
\begin{equation}\label{eq:heisenbergextensionnonsplit}
    Z(W) \hookrightarrow H(W) \twoheadrightarrow \Hom_\R(W_0, W_0^\omega/W_0).
\end{equation}

By taking a Darboux basis change of $W_+$ and $W_-$ if necessary, for a choice of Lagrangian subspaces $L_+ \subseteq W_+$ and $L_- \subseteq W_-$, the abelian groups $(\Hom_\R(W_0, L_{\pm}), +)$ embed into $H(W)$, where their images are given, respectively, by the vanishing of $F_+$, $E_-$, $F_-$, and $Y$, and by the vanishing of $E_+$, $F_+$, $F_-$ and $Y$. Then we have a split group extension
\begin{equation}\label{eq:heisenbergextensionsplit}
    Z(W) \times \Hom_\R(W_0, L_+ \oplus L_-)  \hookrightarrow H(W) \twoheadrightarrow \Hom_\R(W_0, W_+/L_+ \oplus W_-/L_-).
\end{equation}

In the usual Darboux basis $\{\ee^0, \ee^+, \ee^-, \ff^0, \ff^+, \ff^-\}$, $g \in H(W)$ takes the block form
\begin{equation}\label{eq:heisenbergdarbouxform}
\begin{pmatrix} 1_{n_0} & E_+ & E_- & Y & F_+ & F_- \\
0 & 1_{n_+} & 0 & F_+^t & 0 & 0 \\
 0 & 0 & 1_{n_-} & F_-^t & 0 & 0 \\
 0 & 0 & 0 & 1_{n_0} & 0 & 0 \\
 0 & 0 & 0& -E_+^t & 1_{n_+} & 0 \\
 0 & 0 & 0 & -E_-^t & 0 & 1_{n_-} \end{pmatrix}.\end{equation}
We will denote the matrix group of $2n \times 2n$ matrices in this block form and satisfying Equation (\ref{eq:heisenbergcondition}) as $H(\vec{n})$.

\begin{remark}[Symmetries of Jacobi forms of higher degree]
\label{rem:heisenberg}
A particular matrix form of $H(W)$ has been referred to as \emph{the Heisenberg group} in \cite{Ziegler} to define Jacobi forms of higher degree. In the Darboux basis $\{ \ee^+, \ee^-, \ee^0, \ff^+, \ff^-, \ff^0\}$, $g \in H(\vec{n})$ takes the block matrix form
 \begin{equation} \label{eq:heisenbergzieglerform} \begin{pmatrix}
     1_{n_+} & 0 & 0 & 0 & 0 & F_+^t \\ 0 & 1_{n_-} & 0 & 0 & 0& F_-^t \\
     E_+ & E_- & 1_{n_0} & F_+  & F_- & Y \\
     0 & 0 & 0 & 1_{n_+} &0 & -E_+^t \\
     0 & 0 & 0 & 0 & 1_{n_-} & -E_-^t\\  0 & 0 & 0 & 0 & 0 & 1_{n_0} 
\end{pmatrix}\end{equation}
and we can identify with Ziegler's notation for $H_{\R}^{(n_+ + n_-, n_0)}$ by  $\lambda = \begin{pmatrix} E_+ & E_- \end{pmatrix}$, $\mu = \begin{pmatrix} F_+ & F_-\end{pmatrix}$, and $x = -Y$.
\end{remark}

Suppose $g \in \Sp_W(V)$, and $\{\ee, \ff\}$ is a Darboux basis provided by Theorem \ref{thm:basisR}. In the $\{ \ee^0, \ee^+, \ff^+, \ee^-, \ff^-, \ff^0\}$ basis, $g$ takes the block matrix form
 \[ \begin{pmatrix} \ast & \ast & \ast & \ast & \ast & \ast \\
 0_{n_+ \times n_0} & \ast & \ast & 0_{n_+ \times n_-} & 0_{n_+ \times n_-} & \ast\\
 0_{n_+ \times n_0} & \ast & \ast & 0_{n_+ \times n_-} & 0_{n_+ \times n_-} & \ast\\
 0_{n_- \times n_0} & 0_{n_- \times n_+} & 0_{n_- \times n_+} & \ast & \ast & \ast\\
 0_{n_- \times n_0} & 0_{n_- \times n_+} & 0_{n_- \times n_+} & \ast & \ast & \ast\\
 0_{n_0 \times n_0} & 0_{n_0 \times n_+} & 0_{n_0 \times n_+} & 0_{n_0 \times n_-} & 0_{n_0 \times n_-} & \ast \end{pmatrix}\]
 This matrix can be decomposed as a product of two matrices in block form:
\[ \begin{pmatrix} X & 0 & 0 & 0 & 0 & 0\\
0 & A_+ & B_+ & 0 & 0 & 0 \\
0 & C_+ & D_- & 0 & 0 & 0 \\
0 & 0 & 0 & A_- & B_- & 0 \\
0 & 0 & 0 & C_- & D_-& 0 \\
0 & 0 & 0 & 0 & 0 & (X^{t})^{-1} \end{pmatrix} \begin{pmatrix} 1_{n_0} & E_+ & F_+ & E_- & F_- & Y \\
0 & 1_{n_+} & 0 & 0 & 0 & F_+^t\\
0 & 0 & 1_{n_+} & 0 & 0 & -E_{+}^t\\
0 & 0 & 0& 1_{n_-} & 0 & F_-^t\\
0 & 0 & 0 & 0& 1_{n_-} & -E_-^t\\
0 & 0 & 0 & 0 & 0& 1_{n_0}  \end{pmatrix},\]
where, the assumption that $g$ is symplectic is equivalent to
\[ \left( X, \begin{pmatrix} A_{+} & B_{+} \\ C_{+} & D_{+}\end{pmatrix}, \begin{pmatrix} A_{-} & B_{-} \\ C_{-} & D_{-}\end{pmatrix}\right) \in \GL(n_0; \R) \times \Sp(2n_{+}; \R) \times \Sp(2n_-; \R),\]
and $Y - E_+ F_+^t - E_- F_-^t \in \Mat_{n_0 \times n_0}(\R)^t$. From this observation we can deduce the following:

\begin{proposition}\label{prop:homogeneousR}
If $W \subseteq V$ is a real subspace, the set of splittings of $V$ associated to $W$ is a principal homogeneous space for $H(W)$. The set of pairs of splittings $\{(W_+, W_-): W = W_0 \oplus W_+, W^\omega = W_0\oplus W_-\}$ is a principal homogeneous space for $H(W)/Z(W)$.
\end{proposition}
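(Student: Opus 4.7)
The strategy is to view the matrix decomposition derived just before the statement---any $g \in \Sp_W(V)$ factors, in a Darboux basis adapted to a reference splitting $S$, as a block-diagonal element of $\GL(n_0; \R) \times \Sp(2n_+; \R) \times \Sp(2n_-; \R)$ times a Heisenberg element of $H(W)$---as the abstract assertion $\Sp_W(V) = H(W) \cdot \Sp_W(V)^S$, where $\Sp_W(V)^S$ denotes the stabilizer of $S$ (realized by the block-diagonal factor). Since $H(W)$ is the kernel of the natural homomorphism $\Sp_W(V) \to \GL(W_0) \times \Sp(W/W_0) \times \Sp(W^\omega/W_0)$, it is normal, so this is a semidirect-product decomposition and in particular both factor orders are available. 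The principal homogeneous space structure then follows once I verify that $H(W)$ acts on splittings, the action is free, and it is transitive.

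Action and freeness are essentially formal. If $h \in H(W)$ and $(W_+, W_-, W^0)$ is a splitting, then $h$ preserves $W, W^\omega, W_0$, descends to the identity on $W/W_0$ and $W^\omega/W_0$, and is symplectic, so $(hW_+, hW_-, hW^0)$ is again a splitting. For freeness, the definition of a splitting yields the internal decomposition $V = W_0 \oplus W_+ \oplus W_- \oplus W^0$ (by dimension count, $W_0^\omega = W_0 \oplus W_+ \oplus W_-$, and $W^0 \cap W_0^\omega = W^0 \cap W_0 = 0$). If $h$ fixes the splitting, the descent argument forces $h|_{W_+} = 1$ via $W_+ \cong W/W_0$ and $h|_{W_-} = 1$ via $W_- \cong W^\omega/W_0$; on $W^0$, $h$ acts as the identity on $V/W_0^\omega$ (the dual of its identity action on $W_0$), so $h(v) - v \in W^0 \cap W_0^\omega = 0$ for $v \in W^0$.

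Transitivity is the main step. Given splittings $S$ and $S'$, Theorem \ref{thm:basisR} furnishes adapted Darboux bases, and the change-of-basis map $g$ lies in $\Sp_W(V)$ with $g(S) = S'$. Rewriting the matrix decomposition in the order $g = h \cdot g_1$ with $h \in H(W)$ and $g_1 \in \Sp_W(V)^S$---legitimate because $H(W)$ is normal in $\Sp_W(V)$---one obtains $S' = g(S) = h(g_1(S)) = h(S)$. The only real subtlety, and the main obstacle I foresee, is this reordering of factors relative to the decomposition $g = g_1 \cdot h$ displayed above the statement; it reduces to the routine matrix identity $g_1 h g_1^{-1} \in H(W)$, which is the normality statement at the group level.

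For the second statement, the same transitivity descends immediately to pairs $(W_+, W_-)$. The stabilizer of such a pair consists of $h \in H(W)$ with $hW_\pm = W_\pm$; the descent argument applied to $W_+$ and $W_-$ separately gives $h|_{W_\pm} = 1$, and combined with $h|_{W_0} = 1$ this means $h$ acts as the identity on both $W = W_0 \oplus W_+$ and $W^\omega = W_0 \oplus W_-$, which by Definition \ref{def:heisenberg} is exactly $Z(W)$.
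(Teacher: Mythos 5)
Your proposal is correct and takes essentially the same route as the paper's proof: adapted Darboux bases from Theorem \ref{thm:basisR}, the observation that the change-of-basis map lies in $\Sp_W(V)$, and extraction of the Heisenberg factor from the block decomposition, with the stabilizer computation for pairs $(W_+,W_-)$ identifying $Z(W)$. The only difference is that you make explicit two points the paper's one-line argument leaves implicit — freeness of the $H(W)$-action and the factor-ordering issue (the displayed decomposition $g=Dh$ must be rewritten as $g=(DhD^{-1})D$ using normality of $H(W)$ before the Heisenberg factor actually carries the first splitting to the second) — both of which you resolve correctly.
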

\begin{proof}
For two splittings $(W_+, W_-, W^0)$ and $(W_+', W_-', (W^0)')$ of $V$ associated to $W$, obtain two Darboux bases $\{ \ee, \ff\}$ and $\{ \ee', \ff'\}$ by Theorem \ref{thm:basisR}. The linear extension of the map that sends $\{ \ee, \ff\}$ to $\{ \ee', \ff'\}$ is a symplectic linear transformation that stabilizes $W$. Express it in the $\{\ee^0, \ee^+, \ff^+, \ee^-, \ff^-, \ff^0\}$ basis. Then taking the upper triangular part of the matrix decomposition, we obtain a unique element of $H(W)$ that maps $(W_+, W_-, W^0)$ to $(W_+', W_-', (W^0)')$. $H(W)$ acts on $\{(W_+, W_-)\}$ by $g.(W_+, W_-) = (g.W_+, g.W_-)$ with kernel $Z(W)$.
\end{proof}

$\Sp_W(V)$ can be written as a semidirect product in different ways.

\begin{theorem}\label{thm:splitW}
Suppose $(W_+, W_-, W^0)$ is a splitting of $V$ associated to $W$. Then the group extensions
\begin{equation} \label{eq:extensionRmax} \begin{tikzcd}[column sep = 2em] H(W) \arrow[r, hook] & \Sp_W(V) \arrow{r}[two heads, yshift=-4ex]{{\scriptscriptstyle\cdot|_{W_0} \times [\cdot] \times [\cdot]^\omega}}&\GL(W_0) \times \Sp(W/W_0) \times \Sp(W^\omega/W_0)\end{tikzcd} \end{equation}
\begin{equation} \label{eq:extensionRred} \begin{tikzcd}[column sep = small] (\GL(W_0) \times \Sp(W_-))\ltimes H(W) \arrow[r, hook] & \Sp_W(V) \arrow{r}[two heads, yshift = -3ex]{{
\scriptscriptstyle[\cdot]}} & \Sp(W/W_0)\end{tikzcd}\end{equation}
\begin{equation} \label{eq:extensionRker} \begin{tikzcd} (\Sp(W_+) \times \Sp(W_-)) \ltimes H(W) \arrow[r, hook] & \Sp_W(V) \arrow{r}[two heads, yshift = -3ex]{{\cdot|_{W_0}}} & \GL(W_0) \end{tikzcd}\end{equation}
are right split.
\end{theorem}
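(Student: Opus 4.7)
The plan is to build explicit right sections as block-diagonal matrices with respect to the ordered Darboux basis $\{\ee^0, \ee^+, \ff^+, \ee^-, \ff^-, \ff^0\}$ supplied by Theorem \ref{thm:basisR} for the given splitting $(W_+, W_-, W^0)$ associated to $W$. The key input is already displayed in the paragraph preceding the theorem: any $g\in \Sp_W(V)$ factors uniquely as $g = g_{\mathrm{diag}}\cdot g_{\mathrm{heis}}$, where $g_{\mathrm{heis}}\in H(W)$ takes the upper-triangular form (\ref{eq:heisenbergtriangularform}), and $g_{\mathrm{diag}}$ is parametrized by $(X, M_+, M_-)\in\GL(n_0;\R)\times \Sp(2n_+;\R)\times\Sp(2n_-;\R)$, with $(X^t)^{-1}$ in the $W^0$-block forced by the symplectic condition.

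First, I would identify the kernels of the three quotient maps with the stated normal subgroups. For (\ref{eq:extensionRmax}), the kernel is exactly $H(W)$ by Definition \ref{def:heisenberg}. For (\ref{eq:extensionRred}), the condition $[g]=1_{W/W_0}$ amounts to $M_+=1$ on the $W_+$-block, leaving free data $(X, M_-, g_{\mathrm{heis}})\in \GL(W_0)\times \Sp(W_-)\times H(W)$. For (\ref{eq:extensionRker}), the condition $g|_{W_0}=1_{W_0}$ forces $X=1_{n_0}$, leaving $(M_+, M_-, g_{\mathrm{heis}})\in \Sp(W_+)\times\Sp(W_-)\times H(W)$. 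In both cases the semidirect product structure follows by direct inspection: conjugation of a matrix of the form (\ref{eq:heisenbergtriangularform}) by a block-diagonal matrix with blocks $X$, $M_+$, $M_-$, $(X^t)^{-1}$ preserves the upper-triangular Heisenberg form.

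The right splittings are then the evident block-diagonal embeddings. For (\ref{eq:extensionRmax}), send $(X,A,B)$ to the block-diagonal element with blocks $X$ on $W_0$, the lift of $A$ via the isomorphism $W/W_0\cong W_+$ on $W_+$, the lift of $B$ via $W^\omega/W_0\cong W_-$ on $W_-$, and $(X^t)^{-1}$ on $W^0$. For (\ref{eq:extensionRred}), send $A\in\Sp(W/W_0)$ to its lift on the $W_+$-block with identities on all other blocks. For (\ref{eq:extensionRker}), send $X$ to the block-diagonal matrix with $X$ on $W_0$, $(X^t)^{-1}$ on $W^0$, and identities on $W_\pm$. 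Each image lies inside the subgroup of block-diagonal matrices, which is isomorphic to $\GL(n_0;\R)\times \Sp(2n_+;\R)\times \Sp(2n_-;\R)$ under matrix multiplication, so each section is a group homomorphism, and by construction it is a right inverse to the corresponding quotient map.

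The only point requiring care is the kernel identification together with the observation that block-diagonal conjugation preserves $H(W)$; both are immediate from the matrix forms recalled above. There is no deeper obstacle: once the factorization $g=g_{\mathrm{diag}}\cdot g_{\mathrm{heis}}$ is in hand, splitting each extension reduces to the fact that block-diagonal matrices multiply block-wise.
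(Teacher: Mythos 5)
Your proposal is correct and follows essentially the same route as the paper's proof: in a Darboux basis from Theorem \ref{thm:basisR} you define the splitting of (\ref{eq:extensionRmax}) as the block-diagonal (symplectic) lift of $(X,A,B)$ with $(X^t)^{-1}$ on the $W^0$-block, and obtain the other two sections by setting the remaining components to the identity. The only difference is cosmetic: you additionally spell out the kernel identifications and the semidirect-product structure via the factorization $g=g_{\mathrm{diag}}\cdot g_{\mathrm{heis}}$, which the paper leaves implicit in the discussion preceding the theorem.
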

\begin{proof}
Using a Darboux basis given by Theorem \ref{thm:basisR}, we can identify $\Sp(W_+) \cong \Sp(W/W_0)$ and $\Sp(W_-) \cong \Sp(W^\omega/W_0)$ and identify the elements with their matrix representations. The splitting homomorphism of the extension (\ref{eq:extensionRmax}) maps 
\[ \left( X, \begin{pmatrix} A_{+} & B_{+} \\ C_{+} & D_{+}\end{pmatrix}, \begin{pmatrix} A_{-} & B_{-} \\ C_{-} & D_{-}\end{pmatrix}\right) \in \GL(W_0) \times \Sp(W_+) \times \Sp(W_-)\]
to \[\begin{pmatrix} X & 0 & 0 & 0 & 0 & 0 \\
0 & A_+ & 0 & 0 & B_+ & 0 \\ 0 & 0 & A_- & 0 & 0 & B_-\\ 0 & 0 & 0 & (X^t)^{-1} & 0 & 0 \\ 0 & C_+ & 0 & 0 & D_+ & 0 \\ 0 & 0 & C_- & 0 &0 & D_- \end{pmatrix} \in \Sp(V). \]
The other splitting homomorphisms are obtained by restricting the components of $\GL(W_0) \times \Sp(W/W_0) \times \Sp(W^\omega/W_0)$ to identity elements.
\end{proof}

\subsection{Orbits in complex Lagrangian Grassmannians}
\label{subsec:orbitscomplex}

For a real vector space $W$, denote its complexification $\C \otimes_\R W$ by $W^\C$. Write an elementary tensor as $a \otimes_\R \ww$ of $W^\C$ as  $a\ww$, and denote its complex conjugate $\overline{a} \otimes_\R \ww$ by $\overline{a\ww}$ or $\overline{a} \ww$. Extend complex conjugation linearly to an $\R$-linear involution on $W^\C$ and denote the complex conjugate of $\vv \in V^\C$ by $\overline{\vv}$. For a linear transformation $g$ of $V^\C$, denote its complex conjugate $\overline{g}\vv := \overline{g \overline{\vv}}$.

Let $\omega^\C$ be the complex bilinear extension of $\omega$ to $V^\C$. Let $\Sp(V^\C)$ be the group of complex invertible linear transformations of $V^\C$ that preserve $\omega^\C$. $\Sp(V)$ embeds into $\Sp(V^\C)$ by scalar extension of linear transformations, and identify $\Sp(V)$ as a subgroup of $\Sp(V^\C)$.

Call a complex subspace $\FF \subseteq V^\C$ \emph{a complex Lagrangian subspace} if it has complex dimension $n$ and $\omega^\C|_{\FF \otimes_\C \FF}$ vanishes identically. Denote the space of all complex Lagrangian subspaces of $V^\C$ by $\Lag^\C(V)$ and call it \emph{the complex Lagrangian Grassmannian}. If we choose an identification of $V^\C$ with $\C^{2n}$, $\Sp(V^\C)$ is identified with a linear algebraic group, so $\Lag^\C(V)$ becomes a a (complex) projective variety with complex dimension $n(n+1)/2$. If we choose an $\omega$-compatible linear complex structure $\JJ$, $\Lag^\C(V)$ becomes a K\"{a}hler manifold (Remark \ref{rem:complexstructures}), and in fact, a hermitian symmetric space.

$\Sp(V)$ acts on $\Lag^\C(V)$ by $g.\FF := \{ g \vv : \vv \in \FF\}$ for which only $\pm 1_{\Sp(V)}$ acts trivially. This action can be viewed as a generalization of the M\"{o}bius action of $\SL(2; \R)$ on the Riemann sphere (Section \ref{subsec:mobius}).

$\kappa(\vv, \ww):= -i \omega^\C(\vv, \overline{\ww})$ is a hermitian form with split signature $(n, n)$ on $V^\C$. It restricts to a hermitian form on any complex subspace of $V^\C$, in particular for any complex Lagrangian subspace $\FF$.

As before, let $\vec{n}:= (n_0, n_+, n_-)$ be an ordered triple of nonnegative integers that sum to $n$. We will call $\FF$ a (complex) Lagrangian subspace of \emph{type $\vec{n}$} if $\kappa|_\FF$ has signature (inertia) $(n_0, n_+, n_-)$. Let $\FF_0$ be the kernel of $\kappa|_{\FF}$ and we will call $(\FF, \FF_+, \FF_-)$ a \emph{splitting of $\FF$} if $\FF = \FF_0 \oplus \FF_+ \oplus \FF_-$ and both $\kappa|_{\FF_+}$ and $-\kappa|_{\FF_-}$ are positive definite. The null cone of $\FF$, $\{ \vv \in \FF: \kappa(\vv, \vv) = 0\}$ is a cartesian product of $\FF_0$ and the null cone of $\FF_+ \oplus \FF_-$, $\{\vv \in \FF_+ \oplus \FF_- : \kappa(\vv, \vv) = 0\}$.

Let $\Lag^\C(\vec{n}; V) = \Lag^\C(\vec{n})$ denote the set of complex Lagrangian subspaces of $V^\C$ of type $\vec{n}$. Since every $g \in \Sp(V)$ preserves $\kappa$, the generalized M\"{o}bius action stabilizes every $\Lag^\C(\vec{n})$. In fact, the $(n+1)(n+2)/2$ elements of $\{\Lag^\C(\vec{n})\}_{\vec{n}}$ are precisely the orbits of $\Sp(V)$ in $\Lag^\C(V)$. Since any linear map that takes a Darboux basis to a Darboux basis is symplectic, the transitivity of the generalized M\"{o}bius action is a consequence of the following basis theorem.

\begin{theorem}[\label{thm:basisC}Lemma 5.1 of \cite{BlattnerRawnsley}]
Suppose $\FF$ is a complex Lagrangian subspace of $V^\C$ of type $\vec{n}$ with splitting $\FF = \FF_0 \oplus \FF_+ \oplus \FF_-$. Then there exists a Darboux basis of $\R^{2n}$
\[ \{\ee^0_1, \cdots, \ee^0_{n_0}, \ee^+_1, \cdots, \ee^+_{n_+}, \ee^-_1, \cdots, \ee^-_{n_-}, \ff^0_1, \cdots, \ff^0_{n_0}, \ff^+_{1}, \cdots, \ff^+_{n_+}, \ff^-_1, \cdots, \ff^-_{n_-}\}\]
such that $\FF_0 = \span_\C\{ \ee^0_j\}_{1 \le j \le n_0}$, $\FF_+ = \span_\C\{ \ee^+_k - i \ff^+_k\}_{1 \le k \le n_+}$, and $\FF_- = \span_\C\{\ee^-_\ell + i \ff^-_\ell\}_{1 \le \ell \le n_-}$.
\end{theorem}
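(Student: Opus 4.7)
The plan is to construct the desired Darboux basis in three stages, passing from the kernel $\FF_0$ of $\kappa|_\FF$, through the $\kappa$-positive and $\kappa$-negative pieces $\FF_+,\FF_-$, and finally filling in a Lagrangian complement inside a naturally occurring symplectic subspace of $V$. First I would identify $\FF_0 = \FF \cap \bar{\FF}$ intrinsically: since $\FF$ is complex Lagrangian in $V^\C$, $\bar{\FF}^{\omega^\C} = \bar{\FF}$, so the kernel of $\kappa|_\FF = -i\omega^\C(\cdot, \bar{\cdot})|_\FF$ is exactly $\FF \cap \bar{\FF}$. This subspace is stable under complex conjugation, hence the complexification of $W_0 := \FF_0 \cap V$, a real subspace of dimension $n_0$; the inclusion $\FF \subseteq \FF^{\omega^\C}$ forces $W_0$ to be isotropic in $(V, \omega)$. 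Choose any real basis $\{\ee^0_1, \ldots, \ee^0_{n_0}\}$ of $W_0$.

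Next, I would choose a $\kappa$-orthogonal basis $\{\vv^+_k\}$ of $\FF_+$ with $\kappa(\vv^+_j, \vv^+_k) = 2\delta_{jk}$ and decompose $\vv^+_k = \ee^+_k - i\ff^+_k$ with $\ee^+_k, \ff^+_k \in V$. Separating the Lagrangian condition $\omega^\C(\vv^+_j, \vv^+_k) = 0$ and the normalization $\kappa(\vv^+_j, \vv^+_k) = 2\delta_{jk}$ into real and imaginary parts gives $\omega(\ee^+_j, \ee^+_k) = \omega(\ff^+_j, \ff^+_k) = 0$ and $\omega(\ee^+_j, \ff^+_k) = \delta_{jk}$, so $\{\ee^+_k, \ff^+_k\}$ is a Darboux basis of $W_+ := \span_\R\{\ee^+_k, \ff^+_k\}$. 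The analogous construction on $\FF_-$ uses $\vv^-_\ell = \ee^-_\ell + i\ff^-_\ell$ with the opposite normalization $\kappa(\vv^-_j, \vv^-_\ell) = -2\delta_{j\ell}$; the sign change in front of $\ff^-_\ell$ is precisely what compensates for the negative definiteness of $\kappa|_{\FF_-}$ to produce real Darboux relations on $W_- := \span_\R\{\ee^-_\ell, \ff^-_\ell\}$. Using $\omega^\C(\vv^\epsilon_j, \vv^{\epsilon'}_k) = 0$ and $\kappa(\vv^\epsilon_j, \vv^{\epsilon'}_k) = 0$ for the various off-diagonal choices $\epsilon \neq \epsilon'$ (including pairings with the real $\ee^0_j$'s) yields mutual $\omega$-orthogonality of $W_0$, $W_+$, and $W_-$.

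Finally, $W_+ \oplus W_-$ is a symplectic subspace of $V$, so its symplectic complement is a $2n_0$-dimensional symplectic subspace containing the isotropic $W_0$ as a Lagrangian subspace (by dimension count). Applying Proposition 3.3 of \cite{Kim} (or the standard Darboux extension of a Lagrangian subspace to a Darboux basis of the ambient symplectic space), I choose a Lagrangian complement $W^0 = \span_\R\{\ff^0_j\}$ to $W_0$ inside $(W_+ \oplus W_-)^\omega$ with $\omega(\ee^0_j, \ff^0_k) = \delta_{jk}$. Concatenating the three Darboux pieces produces the Darboux basis of $V$ asserted by the theorem, and by construction the complex spans of $\ee^0_j$, $\ee^+_k - i\ff^+_k$, $\ee^-_\ell + i\ff^-_\ell$ recover $\FF_0$, $\FF_+$, $\FF_-$ respectively. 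The main technical point is the sign bookkeeping in the second stage: the explicit forms $\vv^+_k = \ee^+_k - i\ff^+_k$ and $\vv^-_\ell = \ee^-_\ell + i\ff^-_\ell$ force the $\kappa$-normalizations to carry opposite signs $\pm 2$, and once this is handled consistently the remaining verifications reduce to routine real/imaginary decompositions of $\omega^\C$ and a single application of the linear Darboux theorem on $(W_+ \oplus W_-)^\omega$.
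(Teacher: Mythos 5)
Your construction is correct and is essentially the argument the paper leaves to its references: the paper's ``proof'' of Theorem \ref{thm:basisC} is a pointer to Lemma 5.1 of \cite{BlattnerRawnsley} and to the author's thesis, and your three-stage scheme mirrors the paper's own proof of the real analogue, Theorem \ref{thm:basisR} --- build Darboux blocks for the nondegenerate pieces, kill cross terms by separating real and imaginary parts of $\omega^\C$ and $\kappa$, then apply Proposition 3.3 of \cite{Kim} inside the $2n_0$-dimensional symplectic subspace $(W_+\oplus W_-)^\omega$, where $W_0=\Re \FF_0$ sits as a Lagrangian. Your intrinsic identification $\FF_0=\FF\cap\overline{\FF}=(\Re\FF_0)^\C$ with $\Re\FF_0$ isotropic is sound (it is the fact recorded in (\ref{eq:realprojectionimage}), there stated as a consequence of the theorem, so deriving it directly as you do avoids any circularity), and the sign bookkeeping with $\kappa$-norms $+2$ on $\vv^+_k=\ee^+_k-i\ff^+_k$ and $-2$ on $\vv^-_\ell=\ee^-_\ell+i\ff^-_\ell$ checks out.

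One step, however, uses a hypothesis you never justify: the vanishing $\kappa(\vv^+_j,\vv^-_\ell)=0$ that you invoke alongside $\omega^\C(\vv^+_j,\vv^-_\ell)=0$ to kill the $W_+$--$W_-$ cross terms. The Lagrangian condition alone gives only $\omega(\ee^+_j,\ee^-_\ell)=-\omega(\ff^+_j,\ff^-_\ell)$ and $\omega(\ee^+_j,\ff^-_\ell)=\omega(\ff^+_j,\ee^-_\ell)$; you need $\kappa$-orthogonality of $\FF_+$ and $\FF_-$ to conclude that all four pairings vanish, and this is not part of the paper's literal definition of a splitting (which only demands $\FF=\FF_0\oplus\FF_+\oplus\FF_-$ with $\kappa|_{\FF_+}$ and $-\kappa|_{\FF_-}$ definite), nor is it automatic: for $n=2$ take the standard Darboux basis, $\FF=\span_\C\{\ee_1-i\ff_1,\ \ee_2+i\ff_2\}$, $\FF_+=\span_\C\{\ee_1-i\ff_1\}$ and $\FF_-'=\span_\C\{\ee_2+i\ff_2+t(\ee_1-i\ff_1)\}$ with $0<|t|<1$; this satisfies the stated definition but $\kappa(\ee_1-i\ff_1,\ \ee_2+i\ff_2+t(\ee_1-i\ff_1))=2\bar{t}\neq 0$. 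Since the conclusion of the theorem forces $\kappa(\FF_+,\FF_-)=0$ (compute $\kappa$ on the displayed spanning vectors in a Darboux basis), the statement is only true for $\kappa$-orthogonal splittings, which is evidently the intended reading; so your proof is complete once you state that hypothesis explicitly where you use it --- as written, the assertion ``$\kappa(\vv^\epsilon_j,\vv^{\epsilon'}_k)=0$ for $\epsilon\neq\epsilon'$'' is the one unproved (and, from the literal hypotheses, unprovable) step. The pairings involving $\FF_0$ are unproblematic, since $\FF_0$ is the kernel of $\kappa|_{\FF}$ and those terms vanish for free.
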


\begin{proof}
In the original reference the proof is left as an exercise for the reader. We refer to Theorem 2.3.14, Proposition 2.6.5 of \cite{mythesis} for explicit proofs.
\end{proof}

\begin{remark}[The Real Lagrangian Grassmannian]
As a corollary, $\Lag^\C(n, 0, 0; V)$ can be identified with image of the embedding $\Gr(n, 0, 0; V) \xhookrightarrow{\C \otimes_\R \cdot} \Lag^\C(V)$.
\end{remark}

The incidence relations between the orbit closures are referred to as the Orbit Structure Theorem in \cite{Wolf}, and is also stated in Theorem A.2 of \cite{Takeuchi}, Theorem 10.6.3 of \cite{Wolf1} (c.f. Boundary Orbit theorem of \cite{Wolf}). An example is shown in Figure \ref{fig:incidence}. We provide a different proof in the specific case we are considering.

\begin{theorem}[Special case of Theorem 10.6.3 of \cite{Wolf1}, Theorem A.2 of \cite{Takeuchi}] \label{thm:incidence}
Suppose $n_0 + n_+ + n_- = n_0' + n_+' + n_-' = n$. The following are equivalent
\begin{enumerate}
    \item $\Lag^\C(\vec{n}';V) \cap \overline{\Lag^\C(\vec{n};V)}$ is nonempty.
    \item $\Lag^\C(\vec{n}';V) \subseteq \overline{\Lag^\C(\vec{n};V)}$.
    \item $n_-' \le n_-$ and $n_+' \le n_+$.
\end{enumerate}
\end{theorem}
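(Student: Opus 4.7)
The proof follows the same two-part structure as Proposition \ref{prop:incidenceR}: the direction (2) $\Rightarrow$ (1) is trivial since $\Lag^\C(\vec{n}'; V)$ is nonempty by Theorem \ref{thm:basisC}; the substantive work is in (1) $\Rightarrow$ (3) (via semicontinuity of eigenvalue counts) and (3) $\Rightarrow$ (2) (via an explicit deformation). The transitivity of the $\Sp(V)$-action within each $\Lag^\C(\vec{n})$ then upgrades (1) to (2).

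For (1) $\Rightarrow$ (3), I would view the type $(n_0, n_+, n_-)$ of a complex Lagrangian $\FF$ as the inertia of the restricted hermitian form $\kappa|_{\FF}$, and argue that $n_+$ and $n_-$ are lower semicontinuous functions on the ambient Grassmannian of $n$-dimensional complex subspaces of $V^\C$. Given $\FF'$ of type $\vec{n}'$, take any subspaces $P_+', P_-' \subseteq \FF'$ of dimensions $n_+', n_-'$ on which $\kappa$ is positive (resp.\ negative) definite. Since the unit spheres in $P_\pm'$ are compact, $\kappa$ is bounded away from zero with definite sign, so along any sequence $\FF_k \to \FF'$ one can continuously lift $P_\pm'$ to nearby $P_{\pm,k} \subseteq \FF_k$ on which $\kappa$ keeps its sign for $k$ large. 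This gives $n_\pm(\FF_k) \ge n_\pm'$, hence $n_+' \le n_+$ and $n_-' \le n_-$. (Equivalently, one can follow the frame-space argument of Proposition \ref{prop:incidenceR}: the hermitian Gram matrix $(\kappa(\vv_j, \vv_k))_{j,k}$ has continuously varying eigenvalues, and sequence lifting on the stratification by signature in the space of $n \times n$ hermitian matrices does the job.)

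For (3) $\Rightarrow$ (2), I would produce an explicit one-parameter deformation using Theorem \ref{thm:basisC}. Applied to $\FF'$ of type $\vec{n}'$, it gives a Darboux basis with $\FF' = \span_\C\{\ee_j^0\}_{1\le j\le n_0'} \oplus \span_\C\{\ee_k^+ - i\ff_k^+\} \oplus \span_\C\{\ee_\ell^- + i\ff_\ell^-\}$. Because $n_0' - n_0 = (n_+ - n_+') + (n_- - n_-') \ge 0$, I partition $\{1, \ldots, n_0'\}$ into $S_+, S_-, S_0$ of sizes $n_+ - n_+'$, $n_- - n_-'$, $n_0$. For $t > 0$ define
\[
\FF_t := \span_\C \bigl( \{\ee_j^0 - it\ff_j^0\}_{j \in S_+} \cup \{\ee_j^0 + it\ff_j^0\}_{j \in S_-} \cup \{\ee_j^0\}_{j \in S_0} \cup \{\ee_k^+ - i\ff_k^+\}_k \cup \{\ee_\ell^- + i\ff_\ell^-\}_\ell \bigr).
\]
A routine check using $\omega(\ee, \ff) = 1$ on Darboux pairs and vanishing off-pair shows that $\omega^\C$ vanishes identically on $\FF_t$ (the only pairings with a chance of being nonzero would come from indices appearing in both $S_+$ and $S_-$, which is excluded by the partition), so $\FF_t$ is a complex Lagrangian. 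Then one computes $\kappa(\ee_j^0 \mp it\ff_j^0, \ee_j^0 \mp it\ff_j^0) = \pm 2t$ for $j \in S_\pm$, $\kappa(\ee_j^0, \ee_j^0) = 0$ for $j \in S_0$, and the $\pm 2$ values on the $\ee^\pm - i\ff^\pm$ vectors, so the signature of $\kappa|_{\FF_t}$ is exactly $\vec{n}$ for $t > 0$. Since $\FF_t \to \FF'$ as $t \to 0^+$, this gives (2). Transitivity of the $\Sp(V)$-action upgrades the conclusion from a single $\FF' \in \Lag^\C(\vec{n}')$ to the full orbit.

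I expect the main friction point to be the direction (1) $\Rightarrow$ (3), specifically spelling out a clean continuity argument at the level of Grassmannians rather than frame spaces; the frame/Gram-matrix approach of Proposition \ref{prop:incidenceR} is safest and transports directly, at the cost of invoking sequence lifting for the stratification of hermitian matrices by inertia. The deformation in (3) $\Rightarrow$ (2) is mechanical once one trusts the basis form in Theorem \ref{thm:basisC}, and the verification that $\FF_t$ is Lagrangian reduces to disjointness of the partition $S_+ \sqcup S_- \sqcup S_0$.
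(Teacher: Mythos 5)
Your proposal is correct, but it does not follow the paper's route for the substantive direction. The paper proves $(1) \iff (3)$ in one stroke: it passes to the frame space $\Fr(\Lag^\C(V))$, sends a frame to its hermitian Gram matrix $(\kappa(\vv_j,\vv_k))_{j,k}$ and then to its unordered eigenvalue configuration, and transports the inertia stratification of that configuration space back and forth using continuity together with the sequence lifting property (as in Proposition \ref{prop:incidenceR}); in particular the ``sufficiency'' direction $(3) \Rightarrow (1)$ is also extracted from this lifting mechanism rather than from any explicit family. You instead split the equivalence: your $(1) \Rightarrow (3)$ is the same underlying semicontinuity-of-inertia phenomenon, phrased directly on subspaces (restrict $\kappa$ to definite subspaces $P_\pm' \subseteq \FF'$, use compactness of their unit spheres and nearby isomorphisms $\FF' \to \FF_k$ coming from Grassmannian convergence --- this is standard, though it is worth saying explicitly how the subspaces are transported into $\FF_k$), with the paper's frame argument as a fallback; and your $(3) \Rightarrow (2)$ is an explicit one-parameter degeneration $\FF_t$ built from the Darboux basis of Theorem \ref{thm:basisC}, with the partition $S_+ \sqcup S_- \sqcup S_0$ of the null directions and the count $n_0' - n_0 = (n_+ - n_+') + (n_- - n_-') \ge 0$ making the construction possible; your checks that $\omega^\C$ vanishes on $\FF_t$ (the only dangerous pairing would need an index in both $S_+$ and $S_-$) and that the Gram matrix of $\kappa|_{\FF_t}$ is diagonal with entries $\pm 2t, 0, \pm 2$ are accurate, and transitivity upgrades the single limit point to the whole orbit exactly as in the paper. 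The trade-off: your deformation is constructive and self-contained, avoiding the stratification and sequence-lifting machinery entirely for the sufficiency direction, while the paper's argument is more uniform (one mechanism for both implications) and runs in parallel with its proof of Proposition \ref{prop:incidenceR}.
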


\begin{proof} $(1) \iff (3)$: Consider the frame space $\Fr(\Lag^\C(V)) \subseteq (V^\C)^n \times \Lag^\C(V)$ consisting of $((\vv_1, \cdots, \vv_n), \FF)$ such that $\FF = \span_\C\{\vv_1, \cdots, \vv_n\}$.

The map that assigns to a frame  $((\vv_1, \cdots, \vv_n), \FF)$ the $n\times n$ hermitian matrix $(\kappa(\vv_j, \vv_\ell))_{j, \ell}$ is smooth, and the map that takes an $n \times n$ hermitian matrix to its eigenvalues is also continuous. The eigenvalues take values in the metric space of $n$ unordered points on $\R$, which is stratified by how many points are positive, zero, and negative. In this space, one stratum has a nonempty intersection with the closure of another if and only if it does not have more positive points and more negative points than the other.

As in the proof of Theorem \ref{prop:incidenceR}, this stratification is ``pulled back'' to a stratification on $\Fr(\Lag^\C(V))$ via the continuity and sequence lifting property (cf. Proposition 2.4 \cite{Siwiec}) of the aforementioned maps. It is ``pushed forward'' to the desired stratification on $\Lag^\C(V)$ via the continuity and sequence lifting property of the projection map $((\vv_1, \cdots, \vv_n), \FF) \mapsto \FF$. $(1) \iff (2)$: This follows from the transitivity of the continuous action of $\Sp(V)$.
\end{proof}

\begin{remark}[Frame space as a Hilsum-Skandalis bibundle]\label{rem:bibundle}
We first notice that a smooth $G$-equivariant map $\pi: E \to B$ consists of three pieces of data that happily satisfy a compatibility condition: the smooth action of $G$ on $E$, the smooth action of $G$ on $B$, and the map $\pi$. We can view this object from a groupoid perpective by switching the order in which the data is given. We first construct an action Lie groupoid $G \times B \rightrightarrows B$ from the action of $G$ on $B$, and then observe $\pi$ is an anchor map of the action (cf. Definition 3.22 of \cite{Lerman}) of the (action) Lie groupoid $G \times B \rightrightarrows B$ on $E$. This also holds for $G$-equivariant maps for right actions of $G$ on $E$ and $B$.

The frame space $\Fr(\Lag^\C(V))$ constructed in the proof of Theorem \ref{thm:incidence} admits a right $\GL(n; \C)$ action by taking linear combinations of the $\vv_j$s and a left $\Sp(V)$ action acting diagonally on all components. The projection to $\Lag^\C(V)$ is $\Sp(V)$-equivariant and $\GL(n; \C)$-invariant and they commute. From a groupoid perspective, one can check $\Fr(\Lag^\C(V))$ is a \emph{(Hilsum-Skandalis) bibundle} (Definition 3.25 of \cite{Lerman})

\begin{equation}
\begin{tikzcd}
    \Sp(V) \times \Lag^\C(V) \arrow[d, shift left = .75ex] \arrow[d, shift right = .75ex]  & \Fr(\Lag^\C(V)) \arrow[dl, "a_L"] \arrow[l, symbol = \circlearrowright] \arrow[dr, "a_R"'] & \GL(n; \C) \times \Mat_{n\times n}(\C)^\dagger \arrow[l, symbol = \circlearrowleft] \arrow[d, shift right = 0.75ex] \arrow[d, shift left = 0.75ex] \\
    \Lag^{\C}(V) & & {\Mat}_{n\times n}(\C)^{\dagger}\end{tikzcd}
\end{equation}

The rightmost column is the action Lie groupoid of the action of $\GL(n; \C)$ on $\Mat_{n\times n}(\C)^\dagger$ by $g^\dagger A g$, and $a_R(\vv_1, \cdots, \vv_n) = (\kappa(\vv_j, \vv_k))_{j, k}$ is the anchor map of its right action on $\Fr(\Lag^\C(V))$. The leftmost column is the action Lie groupoid of the generalized M\"{o}bius action. $a_L(\vv_1, \cdots, \vv_n) = \span_\C\{\vv_1, \cdots, \vv_n\}$ is the anchor map of its left action on $\Fr(\Lag^\C(V))$. $a_L$ is also a principal bundle (cf. Definition 3.17 of \cite{Lerman}) for the groupoid in the rightmost column, and $a_R$ is invariant with respect to the left action of the groupoid in the leftmost column, making $\Fr(\Lag^\C(V))$ a (Hilsum-Skandalis) bibundle.
\end{remark}

\begin{remark}
Since $\Sp(V^\C)$ is simple, our notation and Wolf's notation in the Orbit Structure Theorem of \cite{Wolf} (also Theorem 10.6.3 of \cite{Wolf1}) are identified via $|\Psi| = n$, $|\Gamma| = n_0$, and 
\[n_+ = |\Sigma \setminus \Gamma| \le |\Sigma| \le |\Sigma \cup \Gamma| = n_0 + n_+.\]
In the Boundary Orbit Theorem of \cite{Wolf}, $|\Gamma| = n_+$. 
We refer to Appendix \ref{subsec:partialcayley}, \ref{subsec:symmetriesholarc} for details.
\end{remark}

\subsection{Real projections of complex Lagrangian subspaces}
\label{subsec:projections}
Consider the projection map $\Re(\vv):= (\vv + \overline{\vv})/2$ from $V^\C$ to $V$ along $iV$. As a consequence of Theorem \ref{thm:basisC}, for a complex Lagrangian subspace $\FF$
\begin{eqnarray}\label{eq:realprojectionimage}
    (\Re\FF_0)^\C &=& \FF_0\\
    \Re\FF_0 &=& (\Re \FF)^\omega \subseteq \Re \FF. \nonumber
\end{eqnarray}

$\Re \FF$ is coisotropic, so $\omega|_{\Re \FF}$ descends to a symplectic form on $\Re\FF / (\Re \FF)^\omega$. Moreover, $\FF + \overline{\FF}$ is the $\omega^\C$-symplectic complement of $\FF_0$, so $\omega^\C|_{\FF + \overline{\FF}}$ descends to a complex bilinear symplectic form on $(\FF + \overline{\FF})/\FF_0$. So
\[(\FF + \overline{\FF})/\FF_0 = (\Re \FF)^\C / ((\Re \FF)^\omega) ^\C \cong (\Re \FF / (\Re \FF)^\omega)^\C\]
is an identification of complexified symplectic vector spaces.

\begin{remark}[Polarizations]
When $n=1$, a point in $\Lag^\C(\R^2)$ can be viewed as a polarization of a ray of light (as an equivalence class of Jones vectors). In the literature of geometric quantization (eg Section 4.1 of \cite{Sniaticki} Section V.3 of \cite{GuilleminSternberg}, Section 5.4 of \cite{Woodhouse}), distributions of the complexified tangent bundle of a symplectic manifold fiberwise satisfying the complex Lagrangian condition has been referred to as \emph{polarizations}. When $n_- = 0$ the following construction was used on the level of spaces, but not of the elements:
\begin{equation*}
\begin{aligned}
\Re \FF_0 &=& \mathbf{D} &=&  V \cap \FF \cap \overline{\FF}\\
\Re \FF &=& \mathbf{E} &=& V \cap (\FF + \overline{\FF}).
\end{aligned}
\end{equation*}
\end{remark}

For a complex Lagrangian subspace $\FF$, let $\Sp_\FF(V)$ denote the subgroup of linear transformations $g \in \Sp(V)$ such that $g.\FF = \FF$. Since $\Re(g\vv) = g\Re(\vv)$ for all $\vv \in V^\C$, $\Sp_\FF(V)$ is a subgroup of $\Sp_{\Re \FF}(V)$. Choose a splitting $\FF = \FF_0 \oplus \FF_+ \oplus \FF_-$, and observing how $H(\Re \FF)$ acts on the Darboux basis provided by Theorem \ref{thm:basisC}, we have $H(\Re \FF) \le \Sp_\FF(V) \le \Sp_{\Re \FF}(V)$.

If $g \in \Sp_\FF(V)$, then $g.\FF_0 = \FF_0$, and $g.\Re\FF_0 = \Re \FF_0$, so $g|_{\Re\FF_0} \in \GL(\Re \FF_0)$. Moreover, $\kappa|_\FF$ descends to a nondegenerate mixed signature hermitian form on $\FF / \FF_0$. Denote by $\U(\FF/\FF_0)$ the automorphisms of this hermitian vector space. Then $g \in \Sp_\FF(V)$ descends to a unitary transformation on $\FF/\FF_0$ that we denote by $[g]_0$.

As before, $\Sp_{\FF}(V)$ can be written as a semidirect product in different ways.

\begin{theorem}\label{thm:splitF} Suppose $\FF$ is a complex Lagrangian subspace. The group extensions
\begin{equation}
    \begin{tikzcd}H(\Re \FF) \arrow[r, hook] & \Sp_\FF(V) \arrow[two heads]{r}[yshift = -4ex]{{\cdot|_{\Re \FF_0} \times {[\cdot]_0}}}& \GL(\Re \FF_0) \times \U(\FF/\FF_0) \end{tikzcd}
\end{equation}
\begin{equation}
    \begin{tikzcd} \GL(\Re \FF_0) \ltimes H(\Re \FF) \arrow[r, hook] & \Sp_\FF(V) \arrow[r, two heads, "{[\cdot]_0}"'] & \U(\FF/\FF_0) \end{tikzcd}
\end{equation}
\begin{equation}
    \begin{tikzcd} \U(\FF/\FF_0)\ltimes H(\Re \FF) \arrow[r, hook] & \Sp_\FF(V) \arrow[r, two heads, "\cdot|_{\Re \FF_0}"'] & \GL(\Re \FF_0) \end{tikzcd}
\end{equation}
are right split.
\end{theorem}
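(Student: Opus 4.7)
The plan is to mirror the proof of Theorem \ref{thm:splitW}: fix a splitting $\FF = \FF_0 \oplus \FF_+ \oplus \FF_-$ and a Darboux basis $\{\ee^0, \ee^\pm, \ff^0, \ff^\pm\}$ provided by Theorem \ref{thm:basisC}, then construct two explicit splitting homomorphisms $\sigma: \U(\FF/\FF_0) \to \Sp_\FF(V)$ and $\tau: \GL(\Re\FF_0) \to \Sp_\FF(V)$ whose images commute inside $\Sp_\FF(V)$. Combining them splits extension (1); restricting each one individually splits extensions (2) and (3). Throughout the proof I will use the decomposition $V^\C = \FF_0 \oplus (\FF_+\oplus \FF_-) \oplus \overline{\FF_+\oplus \FF_-} \oplus \span_\C\{\ff^0_1,\ldots,\ff^0_{n_0}\}$, which is transparent from the basis.

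First I define $\tau(X)$ for $X \in \GL(\Re \FF_0)$ to act as $X$ on $\span_\R\{\ee^0_j\}$, as $(X^t)^{-1}$ on $\span_\R\{\ff^0_j\}$, and as the identity on $\ee^\pm, \ff^\pm$; the standard block symplectic matrix calculation (as in Theorem \ref{thm:splitW}) shows $\tau(X) \in \Sp(V)$, it stabilizes each of $\FF_0, \FF_+, \FF_-$, and its restriction to $\Re\FF_0$ recovers $X$. Next I define $\sigma(U)$ for $U \in \U(\FF/\FF_0)$ by lifting $U$ to act as $U$ on the complement $\FF_+ \oplus \FF_- \subseteq \FF$, as $\overline U$ on $\overline{\FF_+\oplus \FF_-}$, and as the identity on both $\FF_0$ and $\span_\C\{\ff^0_j\}$. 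By construction $\sigma(U)$ commutes with complex conjugation and so restricts to an $\R$-linear transformation of $V$, and it manifestly stabilizes $\FF$ with $[\sigma(U)]_0 = U$.

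The main verification is that $\sigma(U)$ preserves $\omega^\C$. Here $\omega^\C$ vanishes identically on $\FF \times \FF$ and on $\overline\FF \times \overline\FF$, so these components are preserved trivially; on $\FF \times \overline\FF$ we have $\omega^\C(\vv,\overline{\ww}) = i\kappa(\vv,\ww)$, which is preserved because $U$ is unitary for $\kappa$ by definition; finally the pairings involving $\ff^0_j$ satisfy $\omega^\C(\ff^0_j, \ee^\pm_k \mp i \ff^\pm_k) = 0$ in the Darboux basis of Theorem \ref{thm:basisC}, so the nonzero pairings only occur against $\FF_0$, on which $\sigma(U)$ acts as the identity. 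Hence $\sigma(U) \in \Sp_\FF(V)$.

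Finally, the images of $\sigma$ and $\tau$ centralize each other in $\Sp(V)$, since $\sigma(U)$ fixes every $\ee^0_j, \ff^0_j$ pointwise while $\tau(X)$ fixes every $\ee^\pm_k, \ff^\pm_k$ pointwise. Therefore $(X,U) \mapsto \tau(X) \sigma(U)$ is a group homomorphism $\GL(\Re\FF_0) \times \U(\FF/\FF_0) \to \Sp_\FF(V)$ splitting extension (1); its restriction to $\U(\FF/\FF_0)$ (resp.\ $\GL(\Re\FF_0)$) splits extension (2) (resp.\ (3)). The only mild obstacle is bookkeeping the signs coming from the two different conjugation conventions used in $\FF_+$ and $\FF_-$ in Theorem \ref{thm:basisC}, but these are absorbed cleanly because $\sigma(U)$ is defined intrinsically on $\FF$ and then extended by conjugation.
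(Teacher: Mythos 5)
Your proposal is correct. The section you build is in fact the same one the paper writes down: if you express $\tau(X)\sigma(U)$ in the Darboux basis of Theorem \ref{thm:basisC}, you recover exactly the matrix (\ref{eq:embindefunitary}), with the sign bookkeeping between $\FF_+$ and $\FF_-$ accounting for the $\Re g$ and $\Im g$ blocks. What differs is the route of verification. The paper first factors an arbitrary $g \in \Sp_\FF(V) \le \Sp_{\Re\FF}(V)$ as $g'h$, imposes the condition $g.\FF = \FF$ by the vanishing of a lower-left block in a complex Darboux basis adapted to $\FF$, and deduces from (\ref{eq:indefunitary}) that the semisimple part acts unitarily on $(\FF,\kappa|_\FF)$; the splitting morphism is then read off from this computation. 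You bypass the analysis of general elements entirely and check symplecticity of the lift intrinsically, using the decomposition $V^\C = \FF_0 \oplus (\FF_+\oplus\FF_-) \oplus \overline{\FF_+\oplus\FF_-} \oplus \span_\C\{\ff^0\}$ and the observation that the only nonzero $\omega^\C$-pairings are $\FF_0$ against $\span_\C\{\ff^0\}$ (where your lift is the identity) and $E$ against $\overline{E}$ (where invariance is exactly $\kappa$-unitarity of $U$, since $\FF_0$ is the kernel of $\kappa|_\FF$ so $\kappa|_E$ computes the descended form). This is leaner and makes the reason for symplecticity conceptual, at the cost of not yielding the by-product of the paper's computation, namely the explicit normal form of an arbitrary element of $\Sp_\FF(V)$ (which the paper reuses, e.g., in Remark \ref{rmk:dimensions} and in the $\FF^\oplus$ analogue, whose proof is "the same with the off-diagonal blocks set to zero"). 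Both arguments, yours and the paper's, treat the exactness of the displayed sequences (kernel $= H(\Re\FF)$) as given and concentrate on producing the section, so you are not missing anything the paper supplies on that point.
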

\begin{proof}
Choose a splitting $\FF = \FF_0 \oplus \FF_+ \oplus \FF_-$ and take a Darboux basis $\{\ee, \ff\}$ given by Theorem \ref{thm:basisC}. Since $\Sp_\FF(V) \le \Sp_{\Re \FF}(V)$, in the $\{ \ee^0, \ee^+, \ee^-, \ff^+, \ff^-, \ff^0\}$ basis, $g \in \Sp_\FF(V)$ can be expressed as a product $g'\cdot h$ where $(g' )_{\{ \ee^0, \ee^+, \ee^-, \ff^+, \ff^-, \ff^0\}}$ has block form
\begin{equation} \begin{pmatrix} X & 0 & 0 & 0 & 0 & 0\\
0 & A_{++} & A_{+-} & B_{++} & B_{+-} & 0 \\
0 & A_{-+} & A_{--} & B_{-+} & B_{--} & 0 \\
0 & C_{++} & C_{+-} & D_{++} & D_{+-} & 0 \\
0 & C_{+-} & C_{--} & D_{-+} & D_{--}& 0 \\
0 & 0 & 0 & 0 & 0 & (X^{t})^{-1} \end{pmatrix}\end{equation} and $(h)_{\{ \ee^0, \ee^+, \ee^-, \ff^+, \ff^-, \ff^0\}}$ has block form \begin{equation}\begin{pmatrix} 1_{n_0} & E_+ & E_- & F_+ & F_- & Y \\
0 & 1_{n_+} & 0 & 0 & 0 & F_+^t\\
0 & 0 & 1_{n_+} & 0 & 0 & F_{-}^t\\
0 & 0 & 0& 1_{n_-} & 0 & -E_+^t\\
0 & 0 & 0 & 0& 1_{n_-} & -E_-^t\\
0 & 0 & 0 & 0 & 0& 1_{n_0}  \end{pmatrix}.\end{equation}
Since $g.\FF = \FF$, in the complex Darboux basis whose first $n$ vectors span $\FF$
\[ \left \{ \ee^0, \frac{1}{\sqrt{2}}(\ee^+ - i \ff^+), -\frac{i}{\sqrt{2}}(\ee^- +i \ff^-), -\frac{i}{\sqrt{2}} (\ee^+ + i \ff^+), \frac{1}{\sqrt{2}} (\ee^- - i \ff^-), \ff^0 \right\}\]
the lower left $n \times n$ block of the matrix form of $g'$, which is equal to
\[ \frac{i}{2} \begin{pmatrix} 0 & 0 & 0 \\ 0 & A_{++} - D_{++} & A_{+-} + D_{+-} \\ 0 & -A_{-+} - D_{-+} & -A_{--} + D_{--} \end{pmatrix} + \frac{1}{2} \begin{pmatrix} 0 & 0 & 0 \\ 0 & B_{++} + C_{++} & - B_{+-} + C_{+-} \\ 0 & -B_{-+} + C_{-+} &  B_{--} + C_{--} \end{pmatrix},\]
vanishes. This implies that the upper left $n\times n$ block of the matrix form of $g'$, acting unitarily on $(\FF, \kappa|_\FF)$, is
\begin{equation} \label{eq:indefunitary}\begin{pmatrix} X & 0 & 0 \\ 0 & A_{++} - i B_{++} & A_{+-} + i B_{+-} \\ 0 & A_{-+} - i B_{-+} & A_{--} + i B_{--} \end{pmatrix} = \begin{pmatrix} X & 0 & 0 \\ 0 & D_{++} + i C_{++} & - D_{+-} + i C_{+-} \\ 0 & -D_{-+} - i C_{-+} & D_{--} + i C_{--} \end{pmatrix}.\end{equation}

The vanishing of the lower left $n\times n$ block of the matrix form of $h$ gives no additional constraints, so the upper left $n\times n$ block of the matrix form of $h$ acts unitarily on $(\FF, \kappa|_\FF)$ as
\[ \begin{pmatrix} 1_{n_0} & \frac{1}{\sqrt{2}}(E_{+} - i F_+) & \frac{1}{\sqrt{2}}(E_{-} + i F_-) \\ 0 & 1_{n_+} & 0 \\ 0 & 0 & 1_{n_-} \end{pmatrix}.\]

If we take $g \in \U(\FF/\FF_0)$, in the basis taken from the image of 
\[ \left \{ \frac{1}{\sqrt{2}}(\ee^+ - i \ff^+), -\frac{i}{\sqrt{2}}(\ee^- +i \ff^-), -\frac{i}{\sqrt{2}} (\ee^+ + i \ff^+), \frac{1}{\sqrt{2}} (\ee^- - i \ff^-) \right\}\]
under the quotient map, it has block form 
\[\begin{pmatrix} g_{++} & g_{+-} \\ g_{-+} & g_{--} \end{pmatrix}.\]
Then the splitting morphism takes
\[ \left( X, g \right) \in \GL(\Re \FF_0) \times \U(\FF/\FF_0)\] to
\begin{equation}\label{eq:embindefunitary}  \begin{pmatrix} X & 0 & 0 & 0 & 0 & 0\\ 0 & \Re g_{++} & \Re g_{+-} & 0 &  -\Im g_{++} &  \Im g_{+-} \\ 0 & \Re g_{-+} & \Re g_{--} & 0 & -\Im g_{-+} & \Im g_{--} \\
0 & 0 & 0 & (X^t)^{-1} & 0 & 0\\
0 & \Im g_{++} & \Im g_{+-} & 0 & \Re g_{++} & -\Re g_{+-} \\
0 & -\Im g_{-+} & -\Im g_{--} & 0 & -\Re g_{-+} & \Re g_{--} \end{pmatrix}\end{equation}
in the Darboux basis $\{ \ee, \ff\}$. 
The other splitting morphisms can be obtained by letting $g := 1_{\U(\FF/\FF_0)}$ and $X := 1_{\Re \FF_0}$.
\end{proof}

\begin{remark} \label{rmk:Hess}
When $n_- = 0$, $\Sp_{\FF}(V)$ has been computed in Proposition 3.3 of \cite{Hess}.
\end{remark}

\subsection{Equivalence classes of splittings}

\begin{definition}[Equivalent splittings]
Let two splittings $(\FF, \FF_+, \FF_-)$ and $(\FF, \FF'_+, \FF'_-)$ of a complex Lagrangian subspace $\FF$ be \emph{equivalent} if $\FF_0\oplus \FF_+ = \FF'_0 \oplus \FF'_+$ and $\FF_0 \oplus \FF_- = \FF'_0 \oplus \FF'_-$. Equivalently, two splittings are equivalent if they descend to the same splitting on the quotient spaces $\FF/\FF_0$ and $\FF/\FF'_0$.
Denote an equivalence class of splittings by
\begin{equation} \label{eq:lagrangianswithsplittings}
    \FF^\oplus := [(\FF, \FF_+, \FF_-)]. 
\end{equation}
Let $\FF^\oplus_{\ge 0} := \FF_0 \oplus \FF_+$, and $\FF^\oplus_{\le 0} := \FF_0 \oplus \FF_-$, so that we can express $\FF^\oplus$ as a pair $(\FF^\oplus_{\ge 0}, \FF^\oplus_{\le 0})$.
We will also denote by
\[ \FF^\oplus = [\FF_0 \oplus \FF_+ \oplus \FF_-]\]
when we assume a choice of representative of the equivalence class has been made. Let $\Lag^\C_\oplus(\vec{n})$ denote the set of equivalence classes of complex Lagrangian subspaces of type $\vec{n}$ with splittings. Let $\Lag^\C_\oplus(V)$ be the space of all equivalence classes of complex Lagrangian subspaces of $V$ with splittings. 
\end{definition}

By Theorem \ref{thm:basisC}, $\Re \FF^\oplus_{\ge 0}$ and $\Re \FF^\oplus_{\le 0}$ are symplectic complements of each other, so $H(\Re \FF^\oplus_{\ge 0}) = H(\Re\FF^{\oplus}_{\le 0})$.

Denote the linear symplectic reduction
\[ \FF^\oplus/\FF_0:= [(\FF/\FF_0, \FF^\oplus_{\ge 0} / \FF_0, \FF^\oplus_{\le 0}/\FF_0)] \in \Lag_\oplus^\C(0, n_+, n_-; \Re \FF^\oplus_{\ge 0}/\Re \FF_0).\]

\begin{remark}[Splittings as eigenspaces]
Given an $\omega$-compatible linear complex structure $\JJ$, for every complex Lagrangian subspace $\FF$ there is a linear map $\kappa|_\FF^\sharp: \FF \to \FF$ such that
\[ \kappa|_{\FF}(\uu, \vv) = \omega^\C(\uu, \JJ \overline{\kappa|_\FF^\sharp \vv}) \quad \uu, \vv \in \FF. \]
Equivalently, $\omega^\C(\cdot, \JJ \overline{\cdot})$ restricts to a positive definite inner product on $\FF$, so $\kappa|_\FF$ gets identified with $\kappa|_\FF^\sharp$ under $\FF^\vee \otimes \FF^\vee \xrightarrow{\cong} \FF \otimes \FF^\vee$ where $\FF^\vee$ is the dual space of $\FF$ as a real vector space.

For $\FF \in \Lag^\C(\vec{n})$, $\JJ$ gives a preferred choice of splitting $\FF = \FF_0 \oplus \FF_+ \oplus \FF_-$ by the zero, positive, and negative eigenspaces of $\kappa|_\FF^\sharp$, and a preferred choice of equivalence class of splittings $\iota_\JJ(\FF):=[\FF_0 \oplus \FF_+ \oplus \FF_-]$. So we obtain a section $\iota_\JJ: \Lag^\C(\vec{n}) \hookrightarrow \Lag^\C_{\oplus} (\vec{n})$ of the forgetful fibration $\varphi: \FF^\oplus \mapsto \FF$.
\end{remark}

\begin{remark}[Symplectic Twistor Grassmannians of \cite{LeeLeung}]\label{rem:symplectictwistorGrassmannians}
$\Lag^\C(0, n_+, n_-)$ can be identified with the \emph{symplectic Twistor Grassmannians} $\hat{\mathcal{T}}(2n_+, 2n)$ introduced in Definition 15.ii of \cite{LeeLeung}. If $\FF^\oplus = [0 \oplus \FF_+ \oplus \FF_-] \in \Lag^\C(0, n_+, n_-)$, then $\Re|_{\FF_{\pm}}: \FF_{\pm} \to \Re \FF_{\pm}$ are real vector space isomorphisms. Then $\JJ_{\Re \FF_{\pm}} \vv:=  \Re (\pm i \Re|_{\FF_{\pm}}^{-1} (\vv))$ are $\omega|_{\Re \FF_{\pm}}$-compatible linear complex structures on $\Re \FF_{\pm}$. So $(\Re \FF_{+}, \JJ_{\Re \FF_+}, \JJ_{\Re \FF_-})$ is an element of the symplectic twistor space $\hat{\mathcal{T}}(2n_+, 2n)$.
Conversely, if $(W, \JJ_{W}, \JJ_{W^\omega}) \in \hat{\mathcal{T}}(2n_+, 2n)$, then let $\FF_{\JJ_W}$ be the $+i$-eigenspace of $\JJ_W$ in $W^\C$ and $\FF_{\JJ_{W^\omega}}$ be the $-i$-eigenspace of $\JJ_{W^\omega}$ of $\JJ_{W^\omega}$ in $(W^\omega)^\C$. Then $\FF_{\JJ_{W}} \oplus \FF_{\JJ_{W^\omega}}$ is a splitting of a complex Lagrangian subspace of $(W\oplus W^\omega)^\C = V^\C$.
\end{remark}

\begin{theorem}
    The group extensions
    \begin{equation} \begin{tikzcd}[column sep = small]H(\Re \FF_{\ge 0}^\oplus) \arrow[r, hook] & \Sp_{\FF^{\oplus}}(V) \arrow[two heads]{r}[yshift = -4ex]{{\cdot|_{\Re\FF_0} \times [\cdot]_+ \times [\cdot]_-}} & \GL(\Re\FF_0) \times \U(\FF^\oplus_{\ge 0} / \FF_0) \times \U(\FF^\oplus_{\le 0} / \FF_0) \end{tikzcd} \end{equation}
    \begin{equation} \begin{tikzcd}[column sep = small](\GL(\Re\FF_0) \times \U(\FF^\oplus_{\le 0} / \FF_0)) \ltimes H(\Re\FF_{\ge 0}^\oplus) \arrow[r, hook] & \Sp_{\FF^\oplus}(V) \arrow[r, two heads, "{[\cdot]_+}"'] & \U(\FF^\oplus_{\ge 0} / \FF_0) \end{tikzcd} \end{equation}
    \begin{equation} \begin{tikzcd}[column sep = small] (\U(\FF^\oplus_{\ge 0}/\FF_0) \times \U(\FF^\oplus_{\le 0} / \FF_0)) \ltimes H(\Re \FF_{\ge 0}^\oplus) \arrow[r, hook]& \Sp_{\FF^\oplus}(V) \arrow[r, two heads, "\cdot|_{\Re\FF_0}"'] & \GL(\Re\FF_0) \end{tikzcd} \end{equation}
    are right split.
\end{theorem}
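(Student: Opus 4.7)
The plan is to reduce the three right-splitting claims to those of Theorem \ref{thm:splitF} for $\Sp_\FF(V)$, by observing that $\Sp_{\FF^\oplus}(V)$ is cut out of $\Sp_\FF(V)$ by the additional requirement that $g$ preserve each of $\FF^\oplus_{\ge 0} = \FF_0 \oplus \FF_+$ and $\FF^\oplus_{\le 0} = \FF_0 \oplus \FF_-$ separately. Since $\kappa|_\FF$ descends to a nondegenerate hermitian form of signature $(n_+, n_-)$ on $\FF/\FF_0$ whose positive and negative eigenspaces are $\FF^\oplus_{\ge 0}/\FF_0$ and $\FF^\oplus_{\le 0}/\FF_0$, this requirement is equivalent to asking that $[g]_0 \in \U(\FF/\FF_0)$ preserve both summands of this decomposition, refining the $\U(\FF/\FF_0)$-factor of Theorem \ref{thm:splitF} to $\U(\FF^\oplus_{\ge 0}/\FF_0) \times \U(\FF^\oplus_{\le 0}/\FF_0)$ in the target.

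Concretely, after fixing a representative splitting $\FF = \FF_0 \oplus \FF_+ \oplus \FF_-$ and the Darboux basis $\{\ee,\ff\}$ of Theorem \ref{thm:basisC}, I would factor every $g \in \Sp_\FF(V)$ as $g = g' \cdot h$ as in the proof of Theorem \ref{thm:splitF} and examine the indefinite unitary block (\ref{eq:indefunitary}) of $g'$ acting on $(\FF,\kappa|_\FF)$. Imposing $g\cdot \FF^\oplus_{\ge 0} = \FF^\oplus_{\ge 0}$ and $g\cdot \FF^\oplus_{\le 0} = \FF^\oplus_{\le 0}$ forces the $(+,-)$ and $(-,+)$ off-diagonal blocks $A_{+-} + iB_{+-}$ and $A_{-+} - iB_{-+}$ (together with the matching conditions on $C,D$) to vanish, leaving two diagonal blocks that range freely over $\U(\FF^\oplus_{\ge 0}/\FF_0)$ and $\U(\FF^\oplus_{\le 0}/\FF_0)$. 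The three right splittings are then obtained by restricting the embedding (\ref{eq:embindefunitary}) to block-diagonal unitaries ($g_{+-} = g_{-+} = 0$) to split the first extension, and by further specialising the $\GL(\Re\FF_0)$- or $\U$-factors to the identity for the second and third.

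It remains to identify the kernel of the first surjection with the Heisenberg group $H(\Re\FF^\oplus_{\ge 0}) = H(\Re\FF^\oplus_{\le 0})$. By Definition \ref{def:heisenberg}, any $h$ in this group fixes $\Re\FF_0$ pointwise and modifies each of $\ee^\pm_k, \ff^\pm_k$ only by elements of $\Re\FF_0 \subset \FF_0$, so $h(\ee^+_k - i\ff^+_k) \in (\ee^+_k - i\ff^+_k) + \FF_0 \subset \FF^\oplus_{\ge 0}$ and analogously for the minus generators; hence $h$ preserves both summands of the splitting. Comparing the dimension in (\ref{eq:heisenbergdimension}) with the number of free $h$-parameters surviving in the factorisation $g = g'\cdot h$ shows the containment is in fact an equality. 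The main obstacle is the bookkeeping in the second paragraph: correctly identifying which entries of the $3\times 3$ block form (\ref{eq:indefunitary}) must vanish once the equivalence class $\FF^\oplus$ (not merely $\FF$) is preserved, and recognising the surviving block-diagonal pattern as $\U(\FF^\oplus_{\ge 0}/\FF_0) \times \U(\FF^\oplus_{\le 0}/\FF_0)$. Everything else is a mechanical refinement of the matrix argument already used for Theorem \ref{thm:splitF}.
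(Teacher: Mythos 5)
Your proposal is correct and is essentially the paper's own argument: the paper proves this theorem by repeating the proof of Theorem \ref{thm:splitF} with the off-diagonal blocks $A_{\pm\mp}, B_{\pm\mp}, C_{\pm\mp}, D_{\pm\mp}, g_{\pm\mp}$ set to zero, which is exactly the refinement you describe (preserving $\FF^\oplus_{\ge 0}$ and $\FF^\oplus_{\le 0}$ separately kills the off-diagonal blocks of (\ref{eq:indefunitary}), and the splittings come from restricting (\ref{eq:embindefunitary}) to block-diagonal unitaries). Your extra verification that the kernel is $H(\Re\FF^\oplus_{\ge 0})$ is consistent with the paper's setup and adds no new difficulty.
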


\begin{proof}
We repeat the proof of Theorem \ref{thm:splitF} with $A_{\pm \mp}$, $B_{\pm \mp}$, $C_{\pm \mp}$, $D_{\pm \mp}$, $g_{\pm \mp}$ set to zero.
\end{proof}

\begin{remark}[Dimensions] \label{rmk:dimensions}

If $W$ and $\FF$ are of type $\vec{n}$, by choosing Darboux bases given by Theorems \ref{thm:basisR}, \ref{thm:basisC}, there exist group isomorphisms (\cite{mythesis} Proposition 2.7.4)
\begin{eqnarray*}
\Sp_{\FF^\oplus}(V) &\cong& (\GL(n_0; \R) \times \U(n_+) \times \U(n_-)) \ltimes H(\vec{n})\\
\Sp_{\FF}(V) &\cong& (\GL(n_0; \R) \times \U(n_+, n_-)) \ltimes H(\vec{n})\\
\Sp_W(V) &\cong& (\GL(n_0; \R) \times \Sp(2n_+; \R) \times \Sp(2n_-; \R)) \ltimes H(\vec{n}).
\end{eqnarray*}
So
\begin{eqnarray*}
    \dim_{\R} \Sp_{\FF^\oplus}(V) &=& n^2 + \frac{n_0 (n_0 + 1)}{2} - 2n_+ n_- \\
    \dim_{\R} \Sp_{\FF}(V) &=& n^2 + \frac{n_0 (n_0 + 1)}{2}\\
    \dim_{\R} \Sp_W(V) &=& n^2 + \frac{n_0 (n_0 + 1)}{2} + (n_+ - n_-)^2 + n_+ + n_-.
\end{eqnarray*}
Considering $W$, $\FF$, and $\FF^\oplus$ as basepoints we obtain diffeomorphisms
\begin{eqnarray*}
    \Lag_{\oplus}^\C(\vec{n}; V) &\cong& \Sp(V) / \Sp_{\FF^\oplus}(V).\\
    \Lag^\C(\vec{n}; V) &\cong& \Sp(V) / \Sp_{\FF}(V).\\
    \Gr(\vec{n}; V) &\cong& \Sp(V)/\Sp_W(V).
\end{eqnarray*}
From this we compute
\begin{eqnarray*}
    \dim_\R \Lag^\C_\oplus(\vec{n}; V) &=& n^2 + n - \frac{n_0(n_0+1)}{2} + 2n_+ n_-  \\
    \dim_\R \Lag^\C(\vec{n}; V) &=& n^2 + n - \frac{n_0 (n_0 + 1)}{2}\\
    \dim_\R \Gr(\vec{n}; V) &=& n^2 + n - \frac{n_0 (n_0 + 1)}{2} - (n_+ - n_-)^2 - n_+ - n_-.
\end{eqnarray*}
\end{remark}

\subsection{Cayley transforms and binary octahedral symmetries}
\label{subsec:octahedron}

In this section we will closely examine the case $V = \R^2$ with standard symplectic form $\omega_{\R^2}$. We identify $V^\C$ with $\C^2$, equipped with the standard hermitian inner product $\langle \cdot, \cdot \rangle_{\C^2}$. The linear action of $\SL(2; \C)$ on $\C^2$ factors through $\{\pm 1\}$ on $\mathbb{P}^1(\C)$. Consequently, we have
\[ \begin{tikzcd} \SU(2) \arrow[r, hook] \arrow{d} &\SL(2; \C) \arrow[d, two heads] & \C^2 \setminus \{(0, 0)\}\arrow[symbol= \circlearrowright]{l} \arrow[d, two heads] \\
\SO(3) \arrow[r, hook] & \PSL(2; \C)  & \mathbb{P}^1(\C). \arrow[symbol= \circlearrowright]{l} \end{tikzcd}\]

Now consider the Pauli matrices
\begin{equation}\label{eq:pauli}
    \sigma_1 := \begin{pmatrix} 0 & 1 \\ 1 & 0 \end{pmatrix} \quad 
    \sigma_2 := \begin{pmatrix} 0 & -i \\ i & 0 \end{pmatrix} \quad
    \sigma_3 :=\begin{pmatrix} 1 & 0 \\0 & -1\end{pmatrix}.
\end{equation}
Then let $\ii := -i \sigma_1$,  $\jj := -i \sigma_2$, $\kk := -i \sigma_3$. As elements of the matrix group $\SU(2)$, $\ii$, $\jj$, $\kk$ satisfy the quaternionic relations
\[ \ii \jj = \kk \quad \jj \kk = \ii \quad \kk \ii = \jj, \quad \ii^2 = \jj^2 = \kk^2 = -1, \]
and define a hyperk\"{a}hler structure on $\C^2$, and $\jj$ is an $\omega_{\R^2}$-compatible linear complex structure on $\R^2$. The quaternion group $Q_8:= \{ \pm 1, \pm \ii, \pm \jj, \pm \kk\} \le \SU(2)$ is a nonsplit double cover of the Klein four group $K_4 \cong Q_8 /\{\pm 1\}$. As elements of $\SU(2)$, we can consider the square roots
\begin{eqnarray*}
    \sqrt{\ii} &:=& \frac{1}{\sqrt{2}} (1 + \ii) = \frac{1}{\sqrt{2}} \begin{pmatrix} 1& -i \\ -i & 1 \end{pmatrix}\\
    \sqrt{\jj} &:=& \frac{1}{\sqrt{2}} (1 + \jj) = \frac{1}{\sqrt{2}} \begin{pmatrix}  1 & -1 \\ 1 & 1 \end{pmatrix}\\
    \sqrt{\kk} &:=& \frac{1}{\sqrt{2}} (1 + \kk) = \frac{1}{\sqrt{2}}\begin{pmatrix} 1-i & 0 \\ 0 & 1+i \end{pmatrix}.
\end{eqnarray*}
$\sqrt{\ii}$, $\sqrt{\jj}$, $\sqrt{\kk}$ are elements of $\SU(2)$ and in particular, \emph{the linear fractional transformation associated to $\sqrt{\ii}$ is the Cayley transform}. For our discussion, we observe that the cyclic group of order four $\langle \sqrt{\ii}\rangle/\langle-1_{\SU(2)}\rangle$ permutes the following real lines
\begin{equation} \label{eq:cayley} \begin{tikzcd}
     & \R(\ee - i \ff) \arrow[dl, "\pm \sqrt{\ii}"'] & \\ i\R \ff \arrow[dr, "\pm \sqrt{\ii}"'] & & \R\ee \arrow[ul, "\pm \sqrt{\ii}"']\\
     & \R(\ee + i \ff) \arrow[ur, "\pm \sqrt{\ii}"'] & 
\end{tikzcd}\quad \ee = \begin{pmatrix} 1 \\ 0 \end{pmatrix}, \ff = \begin{pmatrix} 0 \\ 1\end{pmatrix} \end{equation}
and is responsible for relating Theorems \ref{thm:basisR} and \ref{thm:basisC}.

$\sqrt{\ii}$ and $\jj$ generate the binary octahedral group $BO_{48}$ which is a nonsplit double cover of the octahedral group $O_{24}$. So the octahedral and binary octahedral groups occur as ``square roots'' of the quaternionic symmetries in the following manner
\begin{equation} \begin{tikzcd} Q_8 \arrow[r, hook] \arrow[d, two heads] & BO_{48} \arrow[d, two heads] \arrow[r, hook]& \SU(2) \arrow[d, two heads]\\ K_4 \arrow[r, hook] & O_{24} \arrow[r, hook] & \SO(3). \end{tikzcd}\end{equation}
Note that complex conjugation and the antipodal map do not preserve the orientation of $\mathbb{P}^1(\C)$ and do not appear within this diagram.
The elements
\[ \mathbf{O}:= \{ [0:1], [1:0], [i: 1], [-i: 1], [1: 1], [-1:1]\}\]
are vertices of a regular octahedron in $\mathbb{P}^1(\C) \cong S^2$. $\sqrt{\ii}$, $\sqrt{\jj}$, $\sqrt{\kk}$ act on $\mathbb{P}^1(\C)$ by a quarter rotation about, respectively, the axes $[-1:1] - [1:1]$, $[i:1]-[-i:1]$, $[0:1]-[1:0]$. $\ii$, $\jj$, $\kk$ act by a half rotation about the respective axes, and the element
\[ \sqrt{\ii} \sqrt{\jj} = \frac{1}{2} (1 + \ii + \jj + \kk) = \frac{1}{2}\begin{pmatrix} 1-i & -1 - i \\ 1-i & 1+i \end{pmatrix}\]
is one of the elements that permute the three axes.

With respect to the standard Hermitian product on $\C^2$, we can identify the Pauli matrices with hermitian forms on $\C^2$ by 
\[ \langle \sigma_j \uu, \vv \rangle_{\C^2} = \overline{\vv}^t \sigma_j \uu \quad \uu, \vv \in \C^2, j = 1, 2, 3.\]
Let $\U(\sigma_j)$ be the unitary group associated to the hermitian form defined by $\sigma_j$, for $j = 1, 2, 3$. Since $\sqrt{\ii}$, $\sqrt{\jj}$, $\sqrt{\kk} \in \SU(2)$, their conjugate transpose is equal to their inverse. Moreover,
\[ \sqrt{\ii} \cdot \sigma_2 \cdot \sqrt{\ii}^{-1} = \sigma_3, \quad \sqrt{\jj}\cdot \sigma_3 \cdot \sqrt{\jj}^{-1} = \sigma_1, \quad \sqrt{\kk} \cdot \sigma_1\cdot \sqrt{\kk}^{-1} = \sigma_2,\]
so the following three real forms of $\SL(2; \C)$ are mapped to each other by
\begin{equation} \label{eq:realforms} \begin{tikzcd}[column sep= -1cm] & \SL(2; \C) \cap \U(\sigma_1) \arrow[dl, outer sep = -1pt, "\Ad_{\sqrt{\kk}}"'] &\\
\SL(2; \C) \cap \U(\sigma_2)\arrow[rr, "\Ad_{\sqrt{\ii}}"'] && \SL(2; \C) \cap \U(\sigma_3) \arrow[ul, outer sep = -4pt, "\Ad_{\sqrt{\jj}}"'].\end{tikzcd}\end{equation}
Here $\SL(2; \C) \cap \U(\sigma_2) = \SL(2; \R)$ and $\SL(2; \C) \cap \U(\sigma_3) = \SU(1, 1)$.

We can also view $\ii$, $\jj$, $\kk$ as basis elements of $\su(2)$. The real Lie algebras corresponding to the real Lie groups in Equation (\ref{eq:realforms}) are mapped to each other by
\[ \begin{tikzcd}[column sep = -0.5cm] & \R \ii \oplus i \R\jj \oplus i \R \kk \arrow[dl, outer sep = -1pt, "\Ad_{\sqrt{\kk}}"'] &\\
 i \R \ii \oplus \R \jj \oplus  i\R\kk \arrow[rr, "\Ad_{\sqrt{\ii}}"'] && i \R \ii \oplus i\R \jj \oplus  \R\kk \arrow[ul, outer sep = -4pt, "\Ad_{\sqrt{\jj}}"']\end{tikzcd}.\]

An $\sl_2$ triple can be constructed by
\begin{equation} \label{eq:sl2} \left( \frac{i\ii - \jj}{2}, i\kk, \frac{i\ii + \jj}{2}\right) = \left( \begin{pmatrix} 0 & 1 \\ 0 & 0 \end{pmatrix}, \begin{pmatrix} 1 & 0 \\ 0 & -1 \end{pmatrix}, \begin{pmatrix} 0 & 0 \\ 1 & 0 \end{pmatrix} \right), \end{equation}
Iterating the 4-periodic $\Ad_{\sqrt{\ii}}$ on this $\sl_2$ triple, we obtain mappings between the root space decompositions of $\sl(2; \R)$ and $\su(1, 1)$:
\[ \begin{tikzcd}[column sep = -2cm]
     & \R \begin{psmallmatrix} i & 1 \\ 1 & -i \end{psmallmatrix}\oplus \R \begin{psmallmatrix} 0 & i \\ -i & 0 \end{psmallmatrix}\oplus \R \begin{psmallmatrix} -i & 1 \\ 1 & i \end{psmallmatrix} \arrow[dl, "\Ad_{\sqrt{\ii}}"'] & \\ \R \begin{psmallmatrix} 0 & 0 \\ 1 & 0 \end{psmallmatrix}\oplus \R \begin{psmallmatrix} -1 & 0 \\ 0 & 1 \end{psmallmatrix}\oplus \R \begin{psmallmatrix} 0 & 1 \\ 0 & 0 \end{psmallmatrix} \arrow[dr, "\Ad_{\sqrt{\ii}}"'] & & \R \begin{psmallmatrix} 0 & 1 \\ 0 & 0 \end{psmallmatrix}\oplus \R \begin{psmallmatrix} 1 & 0 \\ 0 & -1 \end{psmallmatrix}\oplus \R \begin{psmallmatrix} 0 & 0 \\ 1 & 0 \end{psmallmatrix}\arrow[ul, "\Ad_{\sqrt{\ii}}"']\\
     & \R \begin{psmallmatrix} -i & 1 \\ 1 & i \end{psmallmatrix}\oplus \R \begin{psmallmatrix} 0 & -i \\ i & 0 \end{psmallmatrix}\oplus \R \begin{psmallmatrix} i & 1 \\ 1 & -i \end{psmallmatrix}. \arrow[ur, "\Ad_{\sqrt{\ii}}"'] & 
\end{tikzcd}\]
Here $\Ad_{\sqrt{\ii}}^2$ is the unique nontrivial Weyl group symmetry associated to the Cartan subalgebras $\C\jj$ and $\C\kk$ of $\sl(2; \C)$.

The $\R^{2n}$ case is obtained by taking $n$ direct products of these structures. We refer the reader to Section \ref{subsubsec:rootspacedecompositions} for how these structures appear from a traditional Lie theoretic perspective.

\begin{comment}
\begin{remark}[Maximal tori of $\SU(2)$]

By taking weight spaces, we obtain a bijection between the set of two unordered points on $\mathbb{P}^1(\C)$ and the set of Cartan subalgebras of $\sl(2; \C)$. For the unitary representation of $\SU(2)$, its weight spaces are orthogonal with respect to the standared hermitian inner product on $\C^2$, and as orthogonal complements identify antipodal points of $\mathbb{P}^1(\C)$, there is also a bijection between the set of maximal tori of $\SU(2)$ and the real projective plane.

For the axes $[-1:1] - [1:1]$, $[i:1]-[-i:1]$, $[0:1]-[1:0]$, the corresponding maximal tori are
\begin{eqnarray*}
    \SU(2) \cap U(\sigma_1) &=& \left\{ e^{\ii t}= \begin{psmallmatrix} \cos t & -i \sin t \\ - i \sin t & \cos t\end{psmallmatrix}: \t \in \R \right\}\\
    \SU(2) \cap U(\sigma_2) &=& \left\{ e^{\jj t} =\begin{psmallmatrix} \cos t & - \sin t \\ \sin t & \cos t\end{psmallmatrix}: \t \in \R \right\}\\
    \SU(2) \cap U(\sigma_3) &=& \left\{ e^{\kk t} = \begin{psmallmatrix} e^{-it} & 0 \\ 0 & e^{it}\end{psmallmatrix}: \t \in \R \right\}.
\end{eqnarray*}
\end{remark}

In the $\{\ii, \jj, \kk\}$ basis of $\su(2)$, we can express $\Ad_{\sqrt{\ii}}$, $\Ad_{\sqrt{\jj}}$, $\Ad_{\sqrt{\kk}}$ in matrix forms as, respectively, 
\[\begin{psmallmatrix} 1 & 0 & 0 \\ 0 & 0 & -1\\ 0 & 1 & 0 \end{psmallmatrix}, \begin{psmallmatrix} 0 & 0 & 1 \\ 0 & 1 & 0\\ -1 & 0 & 0 \end{psmallmatrix}, \begin{psmallmatrix} 0 & -1 & 0 \\ 1 & 0 & 0\\ 0 & 0 & 1\end{psmallmatrix}.\]
\end{comment}

\subsection{Extension of Weyl group symmetries}
\label{subsec:basepoints}

Now fix a Darboux basis $\{\ee, \ff\}$ of $V$ and identify $V \cong (\R^2)^n$ with $\{\ee, \ff\}$. Let $H$ be a discrete subgroup of $\SU(2)$, $\mathbb{S}_n$ be the permutation group of $\{1, \cdots, n\}$ and choose $\ast:= (\ee_1, \cdots, \ee_n) \in (\R^2)^n$ and denote $L_{\ast}:= \span_\R\{\ee_1, \cdots, \ee_n\}$. Then this choice picks out a discrete subset
\[ F_{H, \ast } := \{ \span_\C\{\ww_1, \cdots, \ww_n\} : \ww_j \in H\R\ee_j, 1 \le j \le n\} \]
of $\Lag^\C(V)$. $\mathbb{S}_n \ltimes H^n$ acts on $F_{H, \ast}$ by
\[ (\sigma, (h_1, \cdots, h_n)).\span_\C\{\ww_1, \cdots, \ww_n\}:= \span_\C\{h_{\sigma(1)}.\ww_{\sigma(1)}, \cdots, h_{\sigma(n)}.\ww_{\sigma(n)}\}\]
and the subgroup $H^n$ acts transitively on $F_{H, \ast}$.
If $H = \langle \sqrt{\ii}\rangle$, $H^n$ acts on $F_{\langle \sqrt{\ii}\rangle, \ast}$ with the subgroup $\langle -1_{\SU(2)}\rangle^n$ acting trivially. From (\ref{eq:cayley}) and Theorem \ref{thm:basisC}, $F_{\langle \sqrt{\ii}\rangle, \ast } \cap \Lag^\C(\vec{n}; V)$ is nonempty for every $\vec{n}$. So the decomposition into orbits can be expressed in a notation suggestive of the Bruhat decomposition

\[ \Lag^\C((\R^2)^n) =\bigcup_{\vec{n}} \Lag^\C(\vec{n}) = \bigcup_{w \in \mathbb{S}_n \ltimes (\langle \sqrt{\ii}\rangle/\langle -1\rangle)^n} Gw\Sp_{L_\ast}((\R^2)^n).L_{\ast}^\C.\]

\begin{remark}[Partial Cayley transforms and basepoints]
Choosing a root space decomposition of $\g$ from a vectorial Cartan subalgebra is equivalent to choosing a Darboux basis up to choice of $\ee$s and $\ff$s within the splitting $V \cong \R^{\oplus 2n}$ (Section \ref{subsec:cartan}).

In the case of interest, the partial Cayley transforms
$c_\Gamma c_\Sigma^2$ of \cite{Wolf},\cite{Wolf1} are equal (cf. Equation (\ref{eq:cayleymatrix1}), (\ref{eq:cayleymatrix2})) to 
$h_{\Gamma, \Sigma} = (h_1, \cdots, h_n)$ where
\[ h_j = \begin{cases} \sqrt{\ii} & \text{ if } 2i \varepsilon_j \in \Gamma \\
\ii & \text{ if } 2i\varepsilon_j \in \Sigma.\end{cases}\]

When $\Gamma$, $\Sigma$ are given by (\ref{eq:Gamma}) (\ref{eq:Sigma}), the complex Lagrangian subspace
\[ \FF_{\vec{n}}:= h_{\Gamma, \Sigma}.\span_\C\{ \ee_1 + i \ff_1, \cdots, \ee_n + i \ff_n\}\] is equal to (in the notation of (\ref{eq:bracketspan}))
\[ \left[\begin{pmatrix} 1_{n_0} & 0 & 0 \\ 0 & 1_{n_+} & 0 \\ 0 & 0 & 1_{n_-}\\ 0 & 0 & 0 \\ 0 & -i 1_{n_+} & 0 \\ 0 & 0 & i 1_{n_-}\end{pmatrix}\right]\] and
is contained in $F_{\langle \sqrt{\ii}\rangle, \ast} \cap \Lag^\C(\vec{n})$.
\end{remark}

Conjugation by elements of $\mathbb{S}_n \ltimes \langle \ii \rangle^n$ stabilizes the maximal torus $\SO(2)^n$ of the maximal compact subgroup of elements commuting with the compatible linear complex structure $\diag(\jj, \cdots, \jj)$ in $\Sp((\R^2)^n)$, so the group $\mathbb{S}_n \ltimes (\langle \sqrt{\ii}\rangle/\langle -1\rangle)^n$ can be viewed as an extension of the Weyl group $\mathbb{S}_n \ltimes (\langle \ii \rangle/\langle -1\rangle)^n$ of $\Sp((\R^2)^n)$. Taking $H = BO_{48} = \langle \sqrt{\ii}, \jj\rangle$, we observe that this discrete group can be extended further to involve quaternionic symmetries and ``square roots of quaternionic symmetries.''

\[ \Lag^\C(V) = \bigcup_{w \in \mathbb{S}_n \ltimes (O_{24})^n} Gw.L_{\ast}^\C.\]
\section{Orbit Fibrations}
\label{sec:fibrations}

In this section we review the Borel embedding and describe three simultaneous linear symplectic reductions as fiber bundles. The fiber bundles become isomorphic when the total space is restricted to spaces that behave well with respect to a compatible linear complex structure. We show how, in the reduced space, the relevant stabilizer subgroups arise from  a choice of involution commuting with a choice of compatible linear complex structure in the reduced linear spaces. We also discuss symmetric space structures and complex structures on the fibers.

\subsection{Review of the Borel embedding}
\label{subsec:borel}

In order to discuss particular complex structures we will be using and set up some notation, we will briefly recall the Borel embedding. For matrix forms of the objects appearing in this section, we refer to Section \ref{subsec:harishchandra}.

Denote $G:= \Sp(V)$ and $G^\C:= \Sp(V^\C)$. Let $\JJ$ be an $\omega$-compatible linear complex structure on $V$. Denote again by $\JJ$ its complex linear extension to $V^\C$.

Since we assume the linear symplectic group comes with its fundamental representation, we assume $G \subseteq \End(V)$, and we can view $\JJ$ as an element of $G =\Sp(V)$. Then $\Ad(\JJ)$ is a Cartan involution of $G$. In fact, the following holds:

\begin{proposition} There is a bijection between the Cartan involutions on $\g$ and the choice of $\omega$-compatible almost complex structures $\JJ$ on $V$.
\end{proposition}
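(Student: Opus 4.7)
The plan is to construct the natural map $\Phi: \JJ \mapsto \theta_\JJ := \Ad(\JJ)|_\g$ and verify it is a well-defined bijection onto the set of Cartan involutions of $\g = \sp(V, \omega)$. First I would check that the image lands in Cartan involutions. Since $\JJ \in \Sp(V) = G$ and $\JJ^2 = -1_V$ is central in $G$, $\theta_\JJ$ squares to $\Ad(-1_V) = \mathrm{id}_\g$ and so is an involutive Lie algebra automorphism. To show it is Cartan, introduce the positive definite inner product $g_\JJ(\cdot, \cdot) := \omega(\cdot, \JJ\cdot)$ on $V$. A short computation using $X \in \sp(V)$ (that is, $\omega(Xv, w) = -\omega(v, Xw)$) shows the $g_\JJ$-adjoint of $X$ is $X^* = \JJ X \JJ$, so $\theta_\JJ(X) = \JJ X \JJ^{-1} = -X^*$. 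Writing the Killing form of $\sp(V,\omega)$ as a positive multiple of the trace form, the symmetric bilinear form $-B(X, \theta_\JJ X)$ becomes a positive multiple of the Hilbert--Schmidt norm $\mathrm{tr}(XX^*)$, which is positive definite on $\g$. The $(+1)$- and $(-1)$-eigenspaces of $\theta_\JJ$ furnish the Cartan decomposition $\g = \k \oplus \p$.

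Next I would prove injectivity: if $\Ad(\JJ_1) = \Ad(\JJ_2)$ on $\g$, then $\JJ_1 \JJ_2^{-1}$ centralizes $G$ (using that $G$ is connected). Because $Z(\Sp(V)) = \{\pm 1_V\}$, this forces $\JJ_1 = \pm \JJ_2$, and compatibility $\omega(v, \JJ_1 v) > 0$ for nonzero $v$ rules out the minus sign, yielding $\JJ_1 = \JJ_2$.

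For surjectivity I would invoke the classical fact that any two Cartan involutions of a real semisimple Lie algebra are conjugate by an inner automorphism. Fix any $\omega$-compatible $\JJ_0$ (obtained, say, from a Darboux basis); by the forward direction $\theta_{\JJ_0}$ is a Cartan involution. Given an arbitrary Cartan involution $\theta$, there exists $g \in G$ with $\theta = \Ad(g) \circ \theta_{\JJ_0} \circ \Ad(g)^{-1} = \Ad(g \JJ_0 g^{-1})$. Setting $\JJ := g \JJ_0 g^{-1}$ one has $\JJ^2 = -1_V$ and $\JJ \in \Sp(V)$, and
\[ \omega(v, \JJ v) = \omega(g^{-1}v, \JJ_0 g^{-1}v) > 0 \]
for nonzero $v$, since $g \in \Sp(V)$ preserves $\omega$ and $\JJ_0$ is compatible. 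Hence $\JJ$ is $\omega$-compatible and $\Phi(\JJ) = \theta$.

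The main obstacle is the surjectivity step, which rests on the classical conjugacy theorem for Cartan involutions and on the observation that $\Sp(V)$-conjugacy preserves the class of $\omega$-compatible linear complex structures. The forward direction and injectivity are essentially bookkeeping arguments using the explicit Cartan decomposition produced by $g_\JJ$ and the fact that $Z(\Sp(V))$ is generated by $-1_V$.
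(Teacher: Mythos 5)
Your proposal is correct and follows essentially the same route as the paper: the map $\JJ \mapsto \Ad(\JJ)$, the observation that $\Ad(\JJ)(X) = -X^{*}$ for the inner product $\omega(\cdot,\JJ\cdot)$ (the paper does this in a $\JJ$-adapted Darboux basis, where it reads $-X^{t}$), and the conjugacy theorem for Cartan involutions together with $\Sp(V)$-conjugacy of compatible complex structures. The only cosmetic difference is in the bijectivity bookkeeping: you prove injectivity directly from $Z(\Sp(V)) = \{\pm 1_V\}$ plus positivity, while the paper matches stabilizers ($g$ commutes with $\JJ$ iff $\Ad(g)$ commutes with $\theta$) and identifies both sets with $G/K$; both arguments are sound.
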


\begin{proof}
Take a Darboux basis $\{ \ee, \JJ \ee\}$ of $V$. With this basis, we identify Lie algebra of $G$ as
\[ \g = \left\{ \mathbf{X} = \begin{pmatrix} \aa & \bb \\ \cc & -\aa^t \end{pmatrix} : \bb, \cc \in \Mat_{n\times n}(\R)^t, \aa \in \Mat_{n\times n}(\R) \right\}.\]
We can compute
\begin{equation*} \Ad(\JJ) \mathbf{X} := \begin{pmatrix} 0 & -1_n\\ 1_n& 0\end{pmatrix} \begin{pmatrix} \aa & \bb \\ \cc & -\aa^t\end{pmatrix} \begin{pmatrix} 0 & 1_n \\ -1_n & 0 \end{pmatrix} = \begin{pmatrix} -\aa^t & -\cc \\ -\bb & \aa\end{pmatrix} = -\mathbf{X}^t \end{equation*}
for all $\mathbf{X} \in \g$. So $\Ad(\JJ)$ is equal to the Cartan involution $\theta(\mathbf{X}) := -\mathbf{X}^t$ on $\g$. It is well known that all Cartan involutions are conjugate, and all $\omega$-compatible linear complex structures are conjugate. The conjugates correspond to each other because
\begin{eqnarray*} \Ad(g\JJ g^{-1})(\mathbf{X}) &=& (g \JJ g^{-1})\mathbf{X} (g\JJ g^{-1})\\
&=& -g (g^{-1} \mathbf{X}g)^{t} g^{-1} = (\Ad(g) \circ \theta \circ \Ad(g^{-1}))(\mathbf{X}). \end{eqnarray*}
$g$ commutes with $\JJ$ if and only if $\Ad(g)$ commutes with $\theta$, so both spaces can be parametrized by $G/K$.
\end{proof}

Let $K^\C \le G^\C$, and $K \le G$ be the subgroups of symplectic linear transformations that commute with $\JJ$, and $G_u \le G^\C$ be the subgroup of symplectic linear transformations $g$ such that $g\JJ = \JJ \overline{g}$. Equivalently, $K^\C$, $K$, and $G_u$ consists of the symplectic linear transformations that preserve, respectively, the bilinear forms $\omega^\C(\cdot, \JJ \cdot)$, $\omega(\cdot, \JJ \cdot)$, and $\omega^\C(\cdot, \JJ \overline{\cdot})$. Let $\g$, $\g^\C$, $\g_u$, $\k^\C$, $\k$ denote the corresponding Lie algebras. The decomposition into the $\pm 1$-eigenspaces of $\Ad(\JJ)$, $\g = \k \oplus \p$ is a Cartan decomposition of $\g$.

For all nonzero $\vv, \ww \in V$
\[ \omega^\C( (1_{V^\C} \mp i \JJ) \vv, (1_{V^\C} \mp i \JJ)\ww) = 0 \]
and
\[ \pm \kappa( (1_{V^\C} \mp i \JJ) \vv, (1_{V^\C} \mp i \JJ)\vv) > 0 \]
so $(1_{V^\C} - i\JJ)V \in \Lag^\C(0, n, 0)$. We can also immediately see 
\begin{equation}
    \FF_{\pm \JJ}:= (1_{V^\C} \mp i \JJ)V
\end{equation} is the $\pm i$-eigenspace of $\JJ$.
Denote the projections to $\FF_{\pm \JJ}$ along $\FF_{\mp \JJ}$ by
\begin{equation}
    \pr_{\pm\JJ} := \frac{1}{\sqrt{2}} (1_{V^\C} \mp i \JJ)
\end{equation}

Let $P^\mp \le G^\C$ be the abelian subgroup of symplectic linear transformations (cf. Section \ref{subsec:harishchandra}) that fix all the vectors of $\FF_{\mp\JJ}$. The following is a well known fact. 
\begin{lemma}\label{lem:borelstabilizers}
$K^\C P^-$ is the stabilizer of $\FF_{-\JJ}$ in $G^\C$. Moreover,
\[ K = G\cap K^\C P^- = G_u \cap K^\C P^-.\]
\end{lemma}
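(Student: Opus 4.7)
The plan is to first establish that $K^\C P^-$ equals the stabilizer of $\FF_{-\JJ}$ in $G^\C$ --- identifying the right-hand side as a Levi-type decomposition of a maximal parabolic --- and then to intersect with each of the two real forms $G$, $G_u$ and argue that the unipotent factor $P^-$ collapses, leaving only $K$.

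The easy inclusion, that $K^\C P^-$ stabilizes $\FF_{-\JJ}$, is immediate: $K^\C$ commutes with $\JJ$ and so preserves each $\JJ$-eigenspace (in particular $\FF_{-\JJ}$), while $P^-$ fixes $\FF_{-\JJ}$ pointwise by definition. For the reverse inclusion, the main step is to show that the restriction homomorphism $\rho \colon K^\C \to \GL(\FF_{-\JJ})$ is an isomorphism. Injectivity: if $k \in K^\C$ restricts to the identity on $\FF_{-\JJ}$, then $k$ also preserves $\FF_{+\JJ}$ (since it commutes with $\JJ$), and the symplectic condition together with the $\omega^\C$-duality between $\FF_{+\JJ}$ and $\FF_{-\JJ}$ forces $k$ to restrict to the identity on $\FF_{+\JJ}$ as well. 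For surjectivity, given any $A \in \GL(\FF_{-\JJ})$ I would extend it by declaring $k$ to act on $\FF_{+\JJ}$ as the dual-inverse of $A$ under the $\omega^\C$-pairing; the resulting $k$ preserves both eigenspaces (so commutes with $\JJ$), preserves $\omega^\C$, and restricts to $A$. Granted that $\rho$ is an isomorphism, any $g$ stabilizing $\FF_{-\JJ}$ decomposes uniquely as $g = kp$ with $k := \rho^{-1}(g|_{\FF_{-\JJ}}) \in K^\C$ and $p := k^{-1}g$ symplectic and fixing $\FF_{-\JJ}$ pointwise, hence in $P^-$.

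For $G \cap K^\C P^- \subseteq K$, any $g \in G$ is real and so commutes with complex conjugation on $V^\C$; if $g$ stabilizes $\FF_{-\JJ}$ then it also stabilizes $\overline{\FF_{-\JJ}} = \FF_{+\JJ}$, and preserving both $\pm i$ eigenspaces of $\JJ$ is equivalent to commuting with $\JJ$, so $g \in K$. For $G_u \cap K^\C P^- \subseteq K$, the key observation is that $G_u$ preserves the positive definite hermitian form $h(\vv, \ww) := \omega^\C(\vv, \JJ\overline{\ww})$ on $V^\C$, as follows from combining $g\JJ = \JJ\overline{g}$ with the symplectic condition. A short calculation --- using that for $\ww \in \FF_{+\JJ}$ one has $\overline{\ww} \in \FF_{-\JJ}$ and that $\FF_{-\JJ}$ is $\omega^\C$-Lagrangian --- shows $\FF_{-\JJ}$ and $\FF_{+\JJ}$ are $h$-orthogonal. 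Hence any $g \in G_u$ stabilizing $\FF_{-\JJ}$ must also stabilize $\FF_{+\JJ}$, so it commutes with $\JJ$ and lies in $K^\C \cap G_u$; and if $g$ both commutes with $\JJ$ and satisfies $g\JJ = \JJ\overline{g}$, then $g = \overline{g}$, forcing $g \in G \cap K^\C = K$. The reverse inclusions $K \subseteq G \cap K^\C P^-$ and $K \subseteq G_u \cap K^\C P^-$ are trivial from $K \subseteq K^\C$.

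The principal obstacle is the surjectivity of $\rho$: every complex linear automorphism of $\FF_{-\JJ}$ extends to a complex symplectic automorphism of $V^\C$ commuting with $\JJ$. This is really the content of the Borel embedding of $G/K$ and deserves a careful verification that the extension is not merely invertible but actually preserves $\omega^\C$ on all of $V^\C$ (not just the pairing between the two eigenspaces). Everything else reduces to eigenspace bookkeeping under complex conjugation (for $G$) and under the positive definite hermitian form $h$ (for $G_u$).
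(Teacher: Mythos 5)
Your proposal is correct and follows essentially the same route as the paper: factor an element $g$ of the stabilizer, with respect to the splitting $V^\C = \FF_{-\JJ}\oplus\FF_{\JJ}$, into a block-diagonal part (lying in $K^\C$, which is your isomorphism $K^\C\cong\GL(\FF_{-\JJ})$ via the $\omega^\C$-dual-inverse extension) times a unipotent part in $P^-$, and then kill the $P^-$ factor on the real forms using complex conjugation (which swaps $\FF_{\pm\JJ}$) for $G$ and the $h$-orthogonality of $\FF_{-\JJ}$ and $\FF_{\JJ}$ for $G_u$. Your explicit check that the dual-inverse extension preserves $\omega^\C$ is a point the paper leaves implicit, but it is the same argument.
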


\begin{proof}
$P^-$ is by definition, contained in the stabilizer of $\FF_{-\JJ}$. Since $\FF_{\JJ}$ is transverse to $V$, 
\[ \FF_{-\JJ} = \pr_{-\JJ}V = \pr_{-\JJ}V^\C. \]
If $k \in K^\C$, then
\[ k. \FF_{-\JJ} = k (1_{V^\C} +i \JJ)V^\C = (1_{V^\C} + i\JJ)kV^\C = \FF_{-\JJ}.\]
So $K^\C P^-$ is contained in the stabilizer of $\FF_{-\JJ}$.

If $g \in \Sp(V^\C)$ is in the stabilizer of $\FF_{-\JJ} \in \Lag^\C(V)$, then with respect to the splitting $V^\C \cong \FF_{-\JJ} \oplus \FF_{\JJ}$, we can write $g$ as
\[ g = \begin{pmatrix} g_{11} & g_{12} \\0 & g_{22} \end{pmatrix} = \begin{pmatrix} g_{11} & 0 \\ 0 & g_{22} \end{pmatrix} \begin{pmatrix} 1_{\FF_{-\JJ}} & g_{11}^{-1} g_{12} \\ 0 & 1_{\FF_{\JJ}} \end{pmatrix}\]
such that $g_{11}: \FF_{-\JJ} \to \FF_{-\JJ}$, $g_{12}: \FF_{\JJ} \to \FF_{-\JJ}$, and $g_{22}: \FF_{\JJ} \to \FF_{\JJ}$ are linear maps.

Then
\[ \begin{pmatrix} 1_{-\FF_{\JJ}} & g_{11}^{-1} g_{12} \\ 0 & 1_{\FF_{\JJ}} \end{pmatrix} \in P^-.\]

Moreover, for any $\vv \in V^\C$ there exist $\vv_{-\JJ} \in \FF_{-\JJ}$ and $\vv_{\JJ} \in \FF_{\JJ}$ such that  $\vv = \vv_{-\JJ} + \vv_{\JJ}$. Then
\[ \JJ (g_{11} \vv_{-\JJ} + g_{22} \vv_{\JJ}) = -i g_{11} \vv_{-\JJ} +i g_{22} \vv_{\JJ} = g_{11} \JJ \vv_{-\JJ} + g_{22} \JJ \vv_{\JJ}  \]
so 
\[ \JJ \begin{pmatrix} g_{11} & 0 \\ 0 & g_{22} \end{pmatrix}=   \begin{pmatrix} g_{11} & 0 \\ 0 & g_{22} \end{pmatrix} \JJ \implies \begin{pmatrix} g_{11} & 0 \\ 0 & g_{22} \end{pmatrix} \in K^\C.\]
Thus the stabilizer of $\FF_{-\JJ}$ is equal to $K^\C P^-$.

If $g \in G \cap K^\C P^-$, $g = \overline{g}$. Since $\FF_{\JJ} = \overline{\FF_{-\JJ}}$, $g = \overline{g}$ implies $g_{12} = \overline{g_{21}} = 0$, so that $G \cap P^-$ is trivial. So $G \cap K^\C P^- = G \cap K^\C = K$.

Since $\FF_{-\JJ}$ and $\FF_{\JJ}$ are orthogonal with respect to $\omega^\C(\cdot, \JJ \overline{\cdot})$, $G_u \cap P^-$ is trivial. Then $G_u \cap K^\C P^- = G_u \cap K^\C$ consists of those $g \in G^\C$ such that $\JJ g = g\JJ = \JJ \overline{g}$, so $G_u \cap K^\C = G \cap K^\C = K$.
\end{proof}

If we take a basis $\{ \vv_1, \cdots, \vv_n\}$ of a complex Lagrangian subspace $\FF$ that is unitary with respect to $\omega^\C(\cdot, \JJ \overline{\cdot})|_{\FF \times \FF}$, then $\{ \vv_1, \cdots, \vv_n, \JJ \overline{\vv}_1, \cdots, \JJ \overline{\vv}_n\}$ is a complex Darboux basis of $V^\C$ that is unitary with respect to $\omega^\C(\cdot, \JJ \overline{\cdot})$. Thus $G_u$ acts transitively on $\Lag^\C(V)$.

So as a consequence of Lemma \ref{lem:borelstabilizers}, the maps
$gK \mapsto g.\FF_{-\JJ}$, and $gK^\C P^- \mapsto g.\FF_{-\JJ}$ provide diffeomorphisms
\begin{eqnarray}
G/K &\xrightarrow{\cong}& \Lag^\C(0, 0, n; V) \notag\\
G_u/K &\xrightarrow{\cong}& \Lag^\C(V) \label{eq:identifcationsborel}\\
G^\C/K^\C P^- &\xrightarrow{\cong}& \Lag^\C(V). \notag
\end{eqnarray}

So in our case of discussion, the Borel embedding (Proposition 7 of \cite{Borel}, and see Proposition I.5.24 of \cite{BorelJi}, Proposition 7.14 of \cite{Helgason} for expositions)
\begin{equation} \label{eq:borel}
G/K \hookrightarrow G^\C/K^\C P^- \cong G_u /K
\end{equation}
can be expressed as just the $G$-equivariant inclusion
\begin{equation} \label{eq:borelinclusion}
    \Lag^\C(0, 0, n; V) \hookrightarrow \Lag^\C(V)
\end{equation}

\begin{remark}[Dependence on choice of $\JJ$]
The Borel embedding (\ref{eq:borel}) exists once $\JJ$ is chosen and depends equivariantly on the choice of $\JJ$. This is because all Cartan involutions are conjugate, and a choice of maximal compact subgroup $K \le G$ is equivalent to a choice of $\omega$-compatible linear complex structure $\JJ$. Moreover, since $K^\C P^-$ is a parabolic subgroup of $G^\C$, it is its own normalizer. Conjugating by elements of $G$ (and $G_u$) preserves all the structures in the embedding--i.e. all the statements above hold if we replace $\JJ$'s by $g\JJ g^{-1}$'s. However, we note that after the identifications (\ref{eq:identifcationsborel}),(\ref{eq:borelinclusion}) \emph{does not require making a choice of $\JJ$ at all} and only uses complex conjugation on $V^\C$.
\end{remark}

\begin{remark}[Complex structure on $\Lag^\C(V)$] \label{rem:complexstructures}
If we identify $V^\C \cong \C^{2n}$, $G^\C$ is a linear algebraic group. Since $K^\C P^-$ is a parabolic subgroup of $G^\C$, $\Lag^\C(V) \cong G_u /K \cong G^\C/K^\C P^-$ is a (complex) projective variety, and inherits a K\"{a}hler structure from complex projective space. In this paper, to make transparent the dependence of the choice of $
\omega$-compatible linear complex structure $\JJ$, we use the following construction of the K\"{a}hler structure on $\Lag^\C(V)$ from \cite{Helgason}:

Given an $\omega$-compatible linear complex structure $\JJ$ on $V$, let $G_u$ be the subgroup of $G^\C$ consisting of the elements fixed by $g \mapsto (g^\dagger)^{-1}$ where $(\cdot)^\dagger$ is the adjoint with respect to the hermitian inner product $\omega^\C( \cdot \JJ \overline{\cdot})$. Let $K$ be fixed locus of the involution $g \mapsto \overline{g}$ on $G_u$. The image $\Ad(K) \subseteq \GL(\g_u)$ is compact, so $(G_u, K)$ is a Riemannian symmetric pair. Let $o:= K$ be a basepoint of the left coset space $G_u/K$. A choice of $K$-invariant bilinear form on $T_o (G_u/K) \cong \g_u / \k \cong i\p $, extends to a $G_u$-invariant Riemannian metric on $G_u/K$, making it a globally Riemannian symmetric space (Proposition IV.3.4 of \cite{Helgason}).

We can choose the $K$-invariant bilinear form on $T_o (G_u/K)$ to be the restriction of the Killing form to $i\p$.

Moreover,
\[ \omega(\uu, \JJ \vv) + \omega(\JJ \uu, \vv) = 0 \quad \uu, \vv \in V,\]
so $\JJ$ can be viewed as an element of $\k \subseteq \End(V)$. Then $\Ad(\JJ)\mathbf{X} = -\mathbf{X}$ is equivalent to
\begin{equation} \label{eq:adJ}
    \frac{1}{2} \ad(\JJ)\mathbf{X} = \JJ\mathbf{X} \quad \mathbf{X}\in \p.
\end{equation}
So $\frac{1}{2}\ad(\JJ)$ is a linear complex structure on $i\p$, compatible with the restriction of the Killing form of $\g_u$ to $i\p$. By Proposition VIII.4.2 in \cite{Helgason}, $\frac{1}{2}\ad(\JJ)$ extends to a unique $G_u$-invariant genuine complex structure on $G_u/K$, making $\Lag^\C(V)$ into a hermitian symmetric space of the compact type CI in the notation of \cite{Helgason}.
\end{remark}

\subsection{Simultaneous linear symplectic reduction}

\begin{definition}[Normalizers and centralizers of fibers]\label{def:normalizercentralizer}
Suppose $E \xrightarrow{\pi} B$ is a smooth surjective map, and a real Lie group $G$ acts on $E$ smoothly. Let $N(\pi^{-1}(b)):= \{ g \in G : g.\pi^{-1}(b) = \pi^{-1}(b)\}$ and $C(\pi^{-1}(b)):= \{ g \in G : g|_{\pi^{-1}(b)} = 1_{\pi^{-1}(b)}\}$. We will denote by $N(b) := N(\pi^{-1}(b))$ and $C(b):= C(\pi^{-1}(b))$ when $\pi$ is clear from context. If $\pi(x) = b$, then denote the isotropy subgroup of $x$ in $N(\pi^{-1}(b))$ by $N(\pi^{-1}(b))_x$ (or simply by $N(b)_x$).
\end{definition}

\begin{remark}
$C(b)$ is the kernel of the action of $N(b)$ on $\pi^{-1}(b)$, and hence a normal subgroup of $N(b)$. $C(b)$ is also a subgroup of $N(b)_x$, so it is also a normal subgroup of $N(b)_x$ for every $x \in \pi^{-1}(b)$.
\end{remark}

The following statement may be well known.

\begin{lemma} \label{lem:orbitspacefibration}
Suppose $M$ is a smooth manifold, and $G$ is a real Lie group acting smoothly, freely, and properly on $M$ from the right. Suppose $H \le G$ is a closed subgroup. Then the orbit space $M/H$ is a smooth manifold and there is a smooth fibration of orbit spaces
\[ \pi: M/H \to M/G\quad xH \mapsto xG \quad x \in M\]
with diffeomorphism of fibers \[ G/H \xrightarrow{\cong} \pi^{-1}(xG) \quad gH \mapsto xgH.\]
\end{lemma}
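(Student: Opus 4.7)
The plan is to apply the quotient manifold theorem twice and then realize $\pi$ as the fiber bundle with typical fiber $G/H$ associated to the principal $G$-bundle $q_G \colon M \to M/G$. Because $H \le G$ is closed, the restricted action of $H$ on $M$ is still smooth, free, and proper. Applying the quotient manifold theorem separately to the $G$- and the $H$-actions, I get smooth manifold structures on $M/G$ and $M/H$, together with smooth surjective submersions $q_G \colon M \to M/G$ and $q_H \colon M \to M/H$, each of which is a principal bundle for the corresponding group.

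Next, $\pi \colon M/H \to M/G$, $xH \mapsto xG$, is well defined because every $H$-orbit is contained in a $G$-orbit, and by construction $q_G = \pi \circ q_H$. Since $q_H$ is a surjective submersion, the universal property of submersions upgrades smoothness of $q_G$ to smoothness of $\pi$. For the fiber statement, fix $x \in M$ and define $\Phi_x \colon G/H \to \pi^{-1}(xG)$ by $\Phi_x(gH) := xgH$. Well-definedness, surjectivity, and injectivity of $\Phi_x$ all reduce to freeness of the $G$-action: $xgH = xg'H$ holds iff $xg = xg'h$ for some $h \in H$, iff $g = g'h$, iff $gH = g'H$. Smoothness of $\Phi_x$ comes from descending the smooth map $g \mapsto q_H(xg)$ along the submersion $G \to G/H$.

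The remaining, and only nontrivial, step is to upgrade $\Phi_x$ from a smooth bijection to a diffeomorphism by showing that $\pi$ is locally trivial. The hard part here is really just unpacking local triviality of $q_G$: over a small enough neighborhood $U$ of $xG$ in $M/G$, choose a $G$-equivariant trivialization $q_G^{-1}(U) \cong U \times G$, with $G$ acting on the second factor by right multiplication. Quotienting both sides by the (still free, proper) right $H$-action yields a diffeomorphism $\pi^{-1}(U) \cong U \times (G/H)$ that intertwines $\pi$ with projection onto the first factor and restricts on each fiber to $\Phi_{x'}$ (up to a left translation in $G$ coming from the trivialization). This exhibits $\pi$ as a smooth fiber bundle with typical fiber $G/H$, and in particular realizes $\Phi_x$ as a diffeomorphism onto $\pi^{-1}(xG)$, finishing the proof.
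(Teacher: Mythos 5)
Your proposal is correct, and its first half coincides with the paper's argument: the paper likewise restricts the action to the closed subgroup $H$, notes that the $H$-action is still smooth, free, and proper, invokes the quotient manifold theorem to make $M/H$ a smooth manifold with $M \twoheadrightarrow M/H$ a surjective submersion, and obtains smoothness of $\pi$ by passing to the quotient along that submersion. The paper stops there, implicitly treating the fiber identification and bundle structure as well known, whereas you go on to prove them: you check that $\Phi_x(gH) = xgH$ is a smooth bijection onto $\pi^{-1}(xG)$ using freeness, and you obtain local triviality of $\pi$ (and hence that $\Phi_x$ is a diffeomorphism) by taking a $G$-equivariant local trivialization of the principal bundle $M \to M/G$ and quotienting it by the right $H$-action to get $\pi^{-1}(U) \cong U \times (G/H)$. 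This extra step is legitimate — local triviality of $M \to M/G$ is available from the slice theorem for free proper actions — and it buys a genuinely complete proof of the statement as phrased (fibration plus fiber diffeomorphism), at the cost of invoking the principal-bundle structure rather than only the submersion property that the paper uses. One small point worth making explicit in your step on smoothness of $\Phi_x$: the target $\pi^{-1}(xG)$ is an embedded submanifold because $\pi$ is a submersion (which follows from $q_G = \pi \circ q_H$ with $q_H$ a surjective submersion), so the descended map $G/H \to M/H$ indeed restricts smoothly to the fiber; with that remark your argument is airtight.
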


\begin{proof}
$H$ acts on $M$ smoothly and freely. Since $H$ is closed, the inclusion map to $G$ is proper, and $H$ acts on $M$ properly. So $M/H$ is a smooth manifold such that the quotient map $M \twoheadrightarrow M/H$ is a smooth surjective submersion. $M \twoheadrightarrow M/G$ is constant on the fibers of $M \twoheadrightarrow M/H$, so we can pass smoothly to the quotient to obtain $\pi: M/H \twoheadrightarrow M/G$.
\end{proof}

The following statement may be well known.

\begin{lemma}\label{lem:fiber}
We assume the setup of Definition \ref{def:normalizercentralizer}.
\begin{enumerate}
    \item If $b\in B$ is a regular value of $\pi$, $N(b)$ acts transitively on $\pi^{-1}(b)$, and suppose $N \le C(b)$ is a normal subgroup of both $N(b)$ and $N(b)_x$. Then there is an $N(b) \twoheadrightarrow N(b)/N$-equivariant diffeomorphism
\[ \pi^{-1}(b) \cong N(b) / N(b)_x \cong (N(b)/N) / (N(b)_x / N).\]
    \item If $G$ acts on $B$ and $\pi$ is $G$-equivariant, then $g \in G$ is an element of $N(b)$ if and only if there exist $x, x' \in \pi^{-1}(b)$ such that $g.x = x'$. Moreover, $N(b)$ is equal to $G_b$, the isotropy subgroup of $b$ in $G$, and $N(b)_x$ is equal to $G_x$.
    \item If $G$ acts on $B$, $\pi$ is $G$-equivariant, and $G$ acts on $E$ transitively, then
    \begin{enumerate}
        \item $G$ acts transitively on $B$, and $N(b)$ acts transitively on $\pi^{-1}(b)$ for every $b \in B$.
        \item For every $b, b' \in B$, $N(b)$ is conjugate to $N(b')$. If $\pi(x) = b$ and $\pi(x') = b'$, then $N(b)_x$ is conjugate to $N(b')_{x'}$.
        \item $\pi$ is a fiber bundle. Moreover, for every $(x, b)$ such that $\pi(x) = b$, there are $G$-equivariant diffeomorphisms such that the following diagram commutes:
        \[ \begin{tikzcd} E  \arrow[r, "\cong"] \arrow[d, two heads, "\pi"] & G/G_x  \arrow[d, two heads, "\pi'"] \arrow[r, "\cong"] & G \times_{N(b)} (N(b) / N(b)_x) \arrow[d, two heads, "\pi''"] \\ B \arrow[r, "\cong"] & G/G_b \arrow[r, equals] & G/N(b) \end{tikzcd}.\]
    \end{enumerate}
    Here $\pi'$ is as in Lemma \ref{lem:orbitspacefibration} and $\pi''$ is an associated bundle to the principal $N(b)$-bundle $G \twoheadrightarrow G/N(b)$. $G/G_x$ and $G/G_b$ are viewed as both left coset spaces and orbit spaces.
\end{enumerate}
\end{lemma}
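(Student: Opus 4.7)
The plan is to handle the three parts in order, using standard orbit--stabilizer machinery for smooth Lie group actions.

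For part (1), since $b$ is a regular value of $\pi$, the fiber $\pi^{-1}(b) \subseteq E$ is a smooth embedded submanifold, and $N(b)$ acts smoothly on it; the action is transitive by hypothesis. The classical theorem on transitive smooth actions of Lie groups then gives a diffeomorphism $N(b)/N(b)_x \xrightarrow{\cong} \pi^{-1}(b)$ via $g N(b)_x \mapsto g.x$. For the second identification, I would observe that $N \subseteq C(b) \subseteq N(b)_x$, the first inclusion by hypothesis and the second because elements of $C(b)$ fix every point of $\pi^{-1}(b)$, in particular $x$. Since $N$ is closed and normal in both $N(b)$ and $N(b)_x$, the third isomorphism theorem for Lie groups yields $(N(b)/N)/(N(b)_x/N) \cong N(b)/N(b)_x$; both diffeomorphisms are $N(b)$-equivariant and descend to $(N(b)/N)$-equivariance.

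Part (2) is a direct unwinding of the definitions using $G$-equivariance. If $g \in N(b)$, then for any $x \in \pi^{-1}(b)$, $x' := g.x \in g.\pi^{-1}(b) = \pi^{-1}(b)$. Conversely, if $g.x = x'$ with $x, x' \in \pi^{-1}(b)$, then $g.b = g.\pi(x) = \pi(g.x) = \pi(x') = b$, so $g \in G_b$. More generally, $G$-equivariance gives $g.\pi^{-1}(b) = \pi^{-1}(g.b)$, from which $g \in N(b) \iff g.b = b$, i.e.\ $N(b) = G_b$. For the second equality, any $g \in G_x$ satisfies $g.b = g.\pi(x) = \pi(x) = b$, hence already lies in $G_b = N(b)$, so $G_x \subseteq N(b)_x$; the reverse inclusion is immediate.

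For part (3), transitivity of $G$ on $E$ together with $G$-equivariance immediately gives transitivity on $B$, and then the transitivity of $N(b) = G_b$ on $\pi^{-1}(b)$ follows because any $g \in G$ carrying $x$ to $x' \in \pi^{-1}(b)$ must lie in $G_b$ by part (2), proving (3a). Conjugacy of the $N(b)$'s and of the $N(b)_x$'s in (3b) is then the standard consequence of transitivity combined with the identifications $N(b) = G_b$, $N(b)_x = G_x$. For the commutative diagram in (3c), I would identify $E \cong G/G_x$ and $B \cong G/G_b$ via orbit maps, under which $\pi$ is forced to be the canonical projection because both maps are $G$-equivariant and agree on the chosen basepoint. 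The identification with $G \times_{N(b)} (N(b)/N(b)_x)$ comes from the explicit map $[g, h N(b)_x] \mapsto g h.x$, which is well-defined on the balanced product, $G$-equivariant, and bijective; smoothness on both sides upgrades it to a diffeomorphism.

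The main obstacle is bookkeeping rather than depth: one must check that all three $G$-equivariant diffeomorphisms in the top row and the equality in the bottom row fit together coherently, and that the fiber bundle structure of $\pi$ is correctly realized. For the latter, I would invoke the standard result that for closed subgroups $N(b)_x \le N(b) \le G$, the natural projection $G/N(b)_x \twoheadrightarrow G/N(b)$ is a locally trivial fiber bundle with fiber $N(b)/N(b)_x$, expressed as the associated bundle to the principal $N(b)$-bundle $G \twoheadrightarrow G/N(b)$ with typical fiber $N(b)/N(b)_x$; combined with part (1), this gives the rightmost vertical arrow in the diagram and completes the proof.
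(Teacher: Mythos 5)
Your proposal is correct and follows essentially the same route as the paper: orbit--stabilizer theory for part (1), direct unwinding of equivariance for part (2), and the identifications $E \cong G/G_x \cong G\times_{N(b)}(N(b)/N(b)_x)$ over $B \cong G/N(b)$ for part (3), with the right-hand square handled by the standard associated-bundle fact (the paper cites Kobayashi--Nomizu, Proposition I.5.5). The only points the paper makes explicit that you pass over silently are that $N(b)$ is a closed, hence Lie, subgroup of $G$ (needed before invoking the transitive-action theorem in (1)) and, implicitly, that $N$ must be closed for the quotient groups in (1) to make sense; both are one-line checks and do not affect the argument.
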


\begin{proof} (1): $\pi^{-1}(b)$ is closed submanifold of $E$, so $N(b)$ is closed. $N(b)$ is also a subgroup of $G$, and hence a Lie subgroup (Theorem II.2.3 of \cite{Helgason}). By assumption $\pi^{-1}(b)$ is a homogeneous space. The diffeomorphism and equivariance are well known (see for instance Theorem II.3.2, Proposition II.4.3 of \cite{Helgason}). (2): If there exist $x, x' \in \pi^{-1}(b)$ such that $g.x = x'$, $g.b  = g.\pi(x) = \pi(g.x) = \pi(x') = b$, so $g \in G_b$. If $g \in G_b$, for any $y \in \pi^{-1}(b)$, $\pi(g.y) = g.\pi(y) = g.b = b$. So $g \in N(b)$. If $g \in N(b)$, for any $x \in \pi^{-1}(b)$, $g.x \in \pi^{-1}(b)$. $N(b)_x \le G_x$ by definition. $G_x \le N(b)_x$ by what we have just shown. (3a): If $b, b' \in B$, take any $x \in \pi^{-1}(b)$ and $x' \in \pi^{-1}(b)$. By assumption, there exists $g \in G$ such that $g.x = x'$. Then since $\pi$ is $G$-equivariant, $g.b = g.\pi(x) = \pi(g.x) = \pi(x') = b'$. For any $x, x' \in \pi^{-1}(b)$, there exists $g \in G$ such that $g.x = x'$. By (2), $g \in N(b)$. (3b): From (2) $G_x = N(b)_x$, $G_x' = N(b')_{x'}$, so it follows from transitivity that the isotropy subgroups are all conjugate. Similarly, from (2) $G_b = N(b)$, $G_{b'} = N(b')$, and they are conjugate as isotropy subgroups. (3c): If $x' \in E$, by the transitivity assumption, there exists an $h \in G$ such that $hx' = x$. Then $x' \mapsto hG_x$ gives a map $E \to G/G_x$ not depending on the choice of $h$, that is well known to be a diffeomorphism. If $h' \in G$ such that $h'.\pi(x') = b$, then $h'.x' \in \pi^{-1}(b)$, so there exists a $g \in N(b) = G_b$ such that $gh' = h$ by (3a). So $h'G_b = hg^{-1} G_b = h G_b$, and the left square commutes. The right square commutes by Proposition I.5.5 of \cite{KobayashiNomizu}.
\end{proof}

\begin{theorem}[Simultaneous linear symplectic reduction] \label{thm:reduction}~\\
Consider the five $\Sp(V)$-equivariant maps given in the commuting diagram
\begin{equation}\label{eq:reduction}
    \begin{tikzcd} \Lag_\oplus^\C(\vec{n}) \arrow[r, two heads, "\varphi"] \arrow[d, two heads, "\Re\cdot_{\ge 0}"'] \arrow[dr, two heads, "\alpha"] &\Lag^\C(\vec{n}) \arrow[d, two heads, "\Re \cdot_0"]\\ \Gr(\vec{n}) \arrow[r, two heads, "\cdot \cap \cdot^\omega"'] & \Gr(n_0, 0, n-n_0) \end{tikzcd} \quad \begin{tikzcd} \FF^\oplus \arrow[r, maps to] \arrow[d, maps to] \arrow[dr, maps to] & \FF \arrow[d, maps to]\\ \Re \FF^\oplus_{\ge 0} \arrow[r, maps to] & \Re \FF_0 \end{tikzcd}.
\end{equation}
\begin{enumerate}
\item $\alpha$, $\varphi$, $(\cdot \cap \cdot^\omega)$, $\Re \cdot_{\ge 0}$, $\Re \cdot_0$ are fiber bundles.
\item If $\pi = \alpha, \Re \cdot_0, (\cdot \cap \cdot^
\omega)$ and $ b = W_0 \in \Gr(n_0, 0, n-n_0)$, or if $\pi = \Re \cdot_{\ge 0}$ and $b = W \in \Gr(\vec{n})$, or if $\pi = \varphi$ and $b = \FF \in \Lag^\C(\vec{n})$, then the normalizers of the fibers are $N(\pi^{-1}(b)) \cong \Sp_b(V)$.

\item If $\pi = \varphi, \alpha, \Re \cdot_{\ge 0}$ and $x = \FF^\oplus \in \Lag^\C_\oplus(\vec{n})$, or if $\pi = \Re \cdot_0$ and $x = \FF \in \Lag^\C(\vec{n})$, or if $\pi = \cdot \cap \cdot^\omega$ and $x = W' \in \Gr(\vec{n})$, then the isotropy subgroups of the action of $N(\pi^{-1}(\pi(x)))$ on $\pi^{-1}(\pi(x))$ are $N(\pi^{-1}(\pi(x)))_x \cong \Sp_x(V)$.

\item For all $W_0 \in \Gr(n_0, 0, n-n_0)$ there are diffeomorphisms for the fibers
\begin{eqnarray*}
    \alpha^{-1}(W_0) & \xrightarrow{\cong} & \Lag_{\oplus}^\C(0, n_+, n_-; W_0^\omega/W_0) \\
    \FF^\oplus &\mapsto &  \FF^\oplus/\FF_0 \\
    (\Re \cdot_0)^{-1}(W_0) & \xrightarrow{\cong} & \Lag^\C(0, n_+, n_-; W_0^\omega/W_0)\\
    \FF &\mapsto &\FF/\FF_0\\
    (\cdot \cap \cdot^{\omega})^{-1}(W_0) &\xrightarrow{\cong}& \Gr(0, n_+, n_-; W_0^\omega/W_0)\\
    W' &\mapsto& W'/W_0
\end{eqnarray*}
equivariant with respect to the isomorphism
\begin{equation} \label{eq:fiberaction} \Sp_{W_0}(V)/(\GL(W_0) \ltimes H(W_0)) \xrightarrow{\cong} \Sp(W_0^\omega/W_0).\end{equation}
Similarly, for all $W \in \Gr(\vec{n})$ there is a diffeomorphism
\begin{eqnarray*}
(\Re\cdot_{\ge 0 })^{-1}(W) &\xrightarrow{\cong} & \Lag^\C(0, n_+, 0; W/W_0) \times \Lag^\C(0, 0, n_+; W^\omega/W_0)\\
\FF^\oplus &\mapsto& (\FF^\oplus_{\ge 0}/\FF_0, \FF^\oplus_{\le 0}/\FF_0)
\end{eqnarray*}
equivariant with respect to the isomorphism $\Sp_W(V)/(\GL(W_0) \ltimes H(W)) \xrightarrow{\cong} \Sp(W/W_0) \times \Sp(W^\omega/W_0)$.

\item If $\pi = \alpha, \Re \cdot_0, (\cdot \cap \cdot^
\omega)$ and $ b = W_0 \in \Gr(n_0, 0, n-n_0)$, or if $\pi = \Re \cdot_{\ge 0}$ and $b = W \in \Gr(\vec{n})$, or if $\pi = \varphi$ and $b = \FF \in \Lag^\C(\vec{n})$, then there are short exact sequences of groups:
\begin{equation} \label{eq:centralizer1}
    \GL(\Re b_0) \ltimes H(\Re b) \hookrightarrow C(\pi^{-1}(b)) \twoheadrightarrow Z(\Sp((\Re b_0)^\omega/\Re b_0))
\end{equation}
\begin{equation}\label{eq:centralizer2}
    Z(\Sp(V)) \hookrightarrow C(\pi^{-1}(b)) \twoheadrightarrow (\operatorname{GL}(\Re b_0)/\{ \pm 1_{\GL(\Re b_0)}\}) \ltimes H(\Re b).
\end{equation}
\end{enumerate}
\end{theorem}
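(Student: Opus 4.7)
The plan is to deduce parts (1), (2), and (3) as a single application of Lemma \ref{lem:fiber}(3) to each of the five maps. The hypotheses of that lemma require transitivity of $G := \Sp(V)$ on the total space and $G$-equivariance of the map. Transitivity on $\Gr(\vec{n})$ and $\Lag^\C(\vec{n})$ are the relative Darboux theorems \ref{thm:basisR} and \ref{thm:basisC}, respectively; transitivity on $\Lag^\C_\oplus(\vec{n})$ follows from Theorem \ref{thm:basisC} by observing that once complex Lagrangians have been matched, the positive- and negative-definite components of a splitting can then be adjusted by elements of $\Sp_\FF(V)$. $G$-equivariance of all five maps is immediate because each is constructed purely from $\omega$, its complexification $\omega^\C$, complex conjugation on $V^\C$, and the real projection $\Re$. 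Parts (2) and (3) then amount to Lemma \ref{lem:fiber}(2) together with the definitions $G_b = \Sp_b(V)$ and $G_x = \Sp_x(V)$.

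For (4) I would construct each fiber diffeomorphism explicitly via linear symplectic reduction. For $\pi = \cdot\cap\cdot^\omega$, a subspace $W' \in \Gr(\vec{n})$ satisfies $W'\cap W'^\omega = W_0$ if and only if $W_0 \subseteq W' \subseteq W_0^\omega$ with $W'/W_0$ symplectic of type $(0,n_+,n_-)$ in $(W_0^\omega/W_0,\tilde\omega)$, so $W' \mapsto W'/W_0$ is a smooth bijection whose inverse takes the preimage under the quotient $W_0^\omega \twoheadrightarrow W_0^\omega/W_0$. The analogues for $\alpha$ and $\Re\cdot_0$ use (\ref{eq:realprojectionimage}) together with the identification $(\FF+\overline\FF)/\FF_0 \cong (\Re\FF/\Re\FF_0)^\C$; the one for $\Re\cdot_{\ge 0}$ uses that $\FF^\oplus_{\ge 0}$ and $\FF^\oplus_{\le 0}$ quotient modulo $\FF_0$ to the claimed positive- and negative-definite complex Lagrangian factors. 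Equivariance under (\ref{eq:fiberaction}) is a direct consequence of the semidirect-product extension (\ref{eq:extensionRmax}) of Theorem \ref{thm:splitW} applied with $W = W_0$ isotropic: the kernel $\GL(W_0)\ltimes H(W_0)$ acts trivially on $W_0^\omega/W_0$, so the action of $\Sp_{W_0}(V)$ on the fiber factors through $\Sp(W_0^\omega/W_0)$.

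Part (5) is the main obstacle. The inclusion $\GL(\Re b_0)\ltimes H(\Re b) \hookrightarrow C(\pi^{-1}(b))$ is direct: by Theorems \ref{thm:splitW} and \ref{thm:splitF} this subgroup is the kernel of the surjection $\Sp_b(V)\twoheadrightarrow \Sp((\Re b_0)^\omega/\Re b_0)$, so it acts trivially on the reduced symplectic space and hence, via the identifications of (4), pointwise on the entire fiber. The image of $C(\pi^{-1}(b))$ in $\Sp((\Re b_0)^\omega/\Re b_0)$ consists of those reduced symplectic automorphisms that fix every subspace appearing in the fiber; the key claim is that this image is exactly $\{\pm 1\}$, which I would verify by exhibiting two elements of the fiber whose common $\Sp$-stabilizer in the reduced space is the center (for example two transverse complex Lagrangians of opposite-signed definite type, or two transverse symplectic subspaces of the right dimensions). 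This yields the short exact sequence (\ref{eq:centralizer1}). The second sequence (\ref{eq:centralizer2}) then follows by noting that $Z(\Sp(V)) = \{\pm 1_{\Sp(V)}\}$ sits inside $\GL(\Re b_0)\subseteq C(\pi^{-1}(b))$ as $\pm 1_{\GL(\Re b_0)}$, so quotienting the kernel term of (\ref{eq:centralizer1}) by this central subgroup produces $(\GL(\Re b_0)/\{\pm 1\})\ltimes H(\Re b)$. The hardest steps will be the faithfulness-modulo-center claim for the various $\Sp$-actions on the fiber Grassmannians, and the coherent interpretation of the sequence in degenerate edge cases where the fiber collapses to a single point or where the reduced symplectic space factors non-trivially as a symplectic direct sum.
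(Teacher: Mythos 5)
Your parts (1)--(4) are correct and essentially reproduce the paper's own argument: transitivity of $\Sp(V)$ on all five total spaces via Theorems \ref{thm:basisR} and \ref{thm:basisC}, equivariance, and Lemma \ref{lem:fiber} give (1)--(3), and your hands-on reduction correspondences in (4), with equivariance extracted from the extension (\ref{eq:extensionRmax}), carry the same content as the paper's appeal to Lemma \ref{lem:fiber}.1 with $N = \GL(W_0)\ltimes H(W)$.

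In (5) there are genuine gaps. The first is your method for the key claim. Exhibiting \emph{two} fiber elements whose common stabilizer in the reduced group is the center will not work, and your proposed witnesses fail outright: two complementary symplectic subspaces of $W_0^\omega/W_0$ have common stabilizer the product of the two reduced symplectic groups, and a pair of definite complex Lagrangians of opposite sign is not even contained in a single fiber (all elements of $(\Re\cdot_0)^{-1}(W_0)$ have the same type), while its common stabilizer is a full unitary group. The correct argument, which the paper uses implicitly, is structural: the set of elements acting trivially on the fiber is the kernel of a \emph{transitive} action of $\Sp(W_0^\omega/W_0)$, hence a closed normal subgroup of a connected group with simple Lie algebra and center $\{\pm 1\}$; when the fiber is not a single point the action is nontrivial, so the kernel is central, and $-1$ preserves every subspace, so the kernel is exactly $\{\pm 1\}$ (equivalently, quote the observations in Sections \ref{subsec:orbitsreal} and \ref{subsec:orbitscomplex} applied to the reduced space). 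Note also that your discussion only addresses $\pi = \alpha$, $\Re\cdot_0$, $\cdot\cap\cdot^\omega$: for $\pi = \Re\cdot_{\ge 0}$ the transitive group on the fiber is $\Sp(W/W_0)\times\Sp(W^\omega/W_0)$, and for $\pi = \varphi$ it is $\U(\FF/\FF_0)$ acting on equivalence classes of splittings, where \emph{every} unit scalar of $\FF/\FF_0$ acts trivially; these cases do not reduce to the same kernel computation and require the separate treatment the paper gives them---this, together with the point-fiber degenerations you flag, is where the real difficulty sits.

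The second gap is your derivation of (\ref{eq:centralizer2}). The assertion that $Z(\Sp(V)) = \{\pm 1_V\}$ sits inside $\GL(\Re b_0)\subseteq C(\pi^{-1}(b))$ as $\pm 1_{\GL(\Re b_0)}$ is false whenever $n_0 < n$: the kernel term $\GL(\Re b_0)\ltimes H(\Re b)$ is by definition the subgroup acting trivially on the reduced space $(\Re b_0)^\omega/\Re b_0$, whereas $-1_V$ acts there as $-1$; in the extreme case $n_0 = 0$ the kernel term is trivial while $-1_V \in C(\pi^{-1}(b))$ is not. So you cannot obtain (\ref{eq:centralizer2}) by quotienting the kernel term of (\ref{eq:centralizer1}) by a subgroup it does not contain. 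The paper instead argues that $C(\pi^{-1}(b))$ is generated by $Z(\Sp(V))$ together with $\GL(\Re b_0)\ltimes H(\Re b)$ and verifies the second sequence by writing elements in a Darboux basis adapted to $b$; the delicate point any correct write-up must confront is that in $C(\pi^{-1}(b))/Z(\Sp(V))$ the class of $-1_{\GL(\Re b_0)}$ is identified with the class of a lift of $-1$ of the reduced space, which is exactly what your shortcut elides.
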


\begin{proof}

(1): By Theorems \ref{thm:basisR}, \ref{thm:basisC}, $\Sp(V)$ acts transitively on all the spaces in the diagram (\ref{eq:reduction}), and all the maps in the diagram (\ref{eq:reduction}) are $\Sp(V)$-equivariant. So Lemma \ref{lem:fiber}.3c, all the maps in the diagram (\ref{eq:reduction}) are a fiber bundles. (2), (3): The computation of $N(\pi^{-1}(b))$ and $N(\pi^{-1}(\pi(x)))_x$ follows from Lemma \ref{lem:fiber}.2, and the definitions of the relevant groups. (4): This follows from Lemma \ref{lem:fiber}.1, where we take $N$ to be $\GL(W_0) \ltimes H(W)$. (5): Suppose $\pi = \alpha, \Re \cdot_0, (\cdot \cap \cdot^\omega)$ and $b = W_0 \in \Gr(n_0, 0, n-n_0)$. Under the isomorphism (\ref{eq:fiberaction}), $C(\pi^{-1}(b))$ gets mapped to the the kernel of the action of $\Sp(b^\omega/b)$, which is $Z(\Sp(b^\omega/b))$. Since $Z(\Sp(V))$ is contained in $C(\pi^{-1}(b))$, this map is surjective, giving us (\ref{eq:centralizer1}), as $b = \Re b_0$. So $C(\pi^{-1}(b))$ is generated by $Z(\Sp(V))$ and $\GL(b) \ltimes H(b)$. Take a Darboux basis $\{\ee, \ff\}$ given by Theorem \ref{thm:basisR} associated to $b$. Then writing out the elements in their matrix forms, we can check the short exact sequence in (\ref{eq:centralizer2}). For $\pi = \Re\cdot_{\ge 0}$, $b = W\in \Gr(\vec{n})$, we can prove it in the same way. If $\pi = \varphi$ and $b = \FF \in \Lag^\C(\vec{n})$, $\U(\FF/\FF_0)$ acts transitively on $\varphi^{-1}(\FF)$ via the identification
\begin{equation}\label{eq:fiberaction3} \Sp_b(V)/(\GL(\Re b_0) \ltimes H(\Re b_0)) \xrightarrow{\cong} \U(\FF/\FF_0) \le \Sp((\Re b_0)^\omega/\Re b_0).\end{equation}
If we take a Darboux basis $\{ \ee, \ff\}$ associated to $\FF$ given by Theorem \ref{thm:basisC}, we can inspect that the kernel of this action is $Z(\Sp((\Re b_0)^\omega/\Re b_0))$, and that under the isomorphism in (\ref{eq:fiberaction3}), $C(\pi^{-1}(b))$ gets mapped to the kernel $Z(\Sp((\Re b_0)^\omega/\Re b_0))$. The rest of the proof proceeds as above.
\end{proof}

\begin{remark}[Extending actions on the fibers] The group of linear symplectomorphisms of the reduced space $\Sp(W_0^\omega/W_0)$ acts on $\Lag_\oplus^\C(0, n_+, n_-; W_0^\omega/W_0)$, $\Lag^\C(0, n_+, n_-; W_0^\omega/W_0)$, and $\Gr(0, n_+, n_-; W_0^\omega/W_0)$. For any choice of complementary subspace $W_-$ of $W_0$ in $W_0^\omega$, this action identifies with an action of $\Sp(W_-)$ on $\alpha^{-1}(W_0)$, $(\Re \cdot_0)^{-1}(W_0)$, $(\cdot \cap \cdot^\omega)^{-1}(W_0)$ via $\Sp(W_0^\omega/W_0) \cong \Sp(W_-)$, and further extends to an action on $\Lag_\oplus^\C(\vec{n})$, $\Lag^\C(\vec{n})$, $\Gr(\vec{n})$ via the splitting homomorphism in Theorem \ref{thm:splitW}:
\[ \Sp(W_0^\omega/W_0) \cong \Sp(W_-) \hookrightarrow \Sp_{W_0^\omega}(V) \le \Sp(V).\]
\end{remark}

\begin{remark}[Boundary components and compatible complex structures]
For an $\omega$-compatible linear complex structure $\JJ$, consider the map $\JJ \mapsto \FF_\JJ = (1_{V^\C} - i \JJ)V \in \Lag^\C(0, n, 0)$. This map is a diffeomorphism from the set of $\omega$-compatible linear complex structures to $\Lag^\C(0, n, 0)$. So
when $n_- = 0$, the fibers $\alpha^{-1}(W)$ can be identified with the set of $\tilde{\omega}$-compatible linear complex structures on $W^\omega/W$. They are referred to as \emph{boundary components} in the literature, for instance, \cite{WolfKoranyi}, \cite{BorelJi}, \cite{BaileyBorel}.
\end{remark}

We can also state a version of Theorem \ref{thm:reduction}, when we are given an $\omega$-compatible linear complex structure $\JJ$ on $V$. In this situation, the fibers of $\Re \cdot_{\ge 0}$ and $\varphi$ reduce to points.

\begin{definition}[Orbits of maximal compact subgroups]\label{def:orbitsJ}~\\
Suppose $\JJ$ is an $\omega$-compatible linear complex structure on $V$.
    \begin{enumerate}
    \item Let $\Gr_{\JJ}(\vec{n}) \subseteq \Gr(\vec{n})$ be the submanifold consisting of $W \subseteq V$ of type $\vec{n}$ such that $W \oplus \JJ W_0$ is $\JJ$-invariant.
    \item Let $\Lag^\C_{\JJ}(\vec{n}) \subseteq \Lag^\C(\vec{n})$ be the submanifold consisting of complex Lagrangian subspaces $\FF \subseteq V^\C$ of type $\vec{n}$ such that
    \[ \FF= \FF_0 \oplus (\FF \cap \FF_{\JJ}) \oplus (\FF \cap \FF_{-\JJ}).\]
    \item Let $\Lag^\C_{\JJ, \oplus}(\vec{n}) \subseteq \Lag^\C_{\oplus}(\vec{n})$ be the submanifold  consisting of equivalence classes of complex Lagrangian subspaces of type $\vec{n}$ with splitting $\FF^\oplus$ such that there exist $\FF_{\pm} = \FF \cap \FF_{\pm\JJ}$ such that $\FF^\oplus = [(\FF, \FF_+, \FF_-)]$.
    \end{enumerate}
\end{definition}

Suppose $W = W_0 \in \Gr_\JJ(\vec{n};V)$. Let $W_-^\JJ$ be the $\omega(\cdot, \JJ \cdot)$-orthogonal complement of $W$ in $W^\omega$. By Theorem \ref{thm:basisJ}, there exists a unique $\JJ$-invariant subspace $W_-^\JJ$ of $W^\omega$ complementary to $W$ in $W^\omega$. Moreover, $W_-^\JJ = (W \oplus \JJ W)^\omega$ as $\JJ$-invariant symplectic subspaces of $V$. Then $\omega$ descends to a symplectic form on both $\tilde{W} = W^\omega/W$ and $V/(W \oplus \JJ W)$, and we will denote both by $\tilde{\omega}$. Moreover, there exist $\tilde{\omega}$-compatible linear complex structures on both $\tilde{W} = W^\omega/W$ and $V/(W \oplus \JJ W)$. We will again denote both by $\tilde{\JJ}(W)$, so that
\[ (W_-^\JJ, \omega|_{W_-^\JJ}, \JJ|_{W_-^\JJ}) \cong (W^\omega/W, \tilde{\omega}, \tilde{\JJ}(W)) \cong (V/(W \oplus \JJ W), \tilde{\omega}, \tilde{\JJ}(W))\]
as symplectic vector spaces with compatible linear complex structure. In particular, $\JJ$ descends to every $W^\omega/W$ simultaneously.

We can observe that
\[\iota_\JJ|_{\Lag_{\JJ}^\C(\vec{n})}(\FF):= [\FF_0 \oplus (\FF \cap \FF_{\JJ}) \oplus (\FF \cap \FF_{-\JJ})]\]
is a diffeomorphism from $\Lag_\JJ^\C(\vec{n})$ to $\Lag_{\JJ, \oplus}^\C(\vec{n})$ with inverse $\varphi|_{\Lag_{\JJ, \oplus}^\C (\vec{n})}$.

Similarly,
\[ \psi_{\JJ}(W) := [ W_0 \oplus (1_{V^\C} - i \JJ)W \oplus (1_{V^\C}+i\JJ)W^\omega]\]
is a diffeomorphism from $\Gr_{\JJ}(\vec{n})$ to $\Lag_{\JJ, \oplus}^\C(\vec{n})$ with inverse $\Re \cdot_{\ge 0}|_{\Lag_{\JJ, \oplus}^\C (\vec{n})}$.

\begin{proposition}[Compact orbit fibrations]\label{prop:compactfibrations}~\\
Suppose $\JJ$ be an $\omega$-compatible linear complex structure on $V$, and $K \le G = \Sp(V)$ is the maximal compact subgroup of linear symplectomorphisms of $V$ that commute with $\JJ$.
    \begin{enumerate}
        \item $K$ acts transitively on $\Gr_{\JJ}(\vec{n})$, $\Lag_{\JJ}^\C(\vec{n})$, and $\Lag_{\JJ, \oplus}^\C(\vec{n})$. For each choice of $W \in \Gr_{\JJ}(\vec{n})$, $\FF \in \Lag_{\JJ}^\C(\vec{n})$, and $\FF^\oplus \in \Lag_{\JJ, \oplus}^\C(\vec{n})$ there are diffeomorphisms
        \begin{equation*}
        \begin{aligned}
            \Gr_{\JJ}(\vec{n}) &\cong& K/(K\cap \Sp_W(V)) &\cong& \U(n)/(O(n_0) \times \U(n_+) \times \U(n_-))&\\
            \Lag_{\JJ}^\C(\vec{n}) &\cong& K/(K \cap \Sp_{\FF}(V)) &\cong& \U(n)/(O(n_0) \times \U(n_+) \times \U(n_-))&\\
            \Lag_{\JJ, \oplus}^\C(\vec{n}) &\cong& K/(K \cap \Sp_{\FF^\oplus}(V)) &\cong& \U(n)/(O(n_0) \times \U(n_+) \times \U(n_-))&.
        \end{aligned}
        \end{equation*}
        \item The maps
        \begin{equation*}
        \begin{aligned}
        &(\cdot \cap \cdot^\omega)|_{\Gr_{\JJ}(\vec{n})} : &\Gr_{\JJ}(\vec{n}) &\to& \Gr(n_0, 0, n-n_0)\\
        &(\Re \cdot_0)|_{\Lag_{\JJ}^\C(\vec{n})} : &\Lag_{\JJ}^\C(\vec{n}) &\to& \Gr(n_0, 0, n-n_0)\\
        &\alpha|_{\Lag_{\JJ, \oplus}^\C(\vec{n})} : &\Lag_{\JJ, \oplus}^\C(\vec{n}) &\to& \Gr(n_0, 0, n-n_0)
        \end{aligned}
        \end{equation*}
        are fiber bundles with fibers diffeomorphic to the ordinary complex Grassmannian of complex $n_+$ dimensional subspaces in complex $n-n_0$ dimensional space.
    \end{enumerate}
\end{proposition}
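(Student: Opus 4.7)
The plan is to reduce everything to the case of $\Gr_\JJ(\vec{n})$ and transport by the $K$-equivariant diffeomorphisms $\iota_\JJ$ and $\psi_\JJ$ introduced just before the proposition. First I would establish transitivity of the $K$-action on $\Gr_\JJ(\vec{n})$ using Theorem \ref{thm:basisJ}.3: given $W, W' \in \Gr_\JJ(\vec{n})$, the theorem supplies Darboux bases of the form $\{\ee_j, \JJ\ee_j\}$ and $\{\ee'_j, \JJ\ee'_j\}$ adapted to the $\JJ$-invariant splittings of $W, W'$; the $\R$-linear map $\ee_j \mapsto \ee'_j$ is forced to send $\JJ\ee_j \mapsto \JJ\ee'_j$, is symplectic (Darboux to Darboux), and commutes with $\JJ$, hence lies in $K$. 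Since any $k \in K$ commutes with $\JJ$ and therefore preserves the $\pm i$-eigenspaces $\FF_{\pm\JJ}$, both $\iota_\JJ$ and $\psi_\JJ$ are $K$-equivariant, so transitivity transfers to $\Lag_\JJ^\C(\vec{n})$ and $\Lag_{\JJ,\oplus}^\C(\vec{n})$.

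Second, to identify the isotropy subgroup in $K$, I would fix a Darboux basis $\{\ee_j, \JJ\ee_j\}$ of $V$, which identifies $K \cong \U(n)$ via $\ee_j \leftrightarrow e_j$, $\JJ\ee_j \leftrightarrow ie_j$. Taking the basepoint $W$ of Theorem \ref{thm:basisJ}.3 with $W_0 = \span_\R\{\ee_j\}_{1 \le j \le n_0}$ and $W_+^\JJ = \span_\R\{\ee_j, \JJ\ee_j\}_{n_0 < j \le n_0+n_+}$, any $k \in K$ preserves the positive-definite form $\omega(\cdot, \JJ\cdot)$; hence it preserves $W_0$, its $\omega(\cdot,\JJ\cdot)$-orthogonal $\JJ$-invariant complement $W_+^\JJ$ inside $W$, and the remaining summand $W_-^\JJ$. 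The action restricts to $\O(n_0)$ on $W_0$ (real orthogonal) and $\U(n_\pm)$ on $W_\pm^\JJ$ (complex unitary), and any such triple uniquely extends to an element of $K$ (the action on $\JJ W_0$ is forced by $\JJ$-equivariance, and the Darboux relations $\omega(\ee_j, \JJ\ee_k) = \delta_{jk}$ are preserved automatically from the $\O(n_0)$ condition on $W_0$). Transporting via $\psi_\JJ$ and $\iota_\JJ$ gives the analogous coset diffeomorphisms for $\Lag_{\JJ,\oplus}^\C(\vec{n})$ and $\Lag_\JJ^\C(\vec{n})$, completing part (1).

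Third, for part (2), the commutativity of diagram (\ref{eq:reduction}) restricted to the $\JJ$-distinguished submanifolds shows that the three maps $(\cdot\cap\cdot^\omega)|_{\Gr_\JJ(\vec{n})}$, $\alpha|_{\Lag_{\JJ,\oplus}^\C(\vec{n})}$, $(\Re\cdot_0)|_{\Lag_\JJ^\C(\vec{n})}$ are all isomorphic as $K$-equivariant maps via pre-composition with $\psi_\JJ$ and $\iota_\JJ$, so it suffices to treat $(\cdot\cap\cdot^\omega)|_{\Gr_\JJ(\vec{n})}$. The base $\Gr(n_0, 0, n-n_0)$ is compact by Corollary \ref{cor:compact} (since $n_+n_- = 0$) and admits a transitive $K$-action by applying Theorem \ref{thm:basisJ}.1 to the coisotropic $W_0^\omega$. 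Hence Lemma \ref{lem:fiber}.3c makes the map a $K$-equivariant fiber bundle. For the fiber over $W_0$, any $W = W_0 \oplus W_+^\JJ$ with $W_+^\JJ \subseteq (W_0 \oplus \JJ W_0)^\omega$ a $\JJ$-invariant real $2n_+$-dimensional subspace satisfies $W \in \Gr_\JJ(\vec{n})$ and $W \cap W^\omega = W_0$ (since $W_+^\JJ$ is symplectic, being $\JJ$-invariant in a symplectic $\JJ$-compatible space), and conversely every element of the fiber has this form; under the identification $((W_0 \oplus \JJ W_0)^\omega, \omega, \JJ) \cong (W_0^\omega/W_0, \tilde{\omega}, \tilde{\JJ})$, the fiber is the space of complex $n_+$-dimensional subspaces of the complex $(n-n_0)$-dimensional hermitian vector space $(W_0^\omega/W_0, \tilde{\JJ})$. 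Equivalently, from the coset description in part (1), the fiber is $\U(n-n_0)/(\U(n_+)\times \U(n_-))$, the ordinary complex Grassmannian.

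The main obstacle is simply the bookkeeping of $K$-equivariance; once $\iota_\JJ$ and $\psi_\JJ$ are shown to be $K$-equivariant (a one-line check using that $K$ preserves the eigenspaces $\FF_{\pm\JJ}$), the cases $\Lag_\JJ^\C(\vec{n})$ and $\Lag_{\JJ,\oplus}^\C(\vec{n})$ reduce to $\Gr_\JJ(\vec{n})$, and the remaining steps are direct basis computations.
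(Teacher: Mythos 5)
Your proposal is correct and follows essentially the same route as the paper: transitivity of $K$ is established on one of the three spaces by an adapted Darboux basis theorem and then transported to the other two by the $K$-equivariant identifications, and part (2) invokes Lemma \ref{lem:fiber} for the bundle structure and identifies the fiber with the complex Grassmannian of $n_+$-planes in $W_0^\omega/W_0$, just as the paper does. The only immaterial difference is that you anchor the transitivity at $\Gr_\JJ(\vec{n})$ via Theorem \ref{thm:basisJ} and transfer by $\psi_\JJ$ and $\iota_\JJ$, whereas the paper anchors at $\Lag^\C_{\JJ,\oplus}(\vec{n})$ via Theorem \ref{thm:basisC} and transfers by the inverse maps $\Re\cdot_{\ge 0}$ and $\varphi$; your explicit computation of the isotropy group $K\cap\Sp_W(V)\cong O(n_0)\times\U(n_+)\times\U(n_-)$ (which should be stated for $k$ in the stabilizer of $W$, not for arbitrary $k\in K$) spells out a step the paper leaves implicit.
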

\begin{proof}
(1): If $\FF^\oplus \in \Lag_{\JJ, \oplus}^\C(\vec{n})$, take a Darboux basis $\{\ee, \ff\}$ given by Theorem \ref{thm:basisC}. Since $\ee^+_k - i \ff^+_k$ is in the $+i$-eigenspace of $\JJ$, $\ff^+_k = \JJ \ee^+_k$ for $1 \le k \le n_+$. Similarly, $\ff^-_k = \JJ \ee^-_k$ for $1 \le k \le n_-$. Then we have constructed a Darboux basis $\{ \ee^0, \ee^+, \ee^-, \JJ \ee^0, \JJ \ee^+, \JJ \ee^-\}$ such that $\FF_0 = \span_\C\{\ee^0\}$, $\FF_+ = \span_\C\{ \ee^+ - i \JJ\ee^+\}$, and $\FF_- = \span_\C\{\ee^- + i \JJ \ee^-\}$. Then for any two choices of $\FF^\oplus, (\FF^\oplus)' \in \Lag_{\JJ, \oplus}^\C(\vec{n})$, the linear extension of the map assigning one such Darboux basis obtained as above to the other is a symplectic linear transformation and commutes with $\JJ$. So $K$ acts transitively on $\Lag_{\JJ, \oplus}^\C(\vec{n})$. Since $\Re \cdot_{\ge 0}|_{\Lag_{\JJ, \oplus}^\C (\vec{n})}$ and $\varphi|_{\Lag_{\JJ, \oplus}^\C (\vec{n})}$ are $K$-equivariant diffeomorphisms, $K$ acts transitively on $\Gr_{\JJ}(\vec{n})$ and $\Lag_{\JJ}^\C(\vec{n})$ as well (Lemma \ref{lem:fiber}.3a).
(2): $\alpha|_{\Lag_{\JJ, \oplus}^\C (\vec{n})}$, $\Re \cdot_0|_{\Lag_\JJ^\C (\vec{n})}$, and $(\cdot \cap \cdot^\omega)|_{\Gr_{\JJ}(\vec{n})}$ are $K$-equivariant. Since $K$ acts transitively on the total spaces, we can apply Lemma \ref{lem:fiber}. In particular, these maps are fiber bundles by Lemma \ref{lem:fiber}.3c.

We can also apply Lemma \ref{lem:fiber}.1 to compute the diffeomorphism of the fibers as follows. If $W' \in (\cdot \cap \cdot^\omega)|_{\Gr_{\JJ}(\vec{n})}^{-1}(W_0)$, then $W'/W_0$ is a complex subspace of $(W_0^\omega/W_0, \tilde{\JJ}(W_0))$. Similarly, if $\FF \in (\Re \cdot_0)|_{\Lag_\JJ^\C(\vec{n})}^{-1}(W_0)$, then $\Re((\FF_0 + \FF \cap \FF_{\JJ}))/\Re\FF_0$ is a complex linear subspace of $(W_0^\omega/W_0, \tilde{\JJ}(W_0))$, and if $\FF^\oplus \in \alpha|_{\Lag_{\JJ, \oplus}^\C(\vec{n})}$, then $\Re \FF^\oplus_{\ge 0}$ is a complex subspace of $(W_0^\omega/W_0, \tilde{\JJ}(W_0))$. These maps to the ordinary complex Grassmannian of complex linear subspaces of $(W_0^\omega/W_0, \tilde{\JJ}(W_0))$ are equivariant with respect to $N(\pi^{-1}(b))/N = (K\cap \Sp_{b}(V))/(K \cap (\GL(b) \ltimes H(b)))\xrightarrow{\cong} \Sp(b^\omega/b)^{\Ad(\tilde{\JJ}(W_0))}$, where $b = W_0$, and $\pi = (\cdot \cap \cdot^\omega)|_{\Gr_{\JJ}(\vec{n})}$, $\Re \cdot_0|_{\Lag_\JJ^\C (\vec{n})}$, or $\alpha|_{\Lag_{\JJ, \oplus}^\C (\vec{n})}$. Since these groups act transitively, the maps are diffeomorphisms. 
\end{proof}

\begin{remark}
    In the isotropic and coisotropic case ($n_+ n_- = 0$), $\Gr_{\JJ}(\vec{n}) = \Gr(\vec{n})$ and Proposition \ref{prop:compactfibrations}.1 for $\Gr(\vec{n})$ was obtained in \cite{Arnold} for the Lagrangian case and \cite{OhPark} for the coisotropic case. The statement about the fibers $(\Re \cdot_0|_{\Lag_\JJ^\C(\vec{n})})^{-1}(W_0)$ in Proposition \ref{prop:compactfibrations}.2 appears in Lemma 11.4 of \cite{Wolf1} and Lemma 9.8 of \cite{Wolf}.
\end{remark}

%\begin{remark}[Algebraic arc components] Do holomorphic arc components coincide with algebraic arc components? In this case Yes! \cite{Wolf1} Theorem 8.15 \end{remark}

\subsection{Involutions and recompositions}
\label{subsec:involutions}

Suppose $W = W_0$ is an isotropic subspace of $V$ and denote by $\tilde{\omega}$ the descended symplectic form on $\tilde{W}:= W^\omega/W$. For convenience in notation we will suppress the dependence on the choice of $W$ in the following. Denote $\tilde{G}:=\Sp(\tilde{W})$. Let $\tilde{\JJ}$ be a linear complex structure on $\tilde{W}$ compatible with $\tilde{\omega}$. Let $\tilde{I}$ be an involution on $\tilde{W}$ in $\tilde{G}$ such that it commutes with $\tilde{\JJ}$ and its $\pm 1$-eigenspaces have real dimensions $2n_{\pm}$. Since $\tilde{I}$ commutes with $\tilde{\JJ}$, its $\pm 1$-eigenspaces are $\tilde{\JJ}$-invariant, symplectic, and are symplectic complements (and $\tilde{\omega}(\cdot, \tilde{\JJ}\cdot)$-orthogonal complements) of each other. Conversely, a splitting of $\tilde{W}$ into symplectic, $\tilde{\JJ}$-invariant subspaces with the appropriate dimensions determines an $\tilde{I}$ up to sign.

Denote the subgroups of elements of $\tilde{G}$ that, respectively, commute with $\tilde{I}$, $\tilde{\JJ}$, and $\tilde{I}\tilde{\JJ}$ by
\begin{equation*}
\begin{aligned}
\tilde{G}^\shortparallel &= &\tilde{G}(\tilde{I}) &:=& \tilde{G}^{\Ad (\tilde{I})} &\le& \tilde{G}\\
\tilde{K} &= &\tilde{G}(\tilde{\JJ}) &:=& \tilde{G}^{\Ad (\tilde{\JJ})} &\le& \tilde{G}\\
 &&\tilde{G}(\tilde{I}\tilde{\JJ}) &:=& \tilde{G}^{\Ad(\tilde{I}\tilde{\JJ})} &\le& \tilde{G}.
 \end{aligned}
\end{equation*}
Let
\[\tilde{K}^\shortparallel = \tilde{G}(\tilde{I}, \tilde{\JJ}) := \tilde{G}(\tilde{I}) \cap \tilde{G}(\tilde{\JJ}) = \tilde{G}(\tilde{I}) \cap \tilde{G}(\tilde{I}\tilde{\JJ}) = \tilde{G}(\tilde{\JJ}) \cap \tilde{G}(\tilde{I}\tilde{\JJ}).\]

Let $\tilde{\g}$ be the Lie algebra of $\tilde{G}$. Let $\tilde{\k}$ be the $+1$-eigenspace of $\Ad(\tilde{\JJ})$ and $\tilde{\p}$ be the $-1$-eigenspace of $\Ad(\tilde{\JJ})$ in $\tilde{\g}$. Let $\tilde{\g}^\shortparallel$ be the $+1$-eigenspace, and $\tilde{\g}^\times$ be the $-1$-eigenspace of $\Ad(\tilde{I})$ in $\tilde{\g}$. Since $\Ad(\tilde{I})$ commutes with $\Ad(\tilde{\JJ})$, $\tilde{\g}$ has two splittings
\[ \tilde{\g} = \tilde{\g}^\shortparallel \oplus \tilde{\g}^\times = \tilde{\k} \oplus \tilde{\p},\]
orthogonal with respect to the Killing form, such that
\[ \tilde{\g} = (\tilde{\g}^\shortparallel \cap \tilde{\k}) \oplus (\tilde{\g}^\times \cap \tilde{\k}) \oplus (\tilde{\g}^\shortparallel \cap \tilde{\p}) \oplus (\tilde{\g}^\times \cap \tilde{\p})\]
is also an orthogonal splitting. Denote $\tilde{\k}^\shortparallel := \tilde{\g}^\shortparallel \cap \tilde{\k}$, $\k^\times := \tilde{\g}^\times \cap \tilde{\k}$, $\tilde{\p}^\shortparallel := \tilde{\g}^\shortparallel \cap \tilde{\p}$, $\tilde{\p}^\times := \tilde{\g}^\times \cap \tilde{\p}$.

From the descriptions as eigenspaces, we have
\begin{equation}\begin{aligned}
        [\tilde{\k}, \tilde{\k}] &\subseteq \tilde{\k}, &[\tilde{\k}, \tilde{\p}] &\subseteq \tilde{\p}, & [\tilde{\p}, \tilde{\p}] &\subseteq \tilde{\k}\\
        [\tilde{\g}^\shortparallel, \tilde{\g}^\shortparallel] &\subseteq \tilde{\g}^\shortparallel, &[\tilde{\g}^\shortparallel, \tilde{\g}^\times] &\subseteq \tilde{\g}^\times, & [\tilde{\g}^\times, \tilde{\g}^\times] &\subseteq \tilde{\g}^\shortparallel.
\end{aligned} \end{equation}
From these conditions we obtain
\begin{equation}\label{eq:brackets1}[\tilde{\k}^\shortparallel, \tilde{\k}^\shortparallel] \subseteq \tilde{\k}^\shortparallel, \quad [\tilde{\k}^\times, \tilde{\k}^\times] \subseteq \tilde{\k}^\shortparallel, \quad [\tilde{\p}^\shortparallel, \tilde{\p}^\shortparallel] \subseteq \tilde{\k}^\shortparallel, \quad [\tilde{\p}^\times, \tilde{\p}^\times] \subseteq \tilde{\k}^\shortparallel,\end{equation}
\begin{equation}\label{eq:brackets2}[ \tilde{\k}^\shortparallel, \tilde{\k}^\times] \subseteq \tilde{\k}^\times,\quad [\tilde{\k}^\shortparallel, \tilde{\p}^\shortparallel] \subseteq \tilde{\p}^\shortparallel, \quad [ \tilde{\k}^\shortparallel, \tilde{\p}^\times] \subseteq \tilde{\p}^\times.\end{equation}
In particular $\tilde{\k}^\shortparallel$, $\tilde{\g}^\shortparallel = \tilde{\k}^\shortparallel \oplus \tilde{\p}^\shortparallel$, $\tilde{\k} = \tilde{\k}^\shortparallel \oplus \tilde{\k}^\times$, and $\tilde{\k}^\shortparallel \oplus \tilde{\p}^\times$ are Lie subalgebras of $\tilde{\g}$, and by (\ref{eq:brackets2}), $\tilde{K}^\shortparallel$ acts on $\tilde{\p}$, $\tilde{\p}^\shortparallel$, $\tilde{\p}^\times$, $\tilde{\p}/\tilde{\p}^\shortparallel$, and $\tilde{\p}/\tilde{\p}^\times$ by $\Ad$. Moreover,
since $\tilde{I}$ commutes with $\tilde{\JJ}$, $\frac{1}{2} \ad(\tilde{\JJ})$ is a linear complex structure (\ref{eq:adJ}) on $\tilde{\p}^\shortparallel$, $\tilde{\p}^\times$, $\tilde{\p}$, $\tilde{\p}/\tilde{\p}^\shortparallel$, and $\tilde{\p}/\tilde{\p}^\times$ compatible with the restrictions and reductions of the Killing form of $\tilde{\g}$.

By the correspondence between Lie groups and Lie algebras,
\begin{equation*}
\begin{aligned}
    \tilde{\g}^\shortparallel &=&\operatorname{Lie} \tilde{G}(\tilde{I}) &=& \tilde{\k}^\shortparallel \oplus \tilde{\p}^\shortparallel\\
    \tilde{\k} &=&\operatorname{Lie} \tilde{G}(\tilde{\JJ}) &=& \tilde{\k}^\shortparallel \oplus \tilde{\k}^\times\\
    && \operatorname{Lie} \tilde{G}(\tilde{I} \tilde{\JJ}) &=& \tilde{\k}^\shortparallel \oplus \tilde{\p}^\times\\
    && \operatorname{Lie}\tilde{G}(\tilde{I}, \tilde{\JJ}) &=& \tilde{\k}^\shortparallel.
\end{aligned}
\end{equation*}

By Theorem \ref{thm:basisJ} there exists a Darboux basis $\{\tilde{\ee}, \tilde{\JJ} \tilde{\ee}\}$ of $\tilde{W}$ such that $\tilde{I}$ and $\tilde{\JJ}$ are expressed in this basis as
\begin{equation}\label{eq:IJ}  (\tilde{I})_{\{\tilde{\ee}, \tilde{\JJ} \tilde{\ee}\}} = \begin{pmatrix} 1_{n_+} & 0 & 0 & 0\\ 0 & -1_{n_-} & 0 & 0\\ 0 & 0 & 1_{n_+} & 0 \\ 0 & 0 & 0 & -1_{n_-}\end{pmatrix}, \quad (\tilde{\JJ})_{\{\tilde{\ee}, \tilde{\JJ} \tilde{\ee}\}} = \begin{pmatrix} 0 & -1_{n-n_0} \\ 1_{n-n_0} & 0 \end{pmatrix}.\end{equation}

Then we can write the elements of $\tilde{\g}$ in this basis as block matrices
\[ \begin{pmatrix} \aa_{++} & \aa_{+-} & \bb_{++} & \bb_{+-}\\
\aa_{-+} & \aa_{--} & \bb_{-+} & \bb_{--} \\ \cc_{++} & \cc_{+-} & \dd_{++} & \dd_{+-} \\ \cc_{-+} & \cc_{--} & \dd_{-+} & \dd_{--} \end{pmatrix}\]
such that
\begin{equation}\label{eq:matrix1}
    \aa_{\pm \pm} = -\dd_{\pm \pm}^t \in \Mat_{n_{\pm} \times n_{\pm}}(\R),\quad \bb_{\pm \pm}, \cc_{\pm, \pm} \in \Mat_{n_{\pm} \times n_{\pm}}(\R)^{t}, \end{equation}
\begin{equation}\label{eq:matrix2}
    \aa_{-+} = - \dd_{+-}^t, \aa_{+-} = -\dd_{-+}^t, \bb_{-+} = \bb_{+-}^t, \cc_{-+} = \cc_{+-}^t,  \in \Mat_{n_- \times n_+}(\R).\end{equation}

In matrix form the eigenspaces are
\begin{eqnarray}
    \tilde{\k}^\shortparallel &=& \left\{ \begin{pmatrix} \aa_{++}&  0 & \bb_{++} & 0 \\ 0 & \aa_{--} & 0 & \bb_{--}\\
    -\bb_{++} & 0 & \aa_{++} & 0 \\\
    0 & -\bb_{--} & 0 & \aa_{--} \end{pmatrix}:  \begin{aligned} \aa_{\pm \pm} &\in \Mat_{n_{\pm} \times n_{\pm}}(\R)^{-t}\\ \bb_{\pm \pm} &\in \Mat_{n_{\pm} \times n_{\pm}}(\R)^{t} \end{aligned} \right\}\\
    \tilde{\p}^\shortparallel &=& \left\{ \begin{pmatrix}\aa_{++} & 0 & \bb_{++} & 0 \\ 0 & \aa_{--} & 0 & \bb_{--}\\
    \bb_{++} & 0 & -\aa_{++} & 0 \\
    0 & \bb_{--} & 0 & -\aa_{--}\end{pmatrix} : \begin{aligned} \aa_{\pm \pm} &\in \Mat_{n_{\pm} \times n_{\pm}}(\R)^{t}\\ \bb_{\pm \pm} &\in \Mat_{n_{\pm} \times n_{\pm}}(\R)^{t} \end{aligned} \right\} \\
    \tilde{\k}^\times &=& \left\{ \begin{pmatrix} 0 & -\aa_{-+}^t & 0 & \bb_{-+}^t \\
    \aa_{-+} & 0 & \bb_{-+}& 0\\
    0 & -\bb_{-+}^t & 0 &-\aa_{-+}^t\\
    -\bb_{-+} & 0 & \aa_{-+} & 0 \end{pmatrix}: \aa_{-+}, \bb_{-+} \in \Mat_{n_{-} \times n_{+}}(\R)\right\}\\
    \tilde{\p}^\times &=& \left\{ \begin{pmatrix} 0 & \aa_{-+}^t & 0 & \bb_{-+}^t\\
    \aa_{-+} & 0 & \bb_{-+} & 0\\
    0 & \bb_{-+}^t & 0 & -\aa_{-+}^t\\
    \bb_{-+} & 0 & -\aa_{-+} & 0 \end{pmatrix} : \aa_{-+}, \bb_{-+} \in \Mat_{n_{-} \times n_{+}}(\R)\right\}.
\end{eqnarray}

Conditions (\ref{eq:matrix1}), (\ref{eq:matrix2}) also imply
\[ \begin{pmatrix} \aa_{\pm \pm} & \bb_{\pm\pm} \\ \cc_{\pm\pm} & \dd_{\pm\pm} \end{pmatrix} \in \sp(2n_{\pm}; \R)\]
and if $\aa_{\pm \pm} \in \Mat_{n_{\pm} \times n_{\pm}}(\R)^{-t}$ then $\aa_{\pm\pm} + i \bb_{\pm\pm} \in \u(n_{\pm})$, 
\[ \begin{pmatrix} \aa_{++} + i \bb_{++} & - \aa_{-+}^t +i \bb_{-+}^t\\ \aa_{-+} + i \bb_{-+} & \aa_{--} + i \bb_{--} \end{pmatrix} \in \u(n), \quad \begin{pmatrix} \aa_{++} + i \bb_{++} & \aa_{-+}^t + i \bb_{-+}^t \\ \aa_{-+} - i \bb_{-+} & \aa_{--} - i \bb_{--} \end{pmatrix} \in \u(n_+, n_-).\]
So we have isomorphisms of vector spaces
\begin{eqnarray*}
\tilde{\k}^\shortparallel \oplus \tilde{\p}^\shortparallel & \cong& \sp(2n_+; \R) \times \sp(2n_-; \R)\\
\tilde{\k}^\shortparallel \oplus \tilde{\k}^\times &\cong& \u(n)\\
\tilde{\k}^\shortparallel \oplus \tilde{\p}^\times &\cong& \u(n_+, n_-)\\
\tilde{\k}^\shortparallel &\cong& \u(n_+) \times \u(n_-)
\end{eqnarray*}
that are also isomorphism of Lie algebras. In the matrix forms, we can also verify ((\ref{eq:indefunitary}), (\ref{eq:embindefunitary}))
\begin{eqnarray*}
    \tilde{G}(\tilde{I}) &\cong& \Sp(2n_+; \R) \times \Sp(2n_-; \R)\\
    \tilde{G}(\tilde{\JJ}) &\cong& \U(n_+ + n_-)\\
    \tilde{G}(\tilde{I}\tilde{\JJ}) &\cong& \U(n_+, n_-)\\
    \tilde{G}(\tilde{I}, \tilde{\JJ}) &\cong& \U(n_+) \times \U(n_-).
\end{eqnarray*}

\begin{definition}[Compatible triples]
Suppose $W_0 \in \Gr(n_0, 0, n-n_0; V)$, and suppose $\tilde{I}$, $\tilde{\JJ}$ are as above.
\begin{enumerate}
    \item If $\FF^\oplus \in \alpha^{-1}(W_0)$, a triple $(\tilde{I}, \tilde{\JJ}, \FF^\oplus)$ is a \emph{compatible triple} if
    \[ \tilde{G}(\tilde{I}, \tilde{\JJ}) = \Sp_{\FF^\oplus/\FF_0}(\tilde{W}) \cap \tilde{G}(\tilde{\JJ}).\]
    \item If $\FF \in (\Re \cdot_0)^{-1}(W_0)$, a triple $(\tilde{I}, \tilde{\JJ}, \FF)$ is a \emph{compatible triple}  if 
    \[ \tilde{G}(\tilde{I}, \tilde{\JJ}) = \Sp_{\FF/\FF_0} (\tilde{W}) \cap \tilde{G}(\tilde{\JJ}).\]
    \item If $W' \in (\cdot \cap \cdot^\omega)^{-1}(W_0)$, a triple $(\tilde{I}, \tilde{\JJ}, W')$ is a \emph{compatible triple} if
    \[ \tilde{G}(\tilde{I}, \tilde{\JJ}) = \Sp_{W'/W_0}(\tilde{W}) \cap \tilde{G}(\JJ).\]
    \end{enumerate}
\end{definition}

By Theorems \ref{thm:basisR}, \ref{thm:basisC}, and (\ref{eq:IJ}), for every pair $(\tilde{I}, \tilde{\JJ})$, there exists $\FF^\oplus \in \alpha^{-1}(W_0)$, $\FF\in (\Re \cdot_0)^{-1}(W_0)$, $W
\in (\cdot \cap \cdot^\omega)^{-1}(W_0)$ such that $(\tilde{I}, \tilde{\JJ}, \FF^\oplus)$, $(\tilde{I}, \tilde{\JJ}, \FF)$, $(\tilde{I}, \tilde{\JJ}, W')$ are compatible triples. Conversely, for every $\FF^\oplus \in \alpha^{-1}(W_0)$, $\FF\in (\Re \cdot_0)^{-1}(W_0)$, $W' \in (\cdot \cap \cdot^\omega)^{-1}(W_0)$, there exist $(\tilde{I}, \tilde{\JJ})$ such that $(\tilde{I}, \tilde{\JJ}, \FF^\oplus)$, $(\tilde{I}, \tilde{\JJ}, \FF)$, $(\tilde{I}, \tilde{\JJ}, W')$ are compatible triples.

\subsection{Fibers of $\Re \cdot_\ge 0$ and $\varphi$ as symmetric spaces}
\label{subsec:symmetricspace}
We have the tower of real Lie groups
\begin{equation} \label{eq:tower}
\begin{tikzcd}
    & \tilde{G} = \Sp(\tilde{W})\arrow[dl, dash] \arrow[d, dash] \arrow[dr, dash]&\\ \tilde{G}^\shortparallel = \tilde{G}(\tilde{I}) \arrow[dr, dash] & \tilde{K} = \tilde{G}(\tilde{\JJ}) \arrow[d, dash] & \tilde{G}(\tilde{I}\tilde{\JJ}) \arrow[dl, dash]\\ & \tilde{K}^\shortparallel = \tilde{G}(\tilde{I}, \tilde{\JJ}). &
\end{tikzcd}
\end{equation}

Then from Lemma \ref{lem:orbitspacefibration} and Lemma \ref{lem:fiber}.3.c we obtain the following fiber bundles:
\begin{eqnarray}
    \tilde{G}(\tilde{I})/\tilde{G}(\tilde{I}, \tilde{\JJ}) \hookrightarrow & \tilde{G}/\tilde{G}(\tilde{I}, \tilde{\JJ}) & \twoheadrightarrow \tilde{G}/\tilde{G}(\tilde{I})\label{eq:grreducedbundle}\\
    \tilde{G}(\tilde{\JJ})/\tilde{G}(\tilde{I}, \tilde{\JJ}) \hookrightarrow & \tilde{G}/\tilde{G}(\tilde{I}, \tilde{\JJ}) & \twoheadrightarrow \tilde{G}/\tilde{G}(\tilde{\JJ})\label{eq:leeleung}\\
    \tilde{G}(\tilde{I}\tilde{\JJ})/\tilde{G}(\tilde{I}, \tilde{\JJ}) \hookrightarrow & \tilde{G}/\tilde{G}(\tilde{I}, \tilde{\JJ}) & \twoheadrightarrow \tilde{G}/\tilde{G}(\tilde{I}\tilde{\JJ}). \label{eq:lagreducedbundle}
\end{eqnarray}

Now choose an $\FF^\oplus \in \Lag_\oplus^\C(\vec{n}; V)$ such that $(\tilde{I}, \tilde{\JJ}, \FF^\oplus)$ is a compatible triple. Taking $\FF^\oplus$, $\FF^\oplus$, and $\Re \FF^\oplus_{\ge 0}$ as basepoints, we can identify the fiber bundle in (\ref{eq:grreducedbundle}) as
\begin{equation} \begin{tikzcd} \tilde{G}(\tilde{I})/\tilde{G}(\tilde{I}, \tilde{\JJ}) \arrow[r, hook] \arrow[d, "\cong"] & \tilde{G}/\tilde{G}(\tilde{I}, \tilde{\JJ}) \arrow[r, two heads] \arrow[d, "\cong"] & \tilde{G}/\tilde{G}(\tilde{I}) \arrow[d, "\cong"]\\
(\Re \cdot_{\ge 0})^{-1}(\Re \FF^\oplus_{\ge 0 }) \arrow[r, hook] & \alpha^{-1}(\Re \FF_0) \arrow[r, two heads, "\Re \cdot_{\ge 0}"'] & (\cdot \cap \cdot^\omega)^{-1}(\Re \FF_0). \end{tikzcd}\end{equation}

Similarly, choose an $\FF^\oplus \in \Lag_\oplus^\C(\vec{n}; V)$ such that $(\tilde{I}, \tilde{\JJ}, \FF^\oplus)$ is a compatible triple. Taking $\FF^\oplus$, $\FF^\oplus$, and $\varphi(\FF^\oplus)$ as basepoints, we can identify the fiber bundle in (\ref{eq:lagreducedbundle}) as

\begin{equation} \begin{tikzcd} \tilde{G}(\tilde{I} \tilde{\JJ})/\tilde{G}(\tilde{I}, \tilde{\JJ}) \arrow[r, hook] \arrow[d, "\cong"] & \tilde{G}/\tilde{G}(\tilde{I}, \tilde{\JJ}) \arrow[r, two heads] \arrow[d, "\cong"] & \tilde{G}/\tilde{G}(\tilde{I}\tilde{\JJ}) \arrow[d, "\cong"]\\
\varphi^{-1}(\varphi(\FF^\oplus)) \arrow[r, hook] & \alpha^{-1}(\Re \FF_0) \arrow[r, two heads, "\varphi"'] & (\Re \cdot_0)^{-1}(\Re \FF_0). \end{tikzcd}\end{equation}

The fiber bundle in (\ref{eq:leeleung}) can be identified with the the fiber bundle $\beta$ of the symplectic twistor Grassmannian in Proposition 17 of \cite{LeeLeung}.

Out of the seven left coset spaces that can be obtained from (\ref{eq:tower}), the following six of them
\[ \tilde{G}(\tilde{I})/\tilde{G}(\tilde{I}, \tilde{\JJ}), \tilde{G}(\tilde{\JJ})/\tilde{G}(\tilde{I}, \tilde{\JJ}), \tilde{G}(\tilde{I}\tilde{\JJ})/\tilde{G}(\tilde{I}, \tilde{\JJ}), \tilde{G}/\tilde{G}(\tilde{I}), \tilde{G}/\tilde{G}(\tilde{\JJ}), \tilde{G}/\tilde{G}(\tilde{I}\tilde{\JJ}) \]
are symmetric spaces, as the subgroup is the subgroup of elements fixed by an involution (Theorem II.1.3a of \cite{Loos}).

Out of these six symmetric spaces, the following four 
\[ \tilde{G}(\tilde{I})/\tilde{G}(\tilde{I}, \tilde{\JJ}), \tilde{G}(\tilde{I}\tilde{\JJ})/\tilde{G}(\tilde{I}, \tilde{\JJ}), \tilde{G}(\tilde{\JJ}) / \tilde{G}(\tilde{I}, \tilde{\JJ}), \tilde{G}/\tilde{G}(\tilde{\JJ})\]
are hermitian symmetric spaces, since $\tilde{G}(\tilde{I}, \tilde{\JJ})$ and $\tilde{G}(\tilde{\JJ})$ are compact. In the same way that we chose the K\"{a}hler structure on $\Lag^\C(V)$ in Remark \ref{rem:complexstructures}, we can construct invariant K\"{a}hler structures on $\tilde{G}(\tilde{I})/\tilde{G}(\tilde{I}, \tilde{\JJ})$, $\tilde{G}(\tilde{I}\tilde{\JJ})/\tilde{G}(\tilde{I}, \tilde{\JJ})$, $\tilde{G}/\tilde{G}(\tilde{\JJ})$, from the fact that $\frac{1}{2}\ad(\tilde{\JJ})$ is a linear complex structure on $\tilde{\p}^\shortparallel$, $\tilde{\p}^\times$, $\tilde{\p}$.

Since $\tilde{G}(\tilde{\JJ})$ is compact and $\tilde{G}$, $\tilde{G}(\tilde{I})$, $\tilde{G}(\tilde{I}\tilde{\JJ})$, are noncompact, $\tilde{G}(\tilde{\JJ}) / \tilde{G}(\tilde{I}, \tilde{\JJ})$ is a hermitian symmetric space of compact type, and $\tilde{G}(\tilde{I})/\tilde{G}(\tilde{I}, \tilde{\JJ}), \tilde{G}(\tilde{I}\tilde{\JJ})/\tilde{G}(\tilde{I}, \tilde{\JJ}), \tilde{G}/\tilde{G}(\tilde{\JJ})$
are hermitian symmetric spaces of noncompact type. By choosing basepoints and by Theorem VI.1.1.(iii) of \cite{Helgason}, we have
\begin{equation}\label{eq:vectorspace}\begin{aligned}
(\Re \cdot_{\ge 0})^{-1}(W) &\cong& \tilde{G}(\tilde{I})/\tilde{G}(\tilde{I}, \tilde{\JJ}) &\cong \tilde{\p}^\shortparallel&\\
\varphi^{-1}(\FF)  &\cong& \tilde{G}(\tilde{I}\tilde{\JJ})/\tilde{G}(\tilde{I}, \tilde{\JJ}) &\cong \tilde{\p}^\times&\\
&&\tilde{G}/\tilde{G}(\tilde{\JJ}) &\cong \tilde{\p}&\end{aligned}
\end{equation}
for $W \in \Gr(\vec{n}; V)$ and $\FF \in \Lag^\C(\vec{n}; V)$. In particular, $(\Re \cdot_{\ge 0})^{-1}(W)$ and $\varphi^{-1}(\FF)$ obtain complex structures from the invariant complex structures on $\tilde{G}(\tilde{I})/\tilde{G}(\tilde{I}, \tilde{\JJ}))$ and $\tilde{G}(\tilde{I}\tilde{\JJ})/\tilde{G}(\tilde{I}, 
\tilde{\JJ})$. By the Harish-Chandra embedding, these spaces are biholomorphic to bounded symmetric domains.

\section{Factorizations of orbit fibrations}
\label{sec:factorization}

In this section, we construct the fiber bundles of interest $\gamma_\JJ$, $\beta_\JJ$, $\eta_\JJ$ as a factor of the fiber bundles of simultaneous linear symplectic reduction, first in the reduced case, and then in the nonreduced case. We follow the proof of Theorem 2 of \cite{Takeuchi}. We also discuss how the symplectic twistor Grassmannians are as fiber products of $\gamma_\JJ$ and $\beta_{\JJ}$, with a corrected diffeomorphism.

\subsection{Fibers of $\alpha$, $\Re \cdot_0$, and $(\cdot \cap \cdot^\omega)$ as vector bundles}

Since $\tilde{G}(\tilde{I})$ and $\tilde{G}(\tilde{I}\tilde{\JJ})$ are noncompact, the two symmetric spaces $\tilde{G}/\tilde{G}(\tilde{I})$, $\tilde{G}/\tilde{G}(\tilde{I}\tilde{\JJ})$ do not admit $\tilde{G}$-invariant Riemannian metrics (Theorem IV.2.5b of \cite{Helgason}). Identifying by choosing appropriate basepoints we get
\begin{equation}
\begin{aligned}
(\cdot \cap \cdot^\omega)^{-1}(W_0) &\cong & \Gr(0, n_+, n_-; \tilde{W}) &\cong & \tilde{G}/\tilde{G}(\tilde{I})\\
(\Re \cdot_0)^{-1}(W_0) & \cong & \Lag^\C(0, n_+. n_-; \tilde{W}) &\cong & \tilde{G}/\tilde{G}(\tilde{I}\tilde{\JJ}),
\end{aligned} \end{equation}
and applying Theorem IV.3.5 of \cite{Loos}, we obtain diffeomorphisms to total spaces of vector bundles:
\begin{equation}
\begin{aligned}
  (\cdot \cap \cdot^\omega)^{-1}(W_0) &\cong&  \tilde{G}/\tilde{G}(\tilde{I}) &\cong& \tilde{K} \times_{\tilde{K}^\shortparallel} \tilde{\p}^\times&\\
      (\Re \cdot_0)^{-1}(W_0) &\cong & \tilde{G}/\tilde{G}(\tilde{I}\tilde{\JJ}) &\cong& \tilde{K} \times_{\tilde{K}^\shortparallel} \tilde{\p}^\shortparallel.&
     \end{aligned}
\end{equation}

For the left coset space, $\tilde{G}/\tilde{G}(\tilde{I}, \tilde{\JJ})$
we show an analogous diffeomorphism directly, and reshow it for the symmetric spaces $\tilde{G}/\tilde{G}(\tilde{I})$ and $\tilde{G}/\tilde{G}(\tilde{I}\tilde{\JJ})$. From the Cartan decomposition of Lie groups
\[ \tilde{G}(\tilde{I}) \cong \tilde{K}^\shortparallel \times \tilde{\p}^\shortparallel, \quad \tilde{G}(\tilde{I}\tilde{\JJ}) \cong \tilde{K}^\shortparallel \times \tilde{\p}^\times,\]
we use the fact that every $\tilde{g} \in \tilde{G}(\tilde{I})$ can be expressed uniquely as $\tilde{g} = \tilde{k}^\shortparallel \exp \tilde{p}^\shortparallel$ for some $\tilde{k}^\shortparallel \in \tilde{K}^\shortparallel$ and $\tilde{p}^\shortparallel \in \tilde{\p}^\shortparallel$, and every $\tilde{g} \in \tilde{G}(\tilde{I}\tilde{\JJ})$ can be expressed uniquely as $\tilde{g} = \tilde{k}^\shortparallel \exp \tilde{p}^\times$ for some $\tilde{k}^\shortparallel \in \tilde{K}^\shortparallel$ and $\tilde{p}^\times \in \tilde{\p}^\times$.

\begin{lemma}\label{lem:fibers} ~
Suppose $W_0 \in \Gr(n_0, 0, n-n_0; V)$, and $(\tilde{I}, \tilde{\JJ}, \FF^\oplus)$, $(\tilde{I}, \tilde{\JJ}, \FF)$, $(\tilde{I}, \tilde{\JJ}, W')$ are compatible triples. 
\begin{enumerate}
    \item There are diffeomorphisms
    \begin{eqnarray*}
    \tilde{K} \times_{\tilde{K}^\shortparallel} \tilde{\p} &\cong&\alpha^{-1}(W_0)\\
    {[(\tilde{k}, \tilde{p})]} & \mapsto& \tilde{k} \exp \tilde{p}.\FF^\oplus\\
   \tilde{K}\times_{\tilde{K}^\shortparallel} (\tilde{\p}/\tilde{\p}^\times)&\cong&(\Re \cdot_0)^{-1}(W_0)\\
    {[(\tilde{k}, \tilde{p}^\shortparallel + \tilde{\p}^\times)]} &\mapsto& \tilde{k} \exp \tilde{p}^\shortparallel.\FF\\
    \tilde{K}\times_{\tilde{K}^\shortparallel} (\tilde{\p}/\tilde{\p}^\shortparallel) &\cong &(\cdot \cap \cdot^\omega)^{-1}(W_0)\\
    {[(\tilde{k}, \tilde{p}^\times + \tilde{\p}^\shortparallel)]} &\mapsto& \tilde{k} \exp \tilde{p}^\times.W'.
\end{eqnarray*}

$\tilde{G}$ acts on the fiber spaces by $\tilde{G} = \Sp(W_0^\omega/W_0) \cong \Sp_{W_0}(V)/(\GL(W_0) \ltimes H(W_0))$, and the diffeomorphisms are $\tilde{K} = \tilde{G}(\tilde{\JJ})$-equivariant.
\item The diagrams
\begin{equation}\label{eq:alphasquar}
\begin{tikzcd} \tilde{K} \times_{\tilde{K}^\shortparallel} \tilde{\p} \arrow[d, two heads] & \alpha^{-1}(W_0) \arrow[d, "\pi_{(\tilde{I}, \tilde{\JJ}, \FF^\oplus)}"] \arrow[l, "\cong"] \\ \tilde{K}/\tilde{K}^\shortparallel \arrow[r, "\cong"]& \tilde{K}.\FF^\oplus\end{tikzcd}\end{equation}
\begin{equation}\label{eq:Re0square}
\begin{tikzcd} \tilde{K}\times_{\tilde{K}^\shortparallel} (\tilde{\p}/\tilde{\p}^\times)  \arrow[d, two heads] & (\Re \cdot_0)^{-1}(W_0) \arrow[d, "\pi_{(\tilde{I}, \tilde{\JJ}, \FF)}"] \arrow[l, "\cong"] \\ \tilde{K}/\tilde{K}^\shortparallel \arrow[r, "\cong"]& \tilde{K}.\FF\end{tikzcd}\end{equation}
\begin{equation}\label{eq:kernelsquare}
\begin{tikzcd} \tilde{K} \times_{\tilde{K}^\shortparallel} (\tilde{\p}/\tilde{\p}^\shortparallel) \arrow[d, two heads] & (\cdot \cap \cdot^\omega)^{-1}(W_0) \arrow[l, "\cong"] \arrow[d, "
\pi_{(\tilde{I}, \tilde{\JJ}, W')}"] \\ \tilde{K}/\tilde{K}^\shortparallel \arrow[r, "\cong"]& \tilde{K}.W'\end{tikzcd}\end{equation}
commute. Here the upper horizontal arrows are defined from the previous statement, and
\begin{eqnarray} 
    \pi_{(\tilde{I}, \tilde{\JJ}, \FF^\oplus)} (\tilde{k} \exp \tilde{p}.\FF^\oplus) &:=& \tilde{k}.\FF^\oplus \label{eq:vectorbundles1}\\
    \pi_{(\tilde{I}, \tilde{\JJ}, \FF)}(\tilde{k} \exp(\tilde{p}^\shortparallel).\FF) &:=& \tilde{k}.\FF\label{eq:vectorbundles2}\\
    \pi_{(\tilde{I}, \tilde{\JJ}, W')}(\tilde{k}\exp(\tilde{p}^\times).W') &:=& \tilde{k}.W'\label{eq:vectorbundles3}
\end{eqnarray}
are $\tilde{K}$-equivariant.
\item The maps (\ref{eq:vectorbundles1}), (\ref{eq:vectorbundles2}), (\ref{eq:vectorbundles3}) depend on the choices involved in defining them as follows. If $g \in \tilde{G}$
\begin{eqnarray*}
    g^{-1} \cdot \pi_{(\Ad(g)\tilde{I}, \Ad(g)\tilde{\JJ}, g.\FF^\oplus)} (g \cdot) &=& \pi_{(\tilde{I}, \tilde{\JJ}, \FF^\oplus)} (\cdot)\\
    g^{-1} \cdot \pi_{(\Ad(g)\tilde{I}, \Ad(g)\tilde{\JJ}, g.\FF)}(g \cdot) &=& \pi_{(\tilde{I}, \tilde{\JJ}, \FF)}(\cdot)\\
    g^{-1} \cdot\pi_{(\Ad(g)\tilde{I}, \Ad(g)\tilde{\JJ}, g.W')}(g\cdot) &=& \pi_{(\tilde{I}, \tilde{\JJ}, W')} (\cdot).
\end{eqnarray*}
The maps (\ref{eq:vectorbundles1}), (\ref{eq:vectorbundles2}), (\ref{eq:vectorbundles3}) do not depend on the choice of basepoint in each $\tilde{K}$-orbit--i.e. for any $\tilde{k} \in \tilde{K}$,
\begin{eqnarray*}
    \pi_{(\Ad(\tilde{k})\tilde{I}, \Ad(\tilde{k})\tilde{\JJ}, \tilde{k}.\FF^\oplus)} &=& \pi_{(\tilde{I}, \tilde{\JJ}, \FF^\oplus)}\\
    \pi_{(\Ad(\tilde{k})\tilde{I}, \Ad(\tilde{k})\tilde{\JJ}, \tilde{k}.\FF)} &=& \pi_{(\tilde{I}, \tilde{\JJ}, \FF)}\\
    \pi_{(\Ad(\tilde{k})\tilde{I}, \Ad(\tilde{k})\tilde{\JJ}, \tilde{k}.W')} &=& \pi_{(\tilde{I}, \tilde{\JJ}, W')}.
\end{eqnarray*}
\end{enumerate}
\end{lemma}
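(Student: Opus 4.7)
The plan is to reduce Part (1) to identifying each fiber as a left coset space, then realize these coset spaces as associated vector bundles using Cartan decomposition (and Loos' theorem in two of the cases), and finally read off Parts (2) and (3) by direct verification.

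In the reduced setting $\vec{n}=(0,n_+,n_-)$ the group $H(\vec{n})$ is trivial, so the isomorphisms of Remark \ref{rmk:dimensions} give $\Sp_{\FF^\oplus/\FF_0}(\tilde{W})\cong\U(n_+)\times\U(n_-)=\tilde{K}^\shortparallel$, $\Sp_{\FF/\FF_0}(\tilde{W})\cong\U(n_+,n_-)=\tilde{G}(\tilde{I}\tilde{\JJ})$ and $\Sp_{W'/W_0}(\tilde{W})\cong\Sp(2n_+;\R)\times\Sp(2n_-;\R)=\tilde{G}(\tilde{I})$ (the equalities being those coming from the compatible triples). Combined with Theorem \ref{thm:reduction}(4), I would conclude $\alpha^{-1}(W_0)\cong\tilde{G}/\tilde{K}^\shortparallel$, $(\Re\cdot_0)^{-1}(W_0)\cong\tilde{G}/\tilde{G}(\tilde{I}\tilde{\JJ})$, and $(\cdot\cap\cdot^\omega)^{-1}(W_0)\cong\tilde{G}/\tilde{G}(\tilde{I})$. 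For the first of these I would invoke the Cartan decomposition of $\tilde{G}$: the map $\tilde{K}\times\tilde{\p}\to\tilde{G}$, $(\tilde{k},\tilde{p})\mapsto\tilde{k}\exp\tilde{p}$, is a $\tilde{K}$-equivariant diffeomorphism, and passing to the quotient by the right action of $\tilde{K}^\shortparallel$ (which, since $\Ad(\tilde{K})\tilde{\p}\subseteq\tilde{\p}$, rewrites as $(\tilde{k},\tilde{p})\cdot\tilde{k}^\shortparallel=(\tilde{k}\tilde{k}^\shortparallel,\Ad(\tilde{k}^{\shortparallel-1})\tilde{p})$) yields $\tilde{K}\times_{\tilde{K}^\shortparallel}\tilde{\p}\cong\alpha^{-1}(W_0)$.

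For the other two fibers I would apply Theorem IV.3.5 of \cite{Loos} to the symmetric pairs $(\tilde{G},\tilde{G}(\tilde{I}\tilde{\JJ}))$ and $(\tilde{G},\tilde{G}(\tilde{I}))$ of noncompact type, obtaining diffeomorphisms onto the total spaces of the vector bundles $\tilde{K}\times_{\tilde{K}^\shortparallel}\tilde{\p}^\shortparallel$ and $\tilde{K}\times_{\tilde{K}^\shortparallel}\tilde{\p}^\times$ realized by $[(\tilde{k},\tilde{p}^\shortparallel)]\mapsto\tilde{k}\exp(\tilde{p}^\shortparallel).\FF$ and $[(\tilde{k},\tilde{p}^\times)]\mapsto\tilde{k}\exp(\tilde{p}^\times).W'$ respectively; the $\tilde{K}^\shortparallel$-module identifications $\tilde{\p}/\tilde{\p}^\times\cong\tilde{\p}^\shortparallel$ and $\tilde{\p}/\tilde{\p}^\shortparallel\cong\tilde{\p}^\times$ then give the form stated in the lemma. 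This is the main obstacle, since the underlying decomposition $\tilde{G}=\tilde{K}\exp(\tilde{\p}^\shortparallel)\tilde{G}(\tilde{I}\tilde{\JJ})$ is a genuine refinement of Cartan decomposition rather than a pointwise Lie-algebra splitting. For Part (2), under these diffeomorphisms $\tilde{K}/\tilde{K}^\shortparallel$ is the zero-section and corresponds to $\tilde{K}.\FF^\oplus$ (resp. $\tilde{K}.\FF$, $\tilde{K}.W'$) since $\tilde{K}^\shortparallel$ is the stabilizer in $\tilde{K}$ of each basepoint; the formulas (\ref{eq:vectorbundles1})--(\ref{eq:vectorbundles3}) are precisely the zero-section projections, well-definedness follows from the uniqueness of Cartan decomposition together with $\Ad(\tilde{K}^\shortparallel)$-invariance of $\tilde{\p}$ (resp. $\tilde{\p}^\shortparallel$, $\tilde{\p}^\times$), and both the commutativity of the squares and the $\tilde{K}$-equivariance are then tautological.

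For Part (3), the equivariance under $g\in\tilde{G}$ is naturality: $\Ad(g)$ intertwines the Cartan-type decomposition defining $\pi_{(\tilde{I},\tilde{\JJ},\FF^\oplus)}$ with the one defining $\pi_{(\Ad(g)\tilde{I},\Ad(g)\tilde{\JJ},g.\FF^\oplus)}$, so the two projections are related by pre- and post-composition with $g^{\pm1}$. For independence under $\tilde{k}\in\tilde{K}$, I would use that $\Ad(\tilde{k})\tilde{\JJ}=\tilde{\JJ}$ (since $\tilde{k}$ centralizes $\tilde{\JJ}$) and $\Ad(\tilde{k})\tilde{\p}\subseteq\tilde{\p}$: rewriting $\tilde{k}'\exp(\tilde{p}).\FF^\oplus=(\tilde{k}'\tilde{k}^{-1})\exp(\Ad(\tilde{k})\tilde{p}).(\tilde{k}.\FF^\oplus)$ and applying $\pi_{(\Ad(\tilde{k})\tilde{I},\tilde{\JJ},\tilde{k}.\FF^\oplus)}$ yields $(\tilde{k}'\tilde{k}^{-1}).(\tilde{k}.\FF^\oplus)=\tilde{k}'.\FF^\oplus$, which matches $\pi_{(\tilde{I},\tilde{\JJ},\FF^\oplus)}(\tilde{k}'\exp\tilde{p}.\FF^\oplus)$. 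The same calculation, with $\tilde{p}$ replaced by $\tilde{p}^\shortparallel$ or $\tilde{p}^\times$ and the basepoint changed to $\FF$ or $W'$, handles the other two projections verbatim.
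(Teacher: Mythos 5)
Your proposal is correct, and its overall architecture (identify each fiber with a left coset space of $\tilde{G}$ via the stabilizers singled out by the compatible triples, realize those coset spaces by exponential maps over $\tilde{K}/\tilde{K}^\shortparallel$, then read off (2) and (3) by the same conjugation computations) coincides with the paper's. The one genuine divergence is in how you handle the two symmetric fibers $(\Re\cdot_0)^{-1}(W_0)\cong\tilde{G}/\tilde{G}(\tilde{I}\tilde{\JJ})$ and $(\cdot\cap\cdot^\omega)^{-1}(W_0)\cong\tilde{G}/\tilde{G}(\tilde{I})$: you invoke Theorem IV.3.5 of \cite{Loos} as a black box to get the vector-bundle realization by $[(\tilde{k},\tilde{p}^\shortparallel)]\mapsto\tilde{k}\exp(\tilde{p}^\shortparallel).\FF$ and $[(\tilde{k},\tilde{p}^\times)]\mapsto\tilde{k}\exp(\tilde{p}^\times).W'$, whereas the paper, although it records that IV.3.5 applies, deliberately reproves these cases from the three-factor diffeomorphism $\tilde{K}\times\tilde{\p}^\shortparallel\times\tilde{\p}^\times\to\tilde{G}$, $(\tilde{k},\tilde{p}^\shortparallel,\tilde{p}^\times)\mapsto\tilde{k}\exp\tilde{p}^\shortparallel\exp\tilde{p}^\times$ (Theorem IV.3.2 of \cite{Loos}, Lemma 6 of \cite{Takeuchi}, Lemma 9.10 of \cite{Wolf}, Lemma 11.6 of \cite{Wolf1}), checking by hand that right multiplication by $\tilde{G}(\tilde{I}\tilde{\JJ})=\tilde{K}^\shortparallel\exp\tilde{\p}^\times$ only reshuffles the $\tilde{K}^\shortparallel$- and $\tilde{\p}^\times$-factors. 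Your shortcut is legitimate for Lemma \ref{lem:fibers} itself, provided Loos' theorem is read as giving the fibration in exactly that exponential form (which you correctly flag as the main obstacle); what the paper's longer route buys is the explicit factorization $\tilde{G}=\tilde{K}\exp\tilde{\p}^\shortparallel\exp\tilde{\p}^\times$ with its uniqueness, which is reused later in the proof of Theorem \ref{thm:vectorbundle} and is exactly what produces the Baker--Campbell--Hausdorff correction $\chi_{\JJ,\FF_\ast^\oplus}$ in Corollary \ref{cor:fiberproduct}. Also note that your identification of the full stabilizers $\Sp_{\FF/\FF_0}(\tilde{W})=\tilde{G}(\tilde{I}\tilde{\JJ})$, $\Sp_{W'/W_0}(\tilde{W})=\tilde{G}(\tilde{I})$, $\Sp_{\FF^\oplus/\FF_0}(\tilde{W})=\tilde{K}^\shortparallel$ from the compatible-triple condition plus the isomorphism types of Remark \ref{rmk:dimensions} is asserted at the same level of detail as in the paper, so it is not a gap relative to the paper's own proof.
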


\begin{proof} (1): The map $\tilde{K} \times \tilde{\p} \to \tilde{G}$ given by $(\tilde{k}, \tilde{p}) \mapsto \tilde{k}\cdot\exp \tilde{p}$ is a diffeomorphism (Theorem VI.1.1 (iii) of \cite{Helgason}), and descends to a diffeomorphism on $\tilde{K} \times_{\tilde{K}^\shortparallel} \tilde{\p} \to \tilde{G}/\tilde{K}^\shortparallel$, as
\[ \tilde{k} \cdot \exp(p)\cdot \tilde{k}^\shortparallel = \tilde{k}\tilde{k}^\shortparallel \cdot \exp (\Ad(\tilde{k}^\shortparallel)^{-1} \tilde{p}) \quad \tilde{k}^\shortparallel \in \tilde{K}^\shortparallel, \tilde{p} \in \tilde{\p}.\]
Since $\tilde{K}^\shortparallel$ is identified with the stabilizer of $\FF^\oplus$ in $\Sp_W(V)/(\GL(V) \ltimes H(W))$, $\tilde{G}/\tilde{K}^\shortparallel$ is diffeomorphic to $\alpha^{-1}(W)$ by the map $g\tilde{K}^\shortparallel \mapsto g\tilde{K}^\shortparallel.\FF^\oplus = g.\FF^\oplus$. 

Consider the map $\tilde{K} \times \tilde{\p}^\shortparallel \times \tilde{\p}^\times \to \tilde{G}$ given by $(\tilde{k}, \tilde{p}^\shortparallel, \tilde{p}^\times) \mapsto \tilde{k} \cdot \exp \tilde{p}^\shortparallel \cdot \exp \tilde{p}^\times$. This is a diffeomorphism by Theorem IV.3.2 of \cite{Loos}, Lemma 6 of \cite{Takeuchi}, Lemma 9.10 of \cite{Wolf}, Lemma 11.6 of \cite{Wolf1}. For any $\tilde{k}_1^\shortparallel \exp \tilde{p}_1^\times \in \tilde{G}(\tilde{I}\tilde{\JJ})$, there exists $\tilde{k}_2^\shortparallel \exp \tilde{p}_2^\times \in \tilde{G}(\tilde{I}\tilde{\JJ})$ such that
\begin{eqnarray*}
    \tilde{k}\exp \tilde{p}^\shortparallel\exp \tilde{p}^\times \tilde{k}_1^\shortparallel \exp \tilde{p}_1^\times &=& \tilde{k} \tilde{k}_1^\shortparallel \exp (\Ad(\tilde{k}_1^\shortparallel)^{-1} \tilde{p}^\shortparallel) \exp (\Ad(\tilde{k}_1^\shortparallel)^{-1} \tilde{p}^\times) \exp \tilde{p}_1^\times\\&=&
    \tilde{k} \tilde{k}_1^\shortparallel \exp (\Ad(\tilde{k}_1^\shortparallel)^{-1} \tilde{p}^\shortparallel) \tilde{k}_2^\shortparallel \exp \tilde{p}_2^\times,
\end{eqnarray*}
because $\exp (\Ad(\tilde{k}_1^\shortparallel)^{-1} \tilde{p}^\times) \exp \tilde{p}_1^\times \in \tilde{G}(\tilde{I}\tilde{\JJ})$, and every element of $\tilde{G}(\tilde{I}\tilde{\JJ})$ can be expressed in the form $\tilde{k}_2^\shortparallel \exp \tilde{p}_2^\times$.
So the diffeomorphism $\tilde{K} \times \tilde{\p}^\shortparallel \times \tilde{\p}^\times \to \tilde{G}$ descends to a diffeomorphism
\[ \tilde{K} \times_{\tilde{K}^\shortparallel} (\tilde{\p}/\tilde{\p}^\times) \to \tilde{G}/\tilde{G}(\tilde{I}\tilde{\JJ}).\]
Since $(\tilde{I}, \tilde{\JJ}, \FF)$ is a compatible triple, $\tilde{G}(\tilde{I}\tilde{\JJ})$ is the stabilizer of $\FF$ in $\tilde{G}$, so $\tilde{G}/ \tilde{G}(\tilde{I}\tilde{\JJ})$ is diffeomorphic to $(\Re \cdot_0)^{-1}$. The diffeomorphism for $(\cdot \cap \cdot^\omega)^{-1}(W)$ is proven similarly. (2) is immediate. (3):

Suppose $\tilde{\FF}^\oplus = \tilde{k}.\exp(p).\FF^\oplus = \tilde{k}g^{-1} \exp(gpg^{-1}).(g.\FF^\oplus)$. Then
\begin{eqnarray*}
    g^{-1} \cdot \pi_{(\Ad(g)\tilde{I}, \Ad(g)\tilde{\JJ}, g.\FF^\oplus)}(g.\tilde{\FF}^\oplus)
    &=& g^{-1}\cdot g\tilde{k}g^{-1}.(g.\FF^\oplus)\\
    &=& \pi_{(\tilde{I}, \tilde{\JJ}, \FF^\oplus)}(\tilde{\FF}^\oplus).
\end{eqnarray*}
The group action for (\ref{eq:vectorbundles2}), (\ref{eq:vectorbundles3}) can be verified similarly. The independence property follows by $\tilde{K}$-equivariance of the maps (\ref{eq:vectorbundles1}), (\ref{eq:vectorbundles2}), (\ref{eq:vectorbundles3}).
\end{proof}

\begin{remark}
If $\tilde{\JJ}$ is descended from an $\omega$-compatible linear complex structure $\JJ$ on $V$, and $\FF^\oplus \in \Lag_{\JJ, \oplus}^\C(\vec{n})$, $\FF \in \Lag_{\JJ}^\C(\vec{n})$, or $W' \in \Gr_{\JJ}(\vec{n})$, we can take
\[ \begin{aligned} \tilde{K}.\FF^\oplus &=& (\alpha|_{\Lag_{\JJ, \oplus}^\C(\vec{n})})^{-1}(W_0)\\
\tilde{K}.\FF &=& (\Re \cdot_{0}|_{\Lag_{\JJ}^\C(\vec{n})})^{-1}(W_0)\\
\tilde{K}.W' &=& ((\cdot \cap \cdot^{\omega})|_{\Gr_{\JJ}(\vec{n})})^{-1}(W_0)\end{aligned}\]
in the diagrams (\ref{eq:alphasquar}), (\ref{eq:Re0square}), (\ref{eq:kernelsquare}).
\end{remark}

\begin{remark}
When $W_0$ is the zero vector space, the diffeomorphism  for $(\Re \cdot_0)^{-1}(W_0) \cong \Lag^\C(0, n_+,n_-; V)$ of Lemma \ref{lem:fibers}.1 appears in Theorem 2 of \cite{Takeuchi}, and the associated bundle in (\ref{eq:Re0square}) is \emph{isomorphic as vector bundles} to the normal bundle of $\Lag^\C_\JJ(0, n_+, n_-; V)$ in $\Lag^\C(0, n_+, n_-; V)$. Our proof follows the idea in the proof of Theorem 2 of \cite{Takeuchi}. Compare with the holomorphic version in Theorem 11.8.2 of \cite{Wolf1} and the second statement of the Orbit Fibration Theorem of \cite{Wolf}, where $\Lag^\C(0, n_+, n_-; V)$ is identified with a \emph{relatively compact} tubular neighborhood of the holomorphic normal bundle of $\Lag^\C_\JJ(0, n_+, n_-; V)$ in $\Lag^\C(0, n_+, n_-; V)$.\end{remark}

\subsection{Factorization of orbit fibrations}
In this section, we will assume the $\tilde{\omega}$-compatible linear complex structure $\tilde{\JJ} = \tilde{\JJ}(W)$ on $\tilde{W} = W^\omega/W$ is induced from $\JJ$ as a function of every isotropic subspace $W=W_0 \subseteq V$.

\begin{proposition}[Uniqueness of compatible triples]~
\begin{enumerate}
\item If $W' \in (\cdot \cap \cdot^\omega)|_{\Gr_{\JJ}(\vec{n})}^{-1}(W_0)$, then there exists a unique involution 
\[ \tilde{I}(W') \in \Sp_{W'/W_0}(W_0^\omega/W_0)\]
such that $(\tilde{I}(W'), \tilde{\JJ}(W_0), W')$ is a compatible triple and $W'/W_0$ is the linear subspace fixed by $\tilde{I}(W')$.
\item If $\FF \in (\Re \cdot_0)|_{\Lag_{\JJ}^\C(\vec{n})}^{-1}(W_0)$, then there exists a unique involution
\[\tilde{I}(\FF)\in\Sp_{\FF/\FF_0}(W_0^\omega/W_0)\]
such that $(\tilde{I}(\FF), \tilde{\JJ}(W_0), \FF)$ is a compatible triple and $(\FF/\FF_0) \cap \FF_{\tilde{\JJ}(W_0)}$ is the linear subspace fixed by $\tilde{I}(\FF)|_{\FF/\FF_0}$.
\item If $\FF^\oplus \in \alpha|_{\Lag^\C_{\JJ, \oplus}(\vec{n})}^{-1}(W_0)$, then there exists a unique involution
\[ \tilde{I}(\FF^\oplus) \in \Sp_{\FF^\oplus/\FF_0}(W_0^\omega/W_0)\]
such that $(\tilde{I}(\FF^\oplus), \tilde{\JJ}(W_0), \FF^\oplus)$ is a compatible triple and $\FF^\oplus_{\ge 0} / \FF_0$ is the linear subspace fixed by $\tilde{I}(\FF^\oplus)|_{\FF/\FF_0}$.
\end{enumerate}
\end{proposition}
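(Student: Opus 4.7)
The plan is to construct $\tilde{I}$ explicitly in each of the three cases from the $\JJ$-adapted splittings provided by the respective hypothesis, verify that the compatible triple condition and the fixed subspace condition hold, and then deduce uniqueness from the two conditions together. In case (1), the hypothesis $W' \in \Gr_\JJ(\vec{n})$ feeds into Theorem \ref{thm:basisJ}(3) to produce a unique $\JJ$-invariant symplectic splitting $V = (W_0 \oplus \JJ W_0) \oplus W_+^\JJ \oplus W_-^\JJ$ with $W' = W_0 \oplus W_+^\JJ$ and $(W')^\omega = W_0 \oplus W_-^\JJ$. Reducing modulo $W_0$, the subspaces $W'/W_0$ and $(W')^\omega/W_0$ become $\tilde{\JJ}(W_0)$-invariant symplectic complements in $\tilde{W}$ of real dimensions $2n_+$ and $2n_-$; I define $\tilde{I}(W')$ to be $+1$ on the former and $-1$ on the latter. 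Cases (2) and (3) proceed by the same recipe applied to the $\tilde{\JJ}$-invariant symplectic decomposition of $\tilde{W}$ induced by the images of $\Re(\FF \cap \FF_{\pm \JJ})$ (resp.\ of $\Re \FF_\pm$, which is independent of the choice of representative of $\FF^\oplus$) modulo $W_0$, available precisely because of the $\Lag^\C_\JJ$ (resp.\ $\Lag^\C_{\JJ,\oplus}$) condition and the real-projection identification from Remark \ref{rem:symplectictwistorGrassmannians}.

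\textbf{Verification.} In every case the $\pm 1$-eigenspaces of $\tilde{I}$ are $\tilde{\JJ}$-invariant symplectic complements of dimensions $2n_\pm$, so the setup of Section \ref{subsec:involutions} applies verbatim. The compatible triple identity $\tilde{G}(\tilde{I}, \tilde{\JJ}) = \Sp_X(\tilde{W}) \cap \tilde{G}(\tilde{\JJ})$, where $X$ is $W'/W_0$, $\FF/\FF_0$, or $\FF^\oplus/\FF_0$ as appropriate, follows from two short arguments: any element commuting with $\tilde{I}$ preserves each eigenspace, and by construction $X$ is either the $+1$-eigenspace (case 1) or a direct sum of $\tilde{\JJ}$-complex-linear pieces lying inside the $\pm 1$-eigenspaces (cases 2 and 3), forcing stabilization of $X$; conversely, a $\tilde{\JJ}$-commuting symplectomorphism stabilizing $X$ also stabilizes the $\tilde{\omega}(\cdot, \tilde{\JJ}\cdot)$-orthogonal complement of the $\tilde{\JJ}$-invariant part of $X$, and hence commutes with $\tilde{I}$. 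The description of the fixed subspace is immediate from the definition of $\tilde{I}$.

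\textbf{Uniqueness and main obstacle.} Any other involution $\tilde{I}'$ meeting the hypotheses commutes with $\tilde{\JJ}$ (this is built into the definition of compatible triple in Section \ref{subsec:involutions}), so its $\pm 1$-eigenspaces in $\tilde{W}$ are $\tilde{\JJ}$-invariant symplectic complements of dimensions $2n_\pm$. In case (1) the fixed subspace condition directly identifies the $+1$-eigenspace as $W'/W_0$, forcing $\tilde{I}' = \tilde{I}(W')$. The main obstacle sits in cases (2) and (3): the specified fixed subspace of $\tilde{I}'|_{\FF/\FF_0}$ has complex dimension $n_+$, while the $+1$-eigenspace of $\tilde{I}'$ in $\tilde{W}^\C$ has complex dimension $2n_+$, so the fixed subspace does not a priori determine the eigenspace. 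The argument will be that $\tilde{\JJ}$-invariance splits the $+1$-eigenspace in $\tilde{W}^\C$ into two $n_+$-dimensional $\pm i$-parts; its $+i$-part is then a complex subspace of $\FF_{\tilde{\JJ}(W_0)}$ containing $(\FF/\FF_0) \cap \FF_{\tilde{\JJ}(W_0)}$ of the same complex dimension $n_+$, so a dimension count forces equality. This pins down the $+1$-eigenspace, hence $\tilde{I}'$.
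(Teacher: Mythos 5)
Your proposal is correct and takes essentially the same route as the paper: you define $\tilde{I}$ to be $\pm 1$ on the $\tilde{\JJ}(W_0)$-invariant pieces determined by the basepoint (your real projections of $\FF \cap \FF_{\pm\JJ}$, resp.\ of the splitting, taken modulo $W_0$ are exactly the real forms of the paper's four-term, conjugation-stable splitting of $(W_0^\omega/W_0)^\C$), verify the compatible-triple identity by eigenspace and $\tilde{\omega}(\cdot,\tilde{\JJ}\cdot)$-orthogonal-complement preservation, and obtain uniqueness by recovering the $+1$-eigenspace from the fixed-subspace hypothesis. Your dimension count in cases (2)--(3) merely makes explicit what the paper compresses into ``verified similarly, considering additionally that $\tilde{I}(\FF)$ commutes with complex conjugation.''
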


\begin{proof}
(1): Consider the splitting 
\[ W_0^\omega/W_0 = (W'/W_0) \oplus ((W')^\omega/W_0)\]
into $\tilde{\JJ}(W_0)$-invariant symplectic subspaces. Then let $\tilde{I}(W')(\ww + \ww'):= \ww' - \ww''$ for $\ww \in W'/W_0$, $\ww'' \in (W')^\omega/W_0$. $\tilde{I}(W')$ is an involution, its $+1$-eigenspace is $W'/W_0$, and commutes with $\tilde{\JJ}(W_0)$. If $g \in \Sp_{W'/W_0}(W_0^\omega/W_0)^{\Ad(\tilde{\JJ}(W_0))}$, then since $(W')^\omega/W_0$ is the $\tilde{\omega}(\cdot, \tilde{\JJ}(W_0)\cdot)$-orthogonal complement of $W'/W_0$ and $g$ preserves orthogonal complements, $g = g|_{W'/W_0} + g|_{(W')^\omega/W_0}$. Then $g$ commutes with $\tilde{I}(W')$. Conversely, if $g$ commutes with both $\tilde{\JJ}(W_0)$ and $\tilde{I}(W')$, $g$ preserves the eigenspaces of $\tilde{I}(W')$ and $\tilde{\omega}(\cdot, \tilde{\JJ}(W_0)\cdot)$ orthogonal complements, so $g = g|_{W'/W_0} + g|_{(W')^\omega/W_0}$, so $g \in \Sp_{W'/W_0}(W_0^\omega/W_0)^{\Ad(\tilde{\JJ(W_0)})}$. So $(\tilde{I}(W'), \tilde{\JJ}(W_0), W')$ is a compatible triple. If $\tilde{I}' \in \Sp_{W'/W_0}(W_0^\omega/W_0)$ is an involution that commutes $\tilde{\JJ}(W_0)$, and whose $+1$-eigenspace is $W'/W_0$, then its $-1$-eigenspace has to be equal to the $\tilde{\omega}(\cdot, \tilde{\JJ}(W_0) \cdot)$-orthogonal complement of $W'/W_0$. So $\tilde{I}' = \tilde{I}(W')$.

(2): $(W_0^\omega/W_0)^\C$ splits into $\tilde{\omega}^\C(\cdot, \tilde{\JJ}(W_0) \overline{\cdot})$-orthogonal subspaces
\begin{equation*}((\FF/\FF_0) \cap \FF_{\tilde{\JJ}(W_0)}) \oplus (\overline{(\FF/\FF_0) \cap \FF_{\tilde{\JJ}(W_0)}}) \oplus ((\FF/\FF_0) \cap \FF_{-\tilde{\JJ}(W_0)}) \oplus (\overline{(\FF/\FF_0) \cap \FF_{-\tilde{\JJ}(W_0)}}).\end{equation*}
Then let $\tilde{I}(\FF)(\ww + \overline{\ww} + \ww' + \overline{\ww}'):= \ww + \overline{\ww} - \ww' -\overline{\ww}'$ for $\ww \in (\FF/\FF_0) \cap \FF_{\tilde{\JJ}(W_0)})$, $\overline{\ww} \in \overline{(\FF/\FF_0) \cap \FF_{\tilde{\JJ}(W_0)})}$, $\ww' \in (\FF/\FF_0) \cap \FF_{-\tilde{\JJ}(W_0)})$, $\overline{\ww}' \in \overline{(\FF/\FF_0) \cap \FF_{-\tilde{\JJ}(W_0)})}$. The rest of the proof can be verified similarly as above, considering additionally the fact that $\tilde{I}(\FF)$ commutes with complex conjugation.
(3): We can proceed similarly to (2), from the splitting of $(W_0^\omega/W_0)^\C$ into $\tilde{\omega}^\C(\cdot, \tilde{\JJ}(W_0) \overline{\cdot})$-orthogonal subspaces
\[ (\FF^\oplus_{\ge 0}/\FF_0) \oplus (\overline{\FF^\oplus_{\ge 0}/\FF_0}) \oplus (\FF^\oplus_{\le 0}/\FF_0) \oplus (\overline{\FF^\oplus_{\le 0}/\FF_0}).\]
\end{proof}

So when an $\omega$-compatible linear complex structure on $V$ is given, a choice of compatible triple is determined by a choice of basepoint.

We will denote any choice of $W' \in (\cdot \cap \cdot^\omega)|_{\Gr_{\JJ}(\vec{n})}^{-1}(W_0)$, $\FF \in (\Re \cdot_0)|_{\Lag_{\JJ}^\C(\vec{n})}^{-1}(W_0)$, and $\FF^\oplus \in \alpha|_{\Lag^\C_{\JJ, \oplus}(\vec{n})}^{-1}(W_0)$ by $b = b_{\JJ, W_0}$, so we can express how the choice of involution depends on other choices as
\[ \tilde{I} = \tilde{I}(b) = \tilde{I}(b, \JJ, W_0).\]

Before we provide the proof of the main theorem, we briefly recall how different objects depend on others. The objects that depend only on $\JJ$ are $K$, $\k$, and $\p$, where $K = G(\JJ) = G^{\Ad(\JJ)} \le G = \Sp(V)$,
and $\operatorname{Lie}(G) = \g = \k \oplus \p$. The objects that depend only on the choice of an isotropic subspace $W = W_0 \subseteqq V$ are $\tilde{G} = \Sp (W_0^\omega/W_0)$ and $\tilde{\g} = \operatorname{Lie}(\tilde{G})$. The objects that depend on the choice of both $\JJ$ and $W=W_0$ are $\tilde{K} = \tilde{G}(\tilde{\JJ}(W_0))$, $\tilde{\k} = \tilde{\k}(\JJ, W_0)$, $\tilde{\p} = \tilde{\p}(\JJ, W_0)$, and $b = b_{\JJ, W_0}$. The objects that depend on the choice of $b$, $\JJ$, and $W_0$ are $\tilde{I} = \tilde{I}(b) = \tilde{I}(b, \JJ, W_0)$,
\[\tilde{G}^\shortparallel = \tilde{G}(\tilde{I}) = \tilde{G}(\tilde{I}(b, \JJ, W_0)), \quad  \tilde{G}(\tilde{I}\tilde{\JJ}) = \tilde{G}(\tilde{I}(b)\tilde{\JJ}(W_0)),\quad \tilde{K}^\shortparallel = \tilde{G}(\tilde{I}(b), \tilde{\JJ}(W_0)),\] \begin{equation*}
    \begin{array}{cccccc}
    \tilde{\g}^\shortparallel&=
    &\tilde{\g}^\shortparallel(\tilde{I}(b, \JJ, W_0))), & \tilde{\g}^\times
    &=& \tilde{\g}^\times(\tilde{I}(b, \JJ, W_0)),\\
    \tilde{\k}^\shortparallel&=& \tilde{\k}^\shortparallel(\tilde{I}(b), \tilde{\JJ}(W_0)),&
    \tilde{\k}^\times&=&\tilde{\k}^\times(\tilde{I}(b), \tilde{\JJ}(W_0)),\\ \tilde{\p}^\shortparallel&=&\tilde{\p}^\shortparallel(\tilde{I}(b), \tilde{\JJ}(W_0)),& \tilde{\p}^\times&=&\tilde{\p}^\times(\tilde{I}(b), \tilde{\JJ}(W_0)).
    \end{array}
\end{equation*}

Let $K^\shortparallel(b_\ast) := K \cap \Sp_{b_\ast}(V)$, for $b_\ast = \FF^\oplus \in \Lag_{\JJ, \oplus}^\C(\vec{n})$, $b_\ast = \FF \in \Lag_{\JJ}^\C(\vec{n})$, or $b_\ast = W' \in \Gr_{\JJ}(\vec{n})$.
\[ K^\shortparallel(b_\ast) \cong O(n_0) \times \U(n_+) \times \U(n_-)\] for any choice of $b_\ast$.

Now consider 
\begin{eqnarray*} \gamma_\JJ: \Gr(\vec{n}) &\to& \Gr_{\JJ}(\vec{n})\\
W' & \mapsto & \pi_{(\tilde{I}(W_\ast '), \tilde{\JJ}((W')_0), W_\ast')}(W')
\end{eqnarray*}
where $W_\ast'$ is any point in $(\cdot \cap \cdot^\omega)|_{\Gr_{\JJ}(\vec{n})}^{-1}((W')_0)$. By Lemma \ref{lem:fibers}.3, $\gamma_\JJ$ only depends on $\JJ$, and not on the choice of auxiliary basepoints $W_\ast'$'s used to define it.

Similarly, let
\begin{eqnarray*} \beta_\JJ: \Lag^\C(\vec{n}) &\to& \Lag_{\JJ}^\C(\vec{n})\\
\FF' & \mapsto & \pi_{(\tilde{I}(\FF_\ast'), \tilde{\JJ}(\Re \FF_0), \FF_\ast')}(\FF')
\end{eqnarray*}
where $\FF_\ast' \in (\Re \cdot 0)|_{\Lag_{\JJ}^\C(\vec{n})}^{-1}(\Re \FF'_0)$, and let
\begin{eqnarray*} \eta_\JJ: \Lag_{\oplus}^\C(\vec{n}) &\to& \Lag_{\JJ, \oplus}^\C(\vec{n})\\
(\FF^\oplus)' & \mapsto & \pi_{(\tilde{I}((\FF^\oplus)_\ast'), \tilde{\JJ}(\Re \FF'_0), (\FF^\oplus)_\ast')}((\FF^\oplus)')
\end{eqnarray*}
where $(\FF^\oplus)_\ast' \in (\alpha|_{\Lag_{\JJ, \oplus}^\C(\vec{n})})^{-1}(\alpha((\FF^\oplus)')$.

Note that the existence of the splitting $V = W_0 \oplus \JJ W_0\oplus W_-^\JJ$ simultaneously for every isotropic $W_0$ provides a preferred choice of identifications
\begin{equation} \label{eq:AdJ}
\begin{tikzcd} \tilde{G}(\tilde{\JJ}(W_0))\arrow[r, hook, "\JJ"] \arrow[d, "\cong"', "\Ad(k)"] & \Sp(W_0^\omega/W_0) \arrow[d,  "\cong"', "\Ad(k)"] \arrow[r, hook, "\JJ"] & \Sp_{W_0}(V) \arrow[r, hook] \arrow[d, "\Ad(k)", "\cong"'] & \Sp(V) \arrow[d, "\Ad(k)"]\\ \tilde{G}(\tilde{\JJ}(kW_0)) \arrow[r, hook, "\JJ"] & \Sp(kW_0^\omega/kW_0) \arrow[r, hook, "\JJ"] & \Sp_{kW_0}(V) \arrow[r, hook] & \Sp(V) \end{tikzcd} \end{equation}
for any $k \in K$. We have labelled with $\JJ$, maps that depend on the choice of $\JJ$. The three vertical arrows on the left are isomorphisms and the rightmost vertical arrow on the right is an automorphism.

Similarly, if $b_\ast = \FF^\oplus \in \Lag_{\JJ, \oplus}^\C(\vec{n})$, $b_\ast = \FF \in \Lag_{\JJ}^\C(\vec{n})$, or $b_\ast = W' \in \Gr_{\JJ}(\vec{n})$, $\JJ$ allows us to identify $\tilde{\p}$, $\tilde{\p}^\shortparallel$, $\tilde{\p}^\times$ as subspaces of $\g$, and $\Ad(k)$ induces isomorphisms

\begin{equation*}
    \begin{array}{ccc}
        \tilde{K}(\Re (b_\ast)_0) &\xrightarrow{\cong}& \tilde{K}(k \Re (b_\ast)_0),\\
        \tilde{\p}(\Re (b_\ast)_0) &\xrightarrow{\cong}& \tilde{\p}(k \Re (b_\ast)_0),\\
        \tilde{\p}^\shortparallel(\tilde{I}(b_\ast), \tilde{\JJ}((b_\ast)_0)) &\xrightarrow{\cong}& \tilde{\p}^\shortparallel(\tilde{I}(kb_\ast), \tilde{\JJ}(kb_\ast)_0)),\\
        \tilde{\p}^\times(\tilde{I}(b_\ast), \tilde{\JJ}((b_\ast)_0)) &\xrightarrow{\cong}& \tilde{\p}^\times(\tilde{I}(kb_\ast), \tilde{\JJ}((kb_\ast)_0)). 
    \end{array}
\end{equation*}
If $k \in K^\shortparallel$, these isomorphisms are automorphisms, and $K^\shortparallel$ acts on $\tilde{\p}(\Re (b_\ast)_0)$, $\tilde{\p}^\shortparallel(\tilde{I}(b_\ast))$, and $\tilde{\p}^\times(\tilde{I}(b_\ast))$ where $\GL(\Re (b_\ast)_0) \le K^\shortparallel$ acts trivially.

Now we are ready to reiterate the proof of Lemma \ref{lem:fibers}.
\begin{theorem}\label{thm:vectorbundle}
Suppose $\FF_\ast^\oplus \in \Lag_{\JJ, \oplus}^\C(\vec{n})$, $\FF_\ast \in \Lag_{\JJ}^\C(\vec{n})$, and $W_\ast \in \Gr_{\JJ}(\vec{n})$.
\begin{enumerate}
\item $\eta_\JJ$, $\beta_\JJ$, and $\gamma_\JJ$ are $K$-equivariant.
\item There are diffeomorphisms
    \begin{eqnarray*}
    K \times_{K^\shortparallel(\FF_\ast^\oplus)} \tilde{\p}(\Re (\FF_\ast)_0) &\cong&\Lag_\oplus^\C(\vec{n})\\
    {[(k, \tilde{p}_\ast)]} & \mapsto& k \exp \tilde{p}_\ast .\FF_\ast^\oplus\\
   K\times_{K^\shortparallel(\FF_\ast)} (\tilde{\p}(\Re (\FF_\ast)_0)/\tilde{\p}^\times(\tilde{I}(\FF_\ast), \tilde{\JJ}(\Re (\FF_\ast)_0))&\cong& \Lag^\C(\vec{n})\\
    {[(k, \tilde{p}_\ast^\shortparallel + \tilde{\p}^\times)]} &\mapsto& k \exp \tilde{p}_\ast^\shortparallel.\FF_\ast\\
    K\times_{K^\shortparallel(W_\ast)} (\tilde{\p}((W_\ast)_0)/\tilde{\p}^\shortparallel(\tilde{I}(W_\ast), \tilde{\JJ}((W_\ast)_0)) &\cong& \Gr(\vec{n})\\
    {[(k, \tilde{p}_\ast^\times + \tilde{\p}^\shortparallel)]} &\mapsto& k \exp \tilde{p}_\ast^\times.W_\ast.
\end{eqnarray*}
    \item The diagrams
    \begin{equation}
    \begin{tikzcd} K \times_{K^\shortparallel(\FF_\ast^\oplus)} \tilde{\p}(\FF_\ast^\oplus)\arrow[d, two heads] & \Lag_\oplus^\C(\vec{n}) \arrow[d, "\eta_\JJ"] \arrow[l, "\cong"'] \\ K/K^\shortparallel(\FF_\ast^\oplus) \arrow[r, "\cong"]& \Lag_{\JJ, \oplus}^\C(\vec{n})
    \end{tikzcd}
    \end{equation}
    
    \begin{equation}
    \begin{tikzcd} K\times_{K^\shortparallel(\FF_\ast)} (\tilde{\p}/\tilde{\p}^\times)(\FF_\ast)  \arrow[d, two heads] & \Lag^\C(\vec{n}) \arrow[d, "\beta_\JJ"] \arrow[l, "\cong"'] \\ K/K^\shortparallel(\FF_\ast) \arrow[r, "\cong"]& \Lag_\JJ^\C(\vec{n})
    \end{tikzcd}
    \end{equation}
    
    \begin{equation}
    \begin{tikzcd} K \times_{K^\shortparallel(W_\ast)} (\tilde{\p}/\tilde{\p}^\shortparallel)(W_\ast) \arrow[d, two heads] & \Gr(\vec{n}) \arrow[l, "\cong"'] \arrow[d, "
\gamma_\JJ"] \\ K/K^\shortparallel(W_\ast) \arrow[r, "\cong"]& \Gr_\JJ(\vec{n})\end{tikzcd}\end{equation}
commute. The upper horizontal arrows are defined from the previous statement. 
\end{enumerate}
\end{theorem}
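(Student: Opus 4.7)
The plan is to promote the fiberwise identifications from Lemma~\ref{lem:fibers} to global diffeomorphisms, using the $K$-equivariance of everything in sight together with the transitivity of $K$ on the compact submanifolds established in Proposition~\ref{prop:compactfibrations}.

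First I address part~(1). For $k \in K$ and $(\FF^\oplus)' \in \Lag_\oplus^\C(\vec{n})$ with $W_0 := \Re((\FF^\oplus)')_0$, the fact that $k$ commutes with $\JJ$ implies $\tilde{\JJ}(kW_0) = \Ad(k) \tilde{\JJ}(W_0)$ under the identifications of diagram~(\ref{eq:AdJ}), and $K$-stability of $\Lag_{\JJ,\oplus}^\C(\vec{n})$ implies that $k$ maps any admissible auxiliary basepoint $(\FF_\ast^\oplus)'$ over $W_0$ to one over $kW_0$ with $\tilde{I}(k.(\FF_\ast^\oplus)') = \Ad(k)\tilde{I}((\FF_\ast^\oplus)')$. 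The $\tilde{G}$-equivariance and basepoint-independence of $\pi$ from Lemma~\ref{lem:fibers}.3 then give $\eta_\JJ(k.(\FF^\oplus)') = k.\eta_\JJ((\FF^\oplus)')$; the same argument handles $\beta_\JJ$ and $\gamma_\JJ$.

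For parts~(2) and~(3), I define the candidate map
\[ \Phi : K \times_{K^\shortparallel(\FF_\ast^\oplus)} \tilde{\p} \to \Lag_\oplus^\C(\vec{n}), \qquad [(k,\tilde{p})] \mapsto k \cdot \exp(\tilde{p}) \cdot \FF_\ast^\oplus, \]
where $\exp(\tilde{p}) \in \tilde{G}$ is regarded as an element of $\Sp_{(\FF_\ast^\oplus)_0}(V) \le \Sp(V)$ via the $\JJ$-dependent splitting homomorphism of Theorem~\ref{thm:splitW}. Well-definedness under the action $(k,\tilde{p}) \cdot h = (kh,\Ad(h^{-1})\tilde{p})$ of $h \in K^\shortparallel(\FF_\ast^\oplus)$ follows from $h.\FF_\ast^\oplus = \FF_\ast^\oplus$ together with $h\exp(\Ad(h^{-1})\tilde{p})h^{-1} = \exp(\tilde{p})$, and $K$-equivariance of $\Phi$ is immediate. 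The commutative square in~(3) holds because $\eta_\JJ|_{\alpha^{-1}(W_0)}$ agrees with $\pi_{(\tilde{I}(\FF_\ast^\oplus),\tilde{\JJ},\FF_\ast^\oplus)}$ by definition, and this extends to all of $\Lag_\oplus^\C(\vec{n})$ by $K$-equivariance.

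To conclude that $\Phi$ is a diffeomorphism, I would restrict to the fiber over $\FF_\ast^\oplus$: the restriction is $\tilde{p} \mapsto \exp(\tilde{p}).\FF_\ast^\oplus$, a diffeomorphism $\tilde{\p} \xrightarrow{\cong} \eta_\JJ^{-1}(\FF_\ast^\oplus)$ by Lemma~\ref{lem:fibers}.1--2 applied to the compatible triple $(\tilde{I}(\FF_\ast^\oplus),\tilde{\JJ},\FF_\ast^\oplus)$, upon identifying the fiber of $\pi$ over the basepoint with $\tilde{K}^\shortparallel \times_{\tilde{K}^\shortparallel} \tilde{\p} \cong \tilde{\p}$. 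Combined with transitivity of $K$ on the base (Proposition~\ref{prop:compactfibrations}.1) and $K$-equivariance, $\Phi$ is a fiberwise diffeomorphism over a transitive base, hence globally a diffeomorphism. The identical argument with $\tilde{\p}$ replaced by $\tilde{\p}/\tilde{\p}^\times$ or $\tilde{\p}/\tilde{\p}^\shortparallel$ yields the statements for $\beta_\JJ$ and $\gamma_\JJ$. The principal technical subtlety is that $\tilde{\p}(\JJ,W_0) \subseteq \g$ genuinely depends on $W_0$, so different points of the base attach different subspaces of $\g$; this is resolved by the $\Ad(K^\shortparallel)$-equivariance encoded in diagram~(\ref{eq:AdJ}), which ensures the fibers assemble into a bona fide associated bundle.
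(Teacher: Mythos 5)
Your proposal is correct and follows essentially the same route as the paper: you build the same map $[(k,\tilde{p}_\ast)]\mapsto k\exp(\tilde{p}_\ast).\FF_\ast^\oplus$, rely on Lemma \ref{lem:fibers} for the fiberwise identification and its basepoint-independence/conjugation property, and use transitivity of $K$ (Proposition \ref{prop:compactfibrations}) plus $K$-equivariance exactly as in the paper's proof. The only difference is packaging: where the paper checks surjectivity by absorbing the auxiliary basepoint via $\tilde{k}e^{\tilde{p}}k = \tilde{k}k\,e^{\Ad(k)^{-1}\tilde{p}}$ and injectivity modulo $K^\shortparallel(\FF_\ast^\oplus)$ by applying $\eta_\JJ$ to both sides, you reach the same conclusion by viewing the map as a bundle morphism covering the base identification and restricting to fiberwise diffeomorphisms, which amounts to the same verification.
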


\begin{proof}

(1): Suppose $W' \in \Gr(\vec{n})$ and $W_\ast' \in (\cdot \cap \cdot^\omega)|_{\Gr_{\JJ}(\vec{n})}^{-1}((W')_0)$. By Lemma \ref{lem:fibers}, there exist $\tilde{k} \in \tilde{K}(\JJ, (W')_0)$ and $\tilde{p}^\times \in \tilde{\p}^\times(\tilde{I}(W_\ast'))$ such that $W' = \tilde{k}e^{\tilde{p}^\times}.W_\ast'$. Then for all $k \in K$
\begin{eqnarray*}
\gamma_\JJ(k.W') &=& \pi_{(\tilde{I}(k.W_\ast'), \tilde{\JJ}(k.(W')_0), k.W_\ast')}(k.W')\\
&=& \pi_{(\tilde{I}(k.W_\ast'), \tilde{\JJ}(k.(W')_0), k.W_\ast')}(k \tilde{k} e^{\tilde{p}^\times}.W_\ast')\\
&=& \pi_{(\tilde{I}(k.W_\ast'), \tilde{\JJ}(k.(W')_0), k.W_\ast')}(k \tilde{k}k^{-1} \cdot k e^{\tilde{p}^\times}k^{-1}.kW_\ast')\\
&=& k \tilde{k} k^{-1}. kW_\ast'\\
&=& k.\gamma_{\JJ}(W'),
\end{eqnarray*}
because $\gamma_\JJ(W') = \gamma_\JJ( \tilde{k} e^{\tilde{p}^\times}.W_\ast') = \tilde{k}.W_{\ast}'$.
So $\gamma_\JJ$ is $K$-equivariant. Similarly, $\eta_\JJ$, $\beta_\JJ$ are $K$-equivariant. (2): Suppose $(\FF^\oplus)' \in \Lag^\C(\vec{n})$ and $(\FF^\oplus_\ast)' \in \alpha|_{\Lag_{\JJ, \oplus}^\C(\vec{n})}^{-1}(\alpha((\FF^\oplus)'))$. By Lemma \ref{lem:fibers}.1, there exist $\tilde{k} \in \tilde{K}(\Re(\FF_\ast^\oplus)'_{0})$ and $\tilde{p} \in \tilde{\p}(\Re(\FF_\ast^\oplus)'_{\ge 0})$ such that $(\FF^\oplus)' = \tilde{k} e^{\tilde{p}} (\FF_\ast^\oplus)'$. Since $K$ acts transitively on $\Lag_{\JJ, \oplus}^\C(\vec{n})$, there exists a $k \in K$ such that
\[ (\FF^\oplus)' = \tilde{k} e^{\tilde{p}} k.\FF_\ast^\oplus = \tilde{k}k. e^{\Ad(k)^{-1} \tilde{p}}.\FF_\ast^\oplus.\]
So the map $K \times \tilde{\p}(\Re (\FF_\ast^\oplus)_{\ge 0}) \to \Lag_{\oplus}^\C(\vec{n})$, $(k, \tilde{p}_\ast) \mapsto k e^{\tilde{p}_\ast}.\FF_\ast^\oplus$ is surjective.

If $ke^{\tilde{p}_\ast}.\FF_\ast^\oplus = k' e^{\tilde{p}_\ast'}.\FF_\ast^\oplus$, taking $\eta_\JJ$ on both sides, we obtain $k^{-1}k' \in \Sp_{\FF_\ast^\oplus}(V)$. So there exists a $k^\shortparallel\in K^\shortparallel(\FF_\ast^\oplus)$ such that $k
' = k k^\shortparallel$. Putting this back in, we get
\[ e^{\tilde{p}_\ast}.\FF_\ast^\oplus = k^{-1} k' e^{\tilde{p}_\ast'}.\FF_\ast^\oplus = e^{\Ad(k^\shortparallel)^{-1}\tilde{p}_\ast'}.\FF_\ast^\oplus.\]
By Lemma \ref{lem:fibers}.1, $\tilde{p_\ast} = \Ad(k^\shortparallel)^{-1}\tilde{p}_\ast'$. So the map $K \times \tilde{\p}(\Re (\FF_\ast^\oplus)_{\ge 0}) \to \Lag_{\oplus}^\C(\vec{n})$ descends to a diffeomorphism $K \times_{K^\shortparallel(\FF_\ast^\oplus)} \tilde{\p}(\Re (\FF_\ast^\oplus)_{\ge 0}) \to \Lag_{\oplus}^\C(\vec{n})$. Similarly, if $W' \in \Gr(\vec{n})$, choose $W_\ast' \in  (\cdot \cap \cdot^\omega|_{\Gr_\JJ(\vec{n})})^{-1}(W_0')$. By Lemma \ref{lem:fibers}.1, there exist $
\tilde{k} \in \tilde{K}(W_0')$ and $\tilde{p} = \tilde{p}^\times + \tilde{p}^\shortparallel \in \tilde{\p}(W_0')$ such that $W' = \tilde{k} e^{\tilde{p}^\times} e^{\tilde{p}^\shortparallel}.W_\ast' = \tilde{k} e^{\tilde{p}^\times}.W_\ast'$. Since $K$ acts transitively on $\Gr_\JJ(\vec{n})$, there exists a $k \in K$ such that $W_\ast' = k.W_\ast$. So
\[ W' = \tilde{k} e^{\tilde{p}^\times}k.W_\ast' = \tilde{k}k. e^{\Ad(k)^{-1}\tilde{p}^\times}.W_\ast.\]
So map  $K \times \tilde{\p}((W_\ast))_0\to \Gr(\vec{n})$, $(\tilde{k}, \tilde{p}_\ast) \mapsto ke^{\tilde{p}_\ast^\times} e^{\tilde{p}_\ast^\shortparallel}.W_\ast$ is surjective. Proceeding as before, it descends to a diffeomorphism $K \times_{K^\shortparallel(W_\ast)} (\tilde{\p}/\tilde{\p}^\shortparallel)(W_\ast)\xrightarrow{\cong} \Gr(\vec{n})$. Similarly, we obtain the diffeomorphism for $K \times_{K^\shortparallel(\FF_\ast)} (\tilde{\p}/\tilde{\p}^\shortparallel)(\FF_\ast)\xrightarrow{\cong} \Gr(\vec{n})$. (3): This follows from the previous statement and the construction of $\eta_\JJ$, $\beta_\JJ$, and $\gamma_\JJ$.
\end{proof}

\begin{remark}
    For $\Lag^\C(\vec{n}; V)$ compare with Theorem 2 of \cite{Takeuchi} in the $n_0 = 0$ case, and the holomorphic version in Theorem 11.8.4 of \cite{Wolf1} and the third statement of the Orbit fibration theorem of \cite{Wolf}.
\end{remark}

These maps fit into the commuting diagram
\begin{equation} \begin{tikzcd} \Lag^\C(\vec{n}) \arrow[d, two heads, "\beta_\JJ"'] \arrow[r, hook, "\iota_\JJ", shift left] & \Lag^\C_{\oplus}(\vec{n}) \arrow[r, two heads, "\Re\cdot_{\ge 0}"] \arrow[l, two heads, "\varphi", shift left] \arrow[d, two heads, "\eta_\JJ"] & \Gr(\vec{n}) \arrow[d, two heads, "\gamma_{\JJ}"]  \\ \Lag^\C_\JJ(\vec{n}) \arrow[ddr, "\Re \cdot_0|_{\Lag_{\JJ}^\C(\vec{n})}"'] & \Lag^\C_{\oplus, \JJ}(\vec{n}) \arrow[l, "\varphi", "\cong"'] \arrow[r, "\Re \cdot_{\ge 0}"', "\cong"] \arrow{dd}[description]{\alpha|_{\Lag_{\oplus, \JJ}^\C(\vec{n})}} & \Gr_{\JJ}(\vec{n})\arrow[ddl, "\cdot \cap \cdot^\omega|_{\Gr_\JJ(\vec{n})}"]\\ &&\\
& \Gr(n_0, 0, n-n_0) & \end{tikzcd} \end{equation}
where every arrow is $K$-equivariant.

Since Theorem \ref{thm:vectorbundle} shows $\eta_\JJ$, $\beta_\JJ$, and $\gamma_\JJ$ are realized as fiber bundles with contractible fibers, we can conclude the following from \cite{Dold}:
\begin{corollary}\label{cor:homotopytype}~
For any $\vec{n}$, $\Gr(\vec{n})$, $\Lag^\C(\vec{n})$, and $\Lag^\C_{\oplus}(\vec{n})$ are homotopy equivalent to the compact left coset space $U(n) / (O(n_0) \times U(n_+) \times U(n_-))$.
\end{corollary}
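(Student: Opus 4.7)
The plan is to apply Dold's theorem to the three fiber bundles $\gamma_\JJ$, $\beta_\JJ$, $\eta_\JJ$ produced by Theorem \ref{thm:vectorbundle}, and then identify the common base space using Proposition \ref{prop:compactfibrations}.

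First I would choose, once and for all, an $\omega$-compatible linear complex structure $\JJ$ on $V$ and invoke Theorem \ref{thm:vectorbundle} to obtain the three $K$-equivariant fiber bundles
\[
\gamma_\JJ: \Gr(\vec{n}) \twoheadrightarrow \Gr_\JJ(\vec{n}), \quad \beta_\JJ: \Lag^\C(\vec{n}) \twoheadrightarrow \Lag^\C_\JJ(\vec{n}), \quad \eta_\JJ: \Lag^\C_\oplus(\vec{n}) \twoheadrightarrow \Lag^\C_{\oplus,\JJ}(\vec{n}).
\]
The fibers, by the same theorem, are diffeomorphic to the real vector spaces $\tilde{\p}/\tilde{\p}^\shortparallel$, $\tilde{\p}/\tilde{\p}^\times$, and $\tilde{\p}$ respectively (associated to any chosen basepoint in the base). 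In particular every fiber is contractible.

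Next I would apply the theorem of Dold cited in \cite{Dold}, which states that a fiber bundle whose fibers are contractible (for suitable paracompact base) admits a section and deformation retracts onto any section; in particular the projection is a homotopy equivalence. Each of the three bases $\Gr_\JJ(\vec{n})$, $\Lag^\C_\JJ(\vec{n})$, $\Lag^\C_{\oplus,\JJ}(\vec{n})$ is a smooth compact manifold (hence paracompact), so Dold's hypotheses are satisfied, and we conclude that $\gamma_\JJ$, $\beta_\JJ$, $\eta_\JJ$ are homotopy equivalences.

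Finally, Proposition \ref{prop:compactfibrations}(1) identifies all three base spaces with the compact coset space
\[
\U(n)/(\O(n_0) \times \U(n_+) \times \U(n_-)).
\]
Combining these diffeomorphisms with the three homotopy equivalences of the previous paragraph yields the claimed homotopy equivalences for $\Gr(\vec{n})$, $\Lag^\C(\vec{n})$, and $\Lag^\C_\oplus(\vec{n})$. There is essentially no obstacle: the content is already packaged in Theorem \ref{thm:vectorbundle} and Proposition \ref{prop:compactfibrations}, and the only external input is Dold's deformation retract theorem; the mild point to verify is paracompactness of the base, which follows immediately from compactness.
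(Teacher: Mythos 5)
Your proposal is correct and follows essentially the same route as the paper: Theorem \ref{thm:vectorbundle} exhibits $\gamma_\JJ$, $\beta_\JJ$, $\eta_\JJ$ as fiber bundles with contractible (vector space) fibers, Dold's theorem gives the deformation retraction onto the base, and Proposition \ref{prop:compactfibrations}(1) identifies each base with $\U(n)/(O(n_0)\times\U(n_+)\times\U(n_-))$. No gaps to report.
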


\begin{remark}[Oriented Grassmannians]
From the long exact sequence of homotopy groups, we can compute
\begin{equation} \label{eq:fundamentalgroups} \pi_1(\Gr(\vec{n})) \cong \begin{cases} \{1\} & \text{ if } n_0 = 0\\
\Z/2\Z & \text{ if } 0 < n_0 < n\\
\Z & \text{ if } n_0 = n.\end{cases} \end{equation}

So if $n_0= 0$, the oriented Grassmannian $\widetilde{Gr}(\vec{n})$ is a disconnected double cover of $\Gr(\vec{n})$. If $n_0 > 0$, take a $W \in \Gr(\vec{n})$, and a Darboux basis 
\[ \{\ee^0_1, \ee^0_2, \cdots, \ee^0_{n_0}, \ee^+, \ee^-, \ff^0_{1}, \ff^0_{2}, \cdots, \ff^0_{n_0}, \ff^+, \ff^-\}\]
associated to $W$ by Theorem \ref{thm:basisR}. It provides an orientation on $W$, and the Darboux basis
\[ \{-\ee^0_1, \ee^0_2, \cdots, \ee^0_{n_0}, \ee^+, \ee^-, -\ff^0_{1}, \ff^0_{2}, \cdots, \ff^0_{n_0}, \ff^+, \ff^-\}\]
provides an opposite orientation on $W$. So $\Sp(V)$ acts transitively on $\widetilde{\Gr}(\vec{n})$ and $\widetilde{\Gr}(\vec{n})$ is a connected double cover of $\Gr(\vec{n})$.
\end{remark}

We observe from (\ref{eq:vectorspace}) and Theorem \ref{thm:vectorbundle} that 
\begin{equation*}
\begin{array}{ccccccl}
\gamma_\JJ^{-1} (W') &\cong& \varphi^{-1}(\FF) &\cong& \tilde{\p}^\times &\quad& W' \in \Gr_{\JJ}(\vec{n}), \FF \in \Lag^\C(\vec{n})\\
\beta_\JJ^{-1}(\FF) &\cong& \Re \cdot_{\ge 0}^{-1}(W') &\cong& \tilde{\p}^\shortparallel &\quad& W' \in \Gr(\vec{n}), \FF \in \Lag_\JJ^\C(\vec{n}).
\end{array}
\end{equation*}
The invariant complex structures on the fibers of $\gamma_\JJ$, $\beta_\JJ$, $\eta_\JJ$ can be obtained by the same argument as Remark \ref{rem:complexstructures} and the end of Section \ref{subsec:symmetricspace}. Since $\frac{1}{2} \ad(\tilde{\JJ})$ is a linear complex structure on $\tilde{\p}^\shortparallel$, $\tilde{\p}^\times$, $\tilde{\p}$ (\ref{eq:adJ}) it extends to an invariant complex structure on the fibers by Proposition IV.3.4 of \cite{Helgason}

We observe from the proof of Theorem \ref{thm:vectorbundle} there is a correction term, depending on the choice of basepoint $\FF_\ast^\oplus \in \Lag_{\JJ, \oplus}^\C(\vec{n})$, and arising from the Baker-Campbell-Hausdorff noncommutativity of the exponentials:
\begin{equation} \label{eq:correction}
\begin{array}{ccccc}
 \Lag_\oplus^\C(\vec{n})&\xrightarrow{\cong}& K \times_{K^\shortparallel(\FF_\ast^\oplus)} \tilde{\p}(\Re (\FF_\ast^\oplus)_0) &\xrightarrow{\cong}& \Lag_\oplus^\C(\vec{n})\\
ke^{\tilde{p}^\shortparallel_\ast}e^{\tilde{p}^\times_\ast}.\FF_\ast^\oplus &\mapsto& [(k, \tilde{p}_\ast)] &\mapsto& ke^{\tilde{p}^\times_\ast}e^{\tilde{p}^\shortparallel_\ast}.\FF_\ast^\oplus
\end{array}\end{equation}
We will let $\chi_{\JJ, \FF_\ast^\oplus}(ke^{\tilde{p}^\shortparallel_\ast}e^{\tilde{p}^\times_\ast}.\FF_\ast^\oplus) := ke^{\tilde{p}^\times_\ast}e^{\tilde{p}^\shortparallel_\ast}.\FF_\ast^\oplus$.
$\chi_{\JJ, \FF_\ast^\oplus}$ is the identity map when $n_+n_- = 0$ or when restricted to $\Lag_{\JJ, \oplus}^\C(\vec{n})$.

\begin{corollary}\label{cor:fiberproduct}
The commuting diagram
\begin{equation}
\begin{tikzcd}
    \Lag_{\oplus}^\C(\vec{n}) \arrow[rr, "\varphi"] \arrow[dd, "\Re \cdot_{\ge 0} \circ \chi_{\JJ, \FF_\ast^\oplus}"'] && \Lag^\C(\vec{n}) \arrow[d, "\beta_\JJ"] \\ && \Lag_\JJ^\C(\vec{n}) \arrow[d, "\iota_\JJ"]\\ \Gr(\vec{n}) \arrow[r, "\gamma_\JJ"'] & \Gr_\JJ(\vec{n}) \arrow[r, "\psi_\JJ"']& \Lag_{\JJ, \oplus}(\vec{n})
\end{tikzcd}
\end{equation}
is a pullback diagram.
\end{corollary}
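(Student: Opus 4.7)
The plan is to verify the pullback property by expressing every corner of the square through the vector bundle presentations from Theorem~\ref{thm:vectorbundle}. Fix a basepoint $\FF^\oplus_\ast \in \Lag^\C_{\JJ,\oplus}(\vec{n})$ and set $\FF_\ast := \varphi(\FF^\oplus_\ast)$, $W_\ast := \Re(\FF^\oplus_\ast)_{\ge 0}$, and $K^\shortparallel := K \cap \Sp_{\FF^\oplus_\ast}(V)$, which is simultaneously the stabilizer in $K$ of $\FF_\ast$ and of $W_\ast$. With these choices, Theorem~\ref{thm:vectorbundle} identifies the four corners (in the order $\Lag^\C_\oplus(\vec{n})$, $\Lag^\C(\vec{n})$, $\Gr(\vec{n})$, $\Lag^\C_{\JJ,\oplus}(\vec{n})$) with $K \times_{K^\shortparallel} \tilde{\p}$, $K \times_{K^\shortparallel} (\tilde{\p}/\tilde{\p}^\times)$, $K \times_{K^\shortparallel} (\tilde{\p}/\tilde{\p}^\shortparallel)$, and $K/K^\shortparallel$. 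Since $K^\shortparallel$ centralizes $\tilde{I}(\FF^\oplus_\ast)$, the $\Ad(K^\shortparallel)$-action preserves the eigenspace splitting $\tilde{\p} = \tilde{\p}^\shortparallel \oplus \tilde{\p}^\times$, so the canonical $K^\shortparallel$-equivariant vector space isomorphism $\tilde{\p} \xrightarrow{\cong} (\tilde{\p}/\tilde{\p}^\times) \oplus (\tilde{\p}/\tilde{\p}^\shortparallel)$ induces a $K$-equivariant diffeomorphism
\[ K \times_{K^\shortparallel} \tilde{\p} \xrightarrow{\cong} \bigl(K \times_{K^\shortparallel} (\tilde{\p}/\tilde{\p}^\times)\bigr) \times_{K/K^\shortparallel} \bigl(K \times_{K^\shortparallel} (\tilde{\p}/\tilde{\p}^\shortparallel)\bigr), \]
which is precisely the pullback identification claimed in the corollary, provided its two component projections match $\varphi$ and $\Re\cdot_{\ge 0} \circ \chi_{\JJ, \FF^\oplus_\ast}$.

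The second step is to verify this matching. Writing a general element of $\Lag^\C_\oplus(\vec{n})$ as $ke^{\tilde{p}^\shortparallel_\ast}e^{\tilde{p}^\times_\ast}.\FF^\oplus_\ast$ via Theorem~\ref{thm:vectorbundle}.2, the top map $\varphi$ sends this point to $ke^{\tilde{p}^\shortparallel_\ast}.\FF_\ast$, because $e^{\tilde{p}^\times_\ast}$ stabilizes $\FF_\ast = \varphi(\FF^\oplus_\ast)$ under the identification $\varphi^{-1}(\FF_\ast) \cong \tilde{\p}^\times$ recorded in (\ref{eq:vectorspace}); this is exactly the bundle map killing the $\tilde{\p}^\times$ summand, after which $\iota_\JJ \circ \beta_\JJ$ collapses the fiber to $kK^\shortparallel$. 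For the left route, applying $\chi_{\JJ,\FF^\oplus_\ast}$ first converts the element to $ke^{\tilde{p}^\times_\ast}e^{\tilde{p}^\shortparallel_\ast}.\FF^\oplus_\ast$, and now $e^{\tilde{p}^\shortparallel_\ast}$ stabilizes $W_\ast$ by the dual identification $(\Re\cdot_{\ge 0})^{-1}(W_\ast) \cong \tilde{\p}^\shortparallel$ from (\ref{eq:vectorspace}); hence $\Re\cdot_{\ge 0}$ sends it to $ke^{\tilde{p}^\times_\ast}.W_\ast$, which is the bundle map killing the $\tilde{\p}^\shortparallel$ summand, and $\psi_\JJ \circ \gamma_\JJ$ again collapses to $kK^\shortparallel$. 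Both routes reach the same point in $\Lag^\C_{\JJ,\oplus}(\vec{n})$, establishing commutativity, and the two projections extracted are exactly the two factors of the fiber product description above.

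The main obstacle is the Baker--Campbell--Hausdorff subtlety encoded by $\chi_{\JJ,\FF^\oplus_\ast}$. Without the correction, $\Re\cdot_{\ge 0}(ke^{\tilde{p}^\shortparallel_\ast}e^{\tilde{p}^\times_\ast}.\FF^\oplus_\ast)$ is not in general equal to $ke^{\tilde{p}^\times_\ast}.W_\ast$, because $e^{\tilde{p}^\shortparallel_\ast}$ and $e^{\tilde{p}^\times_\ast}$ fail to commute whenever $n_+ n_- > 0$; the two natural global parametrizations of $\Lag^\C_\oplus(\vec{n})$ as $K \times_{K^\shortparallel} \tilde{\p}$---one via $ke^{\tilde{p}^\shortparallel}e^{\tilde{p}^\times}.\FF^\oplus_\ast$ and the other via $ke^{\tilde{p}^\times}e^{\tilde{p}^\shortparallel}.\FF^\oplus_\ast$---differ precisely by $\chi_{\JJ,\FF^\oplus_\ast}$, which is its definition in (\ref{eq:correction}). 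The proof therefore reduces to checking this is indeed a well-defined diffeomorphism of $\Lag^\C_\oplus(\vec{n})$ (independent of the choice of representative $(k,\tilde{p}_\ast)$ modulo $K^\shortparallel$), which follows from $K^\shortparallel$-equivariance of the splitting $\tilde{\p} = \tilde{\p}^\shortparallel \oplus \tilde{\p}^\times$ together with the basepoint-independence of $\eta_\JJ$, $\beta_\JJ$, $\gamma_\JJ$ established in Lemma~\ref{lem:fibers}.3.
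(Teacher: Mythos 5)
Your proposal is correct and follows essentially the same route as the paper: identify all four corners with the associated bundles $K\times_{K^\shortparallel}\tilde{\p}$, $K\times_{K^\shortparallel}(\tilde{\p}/\tilde{\p}^\times)$, $K\times_{K^\shortparallel}(\tilde{\p}/\tilde{\p}^\shortparallel)$, $K/K^\shortparallel$ via Theorem \ref{thm:vectorbundle}, observe that the resulting square of quotient maps is a pullback because $\tilde{\p}=\tilde{\p}^\shortparallel\oplus\tilde{\p}^\times$ is a $K^\shortparallel$-equivariant splitting, and absorb the Baker--Campbell--Hausdorff discrepancy between the two ordered parametrizations $ke^{\tilde{p}^\shortparallel}e^{\tilde{p}^\times}.\FF_\ast^\oplus$ and $ke^{\tilde{p}^\times}e^{\tilde{p}^\shortparallel}.\FF_\ast^\oplus$ into the correction $\chi_{\JJ,\FF_\ast^\oplus}$ of (\ref{eq:correction}). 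The only difference is expository: you spell out the elementwise matching of the projections with $\varphi$ and $\Re\cdot_{\ge 0}\circ\chi_{\JJ,\FF_\ast^\oplus}$ (using that $\exp\tilde{\p}^\times$ stabilizes $\FF_\ast$ and $\exp\tilde{\p}^\shortparallel$ stabilizes $W_\ast$), which the paper leaves implicit in its elementwise diagram.
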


\begin{proof}

\begin{equation*}
\begin{tikzcd}
    K\times_{K^\shortparallel(\FF_\ast^\oplus)} \tilde{\p}(\FF_\ast^\oplus) \arrow[r] \arrow[d] & K\times_{K^\shortparallel(\varphi(\FF_\ast^\oplus))} (\tilde{\p}/\tilde{\p}^\times)(\varphi(\FF_\ast^\oplus)) \arrow[d]\\K\times_{K^\shortparallel(\Re(\FF_\ast^\oplus)_{\ge 0})} (\tilde{\p}/\tilde{\p}^\shortparallel)(\Re(\FF_\ast^\oplus)_{\ge 0}) \arrow[r] & K/K^\shortparallel(\FF_\ast^\oplus)
\end{tikzcd}
\end{equation*}
is a pullback diagram. This can be read off from the elementwise description of the maps
\begin{equation*}
\begin{tikzcd}
    {[(k, \tilde{p}_\ast^\shortparallel + \tilde{p}_\ast^\times)]} \arrow[d, maps to] \arrow[r, maps to] & {[(k, \tilde{p}_\ast^\shortparallel + \tilde{\p}^\times)] }\arrow[d, maps to]\\
    {[(k, \tilde{p}_\ast^\times + \tilde{\p}^\shortparallel)]} \arrow[r, maps to] & {[k]}.
\end{tikzcd}
\end{equation*}
The pullback diagram is identifed with the given diagram with the diffeomorphisms in Theorem \ref{thm:vectorbundle} and (\ref{eq:correction}).
\end{proof}

\begin{comment}
\begin{lemma}
Let $G$ be a connected real Lie group, and $K$, $H$ be connected closed subgroups of $G$ such that $HK = KH = G$. If $G/K$ and $H/(K \cap H)$ are contractible, then $G/H$ is homotopy equivalent to $K/(K\cap H)$.
\end{lemma}

\begin{proof}
Since $K, H$ are closed, $K\cap H$ is also a closed subgroup of $G$, $K$, and $H$.
Then
\[ K/(K \cap H) \to G/(K \cap H) \to G/K\]
is a fiber bundle whose fiber at $gK$ is $\{gk(K\cap H): k(K\cap H) \in G/(K\cap H)\} \cong K / (K \cap H)$. This bundle has a contractible base, so $K/ (K \cap H)$ is homotopy equivalent to $G/(K \cap H)$.
Moreover,
\[ H/(K \cap H) \to G/(K \cap H) \to G/H\]
is a fiber bundle with contractible fibers, so $G/(K \cap H)$ is homotopy equivalent to $G/H$.
\end{proof}
\end{comment}

%\input{Incidence}
%\input{Topology}
%\input{R4}

\appendix
\counterwithin{figure}{section}
\section{Appendix}
\label{sec:matrix}

\subsection{Darboux bases and Vectorial Cartan subalgebras}
\label{subsec:cartan}

\begin{proposition}
Suppose $(V, \omega)$ is a real, finite dimensional symplectic vector space, $G = \Sp(V)$ is the group of linear symplectomorphisms of $V$, and $\g$ is the Lie algebra of $G$. Given a Darboux basis $\{\ee_1, \cdots, \ee_n, \ff_1, \cdots, \ff_n\}$ of $V$ there exists a unique Cartan subalgebra $\a_{\{\ee, \ff\}}$ (\emph{vectorial} Cartan subalgebra in the sense of \cite{Sugiura}) of $\g \subseteq \End(V)$ such that the weight space decomposition
\[ V \cong \bigoplus_{\alpha \in \a_{\{\ee, \ff\}}^\vee} V_{\alpha}\]
agrees with
\[ V \cong \R \ee_1 \oplus \cdots \oplus \R \ee_n \oplus \R \ff_1 \oplus \cdots  \oplus \R \ff_n.\]

Conversely, given a Cartan subalgebra $\a$ of $\g$ contained in $\p$ for some Cartan decomposition $\g = \k \oplus \p$ of $\g$, there exists a Darboux basis $\{\ee_1, \cdots, \ee_n, \ff_1, \cdots, \ff_n\}$ of $V$ such that the weight space decomposition
\[ V \cong \bigoplus_{\alpha \in \a^\vee} V_{\alpha}\]
agrees with
\[ V \cong \R \ee_1 \oplus \cdots \oplus \R \ee_n \oplus \R \ff_1 \oplus \cdots  \oplus \R \ff_n.\]
This Darboux basis is unique up to the choice and labelling of vectors within the weight spaces.
\end{proposition}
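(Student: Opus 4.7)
The plan is to verify both directions by explicit computation in a Darboux basis of $V$, exploiting the standard matrix form of $\g = \sp(V)$.

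For the forward direction, given a Darboux basis $\{\ee,\ff\}$, identify $\g$ with block matrices $\bigl(\begin{smallmatrix} \aa & \bb \\ \cc & -\aa^t \end{smallmatrix}\bigr)$, $\bb,\cc \in \Mat_{n\times n}(\R)^{t}$. Define $\a_{\{\ee,\ff\}}$ to be the subset with $\aa$ diagonal and $\bb = \cc = 0$. This is an abelian subalgebra of dimension $n$ whose complexification is the standard diagonal Cartan subalgebra of $\sp(2n;\C)$, hence a Cartan subalgebra of $\g$; its simultaneous eigenspaces on $V$ are exactly the lines $\R\ee_i$ and $\R\ff_i$, with opposite weights. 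For uniqueness, any Cartan subalgebra $\a'$ whose weight space decomposition agrees with $\bigoplus_i(\R\ee_i \oplus \R\ff_i)$ must act diagonally on $\{\ee,\ff\}$, say $\mathbf{X}'\ee_i = \alpha_i(\mathbf{X}')\ee_i$ and $\mathbf{X}'\ff_i = \beta_i(\mathbf{X}')\ff_i$. The symplectic condition $\omega(\mathbf{X}'\ee_i,\ff_i) + \omega(\ee_i,\mathbf{X}'\ff_i) = 0$ forces $\beta_i = -\alpha_i$, so $\a' \subseteq \a_{\{\ee,\ff\}}$, with equality by dimension count (both have dimension $n$, the rank of $\sp$).

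For the converse direction, I use that elements of $\p$ are exactly the $\omega(\cdot,\JJ\cdot)$-symmetric operators on $V$, so commuting elements of $\p$ are simultaneously $\R$-diagonalizable in an $\omega(\cdot,\JJ\cdot)$-orthonormal basis. A Cartan subalgebra $\a \subseteq \p$ is thus an $\R$-split Cartan subalgebra of the split form $\sp(2n;\R)$, of real dimension $n$; its complexification is a Cartan subalgebra of $\sp(2n;\C)$, whose weights on the defining representation form the system $\{\pm\alpha_i\}_{i=1}^{n}$ with $\alpha_i$ linearly independent and each weight space one-dimensional over $\C$, and hence over $\R$ by $\R$-diagonalizability. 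The symplectic invariance of every $\mathbf{X} \in \a$ gives $(\alpha+\beta)(\mathbf{X})\,\omega(V_\alpha,V_\beta) = 0$, so $\omega(V_{\alpha_i},V_{\alpha_j}) = 0$ unless $\alpha_i + \alpha_j = 0$; combined with one-dimensionality, $\omega$ restricts to a nondegenerate pairing $V_{\alpha_i}\times V_{-\alpha_i} \to \R$. Choosing any nonzero $\ee_i \in V_{\alpha_i}$ and the unique $\ff_i \in V_{-\alpha_i}$ with $\omega(\ee_i,\ff_i) = 1$ yields a Darboux basis inducing the given weight space decomposition; the only remaining freedom is a relabeling of the unordered weights $\{\pm\alpha_i\}$ together with a rescaling $\ee_i \mapsto c_i \ee_i$ in each line, which then forces $\ff_i \mapsto c_i^{-1}\ff_i$ by the normalization.

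The main subtlety I anticipate is the one-dimensionality of the weight spaces in the converse direction: it hinges on identifying $\a_\C$ as the standard Cartan subalgebra of $\sp(2n;\C)$, which in turn uses that $\a \subseteq \p$ makes $\ad_\a$ real-diagonalizable so that $\a_\C$ meets the criterion for a Cartan subalgebra of the complexification. Once that structural input is in place, the Darboux basis construction from the weight decomposition and the matrix calculations in the forward direction are both routine.
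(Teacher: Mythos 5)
Your proof is correct, and its existence half coincides with the paper's: both exhibit $\a_{\{\ee, \ff\}}$ as the subalgebra of elements acting diagonally in the given Darboux basis, i.e.\ block matrices $\begin{pmatrix} \aa & 0 \\ 0 & -\aa \end{pmatrix}$ with $\aa$ diagonal, with weight lines $\R\ee_j$, $\R\ff_j$. Where you genuinely diverge is in the uniqueness claim and the converse. The paper appeals to Sugiura's conjugacy theorems (all Cartan decompositions are $\Sp(V)$-conjugate, and all maximal abelian subalgebras of $\p$ are conjugate under the corresponding maximal compact subgroup) to write any vectorial Cartan subalgebra as $g\,\a_{\{\ee,\ff\}}\,g^{-1}$ and transport the standard weight decomposition by $g$; uniqueness and the converse are then read off from how $g$ moves the line decomposition. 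You instead argue directly: uniqueness because any Cartan subalgebra whose weight decomposition is the given line decomposition acts diagonally, is forced by the symplectic identity $\omega(\mathbf{X}\ee_i,\ff_i)+\omega(\ee_i,\mathbf{X}\ff_i)=0$ into $\a_{\{\ee,\ff\}}$, and has dimension $n$; and for the converse you simultaneously diagonalize $\a \subseteq \p$ as commuting $\omega(\cdot,\JJ\cdot)$-symmetric operators, get one-dimensional weight spaces from $\a^\C$ being a Cartan subalgebra of $\sp(2n;\C)$, and use $(\alpha+\beta)(\mathbf{X})\,\omega(V_\alpha,V_\beta)=0$ to pair opposite weight lines nondegenerately and extract a Darboux basis. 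Your route is more self-contained (no external conjugacy theorem for the real group, only the standard facts that $\dim_\R \a$ equals the complex rank and that Cartan subalgebras of $\sp(2n;\C)$ are conjugate), and it actually gives uniqueness among \emph{all} Cartan subalgebras of $\g$, not only those contained in some $\p'$; the paper's route is shorter and stays aligned with the cited notion of vectorial Cartan subalgebra. If you write yours up, make explicit that identifying $\p$ with the $\omega(\cdot,\JJ\cdot)$-symmetric elements of $\g$ uses the correspondence between Cartan involutions of $\g$ and $\omega$-compatible complex structures $\JJ$, which the paper proves separately in the section on the Borel embedding.
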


\begin{proof}
Suppose we are given a Darboux basis $\{\ee, \ff\}$ of $V$. The linear extension of the map assigning $\ee_j \mapsto \ff_j$, $\ff_\jj \mapsto -\ee_j$ for all $1 \le j \le n$ is an $\omega$-compatible linear complex structure $\JJ$ on $V$. It defines an inner product $\omega(\cdot, \JJ \cdot)$ on $V$, an involution $\cdot^t$ on $\End(V)$ by $\omega(g\cdot, \JJ \cdot) = \omega(\cdot, \JJ g^t \cdot)$, and a Cartan decomposition of $\g \subseteq \End(V)$ by $\k = \g \cap \End(V)^{-t}$ and $\p = \g  \cap \End{V}^{t}$.

Let $\{\ee^t, \ff^t\}$ be the dual basis of $\{\ee, \ff\}$ in $V^\vee := \Hom_\R(V; \R)$, and consider the abelian Lie subalgebra
\[ \a_{\{\ee, \ff\}} := \span_\R\{ \ee_j \otimes_\R \ee_j^t - \ff_j \otimes_\R \ff_j^t: 1 \le j \le n\} \subseteq \g.\]
$\a_{\{\ee, \ff\}}$ is a linear subspace of $\p$ and is identified with the diagonal matrices in the image of $\g$ in $\Mat_{2n \times 2n}(\R)$. Computing commutators in $\Mat_{2n \times 2n}(\R)$, we can verify $\a_{\{\ee, \ff\}}$ is a Cartan subalgebra of $\g$.

The weights $\{\alpha_j: 1 \le j \le n\}$ defined by
\[ \alpha_j (\ee_k \otimes_\R \ee_k^t - \ff_k \otimes_\R \ff_k^t):= \begin{cases} 1 \text{ if } j = k \\ 0 \text{ otherwise.} \end{cases} \]
span $\a_{\{\ee, \ff\}}^\vee$, and the weight space of $\alpha_j$ is $\R\ee_j$ and the weight space of $-\alpha_j$ is $\R \ff_j$ for all $1 \le j \le n$.

From Corollary 2, Section 1 of \cite{Sugiura}, any two Cartan decompositions are conjugate under the action of $\Sp(V)$, and any two maximal abelian subalgebras $\a$ of $\g$ contained in a $\p$ for some Cartan decomposition $\g = \k \oplus \p$ are conjugate under the action of the maximal compact subgroup of $\Sp(V)$ corresponding to the Cartan decomposition $\g = \k \oplus \p$.

So for Cartan subalgebra $\a'$ of $\g$ contained in some $\p'$ for some Cartan decomposition $\g = \k' \oplus \p'$, there exists a $g \in \Sp(V)$ such that $\a' = g \a_{\{\ee, \ff\}} g^{-1}$. The weight space decomposition of $\a' = g \a_{\{\ee, \ff\}} g^{-1}$ is
\[ V = \bigoplus_{\alpha \in \a_{\{\ee, \ff\}}^\vee} gV_{\alpha}.\]
Therefore, if $g$ stabilizes the decomposition
\[ V = \R \ee_1 \oplus \cdots \oplus \R \ee_n \oplus \R \ff_1 \oplus \cdots  \oplus \R \ff_n,\]
$\a' = g \a_{\{\ee, \ff\}} g^{-1} = \a_{\{\ee, \ff\}}$. So $\a'$ is uniquely determined by the weight space decomposition. This argument also shows that a choice of Cartan subalgebra $\a \subseteq \g$ contained in $\p$ for some Cartan decomposition uniquely determines a weight space decomposition of $V$ into real one-dimensional subspaces. Then the choice of Darboux basis is determined up to the choice of vectors within each weight space, and the choices differ only by rescaling and relabelling of the vectors.
\end{proof}

\subsection{The generalized M\"{o}bius action}
\label{subsec:mobius}
In this section we fix a Darboux basis of $V$ and assume the identification $V \cong \R^{2n}$, $V^\C \cong \C^{2n}$. 

The symplectic form takes the form 
\[ \omega^\C(\vv, \ww) = \ww^t \begin{pmatrix} 0& -1_{n} \\ 1_{n} & 0 \end{pmatrix} \vv \quad \vv, \ww \in V^\C.\]

We will denote the condition $\FF = \span_\C\{\vv_1, \cdots, \vv_n\}$
as
\begin{equation} \label{eq:bracketspan} \FF = \left[ \begin{pmatrix} | &  & | \\ \vv_1 & \cdots & \vv_n \\ | &  & | \end{pmatrix} \right]=  \left[ \begin{pmatrix} Q \\ P \end{pmatrix} \right] \quad Q, P \in \Mat_{n \times n}(\C)\end{equation}

$\FF$ is a complex Lagrangian subspace if and only if it is $n$ dimensional and $Q^t P = P^t Q$, and is of type $\vec{n}$ if and only if, in addition, the $n \times n$ hermitian matrix $-i (\overline{P}^t Q - \overline{Q}^t P)$ has signature $\vec{n}$.

If $X$ is an invertible $n \times n$ complex matrix, then
\[ \left[ \begin{pmatrix} Q \\ P \end{pmatrix} X \right] = \left[ \begin{pmatrix} Q\\ P \end{pmatrix} \right].\]

\begin{example}[The complex projective line]\label{ex:R2}
When $n = 1$, every complex one dimensional subspace of $\C^2$ is complex Lagrangian so $\Lag^\C(\R^2)$ can be identified with the complex projective line $\mathbb{P}^1(\C)$ via
\[ \left[ \begin{pmatrix} q \\ p \end{pmatrix}\right] \mapsto [q:p].\]

$\Sp(2; \C) = \SL(2; \C)$ and $\Sp(2; \R) = \SL(2; \R)$, so the generalized M\"{o}bius action is then the honest M\"{o}bius action given by linear fractional transformations.

\[\kappa|_{[q:p]} = \begin{pmatrix} \overline{q} & \overline{p} \end{pmatrix} \begin{pmatrix} 0 & -1\\ 1 & 0 \end{pmatrix} \begin{pmatrix} q \\ p \end{pmatrix} = 2\Im(q \overline{p})\]

So $\Lag^\C(0, 1, 0)$, $\Lag^\C(1, 0, 0)$, $\Lag^\C(0, 0, 1)$ can, respectively, be identified with the upper hemisphere, equator, and lower hemisphere when the north and south poles are identified with $[\pm i: 1]$.
\end{example}

\begin{example}[Siegel upper half plane]
The \emph{Siegel upper half plane} is given by
\[ \{ Z \in \Mat_{n\times n}(\C)^t : \Im Z > 0\},\]
and can be identified with $\Lag^\C(0, n, 0)$ by
\[ Z \mapsto \left[ \begin{pmatrix} Z \\ 1_n \end{pmatrix} \right].\]
The generalized M\"{o}bius action of $\Sp(2n; \R)$ on the Siegel upper half plane defined by
\[ \begin{pmatrix} A & B \\ C & D \end{pmatrix} . Z := (AZ+B)(CZ+D)^{-1}\]
is then just a consequence of
\[ \left[ \begin{pmatrix} A & B \\ C & D \end{pmatrix} \begin{pmatrix} Z \\ 1_n\end{pmatrix}\right] = \left[ \begin{pmatrix} AZ + B\\ CZ+D\end{pmatrix}\right] = \left[ \begin{pmatrix} (AZ + B)(CZ+D)^{-1} \\ 1_n \end{pmatrix}\right].\]
\end{example}

\subsection{The Harish-Chandra embedding in matrix form}
\label{subsec:harishchandra}

In this section we fix a Darboux basis of $V$ and assume the identification $V \cong \R^{2n}$, $V^\C \cong \C^{2n}$. We choose an $\omega$-compatible complex structure
\[ \JJ = \begin{pmatrix} 0 & -1_n \\ 1_n & 0 \end{pmatrix}.\]
Then the subgroups defined in Section \ref{subsec:borel} are
\begin{eqnarray*}
     K &=& \left\{ \begin{pmatrix} \xx & -\yy\\ \yy & \xx \end{pmatrix} : \xx + i \yy \in U(n)\right\} \\
     K^\C &=& \left\{ \begin{pmatrix} \xx & - \yy \\ \yy & \xx \end{pmatrix} : \xx^t \xx + \yy^t \yy = 1_n, \xx^t \yy = \yy^t \xx \right\}\\
     G_u &=& \left\{ \begin{pmatrix} \xx & -\overline{\yy}\\ \yy & \overline{\xx} \end{pmatrix} : \xx + i \yy \in U(n)\right\}.
\end{eqnarray*}
Moreover,
\[\FF_{\pm \JJ} = \left[ \begin{pmatrix} \pm i 1_n \\ 1_n \end{pmatrix}\right]\]
and
\begin{eqnarray*}
     P^+ &:=& \left\{ \begin{pmatrix} 1_n -\zz & i\zz \\ i \zz & 1_n + \zz\end{pmatrix} : \zz \in \Mat_{n \times n}(\C)^t\right\}\\
     P^- &:=& \left\{ \begin{pmatrix} 1_n + \zz & i\zz \\ i \zz & 1_n - \zz\end{pmatrix} : \zz \in \Mat_{n \times n}(\C)^t\right\}.
\end{eqnarray*}

The Harish-Chandra embedding is defined by taking a $gK \in G/K$ to the unique element $\pp^+$ of the Lie algebra of $P^+$ such that
\[ gK^\C P^- = \exp(\pp^+) K^\C P^- \in G^\C/K^\C P^-.\]
Composing the Harish-Chandra embedding with the map $\pp^+ \mapsto \exp(\pp^+) K^\C P^-$ gives the Borel embedding.

In our case, the matrix form the Harish-Chandra embedding can be expressed as
\[ \begin{pmatrix} 1_n & \xx \\ 0 & 1_n \end{pmatrix} \begin{pmatrix} \yy & 0 \\ 0 & \yy^{-1}\end{pmatrix}K \mapsto \begin{pmatrix} -\zz & i\zz \\ i \zz & \zz \end{pmatrix} \]
where $\xx$ and $\yy$ are real symmetric $n \times n$ matrices, $\yy$ is strictly positive definite, and
\begin{equation} \label{eq:hc} \zz = \frac{1}{2} (1_n + i (\xx - i \yy^2))(1_n - i (\xx - i \yy^2))^{-1}.\end{equation}
(c.f. the Cayley transform $\zz \mapsto (1_n + i \zz)(1_n - i\zz)^{-1}$ mapping the Siegel upper half plane to the Siegel disk consisting of all $n\times n$ complex symmetric matrices $\zz$ such that $1_n - \overline{\zz}\zz$ is positive definite.)

(\ref{eq:hc}) can be computed by equating the coefficients
\[ \begin{pmatrix} 1_n & \xx \\ 0 & 1_n \end{pmatrix} \begin{pmatrix} \yy & 0 \\ 0 & \yy^{-1} \end{pmatrix} \left[ \begin{pmatrix} -i 1_n \\ 1_n \end{pmatrix}\right] = \left[ \begin{pmatrix} - i(\yy^2 + i \xx) \\ 1_n \end{pmatrix} \right]\]
\[ \begin{pmatrix} 1_n - \zz & i\zz \\ i\zz & 1+ \zz \end{pmatrix} \left[\begin{pmatrix} -i 1_n \\ 1_n \end{pmatrix} \right] = \left[ \begin{pmatrix} -i (1-2\zz) (1+2\zz)^{-1}\\1_n \end{pmatrix} \right]. \]

In particular, the image of the Harish-Chandra embedding is a bounded domain biholomorphic to 
\[ \{ \zz \in \Mat_{n\times n}(\C)^t : 4 \cdot 1_n - \zz \zz^\dagger > 0 \}\]  so the closure of $\Lag^\C(0, 0, n)$ in $\Lag^\C(\R^{2n})$ is homeomorphic to a closed $n(n+1)$-ball.

\subsection{Matrix description of Lie Algebra}
\label{sec:liealg}

In this section we fix a Darboux basis of $V$ and assume the identification $V \cong \R^{2n}$, $V^\C \cong \C^{2n}$. 

\subsubsection{Cartan decomposition and linear complex structure}
Let $\g$ be the lie algebra of $\Sp(2n; \R)$. The Cartan decomposition $\g = \k \oplus \p$ with respect to the involution $\theta(\cdot):= -\cdot^t$ is given by
\begin{eqnarray*}
\g &=& \left\{ \begin{pmatrix} \aa & \bb \\ \cc & -\aa^t \end{pmatrix} : \bb, \cc \in \Mat_{n\times n}(\R)^t, \aa \in \Mat_{n\times n}(\R) \right\}\\
\k &=& \left\{\begin{pmatrix} \aa & -\bb \\ \bb & \aa \end{pmatrix} : \aa \in \Mat_{n\times n}(\R)^{-t}, \bb \in \Mat_{n\times n}(\R)^t \right\}\\
&=& \g \cap \Mat_{2n \times 2n}(\R)^{-t}\\
\p &=& \left\{\begin{pmatrix} \aa & \bb \\ \bb & -\aa \end{pmatrix} : \aa , \bb \in \Mat_{n\times n}(\R)^{t} \right\}\\
&=&  \g \cap \Mat_{2n \times 2n}(\R)^{t}.
\end{eqnarray*}

The element
\begin{equation} \label{eq:Jcenter} \JJ:=\begin{pmatrix} 0 & -1_n \\ 1_n & 0 \end{pmatrix}\end{equation}
in the center of $\k$ is such that
\begin{equation} \label{eq:complexstructure} \frac{1}{2}\ad(\JJ) \left(\begin{pmatrix} \aa & \bb \\ \bb & -\aa\end{pmatrix}\right) = \JJ \left(\begin{pmatrix} \aa & \bb \\ \bb & -\aa\end{pmatrix}\right) = \begin{pmatrix} -\bb & \aa \\ \aa & \bb\end{pmatrix}.\end{equation}
So $(\ad(\JJ)|_\p)^2 = -1_{\p}$, and is a linear complex structure on $\p$, with $\pm i$-eigenspaces
\begin{eqnarray*}
\p^+ &:=& \left\{ \begin{pmatrix} -\zz & i \zz \\ i \zz & \zz\end{pmatrix} : \zz \in \Mat_{n\times n}(\C)^t\right\} \subseteq \p^\C\\
\p^- &:=& \left\{ \begin{pmatrix} \zz & i \zz \\ i \zz & -\zz\end{pmatrix} : \zz \in \Mat_{n\times n}(\C)^t\right\} \subseteq \p^\C.
\end{eqnarray*}
Unlike $\p$, $\p^+$ and $\p^-$ are abelian Lie subalgebras of $\g^\C$.

\subsubsection{Root space decompositions} \label{subsubsec:rootspacedecompositions}

Maximal abelian subalgebras $\a, \h$ of $\g$ contained in $\p$ and $\t \subseteq \k$ are given by
\begin{eqnarray*}
\a &=& \left\{\begin{pmatrix} \dd & 0 \\ 0 & -\dd \end{pmatrix}: \dd = \diag(d_1, \cdots, d_n), d_j \in \R, 1 \le j \le n\right\}\\
\h &=& \left\{ \begin{pmatrix} 0 & \dd \\ \dd & 0 \end{pmatrix}: \dd = \diag(d_1, \cdots, d_n), d_j \in \R, 1 \le j \le n\right\}\\
\t &=& \left\{\begin{pmatrix} 0 & -\dd \\ \dd & 0 \end{pmatrix}: \dd = \diag(d_1, \cdots, d_n), d_j \in \R, 1 \le j \le n\right\}
\end{eqnarray*}

The weights of the fundamental representation are
\begin{eqnarray*}
\mu_j \left(\begin{pmatrix} \dd & 0 \\ 0 & -\dd \end{pmatrix}\right) &:=& d_j  \quad 1 \le j \le n\\
\delta_j \left(\begin{pmatrix} 0 & \dd \\ \dd & 0 \end{pmatrix} \right) &:=& d_j \quad 1 \le j \le n\\
\varepsilon_j \left(\begin{pmatrix} 0 & -\dd \\ \dd & 0 \end{pmatrix}\right) &:=& d_j \quad 1 \le j \le n.
\end{eqnarray*}

The roots are, with uncorrelated choice of signs, elements of 
\begin{eqnarray*}
    \Phi(\g; \a) &:=& \{ \pm \mu_j \pm \mu_\ell : 1 \le j, \ell \le n\}\setminus \{0\}\\
    \Phi(\g; \h) &:=& \{ \pm \delta_j \pm \delta_\ell : 1 \le j, \ell \le n\}\setminus \{0\}\\
    \Phi(\g^\C; \t^\C) &:=& \{ \pm i \varepsilon_j \pm i\varepsilon_{\ell} : 1 \le j, \ell \le n\} \setminus \{0\}.
\end{eqnarray*}

The positive and simple roots are chosen as
\begin{eqnarray*}
    \Phi^+ (\g; \a) &:=& \{ \mu_j - \mu_\ell : 1 \le j < \ell \le n\} \sqcup \{ \mu_j + \mu_\ell : 1 \le j \le \ell \le n\}\\
    \Delta(\g; \a) &:=& \{\mu_1 - \mu_2, \cdots, \mu_{n-1} - \mu_n, 2\mu_n\}\\
    \Phi^+ (\g; \h) &:=& \{ \delta_j - \delta_\ell : 1 \le j < \ell \le n\} \sqcup \{ \delta_j + \delta_\ell : 1 \le j \le \ell \le n\}\\
    \Delta(\g; \h) &:=& \{\delta_1 - \delta_2, \cdots, \delta_{n-1} - \delta_n, 2\delta_n\}\\
    \Phi^+ (\g^\C; \t^\C) &:=& \{ i\varepsilon_j - i\varepsilon_\ell : 1 \le j < \ell \le n\} \sqcup \{ i\varepsilon_j + i\varepsilon_\ell : 1 \le j \le \ell \le n\}\\
    \Delta(\g^\C; \t^\C) &:=& \{i\varepsilon_1 - i\varepsilon_2, \cdots, i\varepsilon_{n-1} - i\varepsilon_n, 2i \varepsilon_n\}
\end{eqnarray*}

With respect to this choice, the fundamental representation has highest weights $\mu_1$, $\delta_1$, $\varepsilon_1$.

The Weyl group $\mathbb{S}_n \ltimes (\Z/2\Z)^n$ acts on $\Phi(\g; \a)$ by permuting the indices and flipping signs
\begin{eqnarray*}
\sigma.(\pm \mu_j \pm \mu_\ell) &=&\pm \mu_{\sigma(j)} \pm \mu_{\sigma(\ell)} \quad \sigma \in \Sigma_n\\
(\epsilon_1, \cdots, \epsilon_n).(\pm \mu_j \pm \mu_\ell) &=&\pm \epsilon_j \mu_j \pm \epsilon_\ell \mu_{\ell} \quad \epsilon_j \in \{ \pm 1\}, 1 \le j \le n
\end{eqnarray*}
and similarly on $\Phi(\g; \h)$ and $\Phi(\g^\C; \t^\C)$.

The $n\times n$ matrix with $1$ on the entry in the $j$th row and $\ell$th column, and rest of entries zero is denoted as $E_{j\ell}$.

The root spaces for $\Phi(\g; \a)$ are
\begin{eqnarray*}
    \g_{\mu_j - \mu_\ell} &=&  \span_\R \left\{ \begin{pmatrix} E_{j\ell} & 0 \\ 0 & -E_{\ell j}\end{pmatrix} \right\} \quad j \neq \ell\\
    \g_{\mu_j + \mu_\ell} &=& \span_\R \left\{ \begin{pmatrix} 0 & E_{j \ell} + E_{\ell j} \\ 0 & 0 \end{pmatrix}\right\}\\
    \g_{-\mu_j - \mu_\ell} &=& \span_\R \left\{ \begin{pmatrix} 0 & 0 \\ E_{j\ell} + E_{\ell j} & 0 \end{pmatrix} \right\}
\end{eqnarray*}
where $1 \le j, \ell \le n$.

The root spaces for $\Phi(\g; \h)$ are
\begin{eqnarray*}
    \g_{\delta_j - \delta_\ell} &:=& \span_\R \left\{ \begin{pmatrix} E_{j\ell} - E_{\ell j} & E_{j \ell} + E_{\ell j} \\ E_{j\ell} + E_{\ell j} & E_{j\ell} - E_{\ell j}\end{pmatrix} \right\}   \quad j \neq \ell \\
    \g_{\delta_j + \delta_\ell} &:=& \span_\R \left\{ \begin{pmatrix} -E_{j\ell} - E_{\ell j} & E_{j \ell}+E_{\ell j} \\ -E_{j\ell} - E_{\ell j} & E_{j\ell} + E_{\ell j}\end{pmatrix} \right\}\\
    \g_{-\delta_j -\delta_\ell} &:=& \span_\R \left\{ \begin{pmatrix} -E_{j\ell} - E_{\ell j} & -E_{j \ell} -E_{\ell j} \\ E_{j\ell} + E_{\ell j} & E_{j\ell} + E_{\ell j}\end{pmatrix} \right\}
\end{eqnarray*}
where $1 \le j, \ell \le n$.

The root spaces for $\Phi(\g^\C; \t^\C)$ are
\begin{eqnarray*}
\g_{i\varepsilon_j - i\varepsilon_\ell}^\C &=&  \span_\C \left\{ \begin{pmatrix} E_{j\ell} - E_{\ell j} & i (E_{j \ell} + E_{\ell j}) \\ -i (E_{j\ell} + E_{\ell j}) & E_{j\ell} - E_{\ell j}\end{pmatrix} \right\} \subseteq \k^\C  \quad j \neq \ell \\
\g_{i\varepsilon_j + i\varepsilon_\ell}^\C &=& \span_\C \left\{ \begin{pmatrix} -E_{j \ell} - E_{\ell j} & i (E_{j\ell} + E_{\ell j}) \\ i (E_{j \ell} + E_{\ell j}) & E_{j \ell} + E_{\ell j} \end{pmatrix} \right\}\subseteq \p^\C\\
\g_{-i\varepsilon_j - i \varepsilon_\ell}^\C &=& \span_\C \left\{\begin{pmatrix} E_{j \ell} + E_{\ell j} & i (E_{j\ell} + E_{\ell j}) \\ i (E_{j \ell} + E_{\ell j}) & -E_{j \ell} - E_{\ell j} \end{pmatrix} \right\}\subseteq \p^\C\\
\end{eqnarray*}
where $1 \le j, \ell \le n$.

$\Phi(\g^\C; \t^\C)$ partitions into the compact roots $\{\pm(i\varepsilon_j - i \varepsilon_\ell): 1 \le j < \ell \le n\}$ and the noncompact roots $\{\pm(i \varepsilon_j + i \varepsilon_\ell) : 1 \le j, \ell \le n\}$.

\subsubsection{Choice of root vectors}

The Killing form on $\g$ is
\[ \left\langle \begin{pmatrix} \aa & \bb \\ \cc & -\aa^t \end{pmatrix}, \begin{pmatrix} \aa' & \bb' \\ \cc' & -(\aa')^t \end{pmatrix} \right\rangle = 2(n+1)\Tr(\aa\aa' + \aa' \aa + \bb \cc' + \cc' \bb).\]

The coroots of $\pm i \varepsilon_j \pm i \varepsilon_\ell \in \Phi(\g^\C; \t^\C)$ are, with correlated choice of signs,
\[ h_{\pm i (\varepsilon_j - i \varepsilon_\ell)} := \begin{pmatrix} 0 & \pm i E_{jj} \mp i E_{\ell \ell} \\ \mp i E_{jj} \pm i E_{jj}& 0 \end{pmatrix}\]
\[ h_{\pm i (\varepsilon_j + i \varepsilon_\ell)} := \begin{pmatrix} 0 & \pm i E_{jj} \pm i E_{\ell \ell} \\ \mp i E_{jj} \mp i E_{jj}& 0 \end{pmatrix}\]

Choose root vectors, for $1 \le j, \ell \le n$,
\begin{eqnarray*}
    e_{i\varepsilon_j - i \varepsilon_\ell} &:=& \frac{1}{2} \begin{pmatrix} E_{j\ell} - E_{\ell j} & i (E_{j \ell} + E_{\ell j}) \\ -i (E_{j\ell} + E_{\ell j}) & E_{j\ell} - E_{\ell j}\end{pmatrix} \in \g_{i \varepsilon_j - i \varepsilon_\ell}^\C \quad j \neq \ell\\
    e_{ i\varepsilon_j + i\varepsilon_\ell} &:=& \frac{1}{2}\begin{pmatrix} -E_{j \ell} - E_{\ell j} & i (E_{j\ell} + E_{\ell j}) \\ i (E_{j \ell} + E_{\ell j}) & E_{j \ell} + E_{\ell j} \end{pmatrix} \in \g_{ i\varepsilon_j + i\varepsilon_\ell}^\C\\
     e_{ -i\varepsilon_j - i\varepsilon_\ell} &:=& \frac{1}{2}\begin{pmatrix} -E_{j \ell} - E_{\ell j} & -i (E_{j\ell} + E_{\ell j}) \\ -i (E_{j \ell} + E_{\ell j}) & E_{j \ell} + E_{\ell j} \end{pmatrix} \in \g_{-i\varepsilon_j - i\varepsilon_\ell}^\C.
\end{eqnarray*}

Then $[e_{\varphi}, e_{-\varphi}] = h_\varphi$ for all $\varphi \in \Phi(\g^\C; \t^\C)$.

For each positive noncompact root $\varphi = i\varepsilon_j + i \varepsilon_\ell$, let
\begin{eqnarray*}
    x_\varphi &:=& e_{\varphi} + e_{-\varphi} = \begin{pmatrix} -E_{j\ell} - E_{\ell j} & 0 \\ 0 & E_{j\ell} + E_{\ell j} \end{pmatrix}\\
    y_{\varphi} &:=& i (e_{\varphi} - e_{-\varphi}) = \begin{pmatrix} 0 & - E_{j\ell} - E_{\ell j} \\ - E_{j\ell} - E_{\ell j} & 0 \end{pmatrix}.
\end{eqnarray*}
Then $\{ x_\varphi, y_\varphi\}_{\{i\varepsilon_j + i \varepsilon_\ell : 1 \le j, \ell \le n\}}$ is a basis of $\p$ and
\[ \frac{1}{2}\ad(\JJ) x_\varphi = y_\varphi, \quad \frac{1}{2}\ad(\JJ) y_\varphi = -x_{\varphi}, \quad [x_\varphi, y_\varphi] = -2i h_\varphi.\]

\subsubsection{Partial Cayley Transforms}
\label{subsec:partialcayley}

Two roots $\varphi, \psi \in \Phi(\g^\C; \t^\C)$ are \emph{strongly orthogonal} if neither $\varphi \pm \psi$ is a root. We choose a maximal strongly orthogonal collection of positive noncompact roots
\begin{equation} \label{eq:stronglyorthogonal} \Psi := \{ 2i \varepsilon_1, \cdots, 2i\varepsilon_n\}.\end{equation}

We choose
\begin{eqnarray} 
    \Gamma &:=& \{ 2i \varepsilon_1, \cdots, 2i\varepsilon_{n_0}\} \subseteq \Psi \label{eq:Gamma}\\
    \Sigma &:=& \{ 2i \varepsilon_{n_0 + 1}, \cdots, 2i \varepsilon_{n_0 + n_+}\} \subseteq \Psi, \label{eq:Sigma}
\end{eqnarray}
and suppress denoting the dependence of $n_0$, $n_-$, because if $|\Gamma| = |\Gamma'|$, for another subset $\Gamma' \subseteq \Psi$ then $\Gamma$ and $\Gamma'$ (respectively $\Sigma$ and $\Sigma'$) differ by a Weyl group transformation.

The partial Cayley transforms are 
\[ c_\Gamma:= \prod_{\gamma \in \Gamma} c_\gamma, \quad c_{2i \varepsilon_j} := \exp\left( \frac{i\pi}{4} y_{2i\varepsilon_j}\right). \]
So
\begin{equation}\label{eq:cayleymatrix1}
    c_\Gamma = \begin{pmatrix} \frac{1}{\sqrt{2}}1_{n_0} & 0 & 0 & -\frac{i}{\sqrt{2}} 1_{n_0} & 0 & 0  \\
    0 & 1_{n_+} & 0 & 0 & 0 & 0 \\
    0 & 0 &  1_{n_-} & 0 & 0 & 0 \\
    -\frac{i}{\sqrt{2}} 1_{n_0} & 0 & 0 & \frac{1}{\sqrt{2}}1_{n_0} & 0 & 0 \\
    0 & 0 & 0 & 0 &  1_{n_+} & 0\\
    0 & 0 & 0 & 0 & 0 & 1_{n_-}\end{pmatrix}
\end{equation}

\begin{equation}\label{eq:cayleymatrix2}
    c_\Sigma = \begin{pmatrix}  1_{n_0} & 0 & 0 & 0 & 0 & 0  \\
    0 & \frac{1}{\sqrt{2}} 1_{n_+} & 0 & 0 & -\frac{i}{\sqrt{2}} 1_{n_+} & 0 \\
    0 & 0 &  1_{n_-} & 0 & 0 & 0 \\
    0 & 0 & 0 & 1_{n_0} & 0 & 0 \\
    0 & -\frac{i}{\sqrt{2}} 1_{n_+} & 0 & 0 & \frac{1}{\sqrt{2}} 1_{n_+} & 0\\
    0 & 0 & 0 & 0 & 0 & 1_{n_-}\end{pmatrix}.
\end{equation}

\subsubsection{Symmetries of holomorphic arc components}
\label{subsec:symmetriesholarc}

The set of all roots orthogonal to $\Psi \setminus \Gamma$ is, with uncorrelated signs,
\[ (\Psi \setminus \Gamma)^{\perp} = \{ \pm i \varepsilon_j \pm i\varepsilon_\ell : n_0 < j, \ell \le n\} \setminus \{0\},\]
The Lie algebra $\t^\C + \sum_{\varphi \in (\Psi \setminus \Gamma)^{\perp}} \g_{\varphi}^\C$ consists of all complex $2n \times 2n$ matrices
\[ \begin{pmatrix} 0 & 0 & -\dd & 0 \\ 0 & \aa & 0 & \bb\\ \dd & 0 & 0 & 0\\ 0 & \cc & 0 & -\aa \end{pmatrix}\]
such that
\[\dd = \diag(d_1, \cdots, d_{n_0}) \quad d_j \in \C, 1 \le j \le n_0\]
and
\[\aa \in \Mat_{(n-n_0) \times (n-n_0)}(\C), \quad \bb, \cc \in \Mat_{(n-n_0) \times (n- n_0)}(\C)^{t} .\]

Its derived algebra is
\[ \g_{\Psi \setminus\Gamma}^\C = \left\{ \begin{pmatrix} 0 & 0 & 0 & 0 \\ 0 & \aa & 0 & \bb\\ 0 & 0 & 0 & 0\\ 0 & \cc & 0 & -\aa \end{pmatrix} : \bb, \cc \in \Mat_{(n-n_0) \times (n- n_0)}(\C)^{t} \right\} \subseteq \g^\C.\]
Let  
\begin{eqnarray*}
\g_{\Psi \setminus \Gamma} &:=& \g_{\Psi \setminus \Gamma}^\C \cap \g\\
\k_{\Psi \setminus \Gamma} &:=& \g_{\Psi \setminus \Gamma} \cap \k\\
\p_{\Psi \setminus \Gamma} &:=& \g_{\Psi \setminus \Gamma} \cap \p
\end{eqnarray*}

so that $\g_{\Psi \setminus \Gamma} = \k_{\Psi \setminus \Gamma} \oplus \p_{\Psi \setminus \Gamma}$.

We can compute
\[ c_\Sigma^4 =\begin{pmatrix} 1_{n_0} & 0 & 0 & 0 & 0\\
0 & -1_{n_+} & 0 & 0 & 0& 0 \\ 0 & 0 & 1_{n_-} & 0 & 0& 0\\ 0 & 0 & 0 &  1_{n_0} & 0  & 0\\ 0 & 0 & 0 & 0 & -1_{n_+} & 0 \\ 0 & 0 & 0 & 0 & 0 &  1_{n_-} \end{pmatrix}. \]

Let, with correlating signs, $\g^{\pm \Sigma} := \{ v \in \g : \Ad(c_{\Sigma}^4)v = \pm v\}$.

Then let
\begin{eqnarray*}
\g_{\Psi \setminus \Gamma}^\Sigma &:=& \g_{\Psi \setminus \Gamma} \cap \g^{+\Sigma}\\
\k_{\Psi \setminus \Gamma}^\Sigma &:=& \k_{\Psi \setminus \Gamma} \cap \g^{+\Sigma}\\
\p_{\Psi \setminus \Gamma}^\Sigma &:=& \p_{\Psi \setminus \Gamma} \cap \g^{+\Sigma}\\
\q_{\Psi \setminus \Gamma} &:=& \k_{\Psi \setminus \Gamma} \cap \g^{-\Sigma}\\
\r_{\Psi \setminus \Gamma} &:=& \p_{\Psi \setminus \Gamma} \cap \g^{-\Sigma}.
\end{eqnarray*}

Then
\[ \g_{\Psi \setminus \Gamma} = \k_{\Psi \setminus \Gamma}^\Sigma \oplus \p_{\Psi \setminus \Gamma}^\Sigma \oplus \q_{\Psi \setminus \Gamma}^{\Sigma} \oplus \r_{\Psi \setminus \Gamma}^{\Sigma}\]
where
\begin{eqnarray}
    \k_{\Psi \setminus \Gamma}^\Sigma &=& \left\{ \begin{psmallmatrix} 0 & 0 & 0 & 0 & 0 & 0 \\
    0 & \aa_{++} & 0 & 0 & -\bb_{++} & 0 \\
    0 & 0 & \aa_{--} & 0 & 0 & -\bb_{--} \\
    0 & 0 & 0 & 0 & 0 & 0 \\
    0 & \bb_{++} & 0 & 0 & \aa_{++} & 0\\
    0 & 0 & \bb_{--} & 0 & 0 & \aa_{--} \end{psmallmatrix}: \begin{aligned}\aa_{\pm \pm} &\in \Mat_{n_{\pm} \times n_{\pm}}(\R)^{-t},\\ \bb_{\pm \pm} &\in \Mat_{n_{\pm} \times n_{\pm}}(\R)^{t}\end{aligned}\right\}\\
     \p_{\Psi \setminus \Gamma}^\Sigma &=& \left\{ \begin{psmallmatrix} 0 & 0 & 0 & 0 & 0 & 0 \\
    0 & \aa_{++} & 0 & 0 & \bb_{++} & 0 \\
    0 & 0 & \aa_{--} & 0 & 0 & \bb_{--} \\
    0 & 0 & 0 & 0 & 0 & 0 \\
    0 & \bb_{++} & 0 & 0 & -\aa_{++} & 0\\
    0 & 0 & \bb_{--} & 0 & 0 & -\aa_{--} \end{psmallmatrix}: \begin{aligned}\aa_{\pm \pm} &\in \Mat_{n_{\pm} \times n_{\pm}}(\R)^{t},\\ \bb_{\pm \pm} &\in \Mat_{n_{\pm} \times n_{\pm}}(\R)^{t}\end{aligned}\right\} \\
     \q_{\Psi \setminus \Gamma}^\Sigma &=& \left\{ \begin{psmallmatrix} 0 & 0 & 0 & 0 & 0 & 0 \\
    0 & 0 & -\aa_{-+}^t & 0 & 0 & -\bb_{-+}^t \\
    0 & \aa_{-+} & 0 & 0 & -\bb_{-+} & 0 \\
    0 & 0 & 0 & 0 & 0 & 0 \\
    0 & 0 & \bb_{-+}^t & 0 & 0 & -\aa_{-+}^t\\
    0 & \bb_{-+} & 0 & 0 & \aa_{-+} & 0 \end{psmallmatrix}: \aa_{-+}, \bb_{-+} \in \Mat_{n_- \times n_+}(\R) \right\}\\
     \r_{\Psi \setminus \Gamma}^\Sigma &=& \left\{ \begin{psmallmatrix} 0 & 0 & 0 & 0 & 0 & 0 \\
    0 & 0 & \aa_{-+}^t & 0 & 0 & \bb_{-+}^t \\
    0 & \aa_{-+} & 0 & 0 & \bb_{-+} & 0 \\
    0 & 0 & 0 & 0 & 0 & 0 \\
    0 & 0 & \bb_{-+}^t & 0 & 0 & \aa_{-+}^t\\
    0 & \bb_{-+} & 0 & 0 & \aa_{-+} & 0 \end{psmallmatrix}: \aa_{-+}, \bb_{-+} \in \Mat_{n_- \times n_+}(\R) \right\}.
\end{eqnarray}

Let $G_{\Psi \setminus \Gamma}^\C$, $G_{\Psi \setminus \Gamma}$, $G_{\Psi \setminus \Gamma}^\Sigma$, $K_{\Psi \setminus \Gamma}$, $K_{\Psi \setminus \Gamma}^\Sigma$, be the Lie groups corresponding to the Lie algebras $\g_{\Psi \setminus \Gamma}^\C$, $\g_{\Psi \setminus \Gamma}$, $\g_{\Psi \setminus \Gamma}^\Sigma$, $\k_{\Psi \setminus \Gamma}$, $\k_{\Psi \setminus \Gamma}^\Sigma$. Then
\begin{eqnarray*}
    G_{\Psi \setminus \Gamma}^\C &\cong& \Sp(2(n-n_0); \C)\\
    G_{\Psi \setminus \Gamma}&\cong& \Sp(2(n-n_0); \R)\\
    G_{\Psi \setminus \Gamma}^\Sigma & \cong & \Sp(2n_+; \R) \times \Sp(2n_-; \R)\\
    K_{\Psi \setminus \Gamma} &\cong & U(n-n_0)\\
    K_{\Psi \setminus \Gamma}^\Sigma &\cong& U(n_+) \times U(n_-).
\end{eqnarray*}

\subsection{Figures}

\begin{figure}[H]
    \centering
    \includegraphics[width = \textwidth]{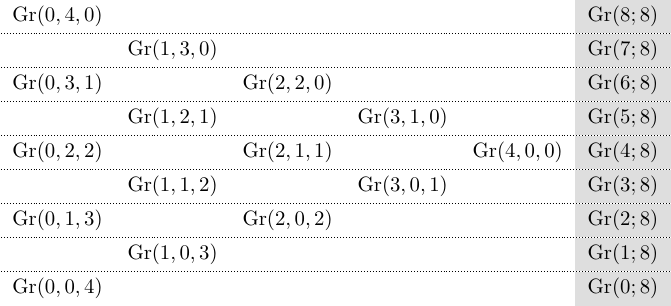}
    \caption{The Grassmannians $\Gr(\vec{n})$ are tabulated with vertical coordinate $n_+ - n_-$ and horizontal coordinate $n_0$. The rows describe the partition of the ordinary Grassmannians by the Grassmannians $\Gr(\vec{n})$. Grassmannians of, respectively, symplectic, isotropic, coisotropic subspaces are located on, respectively, left side, lower right, and upper right side of the triangle. Only the isotropic and coisotropic Grassmannians are compact. Taking symplectic complements identifies orbits symmetric about the horizontal line $n_+ = n_-$.}
    \label{fig:shelf}
\end{figure}

\begin{figure}[H] 
\centering
\includegraphics[width = \textwidth]{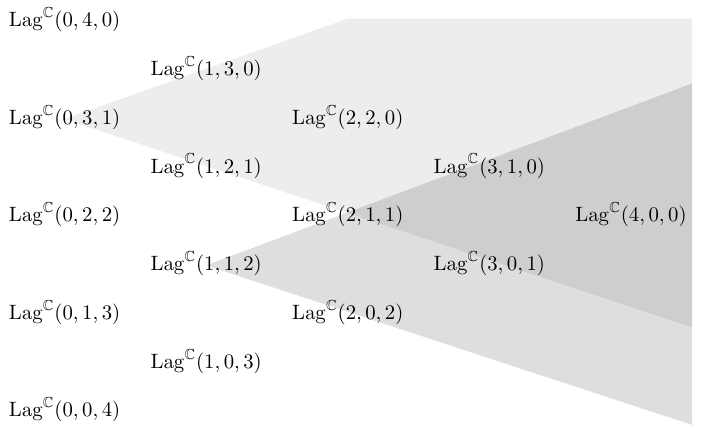}
\caption{The $G$-orbits $\Lag^\C(\vec{n})$ are tabulated with vertical position $n_+ - n_-$ and horizontal position $n_0$. The shaded regions indicate $G$-orbits $\Lag^\C(\vec{n})$ in the closures of $\Lag^\C(0, 3, 1)$, $\Lag^\C(1, 1, 2)$ (and $\Lag^\C(2, 1, 1)$). Only $\Lag^\C(4, 0, 0)$ is compact. Complex conjugation identifies the orbits symmetric about the horizontal line $n_+ = n_-$. }
\label{fig:incidence}
\end{figure}

\begin{figure}[H]
\centering
\includegraphics[width=\textwidth]{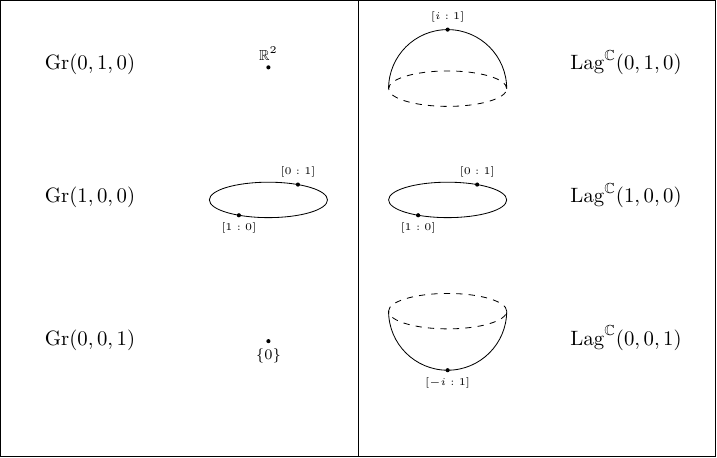}
\caption{Example of Corollary \ref{cor:homotopytype}. $Gr(\vec{n}; \R^2)$ and $\Lag^\C(\vec{n};\R^2)$ are homotopy equivalent.}
\label{fig:assemblyR2}
\end{figure}

\printbibliography

@article{Siwiec,
title = {Sequence-covering and countably bi-quotient mappings},
journal = {General Topology and its Applications},
volume = {1},
number = {2},
pages = {143-154},
year = {1971},
issn = {0016-660X},
doi = {https://doi.org/10.1016/0016-660X(71)90120-6},
author = {Frank Siwiec}
}

@article{BaileyBorel,
title = {Compactification of arithmetic quotients of bounded symmetric domains},
author = {Walter Lewis {Bailey Jr.} and Armand Borel},
journal = {Annals of Mathematics, Second Series},
number = {3},
volume = {84},
year = {1966},
month = {11},
pages = {442--528},
publisher = {Annals of Mathematics}
}

@article{Lerman, 
author = {Eugene Lerman},
title = {Orbifolds as stacks?},
journal = {L'Enseignement Math\'{e}matique},
volume = {56},
number = {3/4},
year = {2010},
pages = {315-–363}
}

@book{PyatetskiiShapiro,
title = {Automorphic functions and the geometry of classical domains},
author = {Ilya I. Pyatetskii-Shapiro},
year = {1969},
publisher = {Gordon and Breach, Science Publishers, Inc.}
}

@book{Sniaticki,
title = {Geometric quantization and quantum mechanics},
author = {J\c{e}drzej \'{S}niaticki},
series = {Applied mathematical sciences},
volume = {30}, 
year = {1980},
publisher = {Springer-Verlag},
}

@book{GuilleminSternberg,
title = {Geometric asymptotics},
author = {Victor Guillemin and Shlomo Sternberg},
series = {Mathematical Surveys and Monographs},
volume = {14},
year = {1990},
publisher = {American Mathematical Society}
}

@book{Woodhouse,
title = {Geometric quantization},
author = {Nicholas Michael John Woodhouse},
year = {1991},
publisher = {Oxford University Press}
}

@book{KobayashiNomizu,
title = {Foundations of differential geometry},
author = {Shoshichi Kobayashi and Katsumi Nomizu},
year = {1963},
volume = {1},
publisher = {Interscience Publishers}
}

@article{Borel,
title = {Les espaces hermitiens sym\'{e}triques},
author = {Armand Borel},
journal = {S\'{e}minaire Nicholas Bourbaki},
pages = {121--132},
number = {62},
year = {1954}
}

@article{BlattnerRawnsley,
title = {A cohomological construction of half-forms for non-positive polarizations},
journal = {Bulletin de la {S}oci\'{e}t\'{e} mathematique de {B}elgique. {S}\'{e}rie {B}},
author = {Robert James Blattner and John Howard Rawnsley},
pages = {109--130},
volume = {38},
year = {1986}
}

@article{AjayiBanyaga,
author = {Deborah Olayide Ajayi and Augustin Banyaga},
title = {An explicit retraction of symplectic flag manifolds onto complex flag manifolds}, 
journal = {Journal of Geometry},
volume = {104},
pages = {1–-9},
year = {2013}
}

@article{LeeLeung,
title = {Grassmannians of symplectic subspaces},
author = {Jae-Hyouk Lee and Naichung Conan Leung},
journal = {manusripta mathematica},
pages = {383--410},
volume = {136},
year = {2011}
}

@article{Sugiura,
title = {Conjugate classes of Cartan subalgebras in real semisimple Lie algebras},
journal = {Journal of the Mathematical Society of Japan},
author = {Mitsuo Sugiura},
pages = {374--433},
volume = {11},
number = {4},
year = {1959}
}

@Inbook{ArnoldGivental,
author="Arnol'd, V. I.
and Givental', A. B.
and Novikov, S. P.",
editor="Arnold, V. I.
and Novikov, S. P.",
title="Symplectic Geometry",
bookTitle="Dynamical Systems IV: Symplectic Geometry and its Applications",
year="2001",
publisher="Springer Berlin Heidelberg",
address="Berlin, Heidelberg",
pages="1--138",
abstract="Symplectic geometry is the mathematical apparatus of such areas of physics as classical mechanics, geometrical optics and thermodynamics. Whenever the equations of a theory can be gotten out of a variational principle, symplectic geometry clears up and systematizes the relations between the quantities entering into the theory. Symplectic geometry simplifies and makes perceptible the frightening formal apparatus of Hamiltonian dynamics and the calculus of variations in the same way that the ordinary geometry of vector spaces reduces cumbersome coordinate computations to a small number of simple basic principles.",
isbn="978-3-662-06791-8",
doi="10.1007/978-3-662-06791-8_1",
url="https://doi.org/10.1007/978-3-662-06791-8_1"
}

@article{Arnold,
    author    = "Vladimir Igorevich Arnol'd",
    title     = "Characteristic class entering in quantization conditions",
    journal   = "Functional Analysis and Its Applications",
    volume   = "1",
    pages    = "1--13",
    year      = "1967",
}

@article{Wolf1,
author = {Joseph Albert Wolf},
title = {{The action of a real semisimple group on a complex flag manifold. I: Orbit structure and holomorphic arc components}},
volume = {75},
journal = {Bulletin of the American Mathematical Society},
number = {6},
publisher = {American Mathematical Society},
pages = {1121 -- 1237},
year = {1969},
}

@article{Kim,
title = {On a noncontractible family of representations of the canonical commutation relations},
journal = {Journal of Geometry and Physics},
volume = {169},
pages = {104343},
year = {2021},
doi = {https://doi.org/10.1016/j.geomphys.2021.104343},
author = {Hyunmoon Kim},
}

@article{Takeuchi,
 ISSN = {00029327, 10806377},
 URL = {http://www.jstor.org/stable/2373477},
 author = {Masaru Takeuchi},
 journal = {American Journal of Mathematics},
 number = {3},
 pages = {657--680},
 publisher = {Johns Hopkins University Press},
 title = {On Orbits in a Compact Hermitian Symmetric Space},
 urldate = {2024-03-18},
 volume = {90},
 year = {1968}
}

@article{OhPark,
author = {Young-Geun Oh and Jae-Suk Park},
title = {Deformations of coisotropic submanifolds and strong homotopy Lie algebroids},
journal = {Inventiones Mathematicae},
year = "2005",
month = "8",
volume = "161",
pages = "287--360",
doi = "10.1007/s00222-004-0426-8"

}

@article{WolfKoranyi,
author = {Joseph A. Wolf and Adam Kor\'{a}nyi},
title = {Generalized Cayley transformations of bounded symmetric domains},
journal = {American Journal of Mathematics},
year = "1965",
month = "10",
volume = "87",
number = "4",
pages = "899--939",
doi = "10.2307/2373253"
}

@article{Dold,
author = {Albrecht Dold},
title = {Partitions of unity in the theory of fibrations},
journal = {Annals of Mathematics},
year = "1963",
month = "9",
volume = "78",
number = "2",
pages = "223-255"
}

@article{KoranyiWolf,
author = {Joseph A. Wolf and Adam Koranyi},
title = {Realization of hermitian symmetric spaces as generalized half-planes},
journal = {The Annals of Mathematics, Second Series},
volume = "81",
number = "2",
year = "1965",
month = "3",
pages = "265--288"
}

@book{BorelJi,
author = {Armand Borel and Lizhen Ji},
title = {Compactifications of symmetric and locally symmetric spaces},
series = {Mathematics: Theory and Applications},
publisher = {Birkh\"{a}user Boston MA},
edition = "1",
year = "2006", 
doi = "10.1007/0-8176-4466-0"
}

@article{Ziegler,
author = {Claus Ziegler},
title = {Jacobi forms of higher degree},
journal = {Abhandlungen aus dem {M}athematischen Seminar der {U}niversit\"{a}t {H}amburg},
volume = "59",
pages = "191--224",
year = "1989",
doi = "10.1007/BF02942329"
}

@inbook{Wolf,
author = {Joseph A. Wolf},
title = {Symmetric Spaces; Short Courses Presented at Washington University},
chapter = {Fine structure of hermitian symmetric spaces},
publisher = {Marcel Dekker, Inc.},
year = {1972},
edition = "1",
series = "Pure and Applied Mathematics",
volume = "8",
pages = "271--357",
editors = "William M. Boothby and Guido L. Weiss"
}

@book{Loos,
author = {Ottmar Loos},
year = {1969},
title = {Symmetric spaces I: General Theory},
publisher = {W. A. Benjamin, Inc.}
}

@Inbook{Hess,
author = {Harald Hess},
year = {2006},
month = {01},
pages = {1--35},
title = {On a geometric quantization scheme generalizing those of {K}ostant-{S}ouriau and {C}zyz},
bookTitle={Differential Geometric Methods in Mathematical Physics, Proceedings of a Conference Held at the Technical University of Clausthal, FRG, July 23-25, 1980},
volume = {139},
publisher = {Springer Berlin, Heidelberg},
editor = {Heinz-Dietrich Doebner and Stig I. Andersson and Herbert Rainer Petry},
isbn = {978-3-540-10578-7},
doi = {10.1007/3-540-10578-6_19}
}

@phdthesis{mythesis,
    title        = {The canonical commutation relations and the local geometry of symplectic spaces},
    author       = {Hyunmoon Kim},
    year         = 2022,
    month        = {8},
    address      = {Seoul, Republic of Korea},
    school       = {Seoul National University},
    type         = {PhD thesis}
}

@book{EichlerZagier,
title = {The Theory of Jacobi Forms},
author = {Martin Eichler and Don Zagier},
publisher = {Birkh\"{a}user Boston, MA},
series = {Progress in Mathematics},
edition = {1},
year = {1985}
}

@book{Helgason,
title = {Differential geometry, Lie groups, and symmetric spaces},
author = {Sigurdur Helgason},
publisher = {American Mathematical Society},
series = {Graduate Studies in Mathematics},
volume = {34},
edition = {2},
year = {2001}
}

{\ifdebug {\newpage} \else {\end{document}} \fi}

\medskip\noindent\textbf{To do:}
\begin{itemize}
\item blah blah
\item blah blah
\end{itemize}

\medskip\noindent\textbf{To discuss:}
\begin{itemize}
\item Cell complex / cell decomposition / cell structure
\item CW complex / CW decomposition / CW structure
\item Cellular complex / cellular decomposition / cellular structure
\end{itemize}

\medskip\noindent\textbf{Conventions:}
\begin{itemize}
\item blah blah
\item blah blah
\end{itemize}

\medskip\noindent\textbf{Not for action:}
\begin{itemize}
\item blah blah
\item blah blah
\end{itemize}

\medskip\noindent\textbf{Journals to submit to}
\begin{itemize}
    \item Arnold Mathematical Journal (Dmitry Fuchs)
    \item Journal of Lie theory
    \item Advances in Geometry (Kaoru Ono)
\end{itemize}

\end{document}